\newtheorem{theorem}{Theorem}[section]
\newtheorem{lemma}[theorem]{Lemma}
\newtheorem{corollary}[theorem]{Corollary}
\newtheorem{prop}[theorem]{Proposition}
\newtheorem{defn}[theorem]{Definition}
\newtheorem{remark}[theorem]{Remark}
\numberwithin{equation}{section}
\newcommand{\Pabc}{\mathbb{P}^1_{a,b,c}}
\newcommand{\N}{\mathbb{N}}
\newcommand{\Z}{\mathbb{Z}}
\newcommand{\C}{\mathbb{C}}
\newcommand{\Q}{\mathbb{Q}}
\newcommand{\bS}{\mathbb{S}}
\newcommand{\CM}{\mathcal{M}}
\newcommand{\cP}{\mathcal{P}}
\newcommand{\CF}{\mathcal{F}}
\newcommand{\bL}{\mathbf{L}}
\newcommand{\BL}{\mathbb{L}}
\newcommand{\I}{\mathrm{I}}
\newcommand{\II}{\mathrm{II}}
\newcommand{\Jac}{\mathrm{Jac}}
\newcommand{\dd}[1]{\frac{\partial}{\partial #1}}
\newcommand{\WT}[1]{\widetilde{#1}}
\newcommand{\quat}{\mathbb{H}}
\newcommand{\one}{\mathbf{1}}
\newcommand{\bP}{\mathbb{P}}
\newcommand{\pt}{\mathrm{pt}}
\newcommand{\AI}{A_\infty}
\newcommand{\QH}{\mathrm{QH}}
\newcommand{\arc}[1]{\stackrel{\frown}{#1}}
\DeclarePairedDelimiter\floor{\lfloor}{\rfloor}
\newcommand{\KS}{\mathsf{KS}}
\newcommand{\m}{\mathfrak{m}}
\begin{document}

\author[Amorim]{Lino Amorim}
\address{Department of Mathematics\\ Kansas State University\\ 138 Cardwell Hall, 1228 N. 17th Street\\
	Manhattan, KS 66506\\ USA}
\email{lamorim@ksu.edu}

\author[Cho]{Cheol-Hyun Cho}
\address{Department of Mathematical Sciences, Research institute of Mathematics\\ Seoul National University\\ San 56-1, 
Shinrimdong\\ Gwanakgu \\Seoul 47907\\ Korea}
\email{chocheol@snu.ac.kr}

\author[Hong]{Hansol Hong}
\address{Department of Mathematics \\ Yonsei University \\ 50 Yonsei-Ro \\ Seodaemun-Gu \\ Seoul 03722 \\ Korea} 
\email{hansolhong@yonsei.ac.kr}

\author[Lau]{Siu-Cheong Lau}
\address{Department of Mathematics and Statistics\\ Boston University\\ 111 Cummington Mall\\ Boston \\ MA 02215\\ USA}
\email{lau@math.bu.edu}

\title[Big Quantum cohomology of orbifold spheres]{Big Quantum cohomology of orbifold spheres}

\begin{abstract}
We construct a Kodaira-Spencer map from  the big quantum cohomology of a sphere with three orbifold points to
the Jacobian ring of the mirror Landau-Ginzburg potential function. This is constructed via the Lagrangian Floer theory of the Seidel Lagrangian and we show that Kodaira-Spencer map is a ring isomorphism.
\end{abstract}
\subjclass[2010]{53D45, 53D40}
%\keywords{orbifold, groupoid, equivariant immersion}
\maketitle

\vspace{-.5cm}

{
\hypersetup{linkcolor=black, pdfauthor={Name}}
	\tableofcontents
}

%\tableofcontents

\vspace{-1cm}

\section{Introduction}
Orbifold projective lines $\bP^1_{a,b,c}$ are two-dimensional spheres with three orbifold singular points as drawn in Figure \ref{fig:introabc}.  They provide a simple yet very interesting class of geometries. 
%Even though they are only one-dimensional, 
Despite low dimensionality, their orbifold Gromov-Witten theory is surprisingly rich. 
Satake-Takahashi \cite{ST} computed the Gromov-Witten invariants and Frobenius structures for elliptic $\bP^1_{a,b,c}$ (where $1/a + 1/b + 1/c = 1$), which involves many interesting number theoretic power series.  Rossi \cite{Rossi} obtained analogous results for spherical $\bP^1_{a,b,c}$ (where $1/a + 1/b + 1/c > 1$).  
%However, Gromov-Witten invariants and close-string mirror symmetry for the hyperbolic case (where $1/a + 1/b + 1/c < 1$) were not studied in the previous literature. 
More recently, Ishibashi-Takahashi-Shiraishi \cite{IST2} proved that the Frobenius structure from the Gromov-Witten invariants of hyperbolic $\bP^1_{a,b,c}$ (where $1/a + 1/b + 1/c < 1$) is isomorphic to the one from their associated affine cusp polynomials.
In this paper, we provide a geometric approach to study closed-string mirror symmetry for $X=\bP^1_{a,b,c}$ in all three cases, with help of Lagrangian Floer theory. Namely, we will construct a Kodaira-Spencer map from orbifold quantum cohomology of $X$
with bulk deformations
to the Jacobian ring of the mirror potential function and show that it is an isomorphism.
%{\color{red}  Ishibashi-Takahashi-Shiraishi for hyperbolic cases?}

Lagrangian Floer theory has provided a purely mathematical approach to construct and prove mirror symmetry.  A typical example is a compact toric manifold, whose mirror can be nicely constructed from Lagrangian Floer theory.
In the Fano case, the second-named author and Yong-Geun Oh \cite{CO} classified the holomorphic discs bounded by toric fibers and showed that the LG mirror $W$ can be formulated as the count of these discs.  Later Fukaya-Oh-Ohta-Ono \cite{FOOOT,FOOO2,FOOO_MS} used Lagrangian deformation theory to construct the LG mirrors in general.  They also constructed the Kodaira-Spencer map (or closed-open map) which produces closed string mirror symmetry for all compact toric manifolds.  This provides a mirror construction from the first principle, which has the advantage that Kontsevich's homological mirror symmetry conjecture \cite{Kontsevich94} can be canonically derived (See \cite{CHL-toric} for Fano cases).

For $X=\bP^1_{a,b,c}$, the Landau-Ginzburg (LG for short) mirrors $W$ were uniformly constructed in \cite{CHL17} based on Lagrangian Floer theory of a certain immersed Lagrangian $\mathbb{L}$, which was first used by Seidel \cite{Se2}.  Moreover, homological mirror symmetry for the elliptic and hyperbolic cases was derived by a family version of a Yoneda functor naturally coming with the construction.  In the hyperbolic case, the LG mirror is an infinite series in variables $x,y,z$.  \cite{CHKL14} found an inductive algorithm to compute the explicit expressions in all cases. 
 
 In this article, we consider the bulk-deformed version of the Lagrangian Floer theory of $\mathbb{L}$ in $X=\bP^1_{a,b,c}$. And we use it to describe the \emph{big quantum cohomology} of $X=\bP^1_{a,b,c}$. This requires a change in the technical setup, instead of the classical methods in \cite{CHL17}, we are forced to use Kuranishi structures and CF-perturbations \cite{FOOO_new} to handle the various moduli spaces of holomorphic curves. Nevertheless we show the bulk-deformed potentials have the same leading order terms as the ones in \cite{CHKL14}.

In this approach to constructing the mirror, it is crucial to find a {\it large} space of solutions to the weak Maurer-Cartan equation. For the immersed Lagrangian $\mathbb{L}$ we show (see Proposition \ref{prop:weakunobsbulk}) that any linear combination of the odd-degree immersed points gives a solution of the Maurer-Cartan equation. This extends the result in \cite{CHL17} to the case of bulk deformations by orbi-sectors.
%The key ingredient is an anti-symplectic involution on $\bP^1_{a,b,c}$, which makes holomorphic polygons appearing in pairs and their contributions to the even-degree immersed sectors cancel.
   
In order to relate the Gromov-Witten invariants of $\bP^1_{a,b,c}$ with  the Jacobian ring of the bulk-deformed mirror potential,
we use the method of Kodaira-Spencer map invented by \cite{FOOO_MS}, which gives a homomorphism from the quantum cohomology of $X$ to the Jacobian ring of the mirror $W_\tau$.  The following is our main theorem.

\begin{theorem} \label{thm:main}
	Let $X=\bP^1_{a,b,c}$ and $W_\tau$ be its bulk-deformed disc potential by $\tau \in H^* (X, \Lambda_+)$.  Let $\Jac(W_\tau)$ be the completed Jacobian ring over the Novikov field $\Lambda$ in a certain choice of coordinates.  Denote the big quantum cohomology of $X$ over $\Lambda$ with quantum product $\bullet_\tau$ by $\QH^*_{orb}(X,\tau)$.  The Kodaira-Spencer map $\KS_\tau: \QH^*_{orb}(X,\tau) \to \Jac(W_\tau)$ is a ring isomorphism.
\end{theorem}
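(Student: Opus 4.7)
The plan is to realise $\KS_\tau$ as an open-closed map associated with the Seidel Lagrangian $\mathbb{L}$ deformed by the bulk class $\tau$ together with the weak bounding cochain $\bb$ of Proposition \ref{prop:weakunobsbulk}, and then to prove it is a ring isomorphism in the classical three-step pattern: ring homomorphism, rank matching, and a Nakayama-type lift from the classical limit.

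First I would define $\KS_\tau(\alpha)$ as a count of holomorphic discs with one interior input labelled by $\alpha$ and boundary deformed by $\bb$, with output interpreted in the Jacobian ring via the identification $HF^*((\mathbb{L},\bb),(\mathbb{L},\bb)) \cong \Jac(W_\tau)$ that is already built into the construction of $W_\tau$. Multiplicativity then follows from the standard TQFT-type argument of \cite{FOOO_MS}: considering the moduli of discs with two interior inputs and boundary labelled by $\bb$, its codimension-one boundary degeneration yields $\KS_\tau(\alpha \bullet_\tau \beta) = \KS_\tau(\alpha) \cdot \KS_\tau(\beta)$. The orbi-insertions are handled by evaluation at twisted marked points, and the anti-symplectic involution highlighted in the introduction keeps the signs and cancellations tractable, just as it did for the potential itself.

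For the isomorphism I would first observe that both sides are free $\Lambda$-modules of the same rank $a+b+c-1$: on the A-side this is $\dim H^*_{orb}(\bP^1_{a,b,c})$; on the B-side it is the Milnor number of the leading-order potential of \cite{CHKL14}, which equals $a+b+c-1$ in all three curvature cases after the coordinate change used to define the completed Jacobian ring. I would then verify that $\KS_0 \bmod \Lambda_+$ is an isomorphism by direct computation on generators: the fundamental class maps to $1$, and the three classes Poincar\'e dual to the orbi-points map to $\partial_x W_0, \partial_y W_0, \partial_z W_0$ (up to invertible leading coefficients), which already generate the classical Jacobian ring. A Nakayama/energy-filtration argument with respect to the Novikov valuation then upgrades this to a ring isomorphism for arbitrary $\tau \in H^*(X,\Lambda_+)$, since surjectivity suffices for free modules of equal finite rank.

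The step I expect to be hardest is the rank matching together with the identification of generators on the B-side in the hyperbolic regime $1/a+1/b+1/c<1$: there $W_\tau$ is an infinite series in $x,y,z$, so finite-dimensionality of $\Jac(W_\tau)$ is not automatic, and one must exhibit an explicit finite presentation in the chosen coordinates. The inductive algorithm of \cite{CHKL14} combined with the bulk contributions at each energy order should produce such a presentation, but controlling the convergence of the bulk series and showing that the ideal $(\partial_x W_\tau,\partial_y W_\tau,\partial_z W_\tau)$ has the expected codimension uniformly in $\tau$ will be the principal technical burden of the proof.
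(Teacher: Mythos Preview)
Your overall architecture---ring homomorphism via the cobordism argument, then surjectivity plus rank matching---is exactly the paper's, and you correctly identify the rank computation in the hyperbolic case as the main technical point. But two of your specific steps do not work as written.

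First, the claim that ``the three classes Poincar\'e dual to the orbi-points map to $\partial_x W_0, \partial_y W_0, \partial_z W_0$'' cannot be right: those elements are zero in $\Jac(W_0)$ by definition, so they generate nothing. What the paper actually proves (Lemma~\ref{lem:imageKS}) is that the basic twisted sectors $\lfloor 1/a\rfloor,\lfloor 1/b\rfloor,\lfloor 1/c\rfloor$ map to $x,y,z$ modulo higher energy, via basic orbi-discs. Surjectivity is then an energy-induction argument using that $x,y,z$ generate $\Lambda\langle\langle x,y,z\rangle\rangle$.

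Second, and more seriously, the Nakayama step ``$\KS_0 \bmod \Lambda_+$ is an isomorphism'' fails because of the coordinate change. After rescaling (which is forced: in the original coordinates $\KS_\tau$ is \emph{not} an isomorphism, see Section~\ref{subsec:valcritpt1}), the leading term of $W_\tau$ is $-T^{-8}xyz + x^a + y^b + z^c$. There is no naive reduction mod $\Lambda_+$: if one rescales away the $T^{-8}$ and then sets $T=0$, one gets $-xyz$, whose Jacobian ring $\Lambda[x,y,z]/(yz,zx,xy)$ is infinite-dimensional. The paper therefore cannot work over $\Lambda_0/\Lambda_+$; instead it computes the rank of $\Jac(-xyz+T^8(x^a+y^b+z^c))$ directly over $\Lambda$ with careful valuation bookkeeping (Theorem~\ref{as:def}, with the lengthy Appendix~\ref{sec:thmasdefpfpf}), and then shows that the full $W_\tau$ is connected to this leading-order potential by a \emph{flat} one-parameter family over $\Lambda\langle\langle s\rangle\rangle$ (Section~\ref{subsec:flatness}). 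Flatness over the PID $\Lambda\langle\langle s\rangle\rangle$ plus finite generation gives freeness, hence constancy of rank along the family. This replaces your Nakayama step, and the negative-valuation term is precisely why the replacement is necessary.
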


The full closed-string mirror symmetry conjecture predicts an isomorphism of Frobenius manifolds. Therefore one expects that the map $\KS_\tau$ identifies the Euler vector field on the big quantum cohomology with the Euler vector field on $\Jac(W_\tau)$, which is the class $[W_\tau]$. We show that this does hold in our case, see Theorem \ref{thm:KSofdiv}. Moreover the $\KS_\tau$ map intertwines the Poincar\'e pairing with the residue pairing on $\Jac(W_\tau)$ determined by some volume form. An algebraic (B-side) description of this volume form is however not yet available.

Theorem \ref{thm:main} should also be helpful in proving the Homological Mirror Symmetry conjecture in this setting. More concretely it should imply, assuming a generalization of Abouzaid's generation criterion to the orbifold setting, that the Lagrangian $\mathbb{L}$ equipped with its bounding cochains split-generate the Fukaya category of $\bP^1_{a,b,c}$.

The construction of Kodaira-Spencer map  \cite{FOOO_MS} crucially depends on the existence of $T^n$-action, hence
the definition is still missing in general cases. The above theorem provides the first class of examples of Kodaira-Spencer map beyond toric manifolds.

In fact, there is a crucial difference between our case of $\bP^1_{a,b,c}$ and that of toric manifolds. Namely, we need to enlarge the domain of LG potential to make the above theorem hold true. Maurer-Cartan formalism of Lagrangian Floer theory provides a natural set of coordinates $\tilde{x},\tilde{y},\tilde{z} \in \Lambda_0$. Namely, they are the coordinates of the Maurer-Cartan space which are dual to the immersed sectors of $\BL$. Given the bulk deformed mirror potential $W_\tau(\tilde{x},\tilde{y},\tilde{z})$, one can define
the Jacobian ring as in Definition \ref{def:jac} as the completed power series ring $\Lambda \ll \tilde{x},\tilde{y},\tilde{z} \gg$
modulo Jacobian ideal of $W_\tau(\tilde{x},\tilde{y},\tilde{z})$. With this Jacobian ring, $\KS_\tau$ is not an isomorphism in general
hence the above theorem fails. In Section \ref{subsec:valcritpt1}, we give an explicit counter-example.

In this paper, we will make the change of variables
\begin{align}\label{eqn:chvar1}
\left\{\begin{array}{lcl}
x&=&T^3 \tilde{x},\\
y&=&T^3 \tilde{y},\\
z&=&T^3 \tilde{z}.
\end{array}\right.
\end{align}
and consider $x,y,z \in \Lambda_0$. In terms of old variables, this is equivalent to allowing
$$val(\tilde{x}),val(\tilde{y}), val(\tilde{z}) \geq -3.$$

In terms of non-archimedean norm $e^{-val}$, $\tilde{x},\tilde{y},\tilde{z}$ are functions on a disc $D(1)$ of radius $1 =e^0$,
and $x,y,z$ are functions on a disc $D(e^3)$ which contains $D(1)$. In the above counter example, critical points of the potential   $W_\tau(\tilde{x},\tilde{y},\tilde{z})$ lie on $D(e^3) \setminus D(1)$ as shown in Proposition \ref{prop:critescape22r}. Thus we need the bigger disc $D(e^3)$ to match the number of critical points with the rank of the quantum cohomology ring. See \ref{subsec:valcritpt1} for related discussions. %(Jacobian ring of $W_\tau(x,y,z)$ as defined in  Definition \ref{def:jac} is different from Jacobian ring of $W_\tau(\tilde{x},\tilde{y},\tilde{z})$ defined above.) $\KS$ will be shown to be an isomorphism to the Jacobian ring of $W_\tau(x,y,z)$.

However, this necessary enlargement of domain is the main source of complication almost in every steps of the proof of the main theorem.
Namely, Lagrangian Floer theory for bounding cochains of negative valuation does not work in general and we need to take care of convergence issues in each step of the proof.
%hus the latter requires a stronger convergence, and the corresponding function space is strictly smaller. 
%(We will use the latter to define the Jacobian ring and show that the Kodaira-Spencer map is surjective.) 

We give another perspective of the above coordinate change.  
For readers convenience, we first recall the case of toric manifolds briefly. For a compact toric n-fold, which can be understood as a compactification of $\C^n$, $W$ takes the form
$$ z_1 + \ldots + z_n + \sum_{i=1}^\infty T^{A_i} Z_i + h.o.t. $$
where $A_i > 0$ and $Z_i$ are monomials in $z_1,\ldots,z_n$, and $h.o.t.$ consists of higher-order terms in $T$.
Under the Kodaira-Spencer map, the images of the toric divisors $D_1,\ldots,D_n$, which are compactifications of the coordinate hyperplanes of $\C^n$, are sent to $z_1,\ldots,z_n$, which generate (a suitable completion of ) $\Lambda[z_1,\ldots,z_n]$ and hence the Jacobian ring.  Thus surjectivity of the Kodaira-Spencer map is automatic in this case.

On the other hand for $\bP^1_{a,b,c}$, the potential $W$ (with $\tau=0$) takes the form
$$W(\tilde{x},\tilde{y},\tilde{z}) = -T \tilde{x}\tilde{y}\tilde{z} + T^{3a} \tilde{x}^a + T^{3b} \tilde{y}^b + T^{3c} \tilde{z}^c + h.o.t. $$
whereas, in new coordinates, the leading terms of the above become
$$W_{lead}:=-T^{-8} xyz + x^a +y^b + z^c.$$ 
The images of the orbifold points $[1/a],[1/b],[1/c]$ are
$T^3 \tilde{x}, T^3 \tilde{y}, T^3 \tilde{z}$ respectively, but in new coordinates these orbifold points map to $x,y,z$ which generates $\Lambda \langle\langle x,y,z\rangle\rangle$. This is one of key ingredient in proving surjectivity of  the KS map. Therefore the coordinate change is also quite natural in this perspective as well.

Once we establish surjectivity of $\KS_\tau$, we match the dimension of the Jacobian ring of the bulk-deformed potential with that of $\QH^*_{orb}(X,\tau)$ to show that $\KS_\tau$ is injective, where the former is given as $a+b+c-1$. For this, we argue with the deformation invariance of the dimension, as it is relatively easy to analyze the leading order terms. In fact, the rank of the Jacobian ring for $-T^{-8} xyz + x^a +y^b + z^c$ is already quite nontrivial, as one needs to additionally take into account the convergence issue when working over $\Lambda$. For this reason, the computation for leading order terms is somewhat lengthy which we will see in Appendix \ref{sec:thmasdefpfpf}. Then we prove that the leading order terms and the actual potential can be interpolated by a flat deformation. This involves a delicate induction step together with some nontrivial algebraic facts.

%In terms of non-Archimedean topology on the mirror LG model,
%the above coordinate change can be understood as follows.
%For the valuation $\nu$ of the Novikov field $\Lambda$ and the associated non-archimedean norm $e^{-\nu}$,
%$\Lambda_0$ can be regarded as a disc $D(1)$ of radius $1 = e^0$.
%Thus the usual convergent power series ring $\Lambda_0 \langle\langle \tilde{x}, \tilde{y}, \tilde{z} \rangle\rangle$ consists of functions defined for $\tilde{x}, \tilde{y}, \tilde{z}$ on $D(1)$,
%whereas $\Lambda_0 \langle\langle x, y,z  \rangle\rangle$ should be interpreted as functions defined for $\tilde{x}, \tilde{y}, \tilde{z}$ on the disc $D(e^{3})$ of radius $e^{3}>1$.

%
%are convergent functions when $x,y,z$ are on the disc $D(e^{3})$ of radius $e^{3}$.

%We do not know an example where the potential function has critical point  from $D(1) \setminus D(e^{3})$.

While the necessity of the coordinate change is now clear, it results in the analytic difficulty that we need to insure convergence throughout the construction under this coordinate change, which a priori is not at all obvious.
Even though the construction in Floer theory has automatic  $T$-adic convergence for bounding cochains in $\Lambda_+$, this coordinate change has an effect that our bounding cochains lie in $\Lambda_{\geq -3}$.  Hence we need a better control in areas to have convergence.  First we will show that in the coordinates $x,y,z$, every term of $W_\tau$ has non-negative valuation (Lemma \ref{lem:geq0}).  Then we use an orbifold version of Gauss-Bonnet theorem (Theorem \ref{thm:GB}) to show that $W_\tau$ actually converges in $T$-adic topology.

\begin{theorem}[Theorem \ref{thm:bdconv}]
	The bulk-deformed potential $W_\tau$ is a convergent series in new variables $x,y,z$ as in \eqref{eqn:chvar1}, that is, it is an element of $\Lambda\langle\langle x,y,z\rangle\rangle$.
\end{theorem}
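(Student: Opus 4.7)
\medskip

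\noindent\textbf{Proof proposal.} By construction, $W_\tau(\tilde x,\tilde y,\tilde z)$ is an infinite sum of contributions from holomorphic orbi-polygons with boundary on the Seidel Lagrangian $\BL$, whose corners at the immersed generators $X,Y,Z$ produce the monomial factors $\tilde x^i\tilde y^j\tilde z^k$ and whose interior orbi-marked points are matched with the bulk class $\tau\in H^*(X,\Lambda_+)$. Each contribution is weighted by $T^{\omega(u)}$ times a factor of positive $T$-valuation coming from $\tau$ (since $\tau\in H^*(X,\Lambda_+)$). After the substitution \eqref{eqn:chvar1}, a polygon with $k:=i+j+k$-corners (with a slight abuse, let $n:=i+j+k$) contributes $T^{\omega(u)+\mathrm{val}(\tau\text{-factor})-3n}\,x^iy^jz^k$. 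So it suffices to prove that, for any $N\in\real$, only finitely many isomorphism classes of polygons (modulo sectors, bulk insertion multiplicities, and homotopy classes) yield a term of $T$-valuation $\leq N$ in the $x,y,z$-variables; equivalently,
\[
\omega(u)-3n\;\longrightarrow\;+\infty \quad \text{as the combinatorial complexity of } u \text{ grows,}
\]
the positive contribution from $\tau$ only helping.

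The plan is to estimate $\omega(u)$ from below via the orbifold Gauss--Bonnet Theorem \ref{thm:GB}. For each holomorphic orbi-polygon $u$ with $n$ corners at immersed sectors of $\BL$ and $m$ interior orbi-insertions at the cone points, applying Gauss--Bonnet to the pull-back metric gives a relation of the shape
\[
\omega(u)\;=\;-\int_u K\,dA \;\geq\; \alpha\,n+\beta\,m-\gamma,
\]
where $\alpha,\beta,\gamma>0$ depend only on $(a,b,c)$ and on the fixed angles of $\BL$ at its three immersed points. The key point is that the immersed angles of $\BL$ at $X,Y,Z$ are chosen so that each corner forces a definite positive amount of area per corner, with $\alpha$ strictly larger than the threshold $3$ needed to absorb the $-3n$ shift produced by the change of variables. (In the elliptic and spherical regimes, where the Gaussian curvature of the symmetric orbifold metric is non-negative, the same inequality follows from combining Gauss--Bonnet with the positive contributions of the corners and of the cone-point insertions; the area contribution from $\tau$ in $H^*(X,\Lambda_+)$ provides additional positivity.)

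With such an inequality in hand, I would conclude as follows. First, Lemma \ref{lem:geq0} guarantees that every term of $W_\tau$ in the $(x,y,z)$-coordinates already has non-negative $T$-valuation, hence the coefficient of each fixed monomial $x^iy^jz^k$ is a well-defined element of $\Lambda$ with non-negative valuation. Second, the Gauss--Bonnet estimate yields $\omega(u)-3n\geq (\alpha-3)n+\beta m-\gamma$, so for any $N$ there are only finitely many pairs $(n,m)$ for which the right-hand side is $\leq N$; for each such pair the relevant polygons represent only finitely many homotopy classes (standard Gromov compactness, since they have bounded area and bounded number of marked points). Therefore the valuations of the coefficients of $W_\tau$ in the $(x,y,z)$-coordinates tend to $+\infty$ as $i+j+k\to\infty$, which is exactly the defining property of $\Lambda\langle\langle x,y,z\rangle\rangle$.

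The main obstacle I foresee is justifying the per-corner constant $\alpha>3$ uniformly in the three regimes (hyperbolic, elliptic, spherical), since the curvature integral $\int_u K\,dA$ has different signs in these cases. In the hyperbolic regime this is automatic once the immersed angle at $X,Y,Z$ is small enough. In the elliptic and spherical regimes I expect to have to combine the boundary angle deficit with a careful accounting of the orbi-cone contributions from Theorem \ref{thm:GB}, using that cone-point insertions come from $\tau\in H^*(X,\Lambda_+)$ and therefore carry extra positive $T$-valuation which can be folded into $\beta m$; after this bookkeeping the estimate $\omega(u)-3n\to\infty$ should hold in all three cases, yielding the desired convergence.
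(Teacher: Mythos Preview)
Your strategy has a genuine gap: you never use the dimension (rigidity) constraint that singles out which orbi-polygons actually contribute to $W_\tau$. The paper's proof hinges on the fact that a contributing polygon has Maslov index fixed by $\mu_{\mathrm{CW}} + 2\sum_j(1-\iota_j)=2$. Combining this with the Gauss--Bonnet computation of $\mu_{\mathrm{CW}}$ (Proposition \ref{prop:GB}) yields the \emph{identity}
\[
(m-3(n_1+n_2+n_3))\cdot\frac{-\chi}{8}\;=\;\frac{n_1}{a}+\frac{n_2}{b}+\frac{n_3}{c}+\sum_j(1-\iota_j)-1,
\]
where $mA$ is the area and $n_1,n_2,n_3$ are the corner counts. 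This equation is what drives the three case-by-case finiteness arguments (elliptic, spherical, hyperbolic). Without the index constraint there is no such relation, and a raw Gauss--Bonnet inequality on $\omega(u)$ cannot replace it.

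Concretely, your proposed bound $\omega(u)\ge \alpha n+\beta m-\gamma$ with $\alpha>3$ is false: the disc giving the term $x^a$ has $a$ corners and area exactly $3a$, so $\omega(u)/n=3$; likewise the basic orbi-discs sit at $T$-exponent exactly $0$. Thus the best uniform linear bound has slope $\alpha=3$, and $(\alpha-3)n$ vanishes, leaving you with no control as $n\to\infty$. Your formula $\omega(u)=-\int_u K\,dA$ is also incorrect (for constant curvature $K$ one has $\int_u K\,dA=K\cdot\omega(u)$), and in the elliptic case $K=0$ this integral carries no area information whatsoever. The fix is not to sharpen the inequality but to bring in the rigidity equation: once you know $m-3(n_1+n_2+n_3)$ is bounded, the identity above bounds the right-hand side, which forces finitely many choices of $(n_1,n_2,n_3,\iota_j)$, hence a bound on $m$, and then Gromov compactness finishes.
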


In the orbifold setting, the twisted sectors have fractional degrees.  For $X=\bP^1_{a,b,c}$, $H^{<2}_{orb}(X)$ is spanned by the fundamental class $\one_X$ and the twisted sectors $[i/a],[j/b],[k/c]$ for $0<i<a$, $0<j<b$, $0<k<c$.
The compatibility of $\KS_\tau: \QH^*_{orb}(X,\tau) \to \Jac(W_\tau)$ with ring structures follows from the standard cobordism argument as in \cite{FOOO_MS}, but it still requires a careful analysis on the associated virtual perturbation scheme in our context. The details will be provide in \ref{subsec:ringhomkura}.

The main theorem is particularly interesting in the hyperbolic case, which belongs to the class of general-type manifolds whose mirror symmetry is mostly conjectural.  Theorem \ref{thm:main} together with the result in \cite{CHKL14} provides the first class of manifolds in general-type whose small quantum cohomology has a presentation which can be explicitly computed.  Even in the toric case, $W$ is a highly non-trivial series due to obstructed non-constant sphere bubbling with negative Chern number.  There is no general algorithm to compute $W$ for toric manifolds of general type.  On the other hand, for hyperbolic $\bP^1_{a,b,c}$ with no bulk deformation (that is $\tau=0$), there is an algorithm to compute the series $W_\tau$ by \cite{CHKL14}, which in turn gives an explicit presentation of the small quantum cohomology $\QH^*_{orb}(X,0)$.  %(Note that there is no non-constant smooth sphere in hyperbolic $\bP^1_{a,b,c}$ and so there is no obstruction in the disc moduli for computing $W$.) 

Finally in the last section, we exhibit several interesting properties of the bulk-deformed potential as well as a few explicit calculations for $\KS_\tau$. Most importantly, we show that the bulk-deformation of the Floer theory of $\mathbb{L}$ produces a \emph{versal} deformation of the mirror potential. More specifically,
\begin{theorem}[Theorem \ref{thm:versality}]
	Consider $P \in \Lambda\langle\langle x,y,z\rangle\rangle$ with $val(P-W_{lead})>0$ where $W_{lead}=-T^{-8} xyz + x^a +y^b + z^c$.
	Then there exist $\tau'\in H^*_{orb}(\mathbb{P}^1_{a,b,c}, \Lambda_0)$ and a coordinate change $(x',y',z')$ such that 
	$$P(x',y',z')=W_{\tau'}.$$
\end{theorem}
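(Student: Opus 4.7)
The proof I have in mind is a formal versality iteration in the spirit of Kodaira-Spencer, using Theorem \ref{thm:main} as the infinitesimal versality input. The plan is to construct $\tau'$ and the coordinate change $(x', y', z')$ as $T$-adic limits of inductively refined approximations, where each refinement step strictly increases the valuation of the residue $P \circ \phi^{(n)} - W_{\tau^{(n)}}$. The Novikov-adic completeness of $\Lambda_0 \langle\langle x,y,z \rangle\rangle$ then supplies the limit objects.

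For the base step, note that $W_0 \equiv W_{lead}$ modulo terms of strictly positive $T$-valuation (by definition of $W_{lead}$), so starting from $\tau^{(0)} = 0$ and $\phi^{(0)} = \mathrm{id}$, one has $val(P \circ \phi^{(0)} - W_{\tau^{(0)}}) = val(P - W_0) > 0$ by hypothesis. For the inductive step, suppose we have $\tau^{(n)} \in H^*_{orb}(\bP^1_{a,b,c}, \Lambda_0)$ and a substitution $\phi^{(n)} \colon (x,y,z) \mapsto (x^{(n)}, y^{(n)}, z^{(n)})$ in $\Lambda_0 \langle\langle x, y, z\rangle\rangle$ with $R_n := P \circ \phi^{(n)} - W_{\tau^{(n)}}$ of valuation $v_n > 0$. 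Writing the leading part of $R_n$ as $T^{v_n} r_n$ with $r_n \in \C\langle\langle x, y, z\rangle\rangle$, I would seek an increment $\delta\tau \in T^{v_n} H^*_{orb}(\bP^1_{a,b,c}, \Lambda_0)$ and a small perturbation $\psi = \mathrm{id} + T^{v_n}(\eta_x, \eta_y, \eta_z)$ so that replacing $\tau^{(n)}$ by $\tau^{(n)} + \delta\tau$ and $\phi^{(n)}$ by $\phi^{(n)} \circ \psi$ pushes the valuation of the residue upwards. A first-order expansion reduces this to the single linear equation
\begin{equation*}
r_n \;\equiv\; \sum_i c_i\, \frac{\partial W_{\tau^{(n)}}}{\partial \tau_i} \;+\; \sum_{k \in \{x,y,z\}} \eta_k\, \partial_k W_{\tau^{(n)}} \pmod{T^{\epsilon}}
\end{equation*}
for some $\epsilon > 0$, where $\delta\tau = T^{v_n}\sum_i c_i e_i$ for a basis $\{e_i\}$ of $H^*_{orb}(\bP^1_{a,b,c})$.

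Solvability of this equation is equivalent to the assertion that $\{\partial_{\tau_i} W_\tau\}_i$ spans $\Jac(W_{\tau^{(n)}})$ as a $\Lambda$-module. By the construction of $\KS_\tau$ as a closed-open map with bulk insertion on the Seidel Lagrangian $\mathbb{L}$, one has the standard identification $\KS_\tau(e_i) = [\partial_{\tau_i} W_\tau]$ in $\Jac(W_\tau)$ (up to an invertible normalization). The required spanning then follows directly from Theorem \ref{thm:main}, which says that $\KS_{\tau^{(n)}}$ is a $\Lambda$-linear isomorphism. Iterating, the sequences $\tau^{(n)}$ and $\phi^{(n)}$ have successive differences of strictly increasing valuation and thus converge $T$-adically to the desired $\tau'$ and $(x', y', z')$. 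Invertibility of the limit substitution is automatic since each $\psi$ has linearization differing from the identity by a term of positive valuation.

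The main obstacle will be valuation bookkeeping under the change of variables \eqref{eqn:chvar1}. The leading term $-T^{-8}xyz$ in $W_{lead}$ means that each $\partial_k W_{\tau^{(n)}}$ contains contributions of valuation $-8$, so an a priori decomposition in the Jacobian ideal might produce coefficients $\eta_k$ whose valuations are too small to ensure that the next residue has valuation strictly larger than $v_n$. To handle this, I expect to establish a uniform estimate: the coefficients $(c_i, \eta_k)$ solving the above equation can be chosen with valuation bounded below by a fixed non-negative constant. This should follow by combining the $T$-adic convergence of $W_\tau$ (Theorem \ref{thm:bdconv}) with an explicit $\Lambda$-basis of $\Jac(W_{\tau^{(n)}})$ coming from the Kodaira-Spencer isomorphism, so that the increments $(\delta\tau, \psi)$ remain within $\Lambda_0$ and $\Lambda_0\langle\langle x, y, z\rangle\rangle$ respectively, and the limiting substitution is an honest coordinate change on the larger disc $D(e^3)$.
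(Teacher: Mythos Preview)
Your plan shares the paper's infinitesimal input---a refined surjectivity statement asserting that any $r$ of nonnegative valuation can be written as $\sum_i c_i\,\partial_{\tau_i}W_\tau + \sum_k \eta_k\,\partial_k W_\tau$ with $val(c_i), val(\eta_k) \geq 0$ (Lemma~\ref{lem:refsurj})---but the two arguments globalize differently. The paper does not iterate on $(\tau^{(n)},\phi^{(n)})$; instead it introduces an interpolation $G(s,x,y,z,\tau) = W_\tau + s(P - W_{lead})$, solves $\partial_s G = X\cdot G$ once for a vector field $X$ of strictly positive valuation over $\Lambda\langle\langle s,x,y,z,\tau\rangle\rangle$ (Lemma~\ref{lem:G}), and then integrates $X$ via a non-archimedean Picard iteration with uniform gain (Lemma~\ref{lem:flow}), evaluating the resulting flow at $s=1$, $\tau=0$.

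Your direct Newton-type iteration has a gap that refined surjectivity alone does not close. The uniform estimate you anticipate controls the \emph{linear} step, but not the \emph{quadratic} error coming from composing $\psi = \mathrm{id} + T^{v_n}\eta$ with the leading term $-T^{-8}xyz$ of $P\circ\phi^{(n)}$. That remainder contains $-T^{\,2v_n-8}(\eta_x\eta_y z + \eta_y\eta_z x + \eta_z\eta_x y)$, so you only obtain $val(R_{n+1}) \geq 2v_n - 8$. Since the hypothesis is merely $v_0 > 0$, the regime $v_n \leq 8$ is unavoidable, and there the valuation of the residue need not increase (for $v_n < 8$ it can drop). The paper's Moser-type flow sidesteps this because the nonlinearity now lives in evaluating $X$ along $\Phi$ rather than in a Taylor expansion of $W$: since $val(X) \geq \epsilon > 0$, each Picard iterate gains a fixed $\epsilon$, completely independent of the $T^{-8}$ coefficient.
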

\noindent Note that this is analogous to the versality statement in toric case proven in \cite[Theorem 2.8.1]{FOOO_MS}. The proof is based on the induction argument on energy, which is similar to the one used to establish surjectivity of $\KS_\tau$.

%Weakly unobstructedness of the odd-degree immersed deformations $b=xX + yY + zZ$ is essential for having the Kodaira-Spencer map.  We extend the weakly unobstructedness for $\bP^1_{a,b,c}$ proved in \cite{CHL17} to the case of bulk deformations by orbi-sectors (prop:weakunobsbulk).  The key ingredient is an anti-symplectic involution on $\bP^1_{a,b,c}$, which makes holomorphic polygons appearing in pairs and their contributions to the even-degree immersed sectors cancel.
%
%{\color{red} **The following paragraph means to say that the argument works for $\tau=0$?**
%When bulk-deformation $\tau \not=0$, we do not have an explicit expression of $W_\tau$.  On the other hand, the proof of the isomorphism just uses the leading-order expression of $W_\tau$, which remains the same as that for $\tau=0$ (where valuation of $\tau$ is taken to be positive).  Hence the proof is uniform for any bulk deformations.}

The organization of the paper is as follows. In Section \ref{sec:bdfloerofseidellag}, we review Floer theory of the Lagrangian $\mathbb{L}$ in $\bP^1_{a,b,c}$ and its bulk-deformation including orbi-sectors. In Section \ref{sec:MC}, we prove the weakly unobstructedness of $\mathbb{L}$ after the bulk-deformation, and in Section \ref{sec:bdpotchangevar}, we study the resulting bulk-deformed potential and its convergence after coordinate change. In Section \ref{sec:ptdependence}, we prove that the bulk-deformed potential changes by an explicit coordinate change for different choices of cohomology representatives, and hence its well-definedness follows. Throughout Section \ref{sec:KSringhom} and \ref{sec:KSisomsurjinj}, we show that $\KS_\tau$ is a ring homomorphism that is surjective and injective, which proves our main theorem. Finally, we provide some concrete calculations of $\KS_\tau$, and prove the versality theorem in Section \ref{sec:applicationcal}.

\subsection*{Acknowledgments}
The authors express their gratitude to Kenji Fukaya and Yong-Geun Oh for useful discussions on virtual perturbation schemes. 
C.-H. Cho was supported by the NRF grant funded by the Korea government(MSIT) (No.
2017R1A22B4009488). H. Hong and C.-H. Cho are supported by the NRF grant funded by the Korea government(MSIT) (No.2020R1A5A1016126).
%the Yonsei University Research Fund of 2019 (2019-22-0008).   
S.-C. Lau is partially funded by Travel Support for Mathematicians of Simons Foundation.

\section{\texorpdfstring{Bulk deformed Floer theory of Seidel Lagrangian in $\bP^1_{a,b,c}$}{Bulk deformed Floer theory of Seidel Lagrangian in P1,a,b,c}}\label{sec:bdfloerofseidellag}
In this section, we recall orbifold quantum cohomology and immersed Lagrangian Floer theory mainly to set the notations.
In short, we will consider orbifold quantum cohomology by Chen-Ruan \cite{CR} and  a de Rham version of immersed Lagrangian Floer theory (defined by Akaho-Joyce \cite{AJ} and Fukaya\cite{F17}). One can enhance the latter by including orbi-discs following the work of the second author and Poddar \cite{CP}. This gives bulk deformations by twisted sectors.

% The reader who are familiar with them may skip this section.

%Our main example is a specific immersed Lagrangian circle in an 1-dimensional orbifold, in which situation holomorphic polygons (having corners at self-intersection point) are considered. Furthermore such a polygon may have cone point in the interior, which is usually called an \emph{orbi-disc}. Counting these polygons will produce not only the Landau-Ginzburg mirror of the orbifold in our concern, but also the ring homomorphism between closed string sectors of this mirror pair.
%
%In this section we recall the necessary background and setup the Lagrangian Floer theory for immersed Lagrangians in a symplectic orbifold.
\subsection{\texorpdfstring{ $\bP^1_{a,b,c}$ and its orbifold quantum cohomology}{ P1,a,b,c and its orbifold quantum cohomology}}
Let $\bP^1_{a,b,c}$ be an orbifold sphere with three orbifold points with isotropy groups $\Z/a$, $\Z/b$, $\Z/c$, where $a,b,c\geq 2$. We take the K\"ahler form $\omega$ descended from the universal cover of $\bP^1_{a,b,c}$ with constant curvature.  For later convenience we scale it such that the total area of $\bP^1_{a,b,c}$ is $8$.
The orbifold Euler characteristic is given by
$$\chi\left(\bP^1_{a,b,c}\right)=\frac{1}{a}+\frac{1}{b}+\frac{1}{c}-1.$$
	
Depending on $\chi$ being positive, zero or negative, the universal cover of $\bP^1_{a,b,c}$ is $\bP^1$, $\C$ or the hyperbolic upper half plane $\quat^2$ (with the standard complex structure). We refer to these as the spherical, elliptic and hyperbolic cases, respectively. In all cases, $\bP^1_{a,b,c}$ can be constructed as a global quotient of a Riemann surface $\Sigma$ by a finite group. In the spherical case $\Sigma$ is a sphere, in the elliptic case $\Sigma$ is an elliptic curve and in the hyperbolic case $\Sigma$ is a surface of genus $\geq 2$.

Recall that the Chen-Ruan orbifold cohomology of an orbifold $X$, as a vector space, is given by
the direct sum of singular cohomology groups of the inertia orbifolds. Namely, let 
$IX = \{(p,(g)_{G_p}): p\in X, g\in G_p\}$, where $G_p \subset G$ is the isotropy subgroup of $p$ and $(g)_{G_p}$ is the conjugacy class of $g \in G_p$.  Its connected components $X_{(g)}$ are called inertia orbifolds, and the indices $(g)$ are called the twisted sectors.  Then for $d\in \Q$,
$$H^{d}_{orb}(X) := \bigoplus_{(g)}  H^{d- 2\iota_{(g)}}(X_{(g)}).$$  
The degree-shifting $\iota_{(g)} \in \Q$ is called the age of the twisted sector in literature.  See \cite{CR04} for more detail.

For $H^*_{orb}(\Pabc)$, we have the cohomology classes $\one_{X}, [\pt] \in H^2(\Pabc,\mathbb{R})$, as
well as the twisted sectors 
\begin{equation}\label{eq:twisted}
\floor*{\frac{1}{a}}, \ldots, \floor*{\frac{a-1}{a}}, \floor*{\frac{1}{b}}, \ldots, \floor*{\frac{b-1}{b}}, \floor*{\frac{1}{c}}, \ldots, \floor*{\frac{c-1}{c}}
\end{equation}
where $\floor*{\frac{k}{a}}$ has degree $\frac{2k}{a}$. Let us denote by $H^{tw}(X)$ the span of the twisted sectors.

By local computations, the classical part of Chen-Ruan product of $\floor*{\frac{j}{a}}$ and $\floor*{\frac{k}{a}}$
is $\floor*{\frac{j+k}{a}}$ if $j+k < a$, and is $\frac{1}{a}[\pt]$ if $j+k=a$ and zero otherwise.
These are the products from constant orbi-spheres.
There are non-trivial contributions from
non-constant orbi-spheres as well for the quantum cohomology.  They can be written as follows via the orbifold Poincar\'e pairing:
$$\langle \one_{X},[\pt] \rangle_{PD_X} = 1,  \ \ \ \langle \floor*{\frac{j}{a}}, \floor*{\frac{a-j}{a}} \rangle_{PD_X} = \frac{1}{a}.$$

Fix $\tau \in H^*_{orb}(X, \Lambda_+)$ and for each $A,B \in H(X,\Lambda_0)$
 the bulk deformed quantum product $A \bullet_\tau B$ is defined by 
$$ \langle A \bullet_\tau B , C \rangle_{PD_X} = \sum_{l=0}^\infty \frac{1}{l!}GW_{l+3}(A,B,C,\tau,\cdots,\tau).$$
where $GW_{l+3}$ is the orbifold Gromov-Witten invariant with $l+3$ inputs (\cite{CR}). The above sum converges over $\Lambda$ by our choice of $\tau$. We denote the resulting Frobenius algebra by $\QH^*_{orb}(X,\tau)$, the quantum cohomology of $X$.
Readers are referred to \cite{IST2, Rossi, ST} for the computation on the quantum cohomology of $\mathbb{P}^1_{a,b,c}$.

\subsection{Immersed Lagrangian Floer theory}
Immersed Lagrangian Floer theory was introduced by Akaho-Joyce \cite{AJ} using singular chains, extending the embedded case
of Fukaya, Oh, Ohta Ono \cite{FOOO}.
A different version using Morse function (and pearl complex) was given by Seidel \cite{Se} and Sheridan \cite{Sheridan11,Sh}.
In our previous work \cite{CHL17}, we used the definition by Seidel to prove homological mirror symmetry.
In this paper, we work with de Rham version of  immersed Lagrangian Floer theory (by  Fukaya \cite{F17}) since
we use Kuranishi structures to deal with orbifold quantum cohomology.
We refer the readers to the above references for general definitions.%, and we explain immersed Lagrangian Floer theory for our main object only.

Seidel \cite{Se} constructed an immersed circle $\BL: \bS^1\looparrowright \bP^1_{a,b,c}$ with three transversal (double) self-intersections (see Figure \ref{fig:introabc}), and we refer to it as the Seidel Lagrangian.  %The three immersed points lie in the equator, determined by the three orbifold points.
We assume that the image of $\BL$ is invariant under reflection with respect to the equator (which passes through the three orbifold points), which is crucial for weakly unobstructedness in the next section.  The image of $\BL$ and the equator divide the sphere into eight regions: two triangles and six bigons. We take $\BL$ such that each of these regions have area $1$. The two triangles provide the leading order terms for our calculation of the $A_\infty$-structure later, and will be called \emph{minimal triangles} from now on for the obvious reason.
The Lagrangian $\BL$ is equipped with a non-trivial spin structure (this is needed for weakly unobstructedness in Section \ref{sec:MC}).  This is given by fixing a point in $\BL$ (which is not an immersed point) and any holomorphic disc contribution through this point gets a (-1) sign for each $\AI$-operation.

\begin{figure}[h]
\begin{center}
\includegraphics[height=2in]{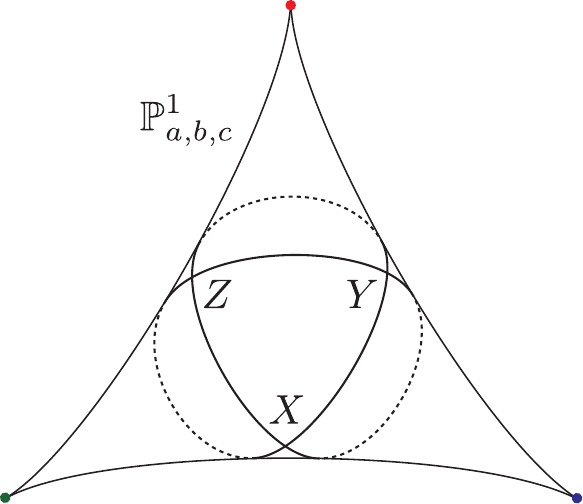}
\caption{}\label{fig:introabc}
\end{center}
\end{figure}

One associates to the Seidel Lagrangian its Fukaya algebra $\CF(\BL)$, which is a filtered $A_{\infty}$-algebra with the underlying $\mathbb{Z}_2$-graded vector space
\begin{equation}\label{eq:CF_def}
	\CF^*(\BL):=\left(\Omega^*(\bS^1)\oplus\bigoplus_{X,Y,Z}\Lambda_0^{\oplus 2}\right)\hat{\otimes}\Lambda_0.
\end{equation}
Here $\Omega^*(\bS^1)$ is the classical de Rham cochain algebra of $\bS^1$ (the domain of $\BL$) with coefficients in $\C$. Each of the intersection points gives rise to two generators in $\CF(\BL)$, one even and one odd. We denote by $X$, $Y$ and $Z$ the odd ones and by $\bar{X}$, $\bar{Y}$ and $\bar{Z}$ the  even generators. 

The $\AI$-operations are defined using the moduli space of pseudo-holomorphic polygons as in Fukaya \cite{F17}, to which we refer readers for details.

In \cite{CHL17} bulk deformations by twisted sectors were not considered. In that setting the $\AI$-operation can be defined using the signed count of rigid immersed polygons in $\bP^1_{a,b,c}$ with prescribed (convex) corners.
In this case, by automatic regularity of (holomorphic) polygons in Riemann surface (see  \cite[Part II, Section 13]{S08}), they are already transversal, and hence it is legitimate to use them for counting. Moreover there exists a compact (smooth) Riemann surface and a covering map to $\bP^1_{a,b,c}$ (which is ramified at exactly the three orbifold points with the corresponding orders), such that the lifts of $\BL$ in the cover are embedded Lagrangians.  Thus one can count polygons in the cover to define Lagrangian Floer theory. %This was the approach taken in \cite{CHL17}.

In this paper however, we consider bulk deformations by twisted sectors and therefore this approach is not available and we follow Fukaya-Oh-Ono-Ohta to define $\AI$-operations using pull-back and push-forward of differential forms over the moduli spaces. In order to do this one needs the technique of  \emph{continuous family of multi-sections}, also called \emph{CF-perturbations}. We give a brief overview of this in the next subsection and refer the reader to \cite{FOOO_new} for a detailed account of this technique.

\subsection{Orbi-discs and Lagrangian Floer theory for orbifolds}\label{subsec:generalorbmod}
We recall how to incorporate orbi-discs into the story. In our case, the Seidel Lagrangian stays away from the orbifold points of $\bP^1_{a,b,c}$, which can be  handled as in the case of toric orbifolds \cite{CP}.

%Therefore following \cite{CP}, we consider a  Lagrangian Floer theory made only of  $J$-holomorphic discs (from a domain without orbifold marked or nodal points), and add the contribution of  $J$-holomorphic orbi-discs in the process of bulk-deformations by twisted sectors. We will explain how this can be done briefly and refer readers to \cite[Section 2]{CP} for more details.

%Gromov-Witten theory for a symplectic orbifold has been defined by Chen-Ruan \cite{CR}. Its open-string analogue which considers stable orbi-discs bounded by a smooth Lagrangian submanifold (away from orbifold loci) of a symplectic orbifold have been defined and studied by \cite{CP}.
%We can easily generalize it  to the case of a Lagrangian immersion $\BL$ in a symplectic orbifold $\mathcal{X}$, by combining the idea of Akaho-Joyce \cite{AJ}.
%It is similar to the moduli space of stable discs defined in \cite{AJ} discussed in Section \ref{IMLA}, except that we now have interior orbifold marked points, and one has to additionally specify the twisted sectors that they pass through.  

Let us first recall the definition of an orbi-disc, adapted for an immersed Lagrangian boundary condition.
Let $T$ be the index set of inertia components of $X$, where $0 \in T$ corresponds to the underlying topological space of $X$.  Let $R$ be the index set of the immersed sectors of $\BL$, where $+ \in R$ corresponds to the underlying immersed Lagrangian.

\begin{defn} \label{orbi-disc moduli}
	Let $\beta \in H_2(X,\BL)$ be a disc class, $\gamma:\{0,\ldots,k\} \to R$ a specification of immersed sectors of $\BL$ and $\nu:\{1,\ldots,l\} \to T$ a specification of twisted sectors of $X$.  The moduli space $\CM^{main}_{k+1,l} (\beta;\nu;\gamma)$ consists of elements of the form $(\Sigma, \vec{z}^+, \vec{m}, \vec{z}, u)$ such that
	\begin{itemize}
		\item $(\Sigma, \vec{z}^+, \vec{m})$ is a (prestable) bordered orbifold Riemann surface with genus zero, where $\vec{z}^+ = (z^+_1,\ldots,z^+_l) \in (\Sigma - \partial \Sigma)^l$ is a sequence of interior orbifold marked points which are not (orbi-)nodes, and $\vec{m} = (m_1,\ldots,m_l) \in \N^l$ specifies the multiplicities of the uniformizing chart at these orbifold points.
		
		\item $u:(\Sigma, \partial \Sigma) \to (X,\BL)$ is a holomorphic map on each component, that is, $u$ is a continuous map which is holomorphic in the interior of $\Sigma$ away from the orbifold points, and around each orbifold point $z^+_i$, $u$ can be locally lifted to be a holomorphic map from the uniformizing chart at $z^+_i$ to a uniformizing chart of $X$ at $f(z^+_i)$.  Moreover, $z^+_i$ is mapped to the twisted sector $X_{\nu(i)}$ for $i = 1,\ldots,l$.
		
		\item $u$ is good and representable as an orbifold morphism.
		
		\item $\vec{z} = (z_0,\ldots,z_{k}) \in (\partial \Sigma)^{k+1}$ is a sequence of boundary marked points obeying the cyclic ordering of $\partial \Sigma$.  Moreover, $z_i$ is mapped to the immersed sector labeled by $\gamma(i)$ for $i=0,\ldots,k$.
	\end{itemize}
\end{defn}

In the case of toric orbifolds, such an orbi-disc with boundary on a Lagrangian torus fiber was studied and classified.  The orbi-disc potential for a toric Calabi-Yau orbifold or a compact semi-Fano toric orbifold was computed using the mirror map in \cite{CCLT13,CCLT12}.

To state the dimension formula for the moduli spaces, we use two related notions of the Maslov index. 
The first one is the desingularized Maslov index $\mu^{de}$ (following \cite{CR}). Given an orbi-disc with a Lagrangian boundary condition, the pull-back bundle data is an orbi-bundle over the domain of the orbi-disc together with a Lagrangian sub-bundle over the boundary of the disc.  This bundle cannot be trivialized due to the non-trivial orbifold structure.  On the other hand there is an associated smooth bundle,
called the desingularized bundle, which has the same set of local holomorphic sections. The latter property enables us to compute the virtual dimension.
The other one is the Chern-Weil Maslov index  $\mu_{\mathrm{CW}} $.
It was shown in Proposition 6.10 of \cite{CS} that 
\begin{equation}\label{decw}
\mu^{de} = \mu_{\mathrm{CW}} - 2 \sum_i \iota(\nu(i))
\end{equation}
where $\iota(\nu(i))$ is the degree shifting number associated to the twisted sector labeled by $\nu(i)$.

Let us also explain how to  handle $J$-holomorphic polygons with transversally intersecting Lagrangian boundary conditions.
Given two Lagrangian subspaces $L_i, L_{i+1} = J \cdot L_i$ in a symplectic vector space with a compatible linear complex structure $J$, we have the path of Lagrangian subspaces from $L_i$ to $L_{i+1}$
given by $e^{\pi Jt/2}L_i$ for $t \in [0,1]$ (which is called a positive path). Given a holomorphic polygon, we get a loop of Lagrangians along the boundary by concatenating with these positive paths at each corner. The resulting Maslov index of the Lagrangian loop is called the topological Maslov index. If there is in addition an orbifold
point in the interior, we can first desingularize it as above, and concatenate with positive paths to define the topological Maslov index, which is also denoted by $\mu^{de}$. 
\begin{remark}\label{rem:cw}
For the definition of the Chern-Weil index, we choose a unitary connection, which asymptotically sends $L_i$ to $JL_i$ at the puncture along the positive path.
Then the relation \eqref{decw} also holds for polygons. We remark that there is an error in \cite{CS} Proposition 5.6. Namely, the formula (23) in \cite{CS} holds true for connections which are trivial near the puncture, but it does not hold for general connections. Rather we have \eqref{decw}  with asymptotic conditions
given by positive paths.
\end{remark}

It is well-known that the Fredholm index of the $\overline{\partial}$ operator on discs with smooth Lagrangian boundary condition equals
$n + \mu^{de}$. For transversely intersecting Lagrangians, we can glue orientation operators of positive paths (where each path has index $n$) at the punctures to obtain a formula 
$$ Ind(\overline{\partial})  + (k+1) n =  n + \mu^{de}$$
(If we had used negative paths instead of positive paths to define the topological Maslov index, the term $(k+1)n$ will disappear.
In this sense, it would be more convenient to use negative paths.  We follow the usual convention to use positive paths.)

Hence the dimension of the moduli space of $J$-holomorphic polygons are given by (adding the effects of $l$ interior and $k+1$ boundary marked points and equivalences)
$$ Ind(\overline{\partial})  + 2l + (k+1) -3 =  n+ \mu^{de} -(k+1)n + 2l+ k- 2$$
In our case of $n=1$ the moduli space $\CM_{l,k+1} (\beta;\nu;\gamma)$
has virtual dimension $$\mu^{de} + 2l-2= \mu_{\mathrm{CW}} + 2 \sum_j (1 -  \iota(j))-2$$
For $l=0$,  it is simply given by $\mu^{de} -2$. 
 %$$ \mu_{\mathrm{CW}} (\beta,\alpha) - 2 \sum_{i=1}^l \iota(\nu(i)) + n + k+1 + 2l - 3 = \mu^{de}(\beta,\alpha) +n+k+1 +2l -3$$
%where $\mu(\beta,\alpha)$ is the Chern-Weil Maslov index (see \cite{CS}),  

%Consider the case that $l=1$.  We have the following invariant which is basically the counting of orbi-discs (see Figure \ref{fig:orbdisk_config} for example).  They will be important for defining the Kodaira-Spencer map on the small quantum cohomology.
%\begin{defn}
%	For a twisted sector of $\cX$ labeled by $\nu$ and immersed sectors $X_1, \ldots, X_k$ of $\BL$, we define the  orbi-disc counting by
%	$$n_{1;k}(\beta;1_{X_\nu};X_1,\ldots,X_k) := \int_{\CM_{1,k+1}(\beta;\nu;\alpha)} \ev_0^* [\pt] $$
%	where $\alpha:\{0,\ldots,k\} \to R$ is defined by $\alpha(0) = 0$ and $\alpha(i) = X_i$ for $i=1,\ldots,k$.
%\end{defn}

\begin{figure}
	\begin{center}
		\includegraphics[height=2in]{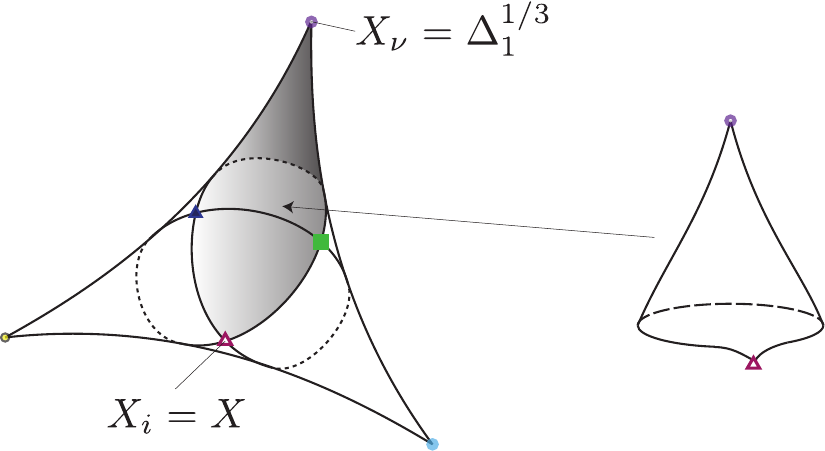}
		\caption{Image of $[1/3]$ orbi-discs in the quotient space $\mathbb{P}^1_{3,3,3}$}\label{fig:orbdisk_config}
	\end{center}
\end{figure}
%Concerning the above definition, dimension matches only when
%$$ \mu_{\mathrm{CW}}(\beta,\alpha) - 2 \iota(\nu) + n + k+1 + 2 - 3 = n+kn. $$
%Thus $n_{1;k}(\beta;C;X_1,\ldots,X_k)$ is non-zero only when
%$\mu^{de}(\beta,\alpha) = k(n-1)$.

%Recall that $\bP^1_{a,b,c}$ is a global quotient $\Sigma /G$ for a Riemann surface $\Sigma$ and a finite group $G$. Also, the Seidel Lagrangian has an embedded lift to $\Sigma$(see \cite{CHKL14}).  Each holomorphic polygon without any interior orbifold marked point can be lifted to a holomorphic polygon in $\Sigma$. In this way, Lagrangian Floer theory can be defined using only stable polygons (without orbifold marking or nodal points) (\cite{Se}, \cite{CHL17}). Orbi-discs enter the story when we consider Kodaira-Spencer map or bulk deformations as in \cite{CP}.

%In general the above definition requires virtual perturbation theory because the moduli space $\CM_{1,k+1}(\beta;\nu;\alpha)$ might not be regular.  However, in our situation $\mathcal{X}$ has dimension one, and the moduli spaces involved in our calculations are Fredholm regular (since these discs have
%lifts which are Fredholm regular).  Hence
%we do not need virtual perturbation theory to define the above invariants. For a cycle $C= \pt$ or $1_{\mathcal{X}}$ of $\mathcal{X}$, we can also define such disc invariant $n_{1;k}(\beta;C;X_1,\ldots,X_k)$ as well, and we omit the details.

\subsection{Bulk deformed Fukaya algebra}\label{subsec:bdeffukalg}
The Fukaya algebra with bulk deformations by twisted sectors can be defined as follows.
First, we see how to adapt the definition of $\mathfrak{q}$ operator to our orbifold setting. Let ${\bf T}_1,\ldots, {\bf T}_m$ denote the twisted sectors in (\ref{eq:twisted}). For each multi-index $I=(i_1,\ldots,i_l)$ let $\nu_I$ be defined by $\nu_I(z^+_j)={\bf T}_{i_j}$ which specifies the twisted sector at each interior point $z^+_j$. We denote the corresponding moduli space $\mathcal{M}^{main}_{k+1,l}(\beta, \nu_I,\gamma)$ as $\mathcal{M}^{main}_{k+1,l}(\beta, \mathbf{T}_I,\gamma)$.
This space has a Kuranishi structure, and we can take a system of \emph{CF-perturbations} which are transversal to the zero-section, compatible with the other perturbations given at the boundaries as in \cite{FOOO_MS}, \cite{F17} or \cite{FOOO_new}.
There is an evaluation map 
$$ev^I: \mathcal{M}^{main}_{k+1,l}(\beta, \mathbf{T}_I,\gamma)  \to \prod_{i=1}^k L(\gamma(i))$$
as well as $ev^I_0$ where $L(\gamma(i))$ is the corresponding immersed sector for $i \neq +$, and $L(+) = \BL$. The system of \emph{CF-perturbations} can also be chosen such that $ev^I_0$ is \emph{strongly submersive} \cite{FOOO_new}.
Using these perturbations we can define 
\begin{equation}\label{eq:qmap}
\mathfrak{q}_{l,k,\beta}(\mathbf{T}_I;h_1 \otimes \cdots \otimes h_k) = (ev^I_0)_*(ev^I)^*(h_1 \times \cdots \times h_k)
\end{equation}
$$\mathfrak{q}_{l,k}^\rho (\mathbf{T}_I;h_1 \otimes \ldots \otimes h_k) = \sum_{\beta} T^{\beta \cap \omega /2\pi} \rho(\partial \beta) \mathfrak{q}_{l,k,\beta}(\mathbf{T}_I; h_1 \otimes \cdots \otimes h_k). $$
Please note that the push-forward operation above depends on the choice of CF-perturbations on the  moduli spaces $\mathcal{M}^{main}_{k+1,l}(\beta, \nu_I,\gamma)$ - see \cite[Appendix C]{FOOOT} or \cite{FOOO_new} for a more detailed account. We will omit this dependence from our notation.
The way to handle the unitary line bundle $\rho$ (on $\BL$) is very standard, and we will omit the superscript $\rho$ from now on.

Given a cohomology class $[\tau] \in H^*_{orb}(X;\Lambda_0)$ we pick a representative $\tau=\tau^0 \one_{X}+ \tau^2 \mathbf{p}+ \tau_{tw}$, where $\mathbf{p}$ is a $\Z_2$-invariant cycle (away from $\BL$ and the orbi-points) representing $[pt] \in H^2 (X,\Lambda_0)$ and $\tau_{tw}=\sum_k \tau_k \mathbf{T}_k$.

The Seidel Lagrangian is invariant under $\iota$ the (anti-symplectic) reflection on the equator that contains the orbifold points as well as the self-intersections. By the above choice the bulk term $\tau$ is also invariant under this reflection. Therefore there is a automorphism of the various moduli spaces 
		\[ \iota_*: \mathcal{M}^{main}_{k+1,l}(\beta, \mathbf{T}_I,\gamma) \to \mathcal{M}^{main}_{k+1,l}(\beta, \mathbf{T}_I,\gamma). 
		\]
	On a holomorphic disk the map $\iota_*$ is given by pre-composing with conjugation (in $\mathbb{C}$) and post-composing with the reflection on the equator. 
	This construction extends to the compactified moduli spaces and is, in fact, a automorphism of spaces with Kuranishi structure \cite{FOOOanti}. Thus it defines a $\Z/2$ action on the moduli spaces. We will further require the the CF-perturbations are $\Z/2$-equivariant. As with any finite group action this is always possible, see \cite[Section 24.2]{FOOO_new} for example. This last compatibility is essential to show unobstructedness of the Seidel Lagrangian.

We define 
\begin{equation}\label{eq:mktau}
\m_k^{\tau}(h_1,\cdots,h_k) = \sum_\beta \exp( \tau^2 \mathbf{p} \cap \beta) \sum_{l=0}^\infty \frac{T^{\beta\cap \omega}}{l!}\mathfrak{q}_{l,k,\beta}(\tau_{tw}^l;h_1,\cdots,h_k)
\end{equation}
for $k>0$, and $\m_0^\tau = \tau_0 \cdot 1_{\BL} + \sum_\beta \exp( \tau^2 \mathbf{p} \cap \beta) \sum_{l=0}^\infty \frac{T^{\beta\cap \omega}}{l!}\mathfrak{q}_{l,0,\beta}(\tau_{tw}^l)$.

\begin{remark}
	Here we are slightly abusing notation. The expression $\mathfrak{q}_{l,k,\beta}(\tau_{tw}^l;h_1,\cdots,h_k)$ actually stands for
	$$\sum_{I=(i_1,\ldots,i_l)} \tau_{i_1}\cdots \tau_{i_l}\mathfrak{q}_{l,k,\beta}(\mathbf{T}_I;h_1,\cdots,h_k)$$
\end{remark}

The collection of operations $\{\m_k^\tau: k\in \Z_{\geq 0}\}$, for each $\tau$, an unital filtered $\AI$-algebra which we denote by $\CF(\BL, \tau)$. Like before, this $A_\infty$-algebra depends on the choices of CF-perturbations, but its homotopy type is independent of these choices \cite[Section 3.2]{FOOO_MS}.
Given an odd Maurer-Cartan element $b \in \CF^{\textrm{odd}}(\BL, \tau)$ we denote the deformed $\AI$ operations by $\m_k^{\tau,b}$.  

Recall that the Seidel Lagrangian together with the equator divides $X$ into eight regions with equal area (say $1$).  For our convenience, we may take $\mathbf{p}$ to be $\lambda$ times the sum of eight points, one in each region (for $\lambda \in \Q$).  We have $\tau^2\mathbf{p}\cap \beta = \lambda \cdot (\omega \cap \beta)$ since the area is given by the number of regions.   Then $ \exp( \tau^2\mathbf{p} \cap \beta) = t^{\omega \cap \beta}$ where $t := e^{\lambda}$ and $\omega \cap \beta \in \Z_{\geq 0}$. We will see in Proposition \ref{prop:tau2tau2'cc} that the choice of a representative of $\mathbf{p}$ does not affect our calculation significantly.

%where $\mathfrak{q}_{l,k,\beta}(\tau_{\nu_1},\cdots,\tau_{\nu_l};h_1,\cdots,h_k)$ are defined using the 
%moduli space of $J$-holomorphic orbi-curves with twister sector insertions $\nu_1,\cdots,\nu_l$ at interior markings
%by the smooth correspondences as in \cite{FOOO}. The construction is standard and explained carefully in the reference, and we omit the definition.

%There is one remark on transversality. For Riemann surfaces, non-constant polygons have automatic transversality  and this can be easily extend to non-constant orbi-discs. ( The domain moduli space of orbi-curves behave in a similar way as that of curves with interior marked points, and the proof thereof can be adapted to this setting.)  

%However, when we consider bulk deformed theory, there can be still obstructed constant orbi-sphere components.  Moreover, disc bubbling together with orbi-sphere components with negative Chern number (in the hyperbolic case) also give rise to obstructions.  To deal with them, we will use Kuranishi structures on differential forms.

\section{Weakly unobstructedness for bulk-deformed Fukaya algebra} \label{sec:MC}
In this section, we show weakly unobstructedness of the Seidel Lagrangian in bulk deformed Floer theory.
The main geometric idea behind this result is the anti-symplectic involution of the orbifold sphere $\bP^1_{a,b,c}$. %as well as other relevant data.
Let $\iota$ be the anti-symplectic involution on the orbifold sphere. The Seidel Lagrangian $i: \bS^1\mapsto \bP^1_{a,b,c}$ is chosen
so that the immersion $i$ is equivariant (with the involution on the domain $\bS^1$ by $\pi$-rotation). Note that $\iota$ preserves the orientation and spin structure of the Seidel Lagrangian.  The bulk inputs that we will consider are orbifold cohomology representatives of  $\bP^1_{a,b,c}$.  The twisted sectors and
the fundamental cycle are invariant under the involution.  A representative of the point class $[\pt] \in H^*_{orb}(X, \Lambda)$ will be chosen to be invariant under $\iota$.

Recall that the Seidel Lagrangian is shown to be weakly unobstructed in \cite{CHL17}. We extend it to the case of bulk deformations.
\begin{prop}\label{prop:weakunobsbulk}
Fix $\tau \in H^*_{orb}(X, \Lambda_+)$, let $\tilde{x},\tilde{y},\tilde{z} \in\Lambda_+$ and define $b=\tilde{x}X+\tilde{y}Y+\tilde{z}Z\in \CF^*(\BL,\tau)$. Any such $b$ is a weak Maurer-Cartan element (that is, a weak bounding cochain). In other words we have
$$\m^{\tau, b}_0 = \sum_{k\geq 0} \m^\tau_k(b,\ldots,b)=\cP(\tau,b) \one_\BL,$$
where $\one_\BL$ is the unit in $\CF^*(\BL,\tau)$ and $\cP(\tau,b)$ is some element in $\Lambda$.
\end{prop}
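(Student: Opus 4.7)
The plan is to adapt the antisymplectic-involution argument of \cite{CHL} (which handles the unbulked case) to the present bulk-deformed setting. Since $b = \tilde x X + \tilde y Y + \tilde z Z$ has odd degree and each $\m^\tau_k$ has degree $2-k$, every $\m^\tau_k(b,\ldots,b)$ is of even degree, hence $\m^{\tau,b}_0$ lies in the even subspace of $\CF^*(\BL)$, spanned by $\one_\BL$, even forms on $\bS^1$, and the even immersed generators $\bar X,\bar Y,\bar Z$. It therefore suffices to show that the coefficients of $\bar X,\bar Y,\bar Z$ vanish and that the $\Omega^*(\bS^1)$-valued component is a Novikov scalar multiple of $\one_\BL$.

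Next I would set up the involution. Let $\iota$ be the antisymplectic reflection across the equator. By construction, $\iota$ preserves $\BL$ together with its orientation and chosen nontrivial spin structure, and it fixes each self-intersection and each orbifold point. Since twisted sectors are supported at the orbifold points, $\tau_{tw}$ is $\iota$-invariant; we also pick $\mathbf{p}$ to be an $\iota$-invariant cycle representing $[\mathrm{pt}]$. For each moduli space $\CM^{main}_{k+1,l}(\beta;\nu_I;\gamma)$ entering the definition of $\mathfrak{q}_{l,k,\beta}$, the map $u \mapsto u':=\iota\circ u\circ c$, where $c$ is complex conjugation on the source, defines a free $\Z/2$-action. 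The bulk labels $\nu_I$ are preserved because $\iota$ fixes each twisted sector, and the corner labels $\gamma$ are preserved because $\iota$ acts trivially on each of $\{X,Y,Z,\bar X,\bar Y,\bar Z\}$: at each self-intersection the reflection swaps the two opposite sectors within each of the two opposite-sector pairs, and each Floer generator, being indexed by such a pair, is therefore fixed. Following \cite{FOOO_MS,F17,CP} one can choose Kuranishi structures and continuous families of multi-sections that are $\Z/2$-equivariant and compatible with the boundary stratification, so the action descends to the perturbed zero sets.

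For each pair $(u,u')$ the area $\beta\cap\omega$ and the bulk weight $\exp(\tau^2 \mathbf{p}\cap\beta)/l!$ agree, and the monomial $\tilde x^{\#X}\tilde y^{\#Y}\tilde z^{\#Z}$ obtained from expanding $b^{\otimes k}$ is invariant under the reversal of the cyclic order of inputs induced by $c$. The decisive step is the sign comparison: the nontrivial spin structure on $\BL$, with its base point chosen generically off the equator, assigns opposite parities to $\partial u$ and $\partial u'$ because $\iota$ reflects the boundary loop across the equator; this produces an overall sign flip. All remaining sign contributions, namely the orientation of the Kuranishi zero set, the Koszul signs in the $\AI$ expansion, and the $1/|G|$ factors at orbi-corners, are $\iota$-invariant. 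Summing over the free $\Z/2$-action therefore forces the $\bar X,\bar Y,\bar Z$-coefficients of $\m^{\tau,b}_0$ to vanish. The remaining $\Omega^*(\bS^1)$-component is invariant under the $\pi$-rotation on $\bS^1=\BL$ induced by $\iota$, and an analogous refinement of the argument, using the rotational freedom of the free output marked point, reduces it to a constant function, producing the identity $\m^{\tau,b}_0=\cP(\tau,b)\one_\BL$.

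The hardest part is the precise sign and equivariance bookkeeping in the orbifold setting, where one must choose a $\Z/2$-equivariant perturbation scheme that is simultaneously compatible with the boundary gluing and with the orbifold automorphism groups appearing on the strata of $\CM^{main}_{k+1,l}(\beta;\nu_I;\gamma)$, thereby extending the constructions of \cite{F17,CP} to the current bulk-deformed context.
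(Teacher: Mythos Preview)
Your overall strategy matches the paper's: use the $\Z/2$-involution to kill the $\bar X,\bar Y,\bar Z$ outputs, then argue the remaining $\Omega^0(\bS^1)$-part is constant. But both halves of your argument have gaps.

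\textbf{The sign cancellation.} You assert that the spin structure ``assigns opposite parities to $\partial u$ and $\partial u'$'' and that all other sign contributions are $\iota$-invariant. Neither claim holds as stated. If $P$ contributes to $\m_k^\tau(X_1,\ldots,X_k)$ then its reflection $P^{op}$ contributes to $\m_k^\tau(X_k,\ldots,X_1)$; under Seidel's combinatorial sign rule the boundary-orientation reversal produces a factor $(-1)^k$, which you have incorrectly absorbed into ``$\iota$-invariant''. Separately, the spin-point intersection numbers $s_1,s_2$ of $\partial P,\partial P^{op}$ do \emph{not} always have opposite parity: the paper shows, by a combinatorial argument tracking how $\partial P\cup\partial P^{op}$ covers the six minimal arcs of $\BL$ evenly, that $s_1+s_2\equiv k+1\pmod 2$. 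Only the combination $(-1)^k\cdot(-1)^{k+1}=-1$ yields the cancellation. Your single ``overall sign flip'' from the spin point is not the right bookkeeping; the argument genuinely needs this $k$-dependent computation (and the paper also explains why one may use Seidel's combinatorial signs in the de Rham model).

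\textbf{The constant-function step.} ``Rotational freedom of the free output marked point'' is not an argument: the output marked point is fixed (it is $z_0$), and no $S^1$-averaging mechanism is set up. The paper instead proves this by induction (its Lemma~\ref{lem:multunit}): assuming $\m_i^\tau(b,\ldots,b)$ is a constant multiple of $\one_\BL$ for $i\le k$, the $A_\infty$-identity together with unitality gives $\m_1\big(\m_{k+1}^\tau(b,\ldots,b)\big)=0$; since $\m_1$ on functions is the de Rham differential, the output is constant. You should replace your last sentence by this induction.
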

\begin{remark}
Here $\tilde{x},\tilde{y},\tilde{z}$ are
regarded as a scalar. In later sections they will be regarded as variables.  In particular, we will investigate convergence problems.
\end{remark}
\begin{proof}
From $\Z/2$-grading of Lagrangian Floer theory, $\m_0^{\tau,b}$ is given by a linear combination of even-degree immersed generators and a zero-form, i.e., a function on $\bS^1$ (the normalization of $\BL$).  In order to prove the proposition, that is, to verify that the weak Maurer-Cartan equation is satisfied, we need to: i) show that the immersed outputs are all zero;  ii) show the function on $\bS^1$ is constant.

The first step is similar to the argument in \cite{CHL17}, namely, any immersed output of $\m^\tau_k(b,\ldots,b)$ vanishes due to the cancellation from the involution $\iota_*$. An immersed output of $\m_k^{\tau} (b,\ldots,b)$, comes from considering a moduli space $\mathcal{M}^{main}_{k+1,l}(\beta, \mathbf{T}_I,\gamma)$, where the zero-th marked point is specified (by $\gamma$) to an immersed sector. In this case the automorphism
	\[ \iota_*: \mathcal{M}^{main}_{k+1,l}(\beta, \mathbf{T}_I,\gamma) \to \mathcal{M}^{main}_{k+1,l}(\beta, \mathbf{T}_I,\gamma)
\]
commutes with the evaluation map $ev_0^I$, as well as the other evaluation maps. Therefore, if we can show that $\iota_*$ reverses the orientation then the push-forward $(ev_0^I)_*$ (in the notation of \ref{eq:qmap}) vanishes. This is because the CF-perturbation is equivariant, which implies that $\iota_*$ restricts to an orientation-reversing diffeomorphism of the zero-set of the perturbed continuous-family of multi-sections (on each contributing Kuranishi chart) \cite[Section 7.3]{FOOO_new}. Then the vanishing follows from the same classical result on manifolds. Hence the immersed output of $\m^\tau_k(b,\ldots,b)$ will vanish.

Hence, for the first step in the proof, we are left with showing that $\iota_*$ reverses orientations. First recall that $\BL$ should is equipped with a non-trivial spin structure which brings the exact cancellation of signs (It is not weakly unobstructed with  the trivial spin structure). As explained in \cite[Section 3.3]{FOOOanti}, when studying the orientations it is enough to study the regular part of the moduli spaces, so we don't need to consider (orbi)-sphere bubbles. We will first consider the combinatorial sign rule of Seidel \cite{Se} and later argue that we may use them for our computation.

Let us consider a orbi-polygon $P$ that produces an immersed output of $\m_k^{\tau} (b,\cdots,b)$ (in particular, such a $P$ should have $k+1$ edges). By applying the reflection about the equator of $\mathbb{P}^1_{a,b,c}$ to $P$, we get another polygon $P^{op}$.
The $\AI$-operations for $P$ and $P^{op}$ give the same output (in $\Z/2$-graded theory) but  if $P$ contributes to $\m_k^{\tau} (X_1, \cdots, X_k)$ then $P^{op}$ contributes to $\m_k^{\tau} (X_k, \cdots, X_1)$. We claim that these two contributions have the opposite signs to each other.

Without loss of generality, let us assume that the boundary orientation of $P$ is coherent with that of $\mathbb{L}$. Then the boundary operation of $P^{op}$ is opposite (for each edges of $P^{op}$) to that of $\mathbb{L}$ since the reflection preserves the orientation of $\mathbb{L}$ whereas it reverses boundary orientations of holomorphic polygons.  From the sign rule of \cite{Se}, there is a sign difference of $(-1)^{k}$ between $P$ and $P^{op}$.  Another source of sign difference is how many times $P$ and $P^{op}$ pass through the point at which the spin structure is non-trivial. Let $s_1$ and $s_2$ denote these numbers, respectively.  We now show that $s_1 - s_2$, or equivalently, $s_1 + s_2$ has the same parity as $k+1$. We first claim that $\partial P \cup \partial P^{op}$ evenly covers $\mathbb{L}$. To see this, let us divide $\mathbb{L}$ into 6 minimal arcs, which are edges joining one corner with another without passing through other corners. We will denote these arcs by $\arc{XY}_{\pm},\arc{YZ}_{\pm},\arc{ZX}_{\pm}$ as in the left of Figure \ref{fig:evenlycover}. 

\begin{figure}[h]
\begin{center}
\includegraphics[height=1.5in]{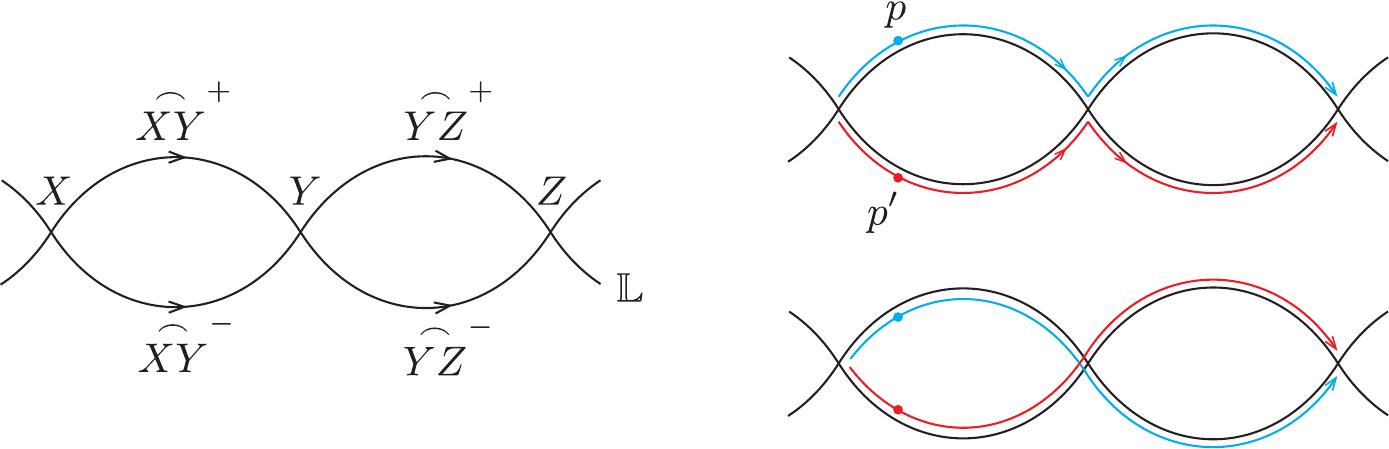}
\caption{}\label{fig:evenlycover}
\end{center}
\end{figure}

Suppose $p$ is a point on the boundary of $P$ that lies in $\arc{XY}_{\pm}$, then its reflection image, say $p'$, is located on $\arc{XY}_{\mp}$. When $p$ travels along $\partial P$, the pair $(p,p')$ first  covers both of $\arc{XY}_{\pm}$.  And then the pair starts covering both of $\arc{YZ}_{\pm}$ afterward, regardless of having corners at $Y$ or not (see the right of Figure \ref{fig:evenlycover}). Since $p$ starts at and comes back to the same point when we go along $\partial P$ once, we see that $(p,p')$ covers $\mathbb{L}$ evenly. 

Having this, let $\partial P \cup \partial P^{op} = s [\mathbb{L}]$, which implies $s_1 + s_2 = s$. If $(k+1)$ edges of $\partial P$ consists of $a_1, a_2, \cdots, a_{k+1}$ minimal arcs, we have
$$6s = 2 (a_1 + a_2 + \cdots + a_{k+1}) \Rightarrow 3s = a_1 + a_2 + \cdots + a_{k+1}$$
since $\partial P \cup \partial P^{op} $ has $6s$ minimal arcs. On the other hand, it is easy to see that each edge of $\partial P$ (and $\partial P^{op}$) consists of an odd number of minimal arcs (as otherwise, the polygon would have a non-convex corner) i.e., $a_i$ are all odd, and hence we conclude that the parity of $s$ is the same as $k+1$, which completes the proof of the claim.

Now, let us argue that the combinatorial sign of Seidel is compatible with the de Rham model we use in this paper.
In \cite[Part II section 13]{S08}, it is shown that the sign of an $\AI$-operation defined using Floer theory (orientation operators) and that defined by combinatorial convention can be identified. Seidel showed that in this surface case, the sign in Floer theory is local, and hence depends only on absolute indices of intersection points. On the other hand, there is a combinatorial way of assigning signs in this case. Seidel constructs a linear isomorphism $\gamma(k)$ for each Floer group $CF^k$ which makes these two signs compatible. We use the existence of this isomorphism to show cancellations.
Note that the combinatorial sign only depends on the parity of the intersection points. 
 
On the other hand, one can also show that the Floer sign also depends only on the parity of the absolute indices of corners.  Note that a choice of path of Lagrangian subspaces from $T_pL_1$ to $T_pL_2$ for $p \in L_1\cap L_2$ defines an orientation operator, which can be used to define its absolute index as well as associated orientation space (determinant of the orientation operator). Absolute indices from different choice of paths may differ by even integer, and one can fix a canonical isomorphism between two different choices using gluing of discs with Lagrangian loop of the difference of paths. It can be shown that
this gluing provides a canonical way to relate orientation spaces corresponding to different paths, which gives rise to the same
sign for associated polygons. In this way, one can observe that the Floer theoretic sign only depends on the parity of the absolute indices at the intersection points based on the above isomorphism of orientation spaces.

Thus we have completed the first step and shown that $\m_k^\tau(b,\cdots,b)$ does not involve even-degree immersed generators and hence is an element in $\Omega^0(\bS^1)$.  It remains to show that it is a constant function in $\bS^1$ and therefore a multiple of the unit. We will prove this in the next lemma. Note that the argument below 
%is completely formal, meaning it 
is simply a consequence of the $\AI$-equations. Therefore it could be of interest in other situations.

\begin{lemma}\label{lem:multunit}
The expression $\m_k^\tau(b,\cdots,b)$  is a constant function in $\bS^1$, for $k \geq 0$.
\end{lemma}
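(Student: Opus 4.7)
The approach is to interpret $\m_k^\tau(b,\ldots,b)$, in its $\Omega^0(\bS^1)$-component, as a pushforward over a $1$-dimensional moduli space, and then to use the anti-symplectic involution $\iota$ to pair polygons so that the combined $0$-form pushforward is constant.

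By the preceding sign argument, all even-degree immersed components of $\m_k^\tau(b,\ldots,b)$ cancel, so it lies in $\Omega^0(\bS^1)$. As such, it equals a finite sum $\sum_P w(P)\,(ev_0|_{\CM_P})_*(1)$, where $P$ ranges over rigid (orbi-)polygons with $k$ immersed corners (each weighted by one of $\tilde x,\tilde y,\tilde z$), $l$ interior orbifold marked points (weighted by entries of $\tau_{tw}$), and one free boundary marked point $z_0$; here $\CM_P$ is the $1$-dimensional moduli parametrizing $z_0$ along $\partial P\setminus\{\mathrm{corners}\}$, and $ev_0:\CM_P\to\bS^1$ sends $z_0$ to its unique lift in the normalization of $\BL$ along the edge containing it.

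I would then pair each polygon $P$ with its $\iota$-mirror $P^{op}$ and show that the combined pushforward over $\CM_P\sqcup\CM_{P^{op}}$ is a constant function on $\bS^1$. Since $\iota$ is anti-holomorphic, forming $P^{op}$ amounts to post-composing with $\iota$ on $\bP^1_{a,b,c}$ and pre-composing with complex conjugation on the source disc $D$; the latter reverses the orientation of $\partial D$, and consequently the orientation of $\CM_{P^{op}}$ relative to $\iota(\CM_P)$. The de Rham pushforward $(ev_0)_*(1)$ over $\CM_{P^{op}}$ therefore picks up an extra factor of $-1$ compared to the $\iota|_{\bS^1}$-translate of the analogous pushforward for $P$. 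Combined with the combinatorial-plus-spin sign difference of $-1$ derived in the preceding paragraphs, these two signs multiply to $+1$, so the $0$-form contributions of $P$ and $P^{op}$ add (in contrast to the cancellation for even-degree immersed outputs). Moreover, because $\iota|_{\bS^1}$ is the fixed-point-free $\pi$-rotation, the images $i(\partial P)$ and $i(\partial P^{op})=\iota(i(\partial P))$ in $\bS^1$ are exchanged by $\iota$ and together cover $\bS^1$ with constant multiplicity, as can be checked using the six-minimal-arc decomposition from the preceding paragraph applied edge-by-edge to $P$ (each edge switching branches at the $X,Y,Z$-corners). Thus $(ev_0)_*(1)$ over $\CM_P\sqcup\CM_{P^{op}}$ is a constant function on $\bS^1$, and summing over all $\iota$-pairs of rigid polygons (plus any $\iota$-invariant polygon, which individually contributes a constant by the same winding argument) gives the claim.

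The main obstacle will be the careful sign and orientation bookkeeping in the middle step: one must verify that the orientation reversal on $\CM_{P^{op}}$ exactly compensates the $-1$ combinatorial/spin sign difference, so that for the $0$-form pushforward the $\iota$-pairing produces addition rather than cancellation. A secondary issue is to confirm the geometric claim that $i(\partial P)\cup i(\partial P^{op})$ covers $\bS^1$ uniformly for polygons of higher corner count (as arise in the hyperbolic case) and for orbi-polygons with interior twisted-sector insertions; this is guaranteed by the $\iota$-invariance of the twisted sectors and of the chosen representative $\mathbf{p}$ of the point class, together with the alternating branch-switching behavior of $\partial P$ at each odd-degree corner.
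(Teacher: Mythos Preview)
Your approach is genuinely different from the paper's, and the paper's is considerably simpler. The paper proceeds by induction on $k$ together with the $A_\infty$-identity: since $\m_1$ restricted to $\Omega^0(\bS^1)$ is the de Rham differential $d$, to show that $\m_{k}^\tau(b,\ldots,b)$ is constant it suffices to show $\m_1\big(\m_{k}^\tau(b,\ldots,b)\big)=0$. The $A_\infty$-relation expresses this as a sum of terms of the form $\m_{k_1}^\tau(b,\ldots,\m_{k_2}^\tau(b,\ldots,b),\ldots,b)$ with $k_2 < k$; each inner $\m_{k_2}^\tau(b,\ldots,b)$ is a multiple of the unit by the induction hypothesis, so these terms vanish by unitality. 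No direct geometric analysis of the pushforward is needed.

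Your direct approach---pairing $P$ with $P^{op}$ and arguing that the combined $(ev_0)_*(1)$ is constant on $\bS^1$---is plausible, and the even-covering claim for $\partial P\cup\partial P^{op}$ is indeed established in the paper (where it is used to control the spin-structure sign). However, there is a genuine gap in your sign step. The $-1$ combinatorial-plus-spin sign difference you invoke from ``the preceding paragraphs'' was computed for polygons with an \emph{immersed output corner}, i.e.\ with $k+1$ corners and $k+1$ edges, and the parity argument $s_1+s_2\equiv k+1\pmod 2$ explicitly uses that edge count. For the $0$-form output the polygon has only $k$ corners and $k$ edges, so the same reasoning gives $s_1+s_2\equiv k\pmod 2$ instead. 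You would therefore need to redo the Seidel sign-rule comparison in this setting and verify that the shifted spin parity, the sign-rule difference for $k$ (rather than $k+1$) edges, and the orientation reversal on $\CM_{P^{op}}$ combine to $+1$ as you claim. You flag this as the ``main obstacle'' but do not carry it out, so the argument is incomplete as written. The paper's $A_\infty$-identity argument sidesteps this bookkeeping entirely.
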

\begin{proof}
 Observe that the energy zero component $\m^\tau_{1,\beta=0}$ of $\m^\tau_1$ on a function $f$ is given by $df$ by definition. Hence, in order to show that the $\m_k^\tau(b,\cdots,b)$ is constant, it is enough to prove that 
$\m^\tau_{1,\beta=0}(\m^\tau_{k}(b,\ldots,b))=0$. 

We start with the case $k=0$. The first $\AI$-equation gives 
\begin{equation}\label{eq:ainf}
	\sum_{\beta=\beta_1+\beta_2}\m^{\tau}_{1, \beta_1}(\m^\tau_{0,\beta_2})=0,
\end{equation}
for each energy level $\beta$. For the minimal (non-zero) energy, the equation immediately gives the desired equation $\m^\tau_{1,\beta=0}(\m^\tau_{0, \beta})=0$. Then we proceed by induction on the energy level - recall that these levels form a countable set by Gromov-compactness. Fix $\bar{\beta}$ and assume we have proven the result for any $\beta<\bar{\beta}$. Consider Equation (\ref{eq:ainf}) for $\bar{\beta}$. For each $\beta_2<\bar{\beta}$, by induction hypothesis $\m^\tau_{0,\beta_2}$ is a multiple of the unit and by unitality $\m^{\tau}_{1, \beta_1}(\m^\tau_{0,\beta_2})=0$. Therefore Equation (\ref{eq:ainf}) simplifies to $\m^\tau_{1,\beta=0}(\m^\tau_{0, \bar{\beta}})=0$. This proves the $k=0$ case.

We then proceed by induction on $k$.
Let us assume the statement for $k\leq i$ and prove the case of $i+1$. Using the $\AI$-equation, the induction hypothesis and unitality gives $\m^\tau_{1}(\m^\tau_{i+1}(b,\ldots,b))=0$. As in the $k=0$ case using an induction on energy we see that this last equality is equivalent to $\m^\tau_{1,0}(\m^\tau_{i+1, \beta}(b,\ldots,b))=0$ for each $\beta$. This proves the result. 

\end{proof} 

This proves the weakly unobstructedness of the Seidel Lagrangian $(\BL, b)$. 
\end{proof}

It follows from Lemma \ref{lem:multunit} that each $b$ determines a deformation of $\CF(\BL)$ with central curvature $\m_0^{\tau,b}$. This means that $\m_1^{\tau,b}$ is a differential and its cohomology $HF^*(\BL,\tau, b)$ is an algebra with product $\m_2^{\tau,b}$.
%\footnote{In the notation of \cite{Fukaya}, they are $\m_1^{b,b}$ and $\m_2^{b,b,b}$, but we take these simplified notations, here.} 
We will describe this algebra in Section \ref{sec:FA}.

\section{Bulk-deformed potential function and change of variables}\label{sec:bdpotchangevar}
The previous section asserts that the Lagrangian Floer potential function $\cP(b)$ is a formal power series in $\tilde{x},\tilde{y}$ and $\tilde{z}$ with coefficients in the Novikov ring $\Lambda_0$, where $b=\tilde{x} X + \tilde{y} Y + \tilde{z} Z$. 
As explained in the introduction, it is essential for the purpose of studying the Kodaira-Spencer map that we work with the following change of variables.
\begin{align}\label{eqn:chvar}
\left\{\begin{array}{lcl}
x&=&T^3 \tilde{x},\\
y&=&T^3 \tilde{y},\\
z&=&T^3 \tilde{z}.
\end{array}\right.
\end{align}
with $x,y,z \in \Lambda_0$.
From now on we denote 
$$W_\tau(x,y,z)=\cP(\tau,b)$$ and call this the \emph{potential function}. Notice that $b$ on the right hand side is now given by $b= T^{-3} x X +T^{-3} y Y + T^{-3} z Z$.

This coordinate change will be essential in our study of Kodaira-Spencer map, and at the same time it is the main source of complication.
%The benefit of the above coordinate changes. Recall that the leading terms of the potential in old variables are given by
%$$W_\tau(\tilde{x},\tilde{y},\tilde{z}) = T^{3a}(\tilde{x})^a +T^{3b}(\tilde{y})^b +T^{3c}(\tilde{z})^c + T\tilde{x}\tilde{y}\tilde{z}+ \cdots.$$
%Even though we take the Novikov field $\Lambda$ as a coefficient field at the final stage, we still need to work with $\Lambda_0$ for the most part of the Floer theoretic construction.
%The term $T^{3a}(\tilde{x})^a$ in the potential becomes $x^a$ with the coordinate changes, and we will see later that the image of the Kodaira-Spencer map for a twisted sector 
%$\floor*{\frac{1}{a}}$ starts with a terms $T^a \tilde{x}$ or $x$. In $\Lambda_0$-coefficient, we cannot get  $T^k \tilde{x}$ for $k <a$ as
%an image of the Kodaira-Spencer map, whereas we need to work over $\Lambda_0$ in earlier stages to make sense of the induction. We do not know how to show directly the map is an isomorphism in this case. With the new coordinates, however, $T^k x$ for $k <a$ lies in the image of $ T^k \floor*{\frac{1}{a}}$ under the Kodaira-Spencer map, which enable us to prove that the map is an isomorphism making use of the induction argument over $\Lambda_0$.
% of KS map after the above coordinate change.

After the coordinate change the term of minimal valuation in the potential $W_\tau$ is $T\tilde{x}\tilde{y}\tilde{z} = T^{-8} xyz$.  This negative energy term should be handled in a delicate way as we will see in our proof of the Kodaira-Spencer map being an isomorphism. For this reason, we will need a better control on the energy of the terms appearing in the related Floer operations
and algebraic manipulations.

We first examine the potential function and its convergence in new variables.
When there is no bulk deformation (i.e. $\tau=0$), \cite{CHKL14} gives closed formulas for $W$ in the spherical and elliptic cases - in these cases, $W$ is simply a polynomial on $x,y,z$. In the hyperbolic case (again when $\tau=0$), an algorithm that computes $W$ is given in \cite{CHKL14}. 

\subsection{Gauss--Bonnet theorem and convergence}
Recall that $b=\tilde{x}X+\tilde{y}Y+\tilde{z}Z=T^{-3}(xX+yY+zZ)$ where $\tilde{x},\tilde{y},\tilde{z}$ are the dual variables to the immersed generators $X,Y,Z$.  Gromov compactness ensures the boundary deformed $A_\infty$ algebra is convergent when $\mathrm{val}\,{\tilde{x}}$, $\mathrm{val}\,{\tilde{y}}$, $\mathrm{val}\,{\tilde{z}} >0$.  We will show that it is still convergent for $\mathrm{val}\,{\tilde{x}},\mathrm{val}\,{\tilde{y}},\mathrm{val}\,{\tilde{z}} \geq -3$ (that is $\mathrm{val}\,{x},\mathrm{val}\,{y},\mathrm{val}\,{z} \geq 0$).

\begin{defn}\label{def:cpr}
	A \emph{convergent power series} in $x,y,z$ is a series of the form
	$$\sum_{i,j,k\in\Z_{\geq 0}}c_{i,j,k}x^iy^jz^k,$$
	with $c_{i,j,k}\in \Lambda$ and $\lim_{i+j+k\to\infty}val(c_{i,j,k})=+\infty$, where $val$ is the usual valuation in $\Lambda$. We denote by $\Lambda \langle\langle x,y,z\rangle\rangle$ the ring of convergent power series.
\end{defn}

Recall that the valuation $val$ in $\Lambda$ determines a non-archimedean metric by the formula $|\xi|=e^{-val(\xi)}$. The condition above then states that the coefficients of the series converge to zero in this norm. Therefore the ring just defined is a special case of the \emph{Tate algebra}, see \cite{BGR}.

Note that any element $P\in\Lambda \langle\langle x,y,z\rangle\rangle$ is indeed convergent (in the unit disc), in the sense that it determines a map $P:\Lambda_0^3\longrightarrow\Lambda$. 

In the remainder of this section we will show that $W_\tau(x,y,z)$ is a convergent power series for each $\tau$ with $\mathrm{val}\,(\tau)>0$, and hence, is an element in $\Lambda \langle\langle x,y,z\rangle\rangle$. 
We begin by establishing a complete classification of the non-positive energy terms of $W_\tau(x,y,z)$. First of all, we have $T^{-8} xyz$ from  the minimal triangle. In addition, we have orbi-discs which give $x^i, y^j, z^l$ with $1 \leq i \leq a-1$, $1 \leq j \leq b-1$, $1 \leq l \leq c-1$  with energy zero coefficients (which are all $1$), which are obtained as fractions of the minimal smooth discs corresponding to $x^a, y^b, z^c$. These discs have exactly one interior orbi-insertion, and can be viewed as the first-order contribution of orbi-sectors in $\QH^*_{orb}(X,\tau)$ to the potential. 
We give a precise description on such discs by the lifting argument below, which is valid for general orbi-discs although their liftings are the maps defined on higher genus (bordered) Riemann surfaces in most of cases.

\begin{lemma}
Suppose $\mathcal{D}$ is an orbifold disc with interior orbifold marked points $p_1,\cdots,p_k$ where 
$p_i$ is $\Z/k_i$ cone point.
Consider a map $\pi: U \to \mathcal{D}$ between Riemann surfaces with boundary (mapping boundaries to boundaries)
and suppose that $p_i$ is a branch point of $\pi$ of multiplicity $k_i$ for each $i$.
For any orbifold holomorphic disc $u: (\mathcal{D}, \partial \mathcal{D}) \to (X,L)$,
the composition $u \circ \pi : (U,\partial U) \to   (X,L)$ is a holomorphic disc.
\end{lemma}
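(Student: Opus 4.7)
The plan is to reduce the claim to a purely local computation at the preimages of the orbifold marked points, since holomorphicity is a local property and the boundary condition $u\circ\pi(\partial U)\subset L$ is immediate from $\pi(\partial U)\subset\partial\mathcal{D}$ and $u(\partial\mathcal{D})\subset L$. On the open set $U\setminus\pi^{-1}\{p_1,\ldots,p_k\}$, the map $\pi$ is a holomorphic map of Riemann surfaces (in fact \'etale where $\pi$ is unbranched) and $u$, restricted to the corresponding open set of $\mathcal{D}$, takes values in the non-orbifold locus of $X$ where it is an ordinary holomorphic map. Hence $u\circ\pi$ is already holomorphic away from the finite set $\{q_1,\ldots,q_k\}=\pi^{-1}\{p_1,\ldots,p_k\}$, and it suffices to check holomorphicity at each branch point $q_i$.

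Near $p_i$, the orbifold structure is given by a uniformizing chart $\phi_i\colon \tilde{V}_i\to V_i$ isomorphic to $z\mapsto z^{k_i}$, and by the definition of an orbifold holomorphic morphism the map $u$ is represented locally by a $\Z/k_i$-equivariant holomorphic lift $\tilde{u}\colon \tilde{V}_i\to \tilde{X}_i$ into a uniformizing chart of $X$ at $u(p_i)$. On the other side, because $\pi$ has branching multiplicity exactly $k_i$ at $q_i$, one can choose a holomorphic coordinate $w$ at $q_i$ and a coordinate $t$ at $p_i$ in which $\pi$ takes the normal form $w\mapsto w^{k_i}=t$; equivalently $\pi$ factors locally as $w\mapsto w$, viewed as a map into $\tilde{V}_i$, followed by the uniformizer $z\mapsto z^{k_i}$. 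Composing, the map $u\circ\pi$ on this neighborhood coincides with the underlying holomorphic map of $\tilde{u}(w)$, so it is holomorphic at $q_i$.

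Patching the two regions, $u\circ\pi$ is a continuous map $(U,\partial U)\to (X,L)$ that is holomorphic on $U$, which is the desired conclusion. The only genuinely non-formal point, and therefore where I expect the main (minor) obstacle to lie, is the simultaneous alignment of coordinates: one must identify the local normal form $w\mapsto w^{k_i}$ of $\pi$ with the very same uniformizer $z\mapsto z^{k_i}$ that is used to define $\tilde{u}$. This is where the representability and goodness hypotheses on $u$ enter; they fix the local lift $\tilde{u}$ up to the deck group $\Z/k_i$, so the compatibility is automatic after a holomorphic change of the coordinate $w$. Once this bookkeeping is done, the holomorphicity of $u\circ\pi$ is immediate from the identity $u(w^{k_i})=\tilde{u}(w)$.
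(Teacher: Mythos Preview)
Your proof is correct and follows essentially the same approach as the paper: holomorphicity away from $\pi^{-1}\{p_i\}$ is immediate, and near each branch point the composition $u\circ\pi$ is identified with the holomorphic lift $\tilde{u}$ to the uniformizing chart. The paper's proof is just a two-sentence version of what you wrote; your added discussion of the coordinate alignment and the role of representability is more detail than the paper supplies, but it is accurate and not a deviation in strategy.
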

\begin{proof}
Note that $u \circ \pi$ is a holomorphic disc away from $\pi^{-1}(p_i)$ by definition.
Near each $\pi^{-1}(p_i)$,  $u \circ \pi$ is nothing but the lift to a uniformizing cover, hence it is holomorphic.
\end{proof}
%We will use the following corollary a few times.

Applying the lemma to orbi-discs with a single orbi-insertion, we obtain the following.  
%(We will not use this in the main argument.)

\begin{corollary}\label{coro:conelifts}
A holomorphic orbi-disc $u$ with one orbifold marked point has a holomorphic lift $\widetilde{u}: U \to X$, where $U$ is a disc.
\end{corollary}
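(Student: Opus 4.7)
The plan is to build the desired branched cover $\pi:U\to\mathcal{D}$ by hand and then quote the preceding Lemma to finish. The Lemma reduces the problem to producing any Riemann surface with boundary $U$ together with a holomorphic branched cover $\pi:U\to\mathcal{D}$ that is ramified precisely at the single orbifold marked point with the correct multiplicity $k$. So the real content of the Corollary is the purely topological/complex-analytic claim that such a $U$ can be chosen to be a disc.

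First I would reduce to a standard model: using the Riemann mapping theorem (applied to $\mathcal{D}$ viewed as a smooth disc by forgetting the orbifold structure, then moving the single interior marked point to the origin by an automorphism of the disc), I may assume $\mathcal{D}$ is the unit disc $\{|\zeta|\le 1\}\subset\mathbb{C}$ with a $\mathbb{Z}/k$-orbifold point at $0$. Then I define $U$ to be another copy of the unit disc $\{|w|\le 1\}$ and set $\pi(w)=w^k$. This $\pi$ is holomorphic, sends $\partial U$ to $\partial\mathcal{D}$, and is branched only over the origin with ramification index exactly $k$, matching the cone order. Applying the preceding Lemma to this $\pi$ gives that $\widetilde{u}:=u\circ\pi$ is a holomorphic disc $U\to X$, sending $\partial U$ to $L$, which is exactly the lift required.

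The only place one has to be careful is the check that the multiplicity condition on $\pi$ is set up correctly so that the Lemma applies, i.e.\ that the local model $w\mapsto w^k$ really is the orbifold branched cover that lifts $u$ at the marked point; this is just the statement that in a uniformizing chart around the orbi-point $u$ is by definition given by a $\mathbb{Z}/k$-equivariant holomorphic map from the local chart, so pulling back along $w\mapsto w^k$ produces an honest holomorphic map on $U$ near the preimage of the orbi-point. I expect no other obstacle: there is no need for a global Riemann--Hurwitz count since $U$ has been constructed by hand as a disc, and no genus can appear because there is only one branch point (if one wanted to verify this a posteriori, $\chi(U)=k\chi(\mathcal{D})-(k-1)=1$ with connected boundary confirms $U\cong D^2$).

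In summary, the plan is just: normalize $\mathcal{D}$ so the orbi-point sits at $0$; take $\pi(w)=w^k$ on the unit disc; invoke the preceding Lemma. The whole argument is essentially a one-line construction, and the work is already done in the Lemma.
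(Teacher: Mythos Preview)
Your proposal is correct and is exactly the approach the paper has in mind: the paper states the Corollary immediately after the Lemma with only the remark ``Applying the lemma to orbi-discs with a single orbi-insertion, we obtain the following,'' leaving the construction of $\pi$ implicit. You have simply spelled out that construction---normalize the orbi-point to the origin and take $\pi(w)=w^k$---and noted the Riemann--Hurwitz check that $U$ is indeed a disc, which is the only nontrivial point.
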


Therefore we see that all such orbi-discs are slices of the discs that contribute to non-bulk-deformed potential. In particular, the slice of the discs for $x^a,y^b,z^c$ will be called the \emph{basic orbi-discs} from now on.

The following lemma gives a complete description of the low energy orbi-discs contributing to the potential. It is a kind of energy quantization at the corners of the discs. A similar result in dimension greater or equal than two appears in \cite[Lemma 4.2]{woodward}.

\begin{lemma} \label{lem:geq0}
	Except for the single term corresponding to the minimal triangle, every term of the bulk-deformed orbifold potential in $x,y,z$ variable 
	%after the change of variable  
	\eqref{eqn:chvar} 
	has non-negative $T$-exponent.
	Moreover, it has $T$-exponent being $0$ exactly for basic orbi-discs,  and the $T$-exponents are positive for the rest (except the minimal triangle).
\end{lemma}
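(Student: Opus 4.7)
The plan is to apply the orbifold Gauss--Bonnet theorem (Theorem \ref{thm:GB}) to the domain of each holomorphic orbi-disc contributing to the potential $W_\tau$. Under the coordinate change $\tilde{x} = T^{-3}x$, $\tilde{y} = T^{-3}y$, $\tilde{z} = T^{-3}z$, a disc with $k$ boundary corners at $X, Y, Z$ and symplectic area $A = \omega(\beta)$ contributes a monomial with $T$-exponent $A - 3k$. Thus the lemma reduces to establishing $A - 3k \geq 0$ except for the minimal triangle (where $A - 3k = 1 - 9 = -8$), with equality exactly for the basic orbi-discs.

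Equip $\bP^1_{a,b,c}$ with the constant-curvature metric in which $\BL$ is geodesic and each of the eight regions has area one; then $K = \pi\chi_{orb}(\bP^1_{a,b,c})/4$. For an orbi-disc $u:(\Sigma,\partial\Sigma) \to (\bP^1_{a,b,c},\BL)$ with $l$ interior orbi-markings of multiplicities $m_1,\ldots,m_l$, orbifold Gauss--Bonnet on the domain gives
\begin{equation*}
K \cdot A + \sum_{i=1}^k(\pi - \theta_i) = 2\pi\Bigl(1 - \sum_{j=1}^l(1 - 1/m_j)\Bigr),
\end{equation*}
where $\theta_i$ is the interior angle at the $i$-th corner. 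I would first calibrate the possible values of $\theta_i$ at odd-generator corners from two known families: the minimal triangle (which pins down one linear relation among the three angles at $X, Y, Z$ and $K$), and the basic orbi-discs (slices of the $x^a, y^b, z^c$ discs with $k=i$, $l=1$, and $A=3i$), which pin down a second relation. Because the four quadrants at each self-intersection admit only two distinct interior angles (one of which may be taken by an odd-sector corner depending on which quadrant is used), this calibration determines a small finite set of possible angle values.

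With the angles determined, substituting back into the Gauss--Bonnet identity yields an explicit formula for $A - 3k$ in terms of $k$, $l$, the multiplicities $m_j$, and the distribution of corners among the three self-intersections. A case analysis, combining this formula with the discrete constraints on $m_j$ (bounded by the orbifold orders $a, b, c$) and on the admissible quadrants at each self-intersection, should then show $A - 3k \geq 0$ for every contributing disc except the minimal triangle, with equality precisely for the basic orbi-discs.

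The main obstacle is the sharp nature of the desired bound: a naive manipulation of the Gauss--Bonnet identity controls only a much weaker linear combination of $A$ and $k$, so one must carefully exploit the orbifold structure (the bounded multiplicities, the two possible angle values at each quadrant, and the classification of quadrants by odd or even sector) to extract the factor $3$ in $A \geq 3k$. A secondary issue is uniformity across the spherical, elliptic, and hyperbolic regimes, where the sign of $K$ varies; the case analysis must handle all three uniformly, facilitated by first working in terms of the region-counting interpretation of $A$ (the total multiplicity with which the disc covers the eight unit-area regions) rather than directly with curvature integrals.
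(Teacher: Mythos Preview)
Your approach has a genuine gap that cannot be repaired by the calibration and case analysis you outline. The Gauss--Bonnet identity you write down has the area $A$ appearing only through the term $K\cdot A$. In the elliptic case $(1/a+1/b+1/c=1)$ the constant curvature is $K=0$, so the area drops out of the identity entirely: Gauss--Bonnet then constrains the corner angles, the number of corners, and the orbifold multiplicities, but says absolutely nothing about $A$. No amount of sharpening the angle values or exploiting the discrete constraints on $m_j$ can recover a lower bound $A\geq 3k$ from an equation in which $A$ does not occur. (This is exactly why the paper \emph{uses} Gauss--Bonnet only later, in Proposition~\ref{prop:GB} and equation~\eqref{eqn:chiabc}, to derive an \emph{equality} relating $m-3(n_1+n_2+n_3)$ to other data --- and that equality is vacuous in the elliptic case.) A secondary issue is that $\BL$ cannot be taken geodesic for the constant-curvature metric: the paper computes the geodesic curvature of each edge segment to be $2\pi(-3\chi/8+\epsilon)\neq 0$, so your formula is missing a boundary term in any case.

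The paper's proof is of a completely different nature and does not use Gauss--Bonnet at all. It is a direct local area estimate: the Seidel Lagrangian and the equator cut $\bP^1_{a,b,c}$ into eight unit-area pieces $M_u,M_l,A_u,B_u,C_u,A_l,B_l,C_l$. One checks that near each $X,Y,Z$ corner of a contributing orbi-polygon, the image must cover $M_u$, then one adjacent piece among $A_u,B_u,C_u$, and then two halves of the adjacent lower pieces, giving area at least $3$ per corner; moreover these local patches attached to distinct corners are disjoint in the domain. This yields $A\geq 3k$ directly, uniformly in all three curvature regimes. Equality forces the disc to be built only from these patches, which one then sees forces it to be a basic orbi-disc.
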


\begin{proof}
Let $u:(\mathcal{D},\partial \mathcal{D}) \to (\Pabc, \BL)$ be a non-constant holomorphic orbi-disc which is not the minimal triangle. We may assume that the counter-clockwise orientation of $\partial D^2$ agrees with the orientation of $\BL$ under $u$. (The other case can be handled similarly).  $\Pabc$ is decomposed into 8 pieces by $\BL$ and the equator.  We denote by $M_u, M_l$  the upper and lower minimal triangles and denote by $A_u, B_u, C_u$ (resp. $A_l,B_l,C_l$) the triangles with one of their corners at $a,b,c$ orbifold points respectively and lies in the upper (resp. lower) hemisphere.  We may decompose the domain of the orbi-disc $\mathcal{D}$ according to the above decomposition under the map $u$.
	Suppose $u$ has an immersed corner mapping to $X,Y$ or $Z$ contributing to the monomial of
	the potential. By our choice of orientation, the map $u$ covers the piece $M_u$.
	(One can check that we cannot turn corners at $M_l$ in this case). 
	We consider the part of $u$ which maps to $M_u$ as in Figure \ref{fig:posorbpot}. 
	We argue that in the neighborhood of each such corner, we have additional regions (in $\mathcal{D}$) of area 2 which are distinct for
	each corner.
	Note that the piece $M_u$ should be attached to exactly one of  $A_u, B_u, C_u$, say the piece $A_u$, since it involves a corner of the disc.
	In this case, this $A_u$ cannot be attached to any other preimages of $M_u$ or $M_l$ in $\mathcal{D}$. Now, $A_u$ is attached to
	two of $A_l$ pieces $A_l^1, A_l^2$. Let us further cut these $A_l$ pieces into halves.
	
	\begin{figure}[h]
		\begin{center}
			\includegraphics[height=1.2in]{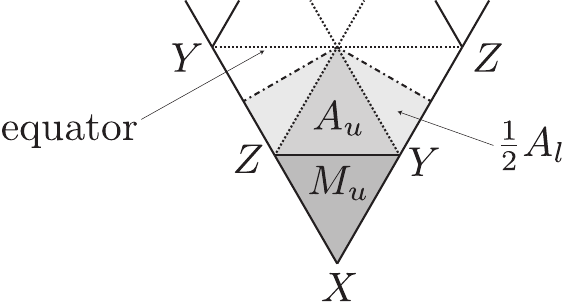}
			\caption{}\label{fig:posorbpot}
		\end{center}
	\end{figure}
	
	Hence each corner of $\mathcal{D}$ at least covers $M_u, A_u, \frac{1}{2} A_l^1, \frac{1}{2} A_l^2$, which gives the area $3$ (or $T^3$).
	Given two different corners, the above local pieces do not overlap since $A_u$ can be attached to the only one corner piece $M_u$.
	This proves the first part of the proposition. Suppose that after the coordinate change, it has no $T$-component.
	This means that the holomorphic orbi-disc consists of these $T^3$ pieces only.  It is elementary to see that the only orbifold holomorphic discs that  we can make in this way are the basic ones. This proves the lemma.
\end{proof}

Next, we will use a version of the Gauss--Bonnet theorem to compute the valuations of the monomials appearing in $W_\tau$.

Recall that the Seidel Lagrangian $\BL$  is taken to be symmetric about the equator, and it subdivides each of the upper and lower hemispheres into four triangles with equal area $A$ (which is set to be $1$), which are $M_u,A_u,B_u,C_u$ and $M_l,A_l,B_l,C_l$ respectively in the proof of Lemma \ref{lem:geq0}.  Let $K$ be the constant curvature of the orbi-sphere.  The equator is taken to be a union of three geodesics connecting the three orbi-points, and the reflection about the equator is an isometry.  Denote by $k$ the geodesic curvature of $\BL$.

We arrange $\BL$ in such a way that the exterior angles of the minimal triangles at $X,Y,Z$ are $2\pi(1/a - \epsilon), 2\pi(1/b - \epsilon), 2\pi(1/c - \epsilon)$ respectively, where $\epsilon$ is taken such that the angles are in $(0,\pi)$.  $\epsilon$ can be set to zero in case $a,b,c\geq 3$.

The Gauss-Bonnet formula for an orbi-polygon is given as follows. 

\begin{theorem}[Gauss-Bonnet formula for an orbi-polygon] \label{thm:GB}
	For an (embedded) orbi-polygon $P\subset X$ with exterior angles $\angle_i$, boundary edges $\gamma_j$, and ages of interior orbi-points being $\iota_k$, 
	$$ \int_P K dA + \sum_i \angle_i + \sum_j \int_{\gamma_j}kds +  2\pi \sum_k (1-\iota_k) = 2\pi. $$
\end{theorem}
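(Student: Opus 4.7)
The plan is to reduce to the classical Gauss--Bonnet theorem for smooth geodesic polygons by excising the orbi-points through cuts, as indicated by the hint. Concretely, for each interior orbi-point $p_k$, choose a smooth simple arc $\delta_k$ from $p_k$ to a point $q_k$ in the interior of one of the boundary edges, with the $\delta_k$ mutually disjoint and meeting $\partial P$ transversally; this can be arranged because $P$ is topologically a disc. Cutting $P$ along $\bigsqcup_k\delta_k$ produces a new polygon $P'$ whose interior, lifted to the $\Z/m_k$-uniformizing chart at each $p_k$, lies in the smooth locus of the ambient surface, and whose boundary consists of the original $\partial P$ (slit at each $q_k$) together with two copies of every $\delta_k$ traversed with opposite orientations.

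The second step is to apply the classical Gauss--Bonnet identity to $P'$, working locally in uniformizers near the orbi-points so that we are in a genuinely smooth setting. Several terms either transfer unchanged or cancel: $\int_{P'} K\,dA=\int_P K\,dA$ because a one-dimensional cut has zero area; the original corners and boundary edges still contribute $\sum_i\angle_i$ and $\sum_j\int_{\gamma_j}k\,ds$; the two opposite traversals of each $\delta_k$ contribute cancelling integrals of geodesic curvature; and at each boundary endpoint $q_k$ the two newly created vertices have interior angles summing to $\pi$, so their exterior angles sum to $\pi$ as well.

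The main obstacle, and the only step that genuinely uses the orbifold data, is pinning down the exterior-angle contribution at the orbi-point ends of the cuts. Near $p_k$ one passes to the $\Z/m_k$-uniformizing chart; after the cut, the lifted polygon opens up into a wedge whose interior angle at the lift of $p_k$ equals $2\pi\iota_k$. In the basic case $\iota_k=1/m_k$ this is precisely one fundamental angular sector of size $2\pi/m_k$, and for general age $\iota_k=j_k/m_k$ one has $j_k$ such sectors, reflecting how the orbi-polygon wraps around $p_k$. The corresponding new exterior angle is therefore $\pi-2\pi\iota_k$.

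Assembling all contributions into the classical Gauss--Bonnet identity for $P'$ gives
\begin{equation*}
\int_P K\,dA+\sum_i\angle_i+\sum_j\int_{\gamma_j}k\,ds+\sum_k\bigl(\pi+(\pi-2\pi\iota_k)\bigr)=2\pi,
\end{equation*}
and the bracketed sum collapses term by term to $2\pi(1-\iota_k)$, which is the stated formula. The only point requiring careful verification is the angle identification at each orbi-point, which I would justify by an explicit computation in the uniformizing chart together with the interpretation of the age as a degree-shifting number governing how the local branched covering wraps around $p_k$.
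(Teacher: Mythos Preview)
Your proof is correct and follows exactly the approach the paper sketches: cut along arcs from each interior orbi-point to the boundary and apply the classical Gauss--Bonnet formula to the resulting polygon. The paper gives only a one-line hint, and your elaboration---that the two traversals of each $\delta_k$ cancel in the geodesic-curvature integral, that the two new exterior angles at $q_k$ sum to $\pi$, and that the cone point contributes exterior angle $\pi-2\pi\iota_k$ after unrolling in the uniformizing chart---is precisely the intended argument.
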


\begin{proof}
	Given an orbi-polygon, we make a cut for each orbi-point (along a simple path connecting the orbi-point to a boundary point).  This gives a new polygon where the cutting path appears as part of the boundary.  Then we apply the ordinary Gauss-Bonnet formula to this usual polygon (with the restricted metric).  The exterior angle at each orbi-point equals to $2\pi (1-\iota_k)$.  Moreover, the integration of geodesic curvature along the pair of paths going back and forth on each cut cancel with each other.  This gives the formula as stated.
\end{proof}

Closely related to this, the Maslov-index formula for an orbi-polygon class $(\beta,\alpha)$ in terms of curvature (where $\alpha$ is the collection of immersed generators that the corners hit) is given as follows (see Remark \ref{rem:cw}, \cite{CS} or \cite{Pacini}). (Here $\angle X$ denotes the exterior angles).
\begin{equation} \label{eq:mu}
\mu_{\mathrm{CW}}(\beta,\alpha)=\frac{1}{\pi}\left(\int_\beta K dA + \int_{\partial \beta} k ds + \sum_{X\in \alpha} \angle X\right).
\end{equation}

In our case the Maslov index is given as follows.
\begin{prop} \label{prop:GB}
	For an orbi-disc bounded by $\BL$ with corners being only $X,Y$ or $Z$ (thus contributing to the potential), denote its area by $mA$
	the numbers of $X,Y,Z$ corners by  $n_1,n_2,n_3$ respectively.  Its Maslov index $\mu_{\mathrm{CW}}$ equals to
	$$ 
	2\left(\frac{n_1}{a}+\frac{n_2}{b}+\frac{n_3}{c} +  (m-3(n_1+n_2+n_3)) \cdot \frac{\chi}{8}\right).
	$$
	In particular the Chern number of an orbi-sphere equals to $ m\chi/8.$
\end{prop}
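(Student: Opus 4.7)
The plan is to apply the Chern--Weil formula for the Maslov index,
\[ \pi \mu_{\mathrm{CW}}(\beta,\alpha) = \int_\beta K\, dA + \int_{\partial\beta} k\, ds + \sum_{X\in\alpha}\angle X, \]
and to evaluate each of the three terms using the orbifold Gauss--Bonnet theorem. First, applying Gauss--Bonnet to the closed orbifold $\Pabc$ gives $\int_X K\, dA = 2\pi\chi_{orb} = 2\pi\chi$; together with constant curvature and total area $8$, this forces $K = \pi\chi/4$, so $\int_\beta K\, dA = \pi\chi m/4$. The sum of exterior angles at the corners is read off from our choice of $\BL$:
\[ \sum_{X\in\alpha}\angle X = 2\pi\left(\frac{n_1}{a}+\frac{n_2}{b}+\frac{n_3}{c}\right) - 2\pi\epsilon(n_1+n_2+n_3). \]

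The third term $\int_{\partial\beta} k\, ds$ is handled by applying Gauss--Bonnet (Theorem \ref{thm:GB}) directly to the orbi-disc domain of $\beta$, which is topologically a disc with $N$ interior orbifold points of orders $n'_1, \ldots, n'_N$. This yields
\[ \int_\beta K\, dA + \int_{\partial\beta} k\, ds + \sum\angle X + 2\pi\sum_k\left(1 - 1/n'_k\right) = 2\pi. \]
Solving for $\int_{\partial\beta} k\, ds$ and substituting into the Chern--Weil formula gives $\mu_{\mathrm{CW}} = 2\bigl(1 - N + \sum_k 1/n'_k\bigr) = 2\chi_{orb}(\mathrm{domain})$. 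To match this with the proposition's formula it remains to verify the topological identity
\[ \chi_{orb}(\mathrm{domain}) = \frac{n_1}{a}+\frac{n_2}{b}+\frac{n_3}{c} + (m - 3(n_1+n_2+n_3))\cdot\frac{\chi}{8}. \]

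I would establish this identity by decomposing $\beta$---as a map, or equivalently as its image in $\Pabc$ counted with multiplicity---into copies of the eight elementary regions $M_u, M_l, A_u, A_l, B_u, B_l, C_u, C_l$ of area $1$ each, and checking that both sides transform in the same way when a single elementary piece is added or removed. The corollary on closed orbi-spheres is independent of the disc formula and follows from a direct Chern number computation: for a holomorphic orbi-sphere of symplectic area $m$ in $\Pabc$, one has $\int c_1 = (K/2\pi)\cdot m = m\chi/8$. The principal obstacle is the combinatorial identity, which requires careful local analysis at each interior orbifold point of the domain---tracking how it arises from ramification of $\beta$ over an image orbifold point---together with a global accounting of the area and corner counts; the verification is delicate because a single interior domain orbifold point can encode varying information about the surrounding elementary pieces.
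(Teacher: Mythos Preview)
Your approach has a genuine gap in step 3, where you apply Gauss--Bonnet (Theorem~\ref{thm:GB}) to the domain $\mathcal{D}$ of the orbi-disc. Theorem~\ref{thm:GB} is stated for an \emph{embedded} orbi-polygon $P\subset X$: the term $2\pi(1-1/n'_k)$ is the cone defect of the intrinsic metric of $X$ at an interior orbi-point of order $n'_k$. But the domain $\mathcal{D}$ is not embedded; to apply Gauss--Bonnet there you must pull back the metric via $u$, and the cone defect of the pulled-back metric at a domain orbi-point hitting a twisted sector of age $\iota_k$ is $2\pi(1-\iota_k)$, not $2\pi(1-1/n'_k)$. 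For example, when $a=3$ the basic orbi-disc hitting $[2/3]$ has one domain orbi-point of order $n'_1=3$ but age $\iota_1=2/3$; your formula gives $\mu_{\mathrm{CW}}=2/3$, while the correct value (and the proposition's formula, with $m=6$, $n_1=2$) gives $4/3$. You also omit possible interior branch points of $u$, each contributing a further defect $2\pi(1-r_j)$. With both corrections, Gauss--Bonnet on the domain yields
\[
\mu_{\mathrm{CW}} = 2 - 2\sum_k(1-\iota_k) + 2\sum_j(r_j-1),
\]
which for an immersed polygon with no branch points is just the rigidity constraint $\mu_{\mathrm{CW}} + 2\sum(1-\iota_k)=2$ --- a tautology, not the desired formula in $m,n_1,n_2,n_3$.

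The proposition's content is precisely that $\mu_{\mathrm{CW}}$ depends only on $m,n_1,n_2,n_3$, and your step 4 (the ``topological identity'') is where this must actually be established; deferring it to an unspecified decomposition into elementary regions leaves the real work undone. The paper instead computes $\int_{\partial\beta} k\,ds$ directly. Applying Gauss--Bonnet to the small triangle $C_u$ pins down the total geodesic curvature along each minimal edge segment of $\BL$ as $\pm 2\pi(-3\chi/8+\epsilon)$, with sign alternating between upper- and lower-hemisphere segments. Since each side of a holomorphic polygon with only $X,Y,Z$ corners consists of an \emph{odd} number of such segments, the contributions cancel pairwise leaving exactly one per side, so $\frac{1}{\pi}\int_{\partial\beta} k\,ds = 2(n_1+n_2+n_3)(-3\chi/8+\epsilon)$. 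The $\epsilon$-terms then cancel against those in $\sum\angle X$, and summing the three Chern--Weil terms gives the stated formula. This explicit boundary computation is the step your argument lacks.
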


Recall that $\chi = -1 + \frac{1}{a} + \frac{1}{b} + \frac{1}{c}$ is the (orbifold) Euler characteristic of $\mathbb{P}^1_{a,b,c}$.

\begin{proof}
	First of all we find the geodesic curvatures of the edges of the minimal triangles.
	By Gauss-Bonnet formula applied to the upper hemisphere (which is a triangle bounded by three geodesics segments forming the equator), we have
	$$ 4AK +  \left(\pi-\frac{\pi}{a}\right) + \left(\pi-\frac{\pi}{b}\right)+\left(\pi-\frac{\pi}{c}\right)= 2\pi  $$
	(where $A=1$ is the area of the minimal triangle.)
	Thus
	\begin{equation} \label{eq:chi}
	\chi = \left(\frac{1}{a}+\frac{1}{b}+\frac{1}{c}\right)-1 = \frac{8AK}{2\pi}.
	\end{equation}
	
	Let $2\pi\,k_{12}$ be the total geodesic curvature along the edge connecting the $X$ and $Y$ corners of the minimal triangle contained in the upper hemisphere. Here, the orientation of the Seidel Lagrangian is fixed such that the orientations of the edges agree with the induced ones from the minimal triangle contained in the upper hemisphere.  $k_{23}$ and $k_{31}$ are similarly defined.
	We apply the Gauss-Bonnet formula to the triangle $C_u$ which is the triangle in the upper hemisphere having corners at the point $X,Y$ and the orbi-point $[1/c]$. This gives
	$$ \left(\pi-\frac{\pi}{a}+\pi\epsilon \right)+\left(\pi-\frac{\pi}{b}+\pi\epsilon\right)+\left(\pi-\frac{\pi}{c}\right) - 2\pi k_{12} + KA = 2\pi.$$
	Combining with \eqref{eq:chi}, we have $k_{12}=\kappa+\epsilon$ where $\kappa=-3\chi/8$.  Applying the same argument for the other two triangles contained in the upper hemisphere, we obtain
	$$ k_{12}=k_{23}=k_{31}=-\frac{3\chi}{8} + \epsilon=-\frac{3KA}{2\pi}+ \epsilon=\kappa+\epsilon.$$
	Then the total geodesic curvatures of the edges of the minimal triangle in the lower hemisphere (in the fixed orientation of the Seidel Lagrangian) are ($2\pi$ times)
	$$ k_{12}'=k_{23}'=k_{31}'=-k_{31}=-\kappa-\epsilon.$$
	
	Consider a holomorphic orbi-disc bounded by $\BL$ with the numbers of $X,Y,Z$ corners being $n_1,n_2,n_3$ respectively.  The area is a multiple $mA$ of the area of the minimal triangle for $m\in\Z_{>0}$.  Thus
	$$ \frac{1}{\pi} \int_\beta K dA = \frac{mKA}{\pi}=\frac{m\chi}{4}.$$
	
	The edges of the orbi-disc are unions of the edge segments of the two minimal triangles in the upper and lower hemispheres.
	By the property of holomorphic orbi-disc, each side between two corners must consist of an odd number of edge segments.
	The total geodesic curvatures of the edge segments cancel with each other, except for one edge segment for each side.  Such a segment in each side lie in the same hemisphere, and in the boundary orientation of the holomorphic disc its geodesic curvature is $2\pi k_{12}=2\pi (\kappa+\epsilon)$.  Then 
	$$ \frac{1}{\pi} \int_{\partial \beta} k ds = 2 (n_1+n_2+n_3) k_{12} = \frac{-3\chi \cdot (n_1+n_2+n_3)}{4} + 2 (n_1+n_2+n_3) \epsilon.$$
	Since the exterior angles of $X,Y,Z$ are $2\pi(1/a-\epsilon), 2\pi(1/b-\epsilon),2\pi(1/c-\epsilon)$ respectively, we have
	$$\frac{1}{\pi}\sum_{X\in \alpha} \angle X = 2\left(\frac{n_1}{a}+\frac{n_2}{b}+\frac{n_3}{c}\right) - 2 (n_1+n_2+n_3) \epsilon.$$
	Hence the error term $2 (n_1+n_2+n_3) \epsilon$ cancels in the sum.  Combining the above equations, result follows.
\end{proof}

\begin{corollary}
	Consider a stable orbi-disc bounded by $\BL$
	which contributes to the disc potential.  Suppose it has area $mA$, the interior orbi-insertions have ages $\iota_j$, and the numbers of $X,Y,Z$ corners are $n_1,n_2,n_3$ respectively.  Then it satisfies
	\begin{equation}\label{eqn:chiabc}
	(m-3(n_1+n_2+n_3)) \cdot \frac{-\chi}{8}=  \frac{n_1}{a}+\frac{n_2}{b}+\frac{n_3}{c}+ \sum_j (1-\iota_j)-1.
	\end{equation}
\end{corollary}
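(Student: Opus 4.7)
The plan is to read the stated identity as the rigidity (virtual dimension zero) condition on the moduli of stable orbi-discs contributing to $W_\tau$, re-expressed via the Chern--Weil Maslov formula of Proposition \ref{prop:GB}.

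First, I would recall from Section \ref{subsec:generalorbmod} that, in the $n=1$ case, the moduli space $\CM^{main}_{k+1,l}(\beta;\nu;\gamma)$ has virtual dimension
$$\mu^{de} + 2l - 2 \,=\, \mu_{\mathrm{CW}} + 2\sum_j(1 - \iota_j) - 2,$$
using the age-shifting relation $\mu^{de} = \mu_{\mathrm{CW}} - 2\sum_j \iota_j$. A stable orbi-disc produces a rigid contribution to the potential precisely when this virtual dimension vanishes, since the coefficient of $\one_\BL$ in $\m_k^\tau(b,\ldots,b)$ is extracted by requiring the output evaluation $ev_0$ to land on a prescribed smooth point of $\bS^1$, which is a codimension-one constraint on an otherwise one-dimensional moduli space.

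Once this rigidity condition is stated as $\mu_{\mathrm{CW}} + 2\sum_j(1 - \iota_j) - 2 = 0$, substituting
$$\mu_{\mathrm{CW}} \,=\, 2\left(\frac{n_1}{a}+\frac{n_2}{b}+\frac{n_3}{c} + (m - 3(n_1+n_2+n_3))\cdot \frac{\chi}{8}\right)$$
from Proposition \ref{prop:GB} and rearranging immediately produces the claimed identity.

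The only bookkeeping to be careful about is the smooth-versus-corner status of the output marked point $z_0$ in the Fredholm index of $\overline{\partial}$, since the paper's dimension formula $\mu^{de} + 2l - 2$ is tabulated with all $k+1$ boundary marked points treated as corners; in the $n=1$ setting the adjustment for a smooth output contributes exactly the shift needed to recover $\mu^{de} + 2l - 2 = 0$ as the rigidity condition, so I do not anticipate any further obstacle beyond this elementary check and the corollary reduces to an algebraic consequence of Proposition \ref{prop:GB}.
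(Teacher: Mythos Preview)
Your proposal is correct and essentially identical to the paper's proof: one imposes the rigidity condition $\mu_{\mathrm{CW}} + 2\sum_j(1-\iota_j) - 2 = 0$ and substitutes the formula of Proposition \ref{prop:GB}. The only point the paper makes explicit that you do not is that for a \emph{stable} orbi-disc one invokes additivity of the Maslov index over disc and sphere components (or passes to a smooth orbi-disc representative of the class) so that Proposition \ref{prop:GB} applies to the total class; your bookkeeping concern about the smooth output marked point is not raised in the paper but resolves exactly as you anticipate.
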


\begin{proof}
%	By the dimension formula for stable orbi-discs contributing the the disc potential,
%	\begin{equation} \label{eq:dim-W}
%	n + \mu_{\mathrm{CW}} + 1 + (1-n) \cdot \#(\textrm{inputs}) - 3 + 2 \cdot \sum_{j} (1 - \iota_j) = n.
%	\end{equation}
Recall that an orbi-disc contributes to the potential if it is rigid, or equivalently its Maslov index satisfies 
	$$\mu_{\mathrm{CW}} + 2 \cdot \sum_{j} (1 - \iota_j) -2 =0.$$
	A stable orbi-disc consist of disc and sphere components.
	Since Maslov index is additive, Proposition \ref{prop:GB} applied to each component gives the result.  (It can also be seen by taking an orbi-smooth disc representative of the class.)
\end{proof}

From the corollary, the area of the orbi-disc is given in terms of the numbers of corners and ages by
\begin{equation}\label{eqn:orbareaformula}
mA = \left(3(n_1 + n_2 + n_3) + 8 \dfrac{\frac{n_1}{a} + \frac{n_2}{b} + \frac{n_3}{c} + \sum_{j} (1-\iota_j) -1}{1- \left(\frac{1}{a} + \frac{1}{b} + \frac{1}{c} \right)} \right)A
\end{equation}
when $\chi \neq 0$, which matches the one given in \cite{CHKL14} when there is no orbi-insertions. If $\chi = 0$ (i.e. elliptic case), we have $\frac{n_1}{a}+\frac{n_2}{b}+\frac{n_3}{c}+ \sum_j (1-\iota_j)=1$.
%Also, the formula for the Maslov index reads
%$$2 \left( \frac{n_1}{a} + \frac{n_2}{b} + \frac{n_3}{c} 0\right) $$
%after plugging in $m$ into Proposition \ref{}

%We define $a-3(n_1+n_2+n_3)A$ as a valuation of $T^{a} \tau^{\vec{k}} x^{n_1}y^{n_2}z^{n_3} \in \Lambda[[\tau,x,y,z]]$.  
Suppose that $T^{a} \tau^{\vec{k}} x^{n_1}y^{n_2}z^{n_3} \in \Lambda[[\tau,x,y,z]]$ is one of monomials contained in $W_\tau$. Our discussion so far tells us that the exponent $a$ satisfies $a=m-3(n_1+n_2+n_3) \geq 0$, where $m$ is given by \eqref{eqn:orbareaformula} and $\vec{k}$ records the ages $\iota_j$ in the formula. Notice that the coordinate change \ref{eqn:chvar} is responsible for the term $-3(n_1+n_2+n_3)$ in $a$. 
%(Here, $\tau^{\vec{k}}$ has a positive valuation, and hence does no harm to convergence.)
By Lemma \ref{lem:geq0}, $m-3(n_1+n_2+n_3) \geq 0$ for all holomorphic orbi-discs other than the minimal triangle.

\begin{theorem}\label{thm:bdconv}
	The bulk-deformed potential $W_\tau$ is a convergent series, that is, it is an element of $\Lambda\langle\langle x,y,z\rangle\rangle$.
\end{theorem}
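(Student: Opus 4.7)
The plan is to estimate, for each triple $(n_1,n_2,n_3) \in \Z_{\geq 0}^3$, the $T$-valuation of the coefficient of $x^{n_1} y^{n_2} z^{n_3}$ in $W_\tau$, and show it tends to $+\infty$ as $n_1+n_2+n_3 \to \infty$. Writing out the defining sum, this coefficient is
\[
c_{n_1,n_2,n_3} = \sum_{\text{orbi-discs }u} \frac{T^{m - 3(n_1+n_2+n_3)}}{l!} \cdot \exp(\tau^2 \mathbf{p} \cap \beta_u) \cdot (\text{bulk }\tau\text{-factor})\cdot (\text{sign})
\]
where the sum runs over stable orbi-discs with exactly $n_i$ corners of each type and $l$ bulk orbi-insertions from $\tau_{tw}^l$. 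Since $\tau^2 \in \Lambda_+$ gives $\mathrm{val}(\exp(\tau^2\mathbf{p}\cap\beta_u))=0$, and $\tau_{tw}\in H^*_{orb}(X,\Lambda_+)$ so each orbi-insertion contributes non-negative valuation, we obtain
\[
\mathrm{val}(c_{n_1,n_2,n_3}) \;\geq\; \inf_{u} \bigl(m(u) - 3(n_1+n_2+n_3)\bigr),
\]
where the infimum is over all stable orbi-discs with the prescribed corners (and any admissible bulk data). Combined with Lemma \ref{lem:geq0} this already shows $\mathrm{val}(c_{n_1,n_2,n_3}) \geq 0$ (with the single exception of the minimal triangle); it remains to show this lower bound diverges with $n_1+n_2+n_3$.

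The key computation uses the identity \eqref{eqn:chiabc} rewritten via \eqref{eqn:orbareaformula}:
\[
m - 3(n_1+n_2+n_3) \;=\; \frac{-8}{\chi}\left(\frac{n_1}{a}+\frac{n_2}{b}+\frac{n_3}{c} + \sum_j(1-\iota_j) - 1\right)
\]
when $\chi \neq 0$. I would split into cases by the sign of $\chi$.

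In the hyperbolic case ($\chi<0$), the prefactor $-8/\chi$ is positive and $\sum_j(1-\iota_j)\geq 0$, so
\[
m - 3(n_1+n_2+n_3) \;\geq\; \frac{-8}{\chi}\left(\frac{n_1}{a}+\frac{n_2}{b}+\frac{n_3}{c}-1\right),
\]
and the right-hand side tends to $+\infty$ uniformly as $n_1+n_2+n_3 \to \infty$, giving the convergence required by Definition \ref{def:cpr}. In the spherical case ($\chi>0$), Lemma \ref{lem:geq0} forces $m-3(n_1+n_2+n_3)\geq 0$ for every contributing disc other than the minimal triangle; since $-8/\chi<0$ this forces $\frac{n_1}{a}+\frac{n_2}{b}+\frac{n_3}{c}+\sum_j(1-\iota_j) \leq 1$, bounding $(n_1,n_2,n_3)$ (and the number of bulk insertions). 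The elliptic case ($\chi=0$) is handled identically from \eqref{eqn:chiabc}, which reads $\frac{n_1}{a}+\frac{n_2}{b}+\frac{n_3}{c}+\sum_j(1-\iota_j)=1$ and again bounds $(n_1,n_2,n_3)$. In both the spherical and elliptic cases, only finitely many monomials $x^{n_1}y^{n_2}z^{n_3}$ occur, so $W_\tau$ is polynomial in $x,y,z$ and the convergence condition is vacuous.

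The main subtlety to be careful with is the combined effect of $\tau$-insertions and sphere bubbles: for fixed $(n_1,n_2,n_3)$ the area $m$ can still be unbounded, so one must check that $c_{n_1,n_2,n_3}$ itself lies in $\Lambda$ (not merely in $\Lambda[[T]]$). This is taken care of by the standard Gromov-compactness argument, which ensures $T$-adic convergence of each $\mathfrak{q}_{l,k,\beta}$-sum over $\beta$ and over $l$ (since $\mathrm{val}(\tau_{tw}),\mathrm{val}(\tau^2)>0$). The genuine content of the proposition is then the outer estimate above controlling the behavior as $n_1+n_2+n_3\to\infty$; the hyperbolic case is the only one requiring an actual estimate, and Proposition \ref{prop:GB} provides exactly what is needed.
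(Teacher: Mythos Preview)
Your proposal is correct and follows essentially the same approach as the paper: both split on the sign of $\chi$, use the index identity \eqref{eqn:chiabc} to constrain $(n_1,n_2,n_3)$ and the bulk insertions, and invoke Gromov compactness for the finiteness at each energy level. The only cosmetic difference is that in the hyperbolic case the paper argues contrapositively (a bound on the $T$-exponent $m-3(n_1+n_2+n_3)$ forces a bound on $m$, hence finitely many discs), whereas you extract the equivalent direct lower bound $m-3(n_1+n_2+n_3)\geq \tfrac{-8}{\chi}\bigl(\tfrac{n_1}{a}+\tfrac{n_2}{b}+\tfrac{n_3}{c}-1\bigr)$ and let it diverge.
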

\begin{proof}
	By Gromov compactness, it suffices to show that the area $mA$ is bounded above once the exponent $m-3(n_1+n_2+n_3)$ of $T$ is bounded.
	In the elliptic case, \eqref{eqn:chiabc} gives 
	$$\frac{n_1}{a}+\frac{n_2}{b}+\frac{n_3}{c}+ \sum_j (1-\iota_j)=1.$$
	There are only finitely many possibilities of $n_i$ and orbi-insertions satisfying this.  (And hence $W_\tau$ is a polynomial in $x,y,z,\tau$.)
	In particular $n_i$ are bounded.  Once $m-3(n_1+n_2+n_3)$ is bounded, $m$ is bounded.
	
	For the spherical case, the $T$-valuation $m- 3(n_1+n_2+n_3) \geq 0$ (other than the minimal triangle) implies that 
	$$ \frac{n_1}{a}+\frac{n_2}{b}+\frac{n_3}{c}+ \sum_j (1-\iota_j) \leq 1.$$
	There are only finitely many possibilities of $n_i$ and orbi-insertions satisfying this.  Thus there are only finitely many possibilities of $m- 3(n_1+n_2+n_3)$, and hence $m$.  $W_\tau$ just consists of finitely many terms.
	
	In the hyperbolic case, if $m- 3(n_1+n_2+n_3)$ is bounded above, then $\frac{n_1}{a}+\frac{n_2}{b}+\frac{n_3}{c}+ \sum_j (1-\iota_j)$ is also bounded above.  This gives finitely many possibilities of $n_i$ and $\iota_j$.  Hence $n_1+n_2+n_3$ is bounded above, and so is $m$.
\end{proof}

\begin{remark}
	Recall that $\tau=\tau^0 \one_{X}+ \tau^2 \mathbf{p}+ \tau_{tw}$, and $\exp (\tau^2\mathbf{p}\cap \beta)=t^{\omega \cap \beta}$ (see the paragraph before Section \ref{sec:MC}).  The above shows that $W(\tau_0,t, \tau_1,\ldots, \tau_m,x,y,z)$ is an element of $\Lambda\langle\langle \tau_0,t, \tau_1,\ldots, \tau_m,x,y,z\rangle\rangle$.
\end{remark}

Similarly, we can show that every $\m_k^{\tau,b}(\alpha_1,\ldots,\alpha_k)$ is a convergent power series. This will allow us to, instead of evaluating the $\AI$ operations at a specific value of $b$, to define $\CF(\BL, \tau, \Lambda\langle\langle x,y,z\rangle\rangle)$ a Fukaya algebra with coefficients on  the ring $\Lambda\langle\langle x,y,z\rangle\rangle$. More concretely, elements in $\CF(\BL, \tau, \Lambda\langle\langle x,y,z\rangle\rangle)$ are power series of the form
$\sum_{i,j,k\in\Z_{\geq 0}}c_{i,j,k}x^iy^jz^k,$ with $c_{i,j,k}\in \CF^*(\BL)\otimes_{\Lambda_0} \Lambda$ and $\lim_{i+j+k\to\infty}val(c_{i,j,k})=+\infty$, where $val$ is the valuation in $\CF^*(\BL)\otimes_{\Lambda_0} \Lambda$ inherited from $\Lambda$. Then we can extend the operations $\m_k^{\tau,b}$ to $\CF(\BL, \tau, \Lambda\langle\langle x,y,z\rangle\rangle)$ - this is the content of the next proposition. 

\begin{prop}\label{prop:fukalgconv}
$\CF(\BL, \tau, \Lambda\langle\langle x,y,z\rangle\rangle)$ is an $A_\infty$-algebra linear over $\Lambda\langle\langle x,y,z\rangle\rangle$.
\end{prop}
\begin{proof}
	First observe that, once we show that $\m_k^{\tau,b}(\alpha_1,\ldots,\alpha_k)$ is convergent, that is it belongs to $\CF(\BL, \tau, \Lambda\langle\langle x,y,z\rangle\rangle)$ for $\alpha_1,\ldots,\alpha_k \in \CF^*(\BL)$, we can simply extend $\m_k^{\tau,b}$ linearly (and continuously) over $\Lambda\langle\langle x,y,z\rangle\rangle$. This will give the proposition. In the remainder of the proof we show this convergence.
	
Consider an orbi-polygon $P$ with $k_1,k_2,k_3$ numbers of $X,Y,Z$ corners (which have odd degree), and $k_1^-,k_2^-,k_3^-$ numbers of $\bar{X},\bar{Y},\bar{Z}$ corners (which have even degree) respectively.  Without loss of generality, we assume that in a neighborhood of one of the odd corners, $P$ is contained in the upper hemisphere.  (If there is no odd corner, then we assume that in a neighborhood of one of the even corners, $P$ is contained in the lower hemisphere.)

Note that for a side of $P$ between adjacent odd and even corners, the number of minimal edge segments is even.  Similarly for an odd-odd side or even-even side, the number of minimal edge segments is odd.  Thus two corners adjacent to an odd-odd or even-even side remain in the same hemisphere; on the other hand, an odd-even side connects a corner in the upper hemisphere to a corner in the lower hemisphere.  It implies all odd corners of $P$ are contained in the upper hemisphere, and all even corners are contained in the lower hemisphere. 

Following the notations in Proposition \ref{prop:GB}, each minimal edge segment lying in the upper (or lower) hemisphere has total geodesic curvature $k_{12}=\kappa + \epsilon$ (or $-k_{12}$ resp.) where $\kappa = -3\chi / 8$.  
Thus, for an odd-even side, the geodesic curvature of the edge segments cancel among each other. For an odd-odd edge (resp. even-even edge), the geodesic curvature of all but one edge segment cancel, and that edge segment lies in the upper (resp. lower) hemisphere.  It follows that the total geodesic curvature of an odd-odd edge, odd-even edge, and even-even edge is $2\pi$ times $k_{12}=\kappa + \epsilon, 0, -k_{12}=-\kappa - \epsilon$ respectively.

Consider Formula \eqref{eq:mu}.
We claim that the error term, namely the term which is a multiple of $\epsilon$, is zero in $\mu_{\mathrm{CW}}(\beta,\alpha)$.  Recall that the exterior angles of the odd vertices $X,Y,Z$ are $2\pi(1/a-\epsilon), 2\pi(1/b-\epsilon),2\pi(1/c-\epsilon)$ respectively.  As in the proof of Proposition \ref{prop:GB}, each odd vertex contributes $-2\epsilon$ to the error term in the Maslov index.  For the even vertices $\bar{X},\bar{Y},\bar{Z}$, the exterior angles are $\pi - 2\pi(1/a-\epsilon), \pi - 2\pi(1/b-\epsilon),\pi - 2\pi(1/c-\epsilon)$ respectively.  Each even vertex contributes $2\epsilon$ to the error term in the Maslov index.  

Let $l_{oo}, l_{oe}, l_{ee}$ be the numbers of odd-odd, odd-even, even-even sides respectively.  Then the numbers of odd and even vertices equal $(l_{oe}+2l_{oo})/2$ and $(l_{oe}+2l_{ee})/2$ respectively.  The total error contribution from the angles of the vertices is
$$ - 2\pi\epsilon (l_{oe}+2l_{oo})/2 + 2\pi\epsilon (l_{oe}+2l_{ee})/2 = -2\pi\epsilon(l_{oo} - l_{ee}).$$

The geodesic curvature of an odd-odd (even-even resp.) edge contributes $2\pi\epsilon$ ($-2\pi\epsilon$ resp.) to the error; the geodesic curvature of an odd-even edge has no error term.  Thus the total error contribution from the geodesic curvatures of the sides is
$$ 2\pi\epsilon (l_{oo} - l_{ee}). $$
We see that the above two error contributions cancel among each other and hence there is no $\epsilon$-term in the Maslov index.

Thus we can throw away the $\epsilon$ terms.  The total geodesic curvature (mod $\epsilon$) of the sides equal to $2\pi \kappa (l_{oo} - l_{ee})$, and $l_{oo} - l_{ee}$ equals to the number of odd vertices minus the number of even vertices, that is $k_1+k_2+k_3 - k_1^- - k_2^- - k_3^-$.  Thus we obtain the following which is a generalization of the Maslov index formula in Proposition \ref{prop:GB}:
\begin{align*}
\mu_{\mathrm{CW}}(P)  = 
2\bigg(\frac{k_1-k_1^-}{a}+\frac{k_2-k_2^-}{b}+\frac{k_3-k_3^-}{c}& +  \frac{k_1^-+k_2^-+k_3^-}{2}  \\
& + (m-3(k_1+k_2+k_3-k_1^- - k_2^- - k_3^-)) \cdot \frac{\chi}{8}\bigg).
\end{align*}

Now consider a stable orbi-disc contributing to a term of $\m_k^{\tau,b}(\alpha_1,\ldots,\alpha_k)$ with the monomial $T^{m-3(n_1+n_2+n_3)}x^{n_1}y^{n_2}z^{n_3}$.  The corresponding dimension formula is
$$\mu_{\mathrm{CW}} + \delta + 2 \cdot \sum_{j} (1 - \iota_j) =2$$
where $\delta=0$ if the output is a zero-form or an immersed generator, $\delta=1$ if the output is a one-form.  
Combining, we have
$$ \left(m-3\left(n_1+n_2+n_3 + \sum_{i=0}^k s_i\right)\right) \cdot \frac{-\chi}{8} = \frac{n_1}{a}+\frac{n_2}{b}+\frac{n_3}{c}+ \sum_{i=0}^k \theta_i + \frac{\delta}{2} -1 + \sum_j (1-\iota_j). $$
Here, for $i=1,\ldots,k$, $s_i=1,-1,0$ depending on the input $\alpha_i$ being odd generators, even generators, or point classes respectively.  For $i=0$, $s_0=1,-1,0$ depending on the output being even generators, odd generators, or $\one_\BL,\pt_\BL$ respectively.   $\theta_i = 1/a,1/b,1/c$ if ($\alpha_i = X,Y,Z, i>0$) or ($\alpha_i=\bar{X},\bar{Y},\bar{Z},i=0$); $\theta_i = 1/2-1/a,1/2-1/b,1/2-1/c$ if ($\alpha_i = X,Y,Z, i=0$) or ($\alpha_i=\bar{X},\bar{Y},\bar{Z},i>0$).  $\theta_i=0$ in all other cases (that is, $\alpha_i$ is either $\one_\BL$ or $\pt_\bL$).

Then the argument goes in a similar way as the proof of Theorem \ref{thm:bdconv}.  We shall show that there are only finitely many possibilities of $n_i$ satisfying the above equality.  Then the boundedness of the valuation $m-3(n_1+n_2+n_3)$ will imply that of $m$.  This means that the area is bounded above.  It follows from Gromov compactness that there are just finitely many terms satisfying the area bound.

In the elliptic case  $-\chi=0$: the LHS $=0$.  Note that on the RHS, $n-i, \theta_i, \delta, 1-\iota_j$ are all non-negative.  Hence there are just finitely many possibilities of $n_i$ that makes it vanish.

In the spherical case $-\chi<0$: note that the proof of Lemma \ref{lem:geq0} still works for polygons with even-degree corners.  Thus $m-3(n_1+n_2+n_3) \geq 0$.  Then $\left(m-3\left(n_1+n_2+n_3 + \sum_{i=0}^k s_i\right)\right)$ is bounded below, and hence the LHS is bounded above.  Thus there are just finitely many possibilities of $n_i$ satisfying the above equality.

In the hyperbolic case, $-\chi>0$.  The valuation $m-3(n_1+n_2+n_3)$ being bounded above implies that the LHS is bounded above.  Hence there are just finitely many possibilities of $n_1,n_2,n_3$ satisfying the above equality.  
\end{proof}

%So far, we have discussed the properties of the potential function $W_\tau$. It can be shown as in \cite{FOOO_MS} that if we choose a different representative ($\Z/2$-symmetric points) of point class for $\tau$, we may get a different potential function, but they are equivalent in the Jacobian ring.

We conclude by summarizing some of the results and providing an explicit  formula for the low energy terms of the potential $W_\tau$.

\begin{prop}\label{coro:w-8}
	$W_\tau$ is a convergent series. Moreover, if $val(\tau)>0$, we have
	\begin{equation}\label{eq:Wlow}
	W_\tau= -T^{-8} xyz + x^a+y^b+z^c+ W_{high}, 
	\end{equation}
	where $val(W_{high})\geq \lambda >0$ for some $\lambda$ and the representative for $[\pt]$ is chosen not to intersect the minimal triangles. 
\end{prop}
\begin{proof}
	Convergence of $W_\tau$ is the content of Theorem \ref{thm:bdconv}. Lemma \ref{lem:geq0}, combined with $val(\tau)>0$, shows that the only possible monomials in $W_\tau$ with $val\leq 0$ are exactly the ones in (\ref{eq:Wlow}). We then only need to show why the coefficients of these monomials, which in our setting are computed after CF-perturbations, agree with the count in \cite{CHL17} using another model. 
	
	Consider the case of the monomial $xyz$, whose contribution comes from the two minimal triangles. The relevant moduli spaces are (topologically) six smooth closed intervals, which parameterize the position of the zero-th marked point on the six arcs of the Seidel Lagrangian (determined by the self-intersections). When the marked point is away from the self-intersections, the evaluation map $ev_0$ is weakly submersive and in fact a local diffeomorphism. Therefore, following the construction of the Kuranishi structure and CF-perturbations \cite[Section 5]{F10} (or alternatively \cite[Section 12]{FOOO_new}), there is no CF-pertubation (or rather, it is the trivial one on some sub-intervals) away from intersection points. Hence on these sub-intervals the output of \eqref{eq:mktau} is simply a constant function which agrees with the naive count. Since by Proposition \ref{prop:weakunobsbulk}, $\m_0^{\tau,b}$ is constant, we can conclude the contribution of the minimal triangles to $W_\tau$ is exactly $-T^8$ (after the change of variables). The other monomials can be argued in the same way.
\end{proof}

We point out that the signs of the coefficients depend on the choice of the location of a generic point (representing the nontrivial spin structure). Nevertheless, one can always make the potential into the form above by substituting variables with their negatives. 

For convenience, we choose the representative of the point class away from the minimal triangles bounded by the Seidel Lagrangian for the rest of the paper.
Note however that if we choose a representative of point class in one of the minimal triangle, then we may get bulk deformed contribution of the minimal triangle, which
have a negative valuation. Nevertheless, the coefficient $\tau_2$ of $[\pt]$ has a non-negative valuation, and hence does not affect the valuation of the coefficient of $xyz$. More specifically, the potential still admits an expression
\begin{equation}\label{eqn:leadingterms-xixyz}
	W_\tau=  -\xi xyz + x^a+y^b+z^c+ W_{high}, 
	\end{equation}
and we still have $val(\xi) = -8$ in this case. 
We refer to Section 5 for a more detailed study of the dependence of $W_\tau$ on the choice of a representative of the point class.

\subsection{\texorpdfstring{Fukaya algebra of $\BL$}{Fukaya algebra of L}}\label{sec:FA}

%We have shown that for each $b$, $\m_1^{\tau,b}$ is a differential and its cohomology $HF^*(\BL,\tau, b)$ is (up to a change in sign) an algebra with product $\m_2^{\tau,b}$.
In this section we will give a partial description of the Fukaya algebra $\CF(\BL,\tau,\Lambda\langle\langle x,y,z\rangle\rangle)$ described in Proposition \ref{prop:fukalgconv}.
In fact it is more convenient to work on the canonical model of $\CF(\BL,\tau,\Lambda\langle\langle x,y,z\rangle\rangle)$. 
That is, we use the homological perturbation lemma to transfer the $\AI$-algebra structure to $H^*(\BL)$.  
We denote by $HF^*(\BL, \tau, \Lambda\langle\langle x,y,z\rangle\rangle)$ the canonical model and denote the $\AI$ operations by $\m_{k,can}^{\tau,b}$.  
%$H^*(\BL, \Lambda\langle\langle x,y,z\rangle\rangle)$ should be thought of as a family of $\AI$-algebras over $\Lambda$, and over each $(x,y,z)$ in $\Lambda_0^3$ and $\tau$ with $\mathrm{val}\, (\tau)>0$ sits a well-defined $\AI$-algebra modeled on $H^*(\BL, \Lambda)$ (by Proposition \ref{prop:fukalgconv}) whose $\AI$-operations are deformed by $b=xX + yY + zZ$ and $\tau$.

\begin{lemma}\label{lem:m2rel}
	Let $p$ be the odd degree generator of $H^*(\bS^1)$. We have the following identities:
	$$\m_{2,can}^{\tau,b}(X,Y)=\bar{Z}T+ T^3\xi_1 + T^{-2}d_1 \one_\BL ,$$
	$$\m_{2,can}^{\tau,b}(Y, Z)=\bar{X}T+ T^3\xi_2 + T^{-2}d_2 \one_\BL,$$
	$$\m_{2,can}^{\tau,b}(Z, X)=\bar{Y}T+ T^3\xi_3 + T^{-2}d_3 \one_\BL,$$
	$$\m_{2,can}^{\tau,b}(X,\bar{X})=(1+c_1)p+\eta_1,$$
	$$\m_{2,can}^{\tau,b}(Y,\bar{Y})=(1+c_2)p+\eta_2,$$
	$$\m_{2,can}^{\tau,b}(Z,\bar{Z})=(1+c_3)p+\eta_3,$$
	where each $\xi_i$ is a linear combination of $\bar{X},\bar{Y},\bar{Z}$ with $val(\xi_i)\geq0$, $val(d_i)\geq0$; each $\eta_i$ is a linear combination of $X,Y,Z$ and $c_i$ is an element of $\Lambda_+$.
\end{lemma}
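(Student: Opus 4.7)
The plan is to enumerate the holomorphic (orbi-)disc contributions to each $\m_{2,can}^{\tau,b}$, separating the leading-order configurations from the higher-order corrections generated by $b$-insertions and bulk insertions by $\tau$, and to control the valuations of the corrections using Lemma \ref{lem:geq0} together with its generalization proved in Proposition \ref{prop:fukalgconv}. The canonical model point of view will be essential for identifying the $\one_\BL$-components, since in the minimal model the unit-projection is produced by harmonic projection of the zero-form output of the raw Fukaya operations.

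For the first three identities, I would start by observing that the upper minimal triangle $M_u$, with area $1$ and convex corners at the three self-intersection points of $\BL$, is the unique area-$1$ configuration contributing to $\m_{2,can}^{\tau,b}(X,Y)$ with inputs at the $X,Y$-corners, and it supplies the leading term $T\bar{Z}$. By the mixed-corner Gauss--Bonnet bound that appears inside the proof of Proposition \ref{prop:fukalgconv}, every other disc with $X,Y$-inputs, output at an even immersed generator, and any number of $b$-insertions has $T$-valuation $\geq 3$ after the coordinate change \eqref{eqn:chvar}, producing the $T^{3}\xi_1$ term with $\mathrm{val}(\xi_1)\geq 0$. The $\one_{\BL}$-component comes instead from configurations in which the output boundary marked point is not pinned at an immersed corner; after the homological perturbation, it is detected by the harmonic projection of the zero-form part of the output on $\BL$. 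The minimal configuration contributing here is $M_u$ again, this time viewed as a disc with three immersed corners $X,Y,Z$ and a floating output marked point, where the extra $Z$-corner is absorbed by a single $b$-insertion. The factor $T^{-3}z$ from the $b$-insertion combines with the area factor $T$ from $M_u$ to produce a leading $T^{-2}z\,\one_\BL$ contribution, and the same valuation bookkeeping forces every further contribution to this component to be of valuation $\geq -2$, yielding the $T^{-2}d_1\,\one_{\BL}$ term with $\mathrm{val}(d_1)\geq 0$. The identities for $\m_{2,can}^{\tau,b}(Y,Z)$ and $\m_{2,can}^{\tau,b}(Z,X)$ then follow by the same analysis together with the cyclic symmetry of the Seidel configuration.

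For the last three identities, the leading term $p$ in $\m_{2,can}^{\tau,b}(X,\bar{X})$ arises from the classical (area-zero) intersection pairing at the self-intersection point, where the two transversal sheets of $\BL$ pair $X$ and $\bar{X}$ to the point class $p\in H^1(\BL)$. All genuine disc corrections to the $p$-coefficient have strictly positive area since no minimal-area configuration is available for the input pair $(X,\bar X)$; hence the resulting correction $c_1$ lies in $\Lambda_+$. The absence of corrections proportional to $\bar X,\bar Y,\bar Z$ follows from the $\Z/2$-reflection cancellation argument already used to establish weak unobstructedness in Proposition \ref{prop:weakunobsbulk}: the anti-symplectic involution pairs every disc contributing to an even-immersed output with a disc contributing the oppositely-signed version of the same output, producing cancellation. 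The residual odd-immersed contributions assemble into $\eta_1$, a linear combination of $X,Y,Z$ over $\Lambda\langle\langle x,y,z\rangle\rangle$, convergent by Proposition \ref{prop:fukalgconv}. The remaining two identities follow by the same reasoning at the other two self-intersection points.

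The main obstacle will be the careful handling of the $T^{-2}d_i\,\one_{\BL}$ terms. Unlike the rigid-disc contributions that give immersed-generator outputs, these arise from a pushforward of forms along the boundary evaluation map on a one-dimensional moduli of discs with a floating output marked point, and extracting the correct leading coefficient requires an explicit unpacking of the canonical-model construction via homological perturbation and harmonic projection. A secondary subtlety is the sharpness of the valuation bounds on $\xi_i$, $d_i$, and $c_i$, which requires the extended Gauss--Bonnet bookkeeping with mixed odd/even corners together with careful tracking of the signs produced by the non-trivial spin structure on $\BL$.
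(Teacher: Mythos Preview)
Your approach is essentially the same as the paper's: identify the minimal triangle (resp.\ the constant disc at the self-intersection) as the leading contribution, and then use the valuation estimates underlying Lemma~\ref{lem:geq0} and Proposition~\ref{prop:fukalgconv} to control the remaining terms. The paper packages the $\one_\BL$-component in the first three identities more cleanly than you do: rather than tracking a floating output marked point through the homological perturbation, it simply observes that this coefficient is $\partial_{\tilde x}\partial_{\tilde y}$ of the potential (equivalently, $T^6\partial_x\partial_y W_\tau$), so the leading $T^{-2}$ and the bound $\mathrm{val}(d_i)\geq 0$ are read off directly from Corollary~\ref{coro:w-8}.

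There is one genuinely incorrect step. In your treatment of $\m_{2,can}^{\tau,b}(X,\bar X)$ you invoke the $\Z/2$-reflection cancellation of Proposition~\ref{prop:weakunobsbulk} to rule out $\bar X,\bar Y,\bar Z$-outputs. This is both unnecessary and inapplicable. It is unnecessary because the output already has odd $\Z/2$-degree (one odd input $X$, one even input $\bar X$, and odd $b$-insertions), so even immersed generators are excluded by grading alone. It is inapplicable because the reflection pairs a polygon contributing to $\m_k(\ldots,X,\ldots,\bar X,\ldots)$ with one contributing to $\m_k(\ldots,\bar X,\ldots,X,\ldots)$, a different operation; no cancellation occurs within $\m_{2,can}^{\tau,b}(X,\bar X)$ itself. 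The paper instead uses a direct geometric constraint---no holomorphic polygon bounded by $\BL$ can have all corners odd except for a single even one, because of the orientation pattern of $\BL$ along its boundary---to conclude that every nonconstant polygon contributing here has its output at an odd immersed corner. This is what forces the decomposition $(1+c_1)p+\eta_1$ with $\eta_1$ supported on $X,Y,Z$; your sketch arrives at the same decomposition, but via the wrong mechanism.
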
 
\begin{proof}
First recall that the operations on the canonical model are given by summing over trees decorated by the original operations $\m_{k}$. Therefore the minimal area terms contributing to $\m_{2,can}^{\tau,b}$ will be the same as $\m_{2}^{\tau,b}$. Hence the minimal triangle with $X,Y,Z$-corners in counter-clockwise order gives the first term in formula above for $\m_{2,can}^{\tau,b}(X,Y)$. Also, the last term is essentially $\partial^2 / \partial x \partial y$ applied to the bulk deformed potential evaluated at $(T^3 x, T^3 y, z)$. This is because $X = \tilde{x} X|_{\tilde{x} =1}$ should be interpreted as  $x X|_{x = T^3} = T^3 X$ after change of variables (in particular, when computing in $x,y,z$-variables) and similar for $Y$. Therefore $T^{-2}$ again comes from the minimal triangle, and that is the smallest valuation among the terms in the coefficient of $\one_\BL$ by Proposition \ref{coro:w-8}. Finally, $T^3 \xi_1$ comes from the polygons apart from the minimal triangle, where one of their corners are used as outputs. Since the output (one of $\bar{X},\bar{Y},\bar{Z}$) in this case has valuation zero unlike variables $x,y,z$, we have additional $T^3$ (which we would lose if the corresponding corner was not an output).
The valuations in $\m_{2,can}^{\tau,b}(Y, Z)$ and $\m_{2,can}^{\tau,b}(Z, X)$ can be estimated in a similar way.

For $\m_{2,can}^{\tau,b}(X,\bar{X})$, constant triangle contributes to $p$ which gives the first term. In general, $\m_{2,can}^{\tau,b}(X,\bar{X})$ should be of odd degree. In fact, there does not exist a polygon
whose corners are $X,Y,Z$'s except one of $\bar{X},\bar{Y},\bar{Z}$ corner due to orientation of Lagrangian.
Thus any non-trivial polygon contributing to $\m_{2,can}^{\tau,b}(X,\bar{X})$ should have an output in $X,Y,Z$.
%(so that it has additional corner of  $\bar{X},\bar{Y}$ or$ \bar{Z}$).
%
%	{\color{red} Is this correct? Are there any signs missing?  
%	ANS) Yes, we need to correct $m_2(X,\bar{X}) = - \pt$...signs of the leading terms could be different from the above by overall multiplication by $(-1)$, depending on the position of spin-point: the above agrees with the one in Seidel-genus-2.}
\end{proof}

\begin{lemma}
	Let $R(X,Y,Z)$ be the subring of $HF^*(\BL, \Lambda\langle\langle x,y,z\rangle\rangle)$ generated by $X,Y,Z$. There exists $r, s, t$ in the closure of $R(X,Y,Z)$ and $q_1,q_2,q_3 \in \Lambda\langle\langle x,y,z\rangle\rangle$ such that
	$$\bar{X} = r + q_1 \one_\BL,$$
	$$\bar{Y} = s + q_2 \one_\BL,$$
	$$\bar{Z} = t + q_3 \one_\BL.$$
\end{lemma}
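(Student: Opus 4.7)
The plan is to invert the first three identities in Lemma \ref{lem:m2rel} as a linear system in the unknowns $\bar{X}, \bar{Y}, \bar{Z}$. Solving for $\bar{Z}$ from $\m_{2,can}^{\tau,b}(X,Y) = T\bar{Z} + T^3\xi_1 + T^{-2}d_1\,\one_\BL$, and analogously for $\bar{X}$ and $\bar{Y}$, I would collect the system in matrix form as
$$(I + T^2 B)\,\bar{\mathbf{v}} = \mathbf{r} + \mathbf{q}\,\one_\BL,$$
where $\bar{\mathbf{v}} = (\bar{X},\bar{Y},\bar{Z})^T$, the matrix $B \in M_3(\Lambda\langle\langle x,y,z\rangle\rangle)$ records the coefficients with which $\bar{X},\bar{Y},\bar{Z}$ appear inside the $\xi_i$ and therefore has all entries of valuation $\geq 0$, $\mathbf{r} = T^{-1}\bigl(\m_{2,can}^{\tau,b}(Y,Z),\, \m_{2,can}^{\tau,b}(Z,X),\, \m_{2,can}^{\tau,b}(X,Y)\bigr)^T$ lies in $R(X,Y,Z)^3$ by the very definition of $R(X,Y,Z)$, and $\mathbf{q} = -T^{-3}(d_2,d_3,d_1)^T \in \Lambda\langle\langle x,y,z\rangle\rangle^3$.

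Since $T^2 B$ has all entries of valuation $\geq 2$, the Neumann series $\sum_{k\geq 0}(-T^2 B)^k$ converges in the $T$-adic topology to a matrix over $\Lambda\langle\langle x,y,z\rangle\rangle$ which inverts $I + T^2 B$. Applying this inverse I obtain
$$\bar{\mathbf{v}} \,=\, (I + T^2 B)^{-1}\mathbf{r} \,+\, \bigl((I + T^2 B)^{-1}\mathbf{q}\bigr)\,\one_\BL.$$
The first summand is a $T$-adic limit of elements of $R(X,Y,Z)^3$ (using that $R(X,Y,Z)$ is a $\Lambda\langle\langle x,y,z\rangle\rangle$-subalgebra, hence stable under the scalar action of $B$), so it lies in the closure of $R(X,Y,Z)^3$ and produces the desired $r, s, t$; the second is a $\Lambda\langle\langle x,y,z\rangle\rangle^3$-multiple of $\one_\BL$ and yields $q_1, q_2, q_3$.

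The only point I expect to require care is valuation bookkeeping. Because of the coordinate change \eqref{eqn:chvar}, some coefficients in $\mathbf{r}$ and $\mathbf{q}$ acquire negative valuation (the worst being $-3$ in the $T^{-3}d_i$ term), so the Neumann series must be shown to converge starting from such inputs. This is immediate: the factor $T^{2k}$ in $(-T^2 B)^k$ forces the valuation of the $k$-th partial correction to be at least $2k - 3$, which tends to $+\infty$. The essential geometric input is therefore the single fact that $\xi_i$ appears with the coefficient $T^3$ in Lemma \ref{lem:m2rel} — any strictly positive power of $T$ would suffice — and this is precisely what Lemma \ref{lem:geq0} guarantees for polygons other than the minimal triangle.
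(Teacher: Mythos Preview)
Your proposal is correct and takes essentially the same approach as the paper: the paper carries out an explicit induction, at each step using the three relations from Lemma~\ref{lem:m2rel} to replace the residual $\bar X,\bar Y,\bar Z$ terms and gain a factor of $T^2$, then passes to the limit. Your matrix formulation with the Neumann series $(I+T^2B)^{-1}=\sum_{k\ge 0}(-T^2B)^k$ is exactly this iterated substitution written compactly, and your valuation bound $2k-3$ matches the paper's $val(t_k-t_{k-1})\ge 2k-3$.
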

\proof
 The proof of the three statements is identical, we prove the last one. First we prove by induction, that for each integer $k\geq1$ there are $t_k\in R(X,Y,Z)$ and $c_k \in \Lambda\langle\langle x,y,z\rangle\rangle$ such that $E_k=\bar{Z}- t_k-c_k 1_L$ is a linear combination of $\bar{X},\bar{Y},\bar{Z}$ and
 $$val(\bar{Z}- t_k-c_k 1_L)\geq 2k, \ val(t_{k}-t_{k-1})\geq 2k-3, \ val(c_{k}-c_{k-1})\geq 2k-3.$$
 
 The first equation in Lemma \ref{lem:m2rel}, gives the case $k=1$ with $t_1=T^{-1}\m_{2,can}^{\tau,b}(X,Y)$  and $c_1=T^{-3}d_1$. Assuming the statement for $k$, we have 
 $$E_k=T^{2k}\left(\alpha\bar{X}+\beta\bar{Y}+ \gamma\bar{Z}  \right),$$
 where $val(\alpha,\beta,\gamma)\geq0$. Now using the formulas for $\bar{X},\bar{Y},\bar{Z}$ given by the first three equations in Lemma \ref{lem:m2rel} we can write
 $$E_k=T^{2k}\left(T^{-3}R+ d 1_L + T^{2}J  \right),$$
 where $R\in R(X,Y,Z) $, $J$ is a linear combination of $\bar{X},\bar{Y},\bar{Z}$ and $val(R)\geq0$, $val(d)\geq-3$, $val(J)\geq0$.
 Therefore we have by induction
 $$\bar{Z}=t_k+T^{2k-3}R + (c_k+dT^{2k})1_L + T^{2k+2}J.$$
 Hence we can take $t_{k+1}=t_k+T^{2k-3}R$ and $c_{k+1}=c_k+dT^{2k}$, satisfying the conditions required.
 
 Finally we simply take the limits $t=\lim_{k} t_k$ and $c=\lim_{k} c_k$.
\qed

We are now ready to prove the main proposition in this subsection.

\begin{prop}\label{prop:Imdiff}
The image of $\m_{1,can}^{\tau,b}$ is contained in the Jacobian ideal. That is, we have the following
	$$\textrm{Im}\left(\m^{\tau,b}_{1,can}\right) \subset  \ \langle \partial_x W_\tau, \partial_y W_\tau, \partial_z W_\tau\rangle \cdot HF^*(\BL, \Lambda\langle\langle x,y,z\rangle\rangle).$$	
\end{prop}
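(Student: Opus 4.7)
The plan is to reduce the statement to checking that $\m_{1,can}^{\tau,b}$ maps each of the finitely many cohomology generators $\one_\BL, p, X, Y, Z, \bar{X}, \bar{Y}, \bar{Z}$ of $H^*(\BL, \Lambda\langle\langle x,y,z\rangle\rangle)$ into the Jacobian ideal, and then exploit $\Lambda\langle\langle x,y,z\rangle\rangle$-linearity of $\m_{1,can}^{\tau,b}$ together with the $\AI$-identity relating $\m_1$ and $\m_2$ to bootstrap.

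The key input is the formula
$$\m_{1,can}^{\tau,b}(X) = T^3 \partial_x W_\tau \cdot \one_\BL,$$
with analogous expressions for $Y$ and $Z$. This comes from a variational argument: the perturbed cochain $b + tX = T^{-3}\bigl((x + T^3 t) X + y Y + z Z\bigr)$ is again weakly unobstructed by Proposition \ref{prop:weakunobsbulk}, with curvature $W_\tau(x + T^3 t, y, z) \cdot \one_\BL$, so comparing the linear-in-$t$ terms in the two natural expansions of $\m_0^{\tau, b + tX}$ yields the formula. These elements manifestly lie in the Jacobian ideal, and the case $\one_\BL$ is trivial by the strict unit axiom.

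To propagate this to $\m_2$-products, I use the $\AI$-identity (the $\m_{\geq 3}$ terms with $\m_0 = W_\tau \cdot \one_\BL$ vanish by the strict unit axiom)
$$\m_{1,can}^{\tau,b}\bigl(\m_{2,can}^{\tau,b}(\alpha, \beta)\bigr) = -\m_{2,can}^{\tau,b}\bigl(\m_{1,can}^{\tau,b}(\alpha), \beta\bigr) - (-1)^{|\alpha|} \m_{2,can}^{\tau,b}\bigl(\alpha, \m_{1,can}^{\tau,b}(\beta)\bigr),$$
together with $\m_{2,can}^{\tau,b}(\partial_\bullet W_\tau \cdot \one_\BL, \beta) = \partial_\bullet W_\tau \cdot \beta$. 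An induction on word length in $X, Y, Z$ then shows that $\m_{1,can}^{\tau,b}(\gamma)$ lies in the Jacobian ideal for every $\gamma \in R(X, Y, Z)$. Invoking $T$-adic continuity of the $\AI$-operations (Proposition \ref{prop:fukalgconv}) together with closedness of finitely generated ideals in the Noetherian Tate algebra $\Lambda\langle\langle x,y,z\rangle\rangle$, this extends to the closure $\overline{R(X, Y, Z)}$. The preceding lemma then disposes of $\bar{X}, \bar{Y}, \bar{Z}$, as they differ from elements of $\overline{R(X, Y, Z)}$ by scalar multiples of $\one_\BL$, which are annihilated by $\m_{1,can}^{\tau,b}$.

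For the remaining generator $p$, Lemma \ref{lem:m2rel} provides $\m_{2,can}^{\tau,b}(X, \bar{X}) = (1 + c_1) p + \eta_1$ with $c_1 \in \Lambda_+$ (so $1 + c_1$ is invertible) and $\eta_1$ a linear combination of $X, Y, Z$. Applying the $\AI$-relation to the left-hand side and using the cases already established gives
$$(1+c_1)\m_{1,can}^{\tau,b}(p) = \m_{1,can}^{\tau,b}\bigl(\m_{2,can}^{\tau,b}(X, \bar{X})\bigr) - \m_{1,can}^{\tau,b}(\eta_1),$$
both of whose right-hand side terms lie in the Jacobian ideal; dividing by the unit $1 + c_1$ completes the proof. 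The principal technical subtlety is the $T$-adic limit argument that extends the conclusion from $R(X, Y, Z)$ to $\overline{R(X, Y, Z)}$, which must be justified by combining the convergence statement of Proposition \ref{prop:fukalgconv} with closedness of the finitely generated Jacobian ideal in the Tate algebra.
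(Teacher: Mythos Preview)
Your proposal is correct and follows essentially the same approach as the paper: both derive $\m_{1,can}^{\tau,b}(X) = T^3 \partial_x W_\tau \cdot \one_\BL$ by differentiating the weak Maurer-Cartan equation, propagate through $R(X,Y,Z)$ and its closure via the Leibniz rule and closedness of ideals in the Tate algebra, invoke the preceding lemma for $\bar X,\bar Y,\bar Z$, and finally handle $p$ using Lemma~\ref{lem:m2rel}. Your explicit justification that the $\m_{\geq 3}(\m_0,\ldots)$ terms in the $\AI$-identity vanish by the strict unit axiom is a detail the paper leaves implicit when it says ``Leibniz rule''.
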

\begin{proof}
In order to prove this proposition one first differentiates the Maurer-Cartan equation
\begin{align*}
\frac{\partial W}{\partial \tilde{x}}(\tau,b)\cdot \one_\BL=\sum_{k_1,k_2\geq 0}\m^\tau_{k_1+k_2+1}(\overbrace{b,\ldots,b}^{k_1},X,\overbrace{b,\ldots,b}^{k_2})=\m_1^{\tau,b}(X).
\end{align*}
We have analogous identities for $Y$ and $Z$. Taking the change of variables into account we have
$$\m_{1,can}^{\tau,b}(X)=T^3\frac{\partial W_\tau}{\partial x}(b)\one_\BL, \ \m_{1,can}^{\tau,b}(Y)=T^3\frac{\partial W_\tau}{\partial y}(b)\one_\BL \ \textrm{and} \ \m_{1,can}^{\tau,b}(Z)=T^3\frac{\partial W_\tau}{\partial z}(b)\one_\BL.$$

By the previous lemma,

$$\m_{1,can}^{\tau,b}(\bar{X})= \m_{1,can}^{\tau,b}(r) + q_1\m_{1,can}^{\tau,b}(1_L)= \m_{1,can}^{\tau,b}(r) ,$$
 
 Now, given the Leibniz rule for $\m_{1,can}^{\tau,b}$ and $\m_{2,can}^{\tau,b}$,  we have that $\m_{1,can}^{\tau,b}(R(X,Y,Z))$ is contained in the Jacobian ideal. Recall from \cite[Section 5.2.7]{BGR}, that the Jacobian ideal, like any ideal in the Tate algebra is closed. Therefore $\m_{1,can}^{\tau,b}$ of the closure of $R(X,Y,Z)$ is also in the Jacobian ideal. Hence $\m_{1,can}^{\tau,b}(\bar{X})$ is in the Jacobian ideal. The same is true for $\bar{Y}$ and $\bar{Z}$.
 	
Finally, from the fourth equation in Lemma \ref{lem:m2rel} we have
$$\m_{1,can}^{\tau,b}(p)=(1+c_1)^{-1}\left(\m_{1,can}^{\tau,b}(\m_{2,can}^{\tau,b}(X,\bar{X}))+\m_{1,can}^{\tau,b}(\eta_1)\right),$$
since $1+c_1$ is invertible.
Again, it follows from the Leibniz rule that the first term on the right is in the Jacobian ideal. Recall, by construction $\eta_1$ is a linear combination of  $X,Y,Z$ and therefore we conclude that $\m_{1,can}^{\tau,b}(\eta_1)$ is in the Jacobian ideal, which completes the proof.
\end{proof}

\begin{prop}\label{floerhom}
	The cohomology $HF^*(\BL,\tau, b)$ is nonzero if and only if $(x,y,z)$ (corresponding to $b$) is a critical point of $W_\tau$. 
	In this case, $HF^*(\BL, \tau, b)$ is isomorphic to 
	$$H^*(\BL):=\left(H^*(\bS^1)\oplus\bigoplus_{X,Y,Z}\Lambda_0^{\oplus 2}\right)\otimes\Lambda,$$ 
	as a vector space.
\end{prop}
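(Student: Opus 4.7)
The proof naturally splits into two cases depending on whether or not $(x,y,z)$ corresponds to a critical point of $W_\tau$.

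\emph{Case 1: $(x,y,z)$ is a critical point of $W_\tau$.} All three generators $\partial_x W_\tau$, $\partial_y W_\tau$, $\partial_z W_\tau$ of the Jacobian ideal vanish at this point. Evaluating the inclusion of Proposition \ref{prop:Imdiff} at $(x,y,z)$, the image of $\m_1^{\tau,b}$ on $H^*(\BL,\Lambda)$ is trivial, i.e.\ $\m_1^{\tau,b}\equiv 0$. Consequently $HF^*(\BL,\tau,b)=H^*(\BL)$ as a $\Lambda$-vector space, which is the stated identification; in particular it is nonzero.

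\emph{Case 2: $(x,y,z)$ is not a critical point.} Without loss of generality assume $\partial_x W_\tau(x,y,z)\neq 0$. Differentiating the weak Maurer-Cartan equation $\m_0^{\tau,b}=\cP(\tau,b)\one_\BL$ with respect to $\tilde{x}$, as done at the start of the proof of Proposition \ref{prop:Imdiff}, yields
$$\m_1^{\tau,b}(X)=T^{3}\,\partial_x W_\tau(x,y,z)\cdot\one_\BL.$$
Since $\Lambda$ is a field, the scalar on the right is invertible, so $\one_\BL$ lies in the image of $\m_1^{\tau,b}$. Pick $\beta\in H^*(\BL,\Lambda)$ with $\m_1^{\tau,b}(\beta)=\one_\BL$. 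For any $\m_1^{\tau,b}$-closed element $\alpha$, the $\AI$-relation applied to the pair $(\alpha,\beta)$,
$$\m_1^{\tau,b}\bigl(\m_2^{\tau,b}(\alpha,\beta)\bigr)\;\pm\;\m_2^{\tau,b}\bigl(\m_1^{\tau,b}(\alpha),\beta\bigr)\;\pm\;\m_2^{\tau,b}\bigl(\alpha,\m_1^{\tau,b}(\beta)\bigr)=0,$$
together with the unit relation $\m_2^{\tau,b}(\alpha,\one_\BL)=\pm\alpha$, shows that $\alpha$ is $\m_1^{\tau,b}$-exact. Hence $HF^*(\BL,\tau,b)=0$.

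The only delicate point is unitality: after homological perturbation $\m_2^{\tau,b}$ need not be strictly unital, only unital up to homotopy. This is handled in a standard way -- either by passing to a strictly unital minimal model, or by tracking the homotopy unit so that $\m_2^{\tau,b}(\alpha,\one_\BL)-\alpha$ is itself an $\m_1^{\tau,b}$-exact correction -- and the conclusion of Case 2 is unaffected.
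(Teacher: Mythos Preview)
Your proof is correct and follows essentially the same approach as the paper. Both arguments use Proposition \ref{prop:Imdiff} for the critical-point case, and the formula $\m_1^{\tau,b}(X)=T^3\partial_x W_\tau\cdot\one_\BL$ together with unitality for the non-critical case; the paper states the latter step more tersely (``since $\one_\BL$ is the identity in $HF^*(\BL,\tau,b)$, this immediately implies $HF^*=0$''), whereas you unpack it via the $\AI$-relation, but the content is the same.
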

\begin{proof}
From the previous proposition we have
$$\m_{1,can}^{\tau,b}(X)=T^3\frac{\partial W_\tau}{\partial x}(b)\one_\BL, \ \m_{1,can}^{\tau,b}(Y)=T^3\frac{\partial W_\tau}{\partial y}(b)\one_\BL \ \textrm{and} \ \m_{1,can}^{\tau,b}(Z)=T^3\frac{\partial W_\tau}{\partial z}(b)\one_\BL.$$

Since $\one_\BL$ is the identity in $HF^*(\BL, \tau, b)$, this immediately implies that $HF^*(\BL, \tau, b)$ is zero if $(x,y,z)$ is not a critical point of $W_\tau$. 

For the converse, note that Proposition \ref{prop:Imdiff} implies that $\textrm{Im}(\m_{1,can}^{\tau,b})=0$, when $(x,y,x)$ is a critical point. Hence the differential $\m_{1,can}^{\tau,b}=0$, which implies that $HF^*(\BL, \tau, b)$ is isomorphic to $H^*(\BL)$.
\end{proof}

Moreover, one can show that when $b$ is a critical point, $HF^*(\BL,\tau, b)$ is isomorphic, as a ring, to the Clifford algebra associated to the Hessian of $W_\tau$ at the point $b$. But we will not make use of this fact.

\section{Dependence of the potential on chain level representatives of bulk}\label{sec:ptdependence}
In this section, we prove that if we change the $\Z/2$-representative $\tau^2\mathbf{p}$ for the $H^2(\Pabc, \Lambda_0)$ component of the bulk deformation, the associated potentials are
related by a coordinate change. We start with the definition of this notion.

\begin{defn}\label{defn:coord}
	A coordinate change is a map $\varphi: \Lambda\langle\langle x',y',z'\rangle\rangle\rightarrow\Lambda\langle\langle x,y,z\rangle\rangle$ of the form
	$$x'\to c_1 x + u_1, \ y'\to c_2 y + u_2, \ z'\to c_3 z + u_3,$$
	where $c_i\in \mathbb{C}^*$ and $u_i \in \Lambda\langle\langle x,y,z\rangle\rangle$ satisfy $val(u_i)>0$, for $i=1,2,3$.
\end{defn}

\begin{prop}\label{prop:tau2tau2'cc}
Let $\tau_2$ and $\tau_2'$ be two $\Z/2$-invariant representatives of the same class in $H^2(\Pabc, \Lambda_0)$.  Their associated potentials $W_{\tau_2}$ and $W_{\tau_2'}$ are related by a coordinate change
$$ x'= \exp (k_X) x, \quad y'= \exp (k_Y) y, \quad  z'= \exp (k_Z) z,$$
with $k_X, k_Y, k_Z \in \Lambda_0$, i.e., $W_{\tau_2} (x',y',z') = W_{\tau_2'} (x,y,z)$. The coefficients $k_X, k_Y, k_Z$ are given in Lemma \ref{lemma:kkkxyzrefinv}.
\end{prop}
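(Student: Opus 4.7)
Since $[\tau_2]=[\tau_2']$ in $H^2(\Pabc,\Lambda_0)$, there is a 1-chain $\alpha$ with $\partial\alpha=\tau_2-\tau_2'$; by averaging over the anti-symplectic involution $\iota$, I may choose $\alpha$ to be $\Z/2$-invariant and supported away from both the immersed points of $\BL$ and the orbi-points. For each disc class $\beta$ contributing to $W_\tau$, Stokes' theorem then yields
\[
\exp\bigl(\tau^2(\tau_2-\tau_2')\cap\beta\bigr)=\exp\bigl(\pm\tau^2\,\alpha\cdot\partial\beta\bigr).
\]
The heart of the proof is to show that $\alpha\cdot\partial\beta$ depends on $\beta$ only through its corner counts and decomposes as $\pm(n_X\kappa_X+n_Y\kappa_Y+n_Z\kappa_Z)$ for constants $\kappa_X,\kappa_Y,\kappa_Z$ determined by $\alpha$. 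Setting $k_*:=\tau^2\kappa_*$ then makes the change in the coefficient of every monomial $x^{n_X}y^{n_Y}z^{n_Z}$ of $W_\tau$ equal to a factor $\exp(n_Xk_X+n_Yk_Y+n_Zk_Z)$, which is precisely the effect of the diagonal substitution $x'=e^{k_X}x,\ y'=e^{k_Y}y,\ z'=e^{k_Z}z$.

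To prove the decomposition, I would lift $\partial\beta$ to the normalization $\bS^1$ of $\BL$, which carries six preimages of the immersed points $X,Y,Z$ and thus six minimal arcs. The $\Z/2$-invariance of $\alpha$ identifies these arcs in three reflection-related pairs, so the intersection pairing $\alpha\cdot(\cdot)$ descends to a linear functional on the $\Z/2$-coinvariant of the relevant relative $H_1$. For each corner type $*\in\{X,Y,Z\}$ I would construct an explicit $\Z/2$-symmetric reference class $\sigma_*$ (a $\Z/2$-invariant 1-chain on $\bS^1$ localized near the two preimages of the immersed point $*$) and verify that every holomorphic orbi-disc $\beta$ contributing to $W_\tau$ satisfies $[\partial\beta]=n_X\sigma_X+n_Y\sigma_Y+n_Z\sigma_Z$ in the coinvariant, using the convexity of the corner angles, the orientation consistency of the boundary, and, for discs wrapping an orbifold point, the Gauss--Bonnet identity of Proposition \ref{prop:GB} to control how the boundary sweeps the arcs. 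The constants $\kappa_*:=\alpha\cdot\sigma_*$ then give the decomposition, and the explicit formulas in Lemma \ref{lemma:kkkxyzrefinv} would fall out of this construction.

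\textbf{Main obstacle.} The identity $[\partial\beta]=n_X\sigma_X+n_Y\sigma_Y+n_Z\sigma_Z$ does not hold in the unreduced relative $H_1$: disc classes with the same corner counts can cover disjoint sets of minimal arcs (for example, the upper and lower minimal triangles), and the basic orbi-discs $x^a,y^b,z^c$ cover arcs with high multiplicity in ways very unlike those of the minimal triangles. It is only after descent to the $\Z/2$-coinvariant, using the reflection symmetry to identify upper and lower configurations, that the identity holds uniformly. Carrying this out consistently across all disc classes that contribute to the potential---including high-area orbi-discs with arbitrary orbi-insertions---and matching the combinatorial signs arising from the spin structure and the orientation choices, constitutes the bulk of the technical work.
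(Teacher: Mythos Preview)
Your overall strategy is exactly the paper's: write $\tau_2-\tau_2'=\partial R$ with $R$ reflection-invariant and supported away from $X,Y,Z$, use $Q\cap\beta=-R\cap\partial\beta$, and then show that $R\cap\partial\beta$ depends on $\beta$ only through the corner counts $\beta(X),\beta(Y),\beta(Z)$.  Once that linear dependence is established, the coordinate change follows by the computation you outline.

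Where you diverge from the paper is in the execution of the key combinatorial step, and you make it considerably harder than it needs to be.  The paper's argument is elementary and short.  Reflection-invariance of $R$ forces $R\cap\arc{XY}_-=-R\cap\arc{XY}_+$, so $R\cap\partial\beta$ is determined by the three differences $\beta(\arc{XY}_+)-\beta(\arc{XY}_-)$ etc.  To compute these, one replaces $\partial\beta$ by a genuine loop $p_\beta$ on $\BL$: at each corner, instead of jumping branches, walk straight past the corner along three extra minimal arcs and return.  Since $p_\beta$ is an integer multiple of $[\BL]$, one has $p_\beta(\arc{XY}_+)-p_\beta(\arc{XY}_-)=0$, and bookkeeping the extra arcs attached at each $X,Y,Z$ corner yields
\[
\beta(\arc{XY}_+)-\beta(\arc{XY}_-)=\beta(X)+\beta(Y)-\beta(Z)
\]
(with a global sign depending on whether $\beta$ is a positive or negative polygon).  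This gives the explicit constants $k_X=i-j+k$, $k_Y=i+j-k$, $k_Z=-i+j+k$ where $i,j,k$ are the intersections of $R$ with the three upper arcs.

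Several ingredients you flag as obstacles are not actually present.  Interior orbi-insertions are irrelevant: the argument is entirely about $\partial\beta$, which lies on $\BL$ and never sees the orbifold points.  The spin structure and combinatorial sign conventions play no role, since you are computing the intersection number $Q\cap\beta$, not a signed disc count.  And the Gauss--Bonnet formula of Proposition~\ref{prop:GB} is not used here; that is an area/Maslov-index relation, whereas the present question is purely about the homotopy class of $\partial\beta$ on $\BL$.  Your proposed detour through $\Z/2$-coinvariants of a relative $H_1$ would work in principle, but the loop trick $p_\beta$ replaces that entire framework with a two-line counting argument.
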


The main ingredient in the proof of this result is the following topological lemma.

\begin{lemma}\label{lemma:kkkxyzrefinv}
	Suppose $\tau_2$ and $\tau_2'$ are two reflection-invariant, cohomologous cycles, and let $Q=\tau_2-\tau_2'$. Given an (orbi-)polygon $\beta$ contributing to the potential $W_\tau$, denote by $\beta(X), \beta(Y)$ and $\beta(Z)$ the numbers of $X$, $Y$ and $Z$ corners, respectively. Then
	there exist $k_X$, $k_Y$ and $k_Z \in \Lambda_0$ such that
	$$- Q \cap \beta  =  k_X \beta(X) + k_Y \beta(Y) + k_Z \beta(Z)$$
	for any (orbi-)polygon $\beta$ contributing to the potential $W_\tau$. 
\end{lemma}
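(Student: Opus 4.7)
The plan is to reduce the statement to a combinatorial identity about boundary traversals and then prove that identity via a careful local analysis at the immersed points.

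Since $Q:=\tau_2-\tau_2'$ is both reflection-invariant and null-homologous, I would work via its weights on the five connected components of $\Pabc\setminus\BL$: the upper and lower minimal triangles $M_u,M_l$ (each bounded by three arcs of $\BL$) and the three ``petals'' $A,B,C$, each bounded by a pair of $\BL$-arcs and containing one orbifold point (following the paper's convention, $A$ contains the point of order $a$ and is bounded by $\arc{YZ}_\pm$, etc.). Writing these weights as $q_{M_u},q_{M_l},q_A,q_B,q_C$, the reflection swap $M_u\leftrightarrow M_l$, $A_u\leftrightarrow A_l$, etc.\ forces $q_{M_u}=q_{M_l}=:q_M$, and the vanishing total weight forces $2q_M+q_A+q_B+q_C=0$. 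Because $\partial\beta\subset\BL$ for any polygon $\beta$ contributing to $W_\tau$, the multiplicity $m_R^\beta$ is constant on each region $R$, giving
\[Q\cap\beta=q_M(m_{M_u}^\beta+m_{M_l}^\beta)+q_Am_A^\beta+q_Bm_B^\beta+q_Cm_C^\beta.\]
Using $2q_M=-(q_A+q_B+q_C)$ and collecting coefficients of $q_A,q_B,q_C$, the lemma reduces to the three identities
\[2m_A^\beta-m_{M_u}^\beta-m_{M_l}^\beta=\beta(X)-\beta(Y)-\beta(Z)\]
together with their cyclic analogues in $(X,Y,Z)\leftrightarrow(A,B,C)$; these then furnish $k_X=q_A+q_M$, $k_Y=q_B+q_M$, $k_Z=q_C+q_M$.

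To prove the identity I would compute both sides from the boundary-degree counts of $\partial\beta$. Let $\kappa_a$ denote the number of times $\partial\beta$ traverses the minimal arc $a$. Multiplicity jumps by $\pm\kappa_a$ across each arc; telescoping across the two arcs $\arc{YZ}_\pm$, which bound $A$ against $M_u$ and $M_l$ respectively, yields
\[\kappa_{\arc{YZ}_-}-\kappa_{\arc{YZ}_+}=2m_A^\beta-m_{M_u}^\beta-m_{M_l}^\beta,\]
so it suffices to check $\kappa_{\arc{YZ}_-}-\kappa_{\arc{YZ}_+}=\beta(X)-\beta(Y)-\beta(Z)$. For this, I analyze the boundary locally at each immersed point: each passage of $\partial\beta$ through $W\in\{X,Y,Z\}$ is either a smooth passage on one of the two branches of $\BL$ at $W$ (with counts $s_1^W,s_2^W$) or an odd-type ``upper-convex'' corner, of which there are $\beta(W)$. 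Every $\kappa_a$ admits two equal expressions as (smooth-passage count)$+$(corner count) read off from its two endpoints, giving six linear relations such as $\kappa_{\arc{XY}_+}=s_1^X+\beta(X)=s_1^Y+\beta(Y)$. Straightforward elimination of the $s_j^W$ then reveals $\kappa_{\arc{YZ}_-}-\kappa_{\arc{YZ}_+}=\beta(X)-\beta(Y)-\beta(Z)$, completing the proof.

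The main obstacle is the bookkeeping: labeling the six arcs $\arc{XY}_\pm$, $\arc{YZ}_\pm$, $\arc{ZX}_\pm$ consistently with the cyclic order $X_1,Y_1,Z_1,X_2,Y_2,Z_2$ in which $\BL$ visits the two preimages of each immersed point (so that $\iota$ acts as $+\leftrightarrow -$), identifying the odd generators $X,Y,Z$ with a single ``upper'' type of branch-switch at each vertex, and pinning down the signs in the multiplicity-jump formulas $\kappa_{\arc{YZ}_+}=m_{M_u}-m_A$ and $\kappa_{\arc{YZ}_-}=m_A-m_{M_l}$. Once these conventions are fixed the argument is purely combinatorial and depends only on $\partial\beta$, so it applies uniformly to every contributing polygon, regardless of its interior orbi-insertions or its area.
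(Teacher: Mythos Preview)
Your approach is correct and reaches the same combinatorial core as the paper, but by a genuinely different route. The paper chooses a reflection-invariant $1$-chain $R$ with $\partial R=Q$, writes $Q\cap\beta=-R\cap\partial\beta$ directly as a sum over arc traversals, and then proves the key arc identity $\beta(\arc{XY}_+)-\beta(\arc{XY}_-)=\pm(\beta(X)+\beta(Y)-\beta(Z))$ by a ``round the corners'' trick: at each corner attach three extra minimal arcs so that $\partial\beta$ becomes an honest integer multiple $p_\beta$ of $[\BL]$, on which the $(+)$ and $(-)$ traversal counts obviously agree. Your argument dualizes this---region weights of $Q$ instead of the $1$-chain $R$, multiplicity jumps instead of $R\cap\partial\beta$---and then proves the same arc identity by a local vertex count rather than the $p_\beta$ loop. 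Your formula $k_X=q_A+q_M$, $k_Y=q_B+q_M$, $k_Z=q_C+q_M$ is more intrinsic (no auxiliary choice of $R$), while the paper's $p_\beta$ device is slicker for establishing the identity itself.

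There is one genuine gap in your local analysis. You assert that each odd corner is an ``upper-convex'' branch-switch connecting the two $(+)$-arcs at that vertex, and base your six relations $\kappa_{\arc{XY}_+}=s_1^X+\beta(X)=s_1^Y+\beta(Y)$ etc.\ on this. That is correct only for \emph{positive} polygons (boundary following the $\BL$-orientation). For negative polygons the very same odd generator $X$ appears as a corner in the \emph{opposite} quadrant, connecting the two $(-)$-arcs; your $\beta(X)$ term then belongs in $\kappa_{\arc{XY}_-}$, not $\kappa_{\arc{XY}_+}$. This is not merely a sign convention to pin down---it is a case distinction you have not made. The repair is easy: either rerun the vertex elimination separately for each orientation, or use signed traversal counts throughout and observe that both your multiplicity-jump relation and your vertex identity acquire the same global sign $\epsilon(\beta)=\pm1$, which cancels in the final equation $2m_A-m_{M_u}-m_{M_l}=\beta(X)-\beta(Y)-\beta(Z)$. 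The paper handles the same dichotomy with a one-line remark after its formula for $\beta(\arc{XY}_+)-\beta(\arc{XY}_-)$; your write-up should too.
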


\begin{proof}
%Although we already know from the general theory that the potential (or its Jacobian ring) does not depend on various choices up to equivalence, changing the representative of $[\pt]$ has a very explicit effect on the potential, which we now explain.
%Suppose that $\tau_2$ and $\tau_2'$ are two different representatives of $[\pt]$ (both of which are formal sums of generic points away from $\mathbb{L}$), 
%and consider their associated bulk-parameters $\tau_2$ and $\tau_2'$. 
Since $Q=\tau_2 - \tau_2'$ is cohomologous to zero we can choose a smooth 1-(co)cycle $R$ such that $\partial R = Q$. Here, we are abusing notations for cycles and cocycles via Poincar\'e duality (see Figure \ref{fig:pbetakxkykz} (a)). Moreover we can choose $R$ so that it is reflection invariant and avoids $X, Y,Z$. Then we have $Q \cap \beta = -R \cap \partial \beta$ for any (orbi-)polygon class $\beta$ which contributes to the potential.

Let $\arc{XY}_{+}$ denote the minimal segment of $\mathbb{L}$ between $X$ and $Y$ lying on the upper hemisphere and $\arc{XY}_{-}$ denote its reflection image,  see Figure \ref{fig:pbetakxkykz} (a). We analogously define $\arc{YZ}_{\pm}$ and $\arc{ZX}_{\pm}$. Define $i,j,k$ by $i:= R \cap \arc{XY}_{+}$, $j:= R \cap \arc{YZ}_{+}$ and $k:= R \cap \arc{ZX}_{+}$. Since $R$ is reflection invariant $R \cap \arc{XY}_{-}= - R \cap \arc{XY}_{+}$, therefore $R\cap \partial \beta$ equals
\begin{equation}\label{eq:Rbeta}
\pm\left( i\left(\beta \left(\arc{XY}_{+} \right)- \beta \left(\arc{XY}_{-} \right)\right)+j \left(\beta \left(\arc{YZ}_{+} \right)- \beta \left(\arc{YZ}_{-} \right)\right)+ k\left(\beta \left(\arc{ZX}_{+} \right)- \beta \left(\arc{ZX}_{-} \right)\right)\right)
\end{equation}
where $\beta \left(\arc{XY}_{\pm}\right)$ is the number of $\arc{XY}_{\pm}$-segments in $\partial \beta$ (and analogously for $YZ$ and $ZX$). The plus or minus depends on $\beta$, having its boundary orientation match with that of $\mathbb{L}$ or its opposite. The former was called a positive polygon in \cite{CHKL14}, and the latter a negative polygon for this reason.

\begin{figure}[h]
\begin{center}
\includegraphics[height=2.5in]{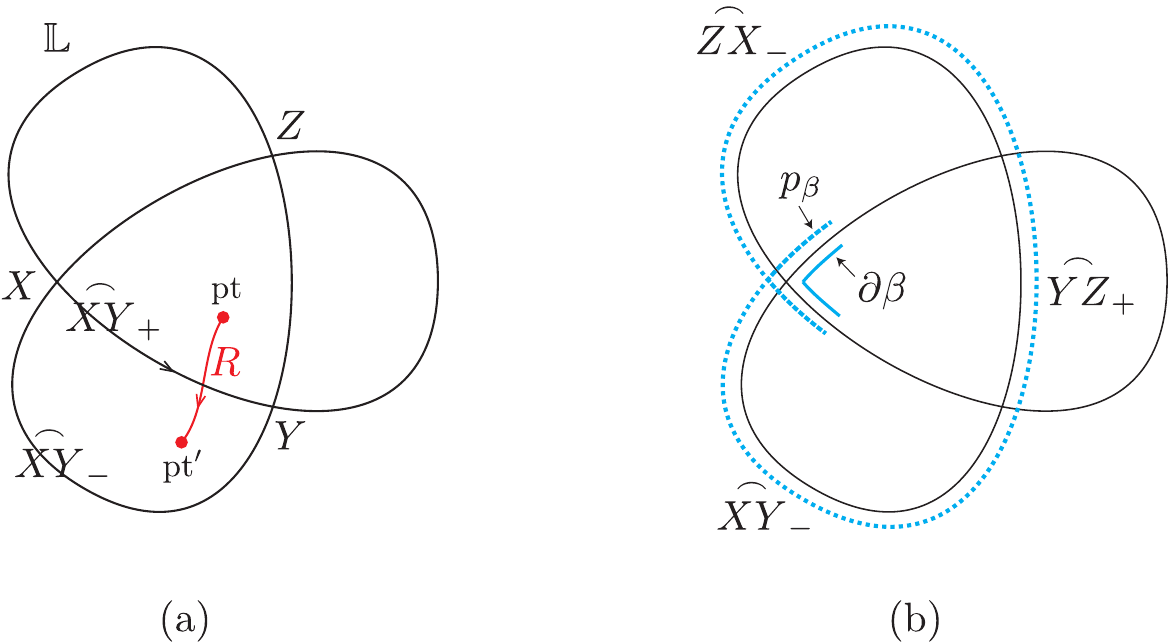}
\caption{}\label{fig:pbetakxkykz}
\end{center}
\end{figure}

Let us assume that $\beta$ is positive, we claim that 
\begin{equation}\label{eq:betaarc}
\beta \left(\arc{XY}_{+} \right)- \beta \left(\arc{XY}_{-} \right) = \beta(X) + \beta(Y) - \beta(Z).
\end{equation}
We consider the loop $p_\beta$ in $\pi_1 (\mathbb{L})$ obtained by attaching three consecutive minimal segments to each corner of $\partial \beta$. Namely, $p_\beta$ near a corner of $\beta$ is parameterized in such a way that we walk past the corner without turning and coming back to the same vertex, rather than jumping to another branch at the corner. See Figure \ref{fig:pbetakxkykz} (b).

Since $p_\beta$ is an integer-multiple of $[\mathbb{L}]$, we have
\begin{equation}\label{eqn:pbetapmxy}
p_\beta \left(\arc{XY}_{+} \right) - p_\beta \left(\arc{XY}_{-} \right) =0.
\end{equation}
Thus $\beta \left(\arc{XY}_{+} \right) - \beta \left(\arc{XY}_{-} \right)$ can be computed from the extra minimal segments that are attached to $\beta$ to obtain $p_\beta$. For instance, the extra segments attached to $X$-corner of $\beta$ consists of $\arc{XY}_-$, $\arc{YZ}_+$ and $\arc{ZX}_-$, and hence removing these from $p_\beta$ increases \eqref{eqn:pbetapmxy} by 1. Likewise, for each $Y$-corner (resp. $Z$-corner) of $\beta$, removing extra segments from $p_\beta$ increases (resp. decreases) \eqref{eqn:pbetapmxy} by $1$. Therefore we conclude the claim.

When $\beta$ is negative, formula (\ref{eq:betaarc}) still holds but now with a minus sign on the right-hand side. There are analogous formulas for the arcs $YZ$ and $ZX$. Combining all of these with \eqref{eq:Rbeta} we obtain
$$R \cap \partial \beta = (i-j+k) \beta(X) + (i+j-k) \beta(Y) + (-i+j+k) \beta(Z),$$
which gives the desired result.
\end{proof}

\begin{proof}[Proof of Proposition \ref{prop:tau2tau2'cc}]

%Observe that $\beta$ and $\bar{\beta}$ share the same area and orbi-insertions as well as the same monomial contribution to the potential. Therefore 
Let us expand our bulk-deformed potential as
%$$W_{\tau_2} (x,y,z) = \sum_\beta \exp (\tau_2  \cap (\beta + \bar{\beta})) c_{\beta, \tau_{tw}} x^{\beta(X)} y^{\beta(Y)} z^{\beta(Z)} T^{\omega(\beta)} $$
$$W_{\tau_2} (x,y,z) = \sum_\beta \exp (\tau_2  \cap \beta) c_{\beta, \tau_{tw}} x^{\beta(X)} y^{\beta(Y)} z^{\beta(Z)} T^{\omega(\beta)} $$
where the sum is taken over the set of all (orbi-)polygon classes. Here, the potential also depends on other bulk parameters as the coefficient $c_{\beta,\tau_{tw}}$ shows, but we wrote $W_{\tau_2}$ to highlight its dependence on $[\pt]$ which is what we want to analyze. $W_{\tau_2'}$ can be written analogously.

Using Lemma \ref{lemma:kkkxyzrefinv}, one can compute $W_{\tau_2} (x',y',z')$ as follows 
\begin{align*}
W_{\tau_2} (x',y',z') &= \sum_\beta \exp (\tau_2  \cap \beta ) c_{\beta, \tau_{tw}} x'^{\beta(X)} y'^{\beta(Y)} z'^{\beta(Z)} T^{\omega(\beta)} \\
&=  \sum_\beta \exp (\tau_2  \cap \beta ) c_{\beta, \tau_{tw}} \exp ( k_X \beta(X) + k_Y \beta(Y) + k_Z \beta(Z)) x^{\beta(X)} y^{\beta(Y)} z^{\beta(Z)} T^{\omega(\beta)} \\
&= \sum_\beta \exp (\tau_2  \cap \beta + R \cap \partial \beta ) c_{\beta, \tau_{tw}} x^{\beta(X)} y^{\beta(Y)} z^{\beta(Z)} T^{\omega(\beta)} \\
&= \sum_\beta \exp (\tau_2  \cap \beta - Q \cap   \beta ) c_{\beta, \tau_{tw}} x^{\beta(X)} y^{\beta(Y)} z^{\beta(Z)} T^{\omega(\beta)} \\
&= \sum_\beta \exp (\tau_2'  \cap \beta ) c_{\beta, \tau_{tw}} x^{\beta(X)} y^{\beta(Y)} z^{\beta(Z)} T^{\omega(\beta)} \\
&= W_{\tau_2'} (x,y,z),
\end{align*}
%\begin{eqnarray*}
%W_{\tau_2} (x',y',z') &=& \sum_\beta \exp (\tau_2  \cap (\beta + \bar{\beta})) c_{\beta, \tau_{tw}} x'^{\beta(X)} y'^{\beta(Y)} z'^{\beta(Z)} T^{\omega(\beta)} \\
%&=&  \sum_\beta \exp (\tau_2  \cap (\beta + \bar{\beta})) c_{\beta, \tau_{tw}} \exp ( k_X \beta(X) + k_Y \beta(Y) + k_Z \beta(Z)) x^{\beta(X)} y^{\beta(Y)} z^{\beta(Z)} T^{\omega(\beta)} \\
%&=& \sum_\beta \exp (\tau_2  \cap (\beta + \bar{\beta})+ R \cap (\partial \beta + \partial \bar{\beta} )) c_{\beta, \tau_{tw}} x^{\beta(X)} y^{\beta(Y)} z^{\beta(Z)} T^{\omega(\beta)} \\
%&=& \sum_\beta \exp (\tau_2  \cap (\beta + \bar{\beta})+ Q \cap (  \beta + \bar{\beta} )) c_{\beta, \tau_{tw}} x^{\beta(X)} y^{\beta(Y)} z^{\beta(Z)} T^{\omega(\beta)} \\
%&=& \sum_\beta \exp (\tau_2'  \cap (\beta + \bar{\beta})) c_{\beta, \tau_{tw}} x^{\beta(X)} y^{\beta(Y)} z^{\beta(Z)} T^{\omega(\beta)} \\
%&=& W_{\tau_2'} (x,y,z),
%\end{eqnarray*}
which is the desired result.
\end{proof}

\section{The Kodaira--Spencer map}\label{sec:KSringhom}

In this section we will define the Kodaira-Spencer map in our setting. This is a map
$ \KS_\tau : \QH^*_{orb}(X,\tau)\longrightarrow \Jac(W_\tau)$ from the quantum cohomology of $\bP^1_{a,b,c}$ to the Jacobian ring of $W_\tau$, constructed geometrically using $J$-holomorphic discs. 
The only previously known construction of $\KS_\tau$ map is the case of toric manifolds by Fukaya-Oh-Ohta-Ono \cite{FOOO_MS}. We will follow
the line of their construction. Their construction heavily uses the $T^n$-action on the moduli space of holomorphic discs.
%which is a reason why the construction has not been available for other cases.  
In our construction,  the $\Z/2$-action (reflection on the equator) will play an analogous role.  We show that the $\KS_\tau$ map is well-defined (independent of the choice of cohomology representative) and it is a ring homomorphism.

\subsection{\texorpdfstring{Definition of $\KS_\tau$ and well-definedness}{Definition of KS and well-definedness}}

We start by defining the Jacobian ring of $W_\tau$.
Recall that we use the convergent power series ring $\Lambda \langle\langle x,y,z\rangle\rangle$ defined in Definition \ref{def:cpr}.
\begin{defn}\label{def:jac}
	Consider $P\in\Lambda \langle\langle x,y,z\rangle\rangle$. We define the \emph{Jacobian ring of $P$} as the ring
	$$\Jac(P)=\frac{\Lambda \langle\langle x,y,z\rangle\rangle}{<\partial_x P,\partial_y P,\partial_z P>}.$$
\end{defn} 

We would like to point out that there is no need to take closure of the ideal, since in $\Lambda \langle\langle x,y,z\rangle\rangle$ (as a Tate algebra) all ideals are closed, see \cite[Section 5.2.7]{BGR}.

\begin{remark}
	We saw in Section 5 that $W_\tau$ is well defined up to a change of variables. Since a change of variables induces a ring isomorphism on the corresponding Jacobian rings, we see that the Jacobian ring $\Jac(W_\tau) $ is well defined up to isomorphism.
\end{remark}

Let $w_0,\cdots,w_B$ be coordinates of $\tau$ with respect to the basis $\{f_i\}_{i=0}^B$ (i.e. $\tau = \sum_i w_i f_i$).
The potential function $W_\tau$  can be regarded as a function $W_\tau(w_0,\cdots,w_B, x,y,z)$ with $w_i \in \Lambda_+$.
Regarding $\tau$ as an element of $H^*_{orb}(X,\Lambda)$, we identify the tangent space $T_\tau H^*_{orb}(X,\Lambda)$ at $\tau$ with $\QH^*_{orb}(X,\tau)$.

\begin{defn}\label{def:KS}
	We define the \emph{Kodaira-Spencer map}
	$$\KS_\tau: \QH^*_{orb}(X,\tau)\longrightarrow \Jac(W_\tau)$$ by the formula 
	$$ \KS_\tau( \frac{\partial}{\partial w_i}) =  \frac{\partial W_\tau}{\partial w_i} $$
\end{defn}

%\textcolor{red}{Need modification for $\bar{\tau}_2$ like in FOOO?}

There is an ambiguity of the choice of representatives $f_i$ in $\QH^*_{orb}(X,\tau)$.
In our case, the twisted sectors as well as the fundamental cycle have canonical representatives.  Hence we only need to consider the choice
of $\tau_2$.
\begin{lemma}
	The map $\KS_\tau$ is well-defined. In other words, if $\tau_2=\partial R$ for some 1-cocycle $R$, then $\KS_\tau( \frac{\partial}{\partial \tau_2})=0$ in $\Jac(W_\tau)$ .
\end{lemma}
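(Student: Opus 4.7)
The plan is to apply Proposition \ref{prop:tau2tau2'cc} to a one-parameter family of representatives sitting above the same cohomology class. Suppose $\tau_2 = \partial R$ for some 1-chain $R$. Since $\tau_2$ is reflection-invariant, by replacing $R$ with $\tfrac{1}{2}(R + \iota^* R)$ we may assume $R$ itself is reflection-invariant, and after a small isotopy we may also assume $R$ avoids the immersed points $X,Y,Z$. These are exactly the hypotheses of Lemma \ref{lemma:kkkxyzrefinv}.

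For a formal parameter $s$, consider the family $\tau(s) := \tau + s\tau_2 = \tau + s\,\partial R$. Every member of this family is cohomologous to $\tau$, so Proposition \ref{prop:tau2tau2'cc} applies and produces constants $k_X(s), k_Y(s), k_Z(s) \in \Lambda_0$ with
\[
W_{\tau(s)}\bigl(e^{k_X(s)} x,\ e^{k_Y(s)} y,\ e^{k_Z(s)} z\bigr) = W_\tau(x,y,z).
\]
Inspecting the formula for $k_X, k_Y, k_Z$ in the proof of Lemma \ref{lemma:kkkxyzrefinv} (each is an integer linear combination of the intersection numbers $R \cap \arc{XY}_+, R \cap \arc{YZ}_+, R\cap\arc{ZX}_+$), one sees that replacing $R$ by $sR$ scales these coefficients exactly by $s$. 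Hence $k_X(s) = s k_X$, $k_Y(s)=s k_Y$, $k_Z(s)=s k_Z$ for fixed $k_X,k_Y,k_Z \in \Lambda_0$ depending only on $R$.

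The left-hand side of the displayed identity is independent of $s$, so differentiating at $s=0$ yields
\[
\KS_\tau\!\left(\frac{\partial}{\partial \tau_2}\right) = \frac{\partial W_{\tau(s)}}{\partial s}\bigg|_{s=0} = -\,k_X\, x\,\frac{\partial W_\tau}{\partial x} - k_Y\, y\,\frac{\partial W_\tau}{\partial y} - k_Z\, z\,\frac{\partial W_\tau}{\partial z},
\]
which lies manifestly inside the Jacobian ideal $\langle \partial_x W_\tau, \partial_y W_\tau, \partial_z W_\tau\rangle$. Therefore $\KS_\tau(\partial/\partial\tau_2) = 0$ in $\Jac(W_\tau)$, establishing well-definedness.

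The only real point to verify carefully is the linear scaling of the $k_i$ in $s$; this is immediate from the explicit combinatorial formulas in Lemma \ref{lemma:kkkxyzrefinv} together with the linearity of the intersection pairing. Everything else is formal manipulation of the identity furnished by Proposition \ref{prop:tau2tau2'cc}.
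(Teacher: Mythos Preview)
Your proof is correct and is essentially the same argument as the paper's, just packaged slightly differently. The paper expands $W_\tau$ directly as a sum over polygons $\beta$, applies Lemma~\ref{lemma:kkkxyzrefinv} to replace $\tau_2\cap\beta$ by $k_X\beta(X)+k_Y\beta(Y)+k_Z\beta(Z)$, and recognizes the result as $k_X x\,\partial_x W_\tau + k_Y y\,\partial_y W_\tau + k_Z z\,\partial_z W_\tau$; you instead invoke Proposition~\ref{prop:tau2tau2'cc} (which is itself the exponentiated form of that same lemma) for the family $\tau(s)=\tau+s\tau_2$ and differentiate, arriving at the identical expression up to an overall sign in the $k_i$. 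Both routes are the same computation, one multiplicative and one infinitesimal.
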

\begin{proof}
	As in the proof of Proposition \ref{prop:tau2tau2'cc} we write 
	$$W_{\tau} (x,y,z) = \sum_\beta \exp (\tau_2  \cap \beta) c_{\beta, \tau_{tw}} x^{\beta(X)} y^{\beta(Y)} z^{\beta(Z)} T^{\omega(\beta)}. $$
	Then, by definition we have
	$$\KS_\tau( \frac{\partial}{\partial \tau_2}) = \sum_\beta (\tau_2  \cap \beta )\exp (\tau_2  \cap \beta )  c_{\beta, \tau_{tw}} x^{\beta(X)} y^{\beta(Y)} z^{\beta(Z)} T^{\omega(\beta)}.$$
	By assumption, $\tau_2=\partial R$, then by Lemma \ref{lemma:kkkxyzrefinv}, there are $k_X, k_Y, k_Z$ such that $-Q \cap \beta  =  k_X \beta(X) + k_Y \beta(Y) + k_Z \beta(Z)$ for all $\beta$. Therefore
	
	\begin{eqnarray*}
		\KS_\tau( \frac{\partial}{\partial \tau_2}) &=& \sum_\beta (-k_X \beta(X) - k_Y \beta(Y) - k_Z \beta(Z) )\exp (\tau_2  \cap \beta )  c_{\beta, \tau_{tw}} x^{\beta(X)} y^{\beta(Y)} z^{\beta(Z)} T^{\omega(\beta)} \\
		&=&  -k_X x \frac{\partial W_\tau}{\partial x} - k_Y y \frac{\partial W_\tau}{\partial y} - k_Z z \frac{\partial W_\tau}{\partial z}.
	\end{eqnarray*}
	Therefore $\KS_\tau( \frac{\partial}{\partial \tau_2})=0$ in $\Jac(W_\tau)$
\end{proof}

Here is an alternative description of $\KS_\tau$, for indices $i$ such that $f_i$ is a twisted sector. %is roughly given by the closed-open map via $J$-holomorphic disc.
The derivative $\frac{\partial}{\partial w_i}$ has the effect of removing the variable $w_i$ in one of the $\tau= \sum_i w_i f_i$ insertions on the disc.
Therefore we have the following expression
$$ \KS_\tau(f_i)= \sum_{\beta, k} \exp( \tau^2 \mathbf{p} \cap \beta) \sum_{l=0}^\infty \frac{T^{\beta\cap \omega}}{l!}\mathfrak{q}_{l+1,k,\beta}(f_i, \tau_{tw}^l; b,\ldots, b).$$
We would like to have an analogous description for the cases of the fundamental and point classes in $\Pabc$. More concretely, let $\mathbf{Q}$ be a $\Z/2$-invariant cycle in $\Pabc$, and define the moduli spaces
$$ \mathcal{M}^{main}_{k+1,l+1}(\beta, \mathbf{Q}, \tau_{tw},\gamma)=  \mathcal{M}^{main}_{k+1,l+1}(\beta, \tau_{tw},\gamma)\times_X \mathbf{Q}.$$
Using these spaces and their evaluation maps, analogously to (\ref{eq:qmap}) we can define maps $\mathfrak{q}_{l+1,k,\beta}(\mathbf{Q}, \tau_{tw}^l, - )$. Then we have the following statement.

\begin{prop}\label{prop:divisor}
	Let $\mathbf{Q}$ be a cycle representing the fundamental cycle or the point class in $\Pabc$.  Then
	$$\KS_\tau(\mathbf{Q})= \sum_{\beta, k} \exp( \tau^2 \mathbf{p} \cap \beta) \sum_{l=0}^\infty \frac{T^{\beta\cap \omega}}{l!}\mathfrak{q}_{l+1,k,\beta}(\mathbf{Q}, \tau_{tw}^l; b,\ldots, b),$$
	in $\Jac(W_\tau)$.
\end{prop}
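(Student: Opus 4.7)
The plan is to exploit the explicit dependence of $W_\tau$ on the coefficients of the non-twisted classes $\one_X$ and $[\pt]$, and then reduce the statement to divisor-type axioms for the bulk-deformed $\mathfrak{q}$-operations. Writing $\tau = \tau^0 \one_X + \tau^2 \mathbf{p} + \tau_{tw}$, formula \eqref{eq:mktau} shows that $\tau^0$ enters $W_\tau$ only through the explicit summand $\tau^0 \cdot \one_\BL$ in $\m_0^\tau$, so $W_\tau = \tau^0 + (\text{terms independent of }\tau^0)$ and $\KS_\tau(\one_X) = \partial W_\tau/\partial \tau^0 = 1$ in $\Jac(W_\tau)$. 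Similarly, $\tau^2$ enters only through the prefactor $\exp(\tau^2\,\mathbf{p}\cap\beta)$ for each disc class $\beta$, giving
$$\KS_\tau(\mathbf{p}) = \frac{\partial W_\tau}{\partial \tau^2} = \sum_{\beta,k} (\mathbf{p}\cap\beta)\exp(\tau^2\,\mathbf{p}\cap\beta)\sum_{l=0}^{\infty}\frac{T^{\beta\cap\omega}}{l!}\mathfrak{q}_{l,k,\beta}(\tau_{tw}^l;b,\ldots,b).$$

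With these formulas in hand, the proposition reduces to two chain-level axioms: a \emph{unit axiom}, stating that the sum $\sum_{\beta,k,l}\exp(\tau^2\mathbf{p}\cap\beta)\tfrac{T^{\beta\cap\omega}}{l!}\mathfrak{q}_{l+1,k,\beta}(\one_X,\tau_{tw}^l;b,\ldots,b)$ equals $\one_\BL$ (and hence $1$ in $\Jac(W_\tau)$); and a \emph{divisor equation}, stating that
$$\mathfrak{q}_{l+1,k,\beta}(\mathbf{p},\tau_{tw}^l;b,\ldots,b) = (\mathbf{p}\cap\beta)\,\mathfrak{q}_{l,k,\beta}(\tau_{tw}^l;b,\ldots,b).$$
Substituting the divisor equation into the right-hand side of the proposition recovers exactly the derivative formula for $\KS_\tau(\mathbf{p})$ computed above, while the unit axiom gives the matching $1$ in the case $\mathbf{Q}=\one_X$.

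To prove these two axioms I would in both cases use the forgetful map $\pi\colon\CM^{main}_{k+1,l+1}(\beta,\mathbf{Q},\nu_I,\gamma)\to\CM^{main}_{k+1,l}(\beta,\nu_I,\gamma)$ forgetting the extra interior marked point. In the case $\mathbf{Q}=\one_X$ no constraint is imposed, so at stable configurations the source is a two-real-dimensional disc bundle over the target, and the pushforward of the pullback of $\one_X$ vanishes by a dimension count on every stratum where the target is of the expected dimension. The only surviving contribution comes from the degenerate class $\beta=0$ with $k=l=0$, which yields $\one_\BL$. In the case $\mathbf{Q}=\mathbf{p}$, the extra marked point is constrained to lie on $\mathbf{p}$; after choosing $\Z/2$-invariant Kuranishi data and continuous families of multi-sections compatible with $\pi$, the generic fiber of $\pi$ becomes the zero-dimensional set $u^{-1}(\mathbf{p})\cap\mathrm{int}(\mathcal{D})$, whose signed count equals the topological intersection number $\mathbf{p}\cap\beta$. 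Passing this counting through the evaluation maps gives the divisor equation.

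The main obstacle is establishing compatibility of the virtual perturbation scheme on the extended moduli $\CM^{main}_{k+1,l+1}(\beta,\mathbf{p},\nu_I,\gamma)$ with the forgetful map $\pi$, the $\Z/2$-anti-symplectic symmetry used for weak unobstructedness in Section \ref{sec:MC}, and the $\AI$-boundary gluings; this is the orbi-disc analogue of the divisor axiom proved for toric manifolds in \cite{FOOO_MS}. The new feature in our setting is the possible interaction of the extra marked point with interior orbi-insertions from $\tau_{tw}$ and with the orbifold locus of $\Pabc$, which can be avoided by keeping the chosen representative $\mathbf{p}$ disjoint from the three orbi-points (an assumption already in force since Section \ref{subsec:bdeffukalg}), so that all orbifold phenomena near the extra marked point are trivial.
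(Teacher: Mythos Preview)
Your reduction of the statement to a unit axiom and a divisor equation, and your identification of the forgetful map $\pi$ as the relevant geometric tool, match the paper's framing exactly. However, the proposal has a genuine gap at the step where you assert the \emph{chain-level} identity
\[
\mathfrak{q}_{l+1,k,\beta}(\mathbf{p},\tau_{tw}^l;b,\ldots,b) \;=\; (\mathbf{p}\cap\beta)\,\mathfrak{q}_{l,k,\beta}(\tau_{tw}^l;b,\ldots,b).
\]
For this to hold, the Kuranishi structures and CF-perturbations \emph{already chosen} on $\mathcal{M}^{main}_{k+1,l+1}$ to define the $\mathfrak{q}$-maps in Section~\ref{subsec:bdeffukalg} would themselves have to be compatible with $\pi$. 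You acknowledge this as ``the main obstacle'' but then propose simply to choose $\pi$-compatible data. The problem is that those same $\mathfrak{q}$-maps must also satisfy the component-wise (fiber-product) compatibilities needed for the ring-homomorphism proof in Section~\ref{subsec:ringhomkura}; the paper remarks that making a single system of perturbations meet both requirements simultaneously ``seems a rather complicated problem'', and does not attempt it. So building new $\pi$-compatible data defines \emph{different} $\mathfrak{q}$-operations, and you have not explained why those agree with the ones appearing in $\KS_\tau$.

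The paper's missing ingredient is a homotopy (cobordism) argument. One works on the parametrized spaces $\mathcal{M}^{para}_{k+1,l+1}(\beta,\mathbf{Q},\tau_{tw},\gamma)=\mathcal{M}^{main}_{k+1,l+1}(\beta,\mathbf{Q},\tau_{tw},\gamma)\times[0,1]$, placing the original perturbations at $s=0$ and the $\pi$-compatible ones at $s=1$. The $s=1$ end then yields your divisor count $(\mathbf{p}\cap\beta)\,\mathfrak{q}_{l,k,\beta}(\ldots)$ (resp.\ $\one_\BL$ for $\mathbf{Q}=\one_X$), while the $s=0$ end gives the right-hand side of the proposition. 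Stokes' theorem on the parametrized moduli, together with weak unobstructedness, shows the two ends differ by $\m_1^{\tau,b}$ of an auxiliary chain $F_{b,\tau}$ built from the interior of the cobordism; Proposition~\ref{prop:Imdiff} then forces this difference into the Jacobian ideal. This is precisely why the proposition is only asserted ``in $\Jac(W_\tau)$'' and not at chain level, a point your formulation obscures.
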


This proposition essentially asserts that the $\mathfrak{q}$ maps are unital and satisfy a version of the divisor axiom in Gromov-Witten theory. Both properties are related to the compatibility of the Kuranishi structures (and perturbations) on the moduli spaces of discs with forgetting interior marked points. It turns out that ensuring this compatibility for all moduli spaces seems a rather complicated problem. We will avoid tackling that problem by taking a homotopy between the usual Kuranishi structures on $\mathcal{M}^{main}_{k+1,l+1}(\beta, \mathbf{Q}, \tau_{tw},\gamma)$ and one constructed specifically to ensure this compatibility. Therefore the equality in the statement holds only in the Jacobian ring, but not necessarily at chain-level. We will postpone this proof to Appendix \ref{sec:divisor}.

\subsection{Ring homomorphism}\label{subsec:ringhomkura}

In this subsection we will prove the following

\begin{theorem}\label{KSring}
	The map $\KS_\tau:\QH^*_{orb}(X,\tau)\longrightarrow\Jac(W_\tau)$ is a ring homomorphism.
\end{theorem}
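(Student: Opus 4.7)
The plan is to adapt the standard cobordism argument of \cite{FOOO_MS} to our orbifold setting with bulk deformations by twisted sectors. Fix classes $f_i, f_j \in H^*_{orb}(X,\Lambda)$; the goal is to establish
$$\KS_\tau(f_i \bullet_\tau f_j) = \KS_\tau(f_i) \cdot \KS_\tau(f_j) \quad \text{in } \Jac(W_\tau).$$
First I would introduce, for each disc class $\beta$ and each $k, l \geq 0$, the moduli space $\mathcal{M}^{main}_{k+1, l+2}(\beta;\, f_i, f_j, \tau_{tw}^l;\, \gamma)$ of pseudo-holomorphic orbi-discs with boundary on $\BL$, carrying two distinguished interior marked points constrained to cycles representing $f_i$ and $f_j$, $l$ further interior points absorbing the twisted sector part of $\tau$, and $k+1$ boundary marked points (with $k$ inputs $b$ and one output), together with a one-real-parameter family that interpolates between two codimension-one degenerations: collision of the two constrained interior marked points (producing a sphere bubble), versus breaking of the disc into two disc components joined at a single boundary node, with one constrained marked point on each.

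The core of the argument is the analysis of the resulting codimension-one boundary. On the sphere-bubble side, inserting a dual basis $\{e_a\}, \{e^a\}$ of $H^*_{orb}(X,\Lambda)$ with respect to the orbifold Poincar\'e pairing at the interior node, and summing over all bubble classes, all splittings $l = l_1 + l_2$ of the twisted sector insertions between sphere and disc, and all $k$, produces
$$\sum_a \bigl\langle f_i \bullet_\tau f_j,\, e_a \bigr\rangle_{PD_X} \, \KS_\tau(e^a) \;=\; \KS_\tau(f_i \bullet_\tau f_j)$$
by the very definition of the bulk-deformed quantum product. On the boundary-node side, the disc breaks into a disc carrying $f_i$ and a disc carrying $f_j$, and the sum over the two factors with bulk and boundary inputs redistributed assembles into the Fukaya product $\m_{2,can}^{\tau,b}(\KS_\tau(f_i), \KS_\tau(f_j))$. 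By Proposition \ref{prop:weakunobsbulk} and Proposition \ref{prop:Imdiff}, the chain-level representatives of $\KS_\tau(f_i)$ and $\KS_\tau(f_j)$ are scalar multiples of $\one_\BL$ modulo the Jacobian ideal, so this product reduces in $\Jac(W_\tau)$ to the ordinary product of convergent power series $\KS_\tau(f_i) \cdot \KS_\tau(f_j)$. Since the total boundary of the one-parameter family sums to zero (after incorporating the $b$-deformation and the exponential bulk weight), the desired identity follows.

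The main obstacle lies in the virtual perturbation scheme: one must construct $\Z/2$-equivariant Kuranishi structures and continuous families of multisections on this one-parameter moduli space that are simultaneously compatible with the perturbations used in defining $\KS_\tau$ (including the alternative description of Proposition \ref{prop:divisor} for the fundamental and point classes), with those defining the quantum product, and with the gluing at both types of degenerate configurations. In particular, the reflection symmetry of Section \ref{sec:MC} must be preserved throughout, so that unwanted even-degree immersed outputs cancel pairwise and the boundary-node contribution is genuinely a scalar function in $\Lambda\langle\langle x,y,z\rangle\rangle$ rather than containing spurious $\bar{X}, \bar{Y}, \bar{Z}$ terms. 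One also has to verify convergence of the infinite sums over $\beta, k, l$ under the coordinate change \eqref{eqn:chvar1}, using the Gauss--Bonnet area estimates behind Theorem \ref{thm:bdconv} and Proposition \ref{prop:fukalgconv} to ensure that every assembled expression lies in $\Lambda\langle\langle x,y,z\rangle\rangle$. Once these analytic and combinatorial compatibilities are in place, the cobordism identity yields Theorem \ref{KSring} after passage to $\Jac(W_\tau)$.
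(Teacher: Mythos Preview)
Your overall cobordism strategy matches the paper's, but the description of the product-side degeneration is off in a way that matters. In the moduli space $\mathcal{M}^{main}_{1,2}$ (a disc), both relevant strata $\Sigma_0$ (sphere bubble carrying the two constrained interior points) and $\Sigma_{12}$ are \emph{point} strata of codimension two, and the cobordism is the preimage of a path joining these two points. The stratum $\Sigma_{12}$ is not a single boundary-node breaking into two discs---that would be one of the one-dimensional strata $\Sigma_1,\Sigma_2$---but rather a \emph{three}-disc configuration: a base disc with two disc bubbles, each bubble carrying one of the constrained interior points. At $\Sigma_{12}$ each bubble, after summing over $b$-insertions and $\tau_{tw}$-insertions, outputs $\KS_\tau(f_i)\cdot\one_\BL$ and $\KS_\tau(f_j)\cdot\one_\BL$ \emph{exactly} (not merely modulo the Jacobian ideal; this is the weak unobstructedness argument of Proposition~\ref{prop:weakunobsbulk} applied to the bubble with one interior insertion), and the base disc with two unit inputs collapses by unitality, yielding the scalar product $\KS_\tau(f_i)\cdot\KS_\tau(f_j)$ directly---there is no $\m_{2,can}^{\tau,b}$ here. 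Proposition~\ref{prop:Imdiff} enters at a different point: the cobordism between $\mathfrak{forget}^{-1}(\Sigma_0)$ and $\mathfrak{forget}^{-1}(\Sigma_{12})$ shows that $Z_{b,\tau}^{\Sigma_0}-Z_{b,\tau}^{\Sigma_{12}}$ lies in the image of $(\m_1^{\tau,b})_{can}$, and it is \emph{this} image that is contained in the Jacobian ideal. A further subtlety you should be aware of is that $\mathfrak{forget}$ is only a stratified submersion, so the paper does not land directly on $\Sigma_0,\Sigma_{12}$ but on nearby interior points $\Sigma_{3,i},\Sigma_{4,i}$ and then takes a limit using $C^1$-convergence of perturbations; similarly, matching $Z_{b,\tau}^{\Sigma_0}$ with $\KS_\tau(A\bullet_\tau B)$ requires replacing the geometric diagonal by $\sum g^{ij}f_i\times f_j$ via a bounding chain $R$, which introduces another $(\m_1^{\tau,b})_{can}$-exact correction.
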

This map is rather surprising in that it identifies the complicated quantum multiplication with the standard multiplication of polynomials in Jacobian ring.
The geometric idea behind this map is rather well-known. Namely, the closed-open maps in topological conformal field theory are ring homomorphisms from the closed theory to Hochschild cohomology of the open theory.  They are explored in Seidel \cite{Seidel-I}, Biran-Cornea \cite{BC}, Fukaya-Oh-Ohta-Ono \cite{FOOO}.
A benefit of this construction is that Hochschild cohomology of the Fukaya category is very heavy object to handle, whereas the construction of $\KS_\tau$ map is rather direct and simple.

\proof
As before, we will follow the line of proof of \cite{FOOO_MS} Theorem 2.6.1 and we will use their notation freely to shorten our exposition.
The proof is based on a cobordism argument. Consider two cohomology representatives $A, B$ in $\QH^*_{orb}(X,\tau)$.
Consider the forgetful map for the moduli space introduced in Section \ref{subsec:bdeffukalg}
$$\mathfrak{forget}: \mathcal{M}^{main}_{k+1,l+2}(\beta, A \otimes B \otimes \tau_{tw}^{\otimes l},\gamma)  \to \mathcal{M}^{main}_{1,2}$$
which forgets maps and shrinks resulting unstable components if any, followed by the forgetful map $\mathcal{M}^{main}_{k+1,l+2} \to
\mathcal{M}^{main}_{1,2}$ forgetting the boundary marked points, except the first one and forgetting the interior marked points except the first two.  
In Lemma 2.6.3 \cite{FOOO_MS}, $\mathcal{M}^{main}_{1,2}$ is shown to be topologically a disc with some stratification, so that the above $\mathfrak{forget}$ is a continuous and stratified smooth submersion.

 The idea of the proof is to consider a curve in  $\mathcal{M}^{main}_{1,2}$ which connects two point strata of $D^2$. $\Sigma_0$ is a stratum where  two interior marked points lie on a sphere bubble, and
$\Sigma_{12}$ is a component where there are two disc bubbles each of which contains one of the interior marked points.
We will see that {\it integration} over $\mathfrak{forget}^{-1}(\Sigma_0)$ and  $\mathfrak{forget}^{-1}(\Sigma_{12})$ correspond to $\KS_\tau(A \bullet_\tau B)$ and $\KS_\tau(A)\KS_\tau(B)$ respectively and that the pre-image of the curve defines the desired cobordism relation between them.
There is a technical issue in that the map $\mathfrak{forget}$ is only a stratified submersion. We will explain below how to handle this issue following \cite{FOOO_MS}.

We first have to construct Kuranishi structures and continuous family of multi-sections on (neighborhoods of) the spaces $\mathfrak{forget}^{-1}(\Sigma_0)$ and  $\mathfrak{forget}^{-1}(\Sigma_{12})$. 
To describe the neighborhood near $\Sigma_0$, we consider the following moduli space.
For $\alpha \in H_2(X,\Z)$, let $\mathcal{M}_l(\alpha)$ the moduli space of stable maps from genus zero closed orbifold Riemann surface with $l$-marked points and of homology class $\alpha$.  Take the fiber product
$$\mathcal{M}_{l_1+3}(\alpha, A \otimes B \otimes \tau_{tw}^{l_1})
= \mathcal{M}_{l_1+3}(\alpha)_{(ev_1,\cdots,ev_{l_1+2})} \times_{IX^{l_1+2}} ( A \otimes B \otimes \tau_{tw}^{l_1})$$
(where $IX$ denotes the inertia orbifold of $X$).
Then $ev_{l_1+3}$ defines an evaluation map from the above moduli space to $X$.
Define the moduli space $\mathcal{M}_{k+1, l_1,l_2}(\alpha, \beta; A,B,K)$ to be the fiber product
$$\big(\mathcal{M}_{l_1+3}(\alpha, A \otimes B \otimes \tau_+^{l_1})\times \mathcal{M}^{main}_{k+1,l_2+1}(\beta,\tau_+^{l_2}) \big) \times_{(IX \times IX)} K $$
for a chain $K$ in $IX \times IX$.

Let us consider the case that $K = \Delta'$ defined as
\begin{equation}\label{del}
	\Delta' = \{ (x,g), (x, g^{-1})\} \subset IX \times IX
\end{equation}
for the inertia orbifold $IX$.
The following is an analogue of Lemma 2.6.9 \cite{FOOO_MS}, to which we refer readers for the proof.
\begin{lemma}
	There exist a surjective map 
	\begin{equation}\label{eqglue}
		\mathfrak{Glue}: \bigcup_{\alpha \sharp \beta' = \beta} \bigcup_{l_1+l_2=l} \mathcal{M}_{k+1, l_1,l_2}(\alpha, A,B,\Delta')
		\to \mathfrak{forget}^{-1}(\Sigma_0)
	\end{equation}
	which defines an isomorphism of spaces with Kuranishi structures, away from codimension 2 strata. Here we are using the stratification of the moduli spaces given by the combinatorial type of the curves.
\end{lemma}
The $\mathfrak{Glue}$ map gives a way to describe an element of  $\mathfrak{forget}^{-1}(\Sigma_0)$ as a fiber product of sphere and disc moduli space.
In this way, it corresponds to first taking the quantum multiplication and then taking the Kodaira-Spencer map.
In the case that there are several sphere bubbles attached to a disc component (codimension higher than 2), there may be several ways of
such description. Namely, $\mathfrak{Glue}$ map image may overlap in codimension two strata. 
%But since it is of higher codimension, it does not affect the cobordism argument. 

The more important issue is the compatibility of Kuranishi perturbations to be chosen, where there are differences between toric and our cases.
Recall that the proof of \cite{FOOO} uses $T^n$-action on moduli space of holomorphic discs in an essential way. Because finite group symmetry is much easier to handle than $T^n$-symmetry, many of the arguments simplify in our $\Z/2$-symmetry case.

First note that finite group symmetry can be easily achieved for Kuranishi structures - see \cite[Appendix A1.3]{FOOO}.  Hence, one may consider 
$\Z/2$-equivariant Kuranishi structures on moduli space of $J$-holomorphic discs or spheres.

Therefore, we may consider the following Kuranishi structure on $\mathfrak{forget}^{-1}(\Sigma_0)$. We choose a component-wise $\Z/2$-equivariant Kuranishi structure and CF-perturbations on the moduli spaces $\mathcal{M}^{main}_{k+1,l+2}(\beta,A,B,\tau_{tw}^{l})$, following \cite{F10}. Here component-wise means that the Kuranishi structure is compatible with the fiber product description of each of the strata of the disc-sphere stratification (formed by disc or sphere bubbling) - see Definition 4.2.2 in \cite{FOOO_MS} for complete details.

\begin{lemma}\label{lem:cfp}
	There exist component-wise $\Z/2$-equivariant Kuranishi structures and CF-perturbations on the moduli spaces $\mathcal{M}^{main}_{k+1,l+2}(\beta,A,B,\tau_{tw}^{l})$. This Kuranishi structure and perturbations induce Kuranishi structures and perturbations on $\mathfrak{forget}^{-1}(\Sigma_0)$. 
	
	Moreover these Kuranishi structures and perturbations on $\mathfrak{forget}^{-1}(\Sigma_0)$ coincide with the ones induced by the Glue map \eqref{eqglue}, which agree with each other on the overlapped part. 
	%They also coincide with $\Z/2$-equivariant multi-section of $\mathcal{M}^{main}_{k+1,l+2}(\beta,A,B,\tau_+^{l})$.
\end{lemma}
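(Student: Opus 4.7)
The plan is to follow the inductive scheme of Fukaya--Oh--Ohta--Ono for constructing compatible Kuranishi structures and CF perturbations on moduli spaces of pseudoholomorphic maps, with the main simplification that the relevant symmetry here is the finite group $\Z/2$ coming from the anti-symplectic involution $\iota$ rather than the torus $T^n$ used in \cite{FOOO_MS}. First I would organize all moduli spaces $\mathcal{M}^{main}_{k+1,l+2}(\beta,A,B,\tau_{tw}^l)$ (and the sphere moduli $\mathcal{M}_{l+3}(\alpha, A \otimes B \otimes \tau_{tw}^l)$) by a well-ordering on the pair (total energy, number of irreducible components of the domain). Gromov compactness gives that only finitely many strata have total energy below any fixed bound, and the normalized corner structure of a Kuranishi space only refers to strata strictly lower in this order.

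At the base case --- one-component domains --- there is no fiber-product condition to check, and one takes any standard $J$-holomorphic Kuranishi chart. The involution $\iota$ acts naturally on the chart by postcomposition together with domain reflection; the obstruction bundle, built from $\overline{\partial}$-cohomology, inherits a linear $\Z/2$-action, so one gets a genuine $\Z/2$-equivariant Kuranishi chart. For the CF perturbation, start with any transversal CF perturbation of the chosen chart and replace it by the $\Z/2$-average. Because $2$ is invertible and the space of CF perturbations is a convex cone inside a Banach space, the averaged perturbation remains transversal provided the original perturbation is generic.

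For the inductive step at a boundary stratum, which by definition of stable maps is a fiber product of moduli spaces of strictly lower order, take the fiber product of the Kuranishi charts and CF perturbations already constructed on the factors (with $\Delta'$ playing the role of \eqref{del} at each disc-sphere node). This produces a $\Z/2$-equivariant Kuranishi chart and CF perturbation on the stratum; extending it to a collar neighborhood is done by the usual gluing analysis of a sphere bubble onto a disc (or a sphere), which is canonical up to isomorphism of Kuranishi structures. The Glue map \eqref{eqglue} on $\mathfrak{forget}^{-1}(\Sigma_0)$ is, by construction, induced by precisely this fiber product / gluing, so the Kuranishi structure and CF perturbation that it transports from $\mathcal{M}_{k+1,l_1,l_2}(\alpha,\beta';A,B,\Delta')$ to $\mathfrak{forget}^{-1}(\Sigma_0)$ agrees, on the stratum itself and on its collar, with the intrinsically defined one.

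The hard part will be compatibility where different $\mathfrak{Glue}$ images overlap, i.e.\ near strata of codimension $\ge 2$ on which more than one sphere bubble is attached to a disc; for such a configuration one may group the bubbles in several different ways and thereby exhibit the same point as coming from distinct $\mathcal{M}_{k+1,l_1,l_2}$. The resolution, in keeping with \cite[Lemma 2.6.9]{FOOO_MS}, is to perform the induction not only on energy but also downward on the depth of the stratum: first fix the Kuranishi chart on the deepest stratum (all bubbles fully separated), and then construct the charts on all intermediate strata as gluings of the deep one, so that every $\mathfrak{Glue}$ map factors through the canonical chart on the deepest stratum and all constructions coincide on overlaps automatically. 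Because the fiber product, the gluing operation, and the collar extension all commute with the component-wise $\Z/2$-action, and averaging preserves transversality of CF perturbations, equivariance is preserved throughout this downward induction, giving the required structure on every $\mathcal{M}^{main}_{k+1,l+2}(\beta,A,B,\tau_{tw}^l)$ and on $\mathfrak{forget}^{-1}(\Sigma_0)$.
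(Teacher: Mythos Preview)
Your proposal is correct and follows essentially the same inductive fiber-product strategy as the paper, just unpacked in considerably more detail. The paper's own proof is terse: it cites \cite{F10} for $\Z/2$-equivariant CF perturbations on each sphere or disc component, invokes associativity of fiber products of Kuranishi structures for compatibility on the overlapped part, and singles out submersivity of $ev_0$ (Lemma 3.1 of \cite{F10}) as the key technical point that lets the inductive choice of compatible perturbations go through.
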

\begin{remark}
	The $T^n$-equivariant analogue of this lemma near $\Sigma_{12}$ is given in Lemma 2.6.23 \cite{FOOO_MS} in the toric case. But this statement near $\Sigma_0$ does not hold in the toric case because the moduli space of $J$-holomorphic spheres cannot be made $T^n$-equivariant. Therefore, the construction
	for $\Sigma_0$ is much more involved than that of $\Sigma_{12}$ in \cite{FOOO_MS}. But since we can impose $\Z/2$-symmetry even for sphere moduli spaces, we can treat both cases in the same way.
\end{remark}
\begin{proof}
	We can choose $\Z/2$-equivariant CF perturbation, following \cite{F10}, on each sphere or disc component. On the overlapped part, two Kuranishi structures can be shown to be isomorphic (using the associativity of fiber products of Kuranishi structures).
	Hence perturbations can be chosen to inductively component-wise to have this compatibility because evaluation map $ev_0$ can be made submersive (see Lemma 3.1 \cite{F10}).
\end{proof}

The multi-section in the neighborhood of $\mathfrak{forget}^{-1}(\Sigma_{12})$ can be constructed in a similar way. 
There exist a surjective map 
$$\mathfrak{Glue}: \bigcup_{\beta_{(0)}+\beta_{(1)}+\beta_{(2)}=\beta} \big( (\mathcal{M}_{k_1+1, l_1+1}(\beta_{(1)}, A \otimes \tau_+^{l_1}) \times
\mathcal{M}_{k_2+1, l_2+1}(\beta_{(2)}, B \otimes \tau_+^{l_2}) ) $$
$$ \;_{(ev_0,ev_0)} \times_{(ev_i,ev_j)} \mathcal{M}_{k_3+3, l_3}(\beta_{(0)},  \tau_+^{l_3})\big) \to \mathfrak{forget}^{-1}(\Sigma_{12})$$
The relationship of Kuranishi structures under the $\mathfrak{Glue}$ map is the same as that of Lemma 2.6.22 \cite{FOOO_MS}
(we consider $\Z/2$-equivariance instead of $T^n$-equivariance), and
we can choose $\Z/2$-equivariant CF-perturbations as in Lemma \ref{lem:cfp} in this case also.

Now that we have Kuranishi structures and CF-perturbations, we can define maps using these moduli spaces. 
For $\Sigma \in \mathcal{M}^{main}_{1,2}$, consider $\mathfrak{forget}^{-1}(\Sigma)$.
Following \eqref{eq:qmap}, we use the evaluation map $ev: \mathfrak{forget}^{-1}(\Sigma) \to \prod_{i=1}^k L(\alpha(i))$
to define 
$$\WT{Z}_{b,\tau}^\Sigma = \sum_{k,\beta, l} \frac{T^{\omega \cap \beta}}{l!} (ev_0)_* 
(ev_1^*b \wedge \cdots \wedge ev_k^*b).$$

\begin{lemma}
	We have
	$$\WT{Z}_{b,\tau}^\Sigma = Z_{b,\tau}^\Sigma \one_\BL,$$
	for some $Z_{b,\tau}^\Sigma \in \Lambda_0$
\end{lemma}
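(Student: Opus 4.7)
The plan is to mirror the two-step strategy of Proposition \ref{prop:weakunobsbulk}, now applied to the fiber $\mathfrak{forget}^{-1}(\Sigma)$ rather than to the full Maurer--Cartan sum. The essential inputs are already in place: the anti-symplectic involution $\iota$ on $\bP^1_{a,b,c}$ preserves $\BL$ together with its spin structure; the bulk representatives $A$, $B$, $\mathbf{p}$, and $\tau_{tw}$ have all been chosen $\iota$-invariant; the boundary input $b = xX+yY+zZ$ is $\iota$-invariant as well; and by Lemma \ref{lem:cfp} the Kuranishi structures and CF perturbations on $\mathfrak{forget}^{-1}(\Sigma)$ are taken to be $\Z/2$-equivariant (including on the orbi-sphere bubble strata, where the $\Z/2$ symmetry replaces the $T^n$ symmetry used in \cite{FOOO_MS}).

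The first step is to show that $\WT{Z}_{b,\tau}^\Sigma$ has no component along any immersed generator $X, Y, Z, \bar{X}, \bar{Y}, \bar{Z}$. The involution $\iota$ sends any orbi-polygon contributing to an immersed-sector output to a mirror configuration contributing to the same output, with all bulk insertions mapped to themselves. The sign analysis from Proposition \ref{prop:weakunobsbulk} applies verbatim: reversing the boundary cyclic order contributes $(-1)^k$, and the parity count of minimal arcs crossing the spin marker on $\BL\cup\mathrm{equator}$ shows that $\partial P\cup\partial P^{op}$ covers $\BL$ an even number of times with each edge containing an odd number of minimal arcs. These signs combine to an exact pairwise cancellation, so the immersed part vanishes.

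After this cancellation, $\WT{Z}_{b,\tau}^\Sigma$ is a differential form on $\bS^1$. The second step is to show it is a constant function (a scalar multiple of $\one_\BL$). One argues by induction on the energy filtration, exactly as in Lemma \ref{lem:multunit}: the $\AI$ relations applied to the universal family over $\mathfrak{forget}^{-1}(\Sigma)$ express $d\WT{Z}_{b,\tau}^\Sigma = \m_1(\WT{Z}_{b,\tau}^\Sigma)$ as a sum of fiber integrations over boundary strata, in which disc and sphere components bubble off. Each such boundary contribution reorganizes, via the component-wise Kuranishi structure, into compositions of $\m_k^{\tau,b}$-operations and (deformed) Gromov--Witten style sphere contributions, all evaluated on $b$ and the closed cocycles $A$, $B$, $\tau_{tw}$. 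By Proposition \ref{prop:weakunobsbulk} and the unit axiom, each such composition produces a multiple of $\one_\BL$, whose differential vanishes. The only closed element in the form part of $\CF^*(\BL)$ is then a constant, proving the claim.

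The main technical obstacle is coherence of the equivariant CF perturbations on the boundary strata of $\mathfrak{forget}^{-1}(\Sigma)$ where orbi-sphere bubbles appear: one must simultaneously maintain $\Z/2$-equivariance (for the first step) and the component-wise fiber-product form (for the $\AI$-relation in the second step). This is exactly the content of Lemma \ref{lem:cfp} and its analogue near $\Sigma_{12}$, and once it is established the two steps above go through without further difficulty.
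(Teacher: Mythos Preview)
Your proposal is correct and follows the same two-step strategy as the paper: the $\Z/2$-reflection argument of Proposition~\ref{prop:weakunobsbulk} kills the immersed-sector output, and then the codimension-one boundary of $\mathfrak{forget}^{-1}(\Sigma)$ together with weak unobstructedness and the unit axiom forces the remaining function to be constant. The paper phrases the second step slightly more directly---the boundary relation reads $\m_1^{\tau,b}(\WT{Z}_{b,\tau}^\Sigma)=0$ (the only other boundary pieces feed $\m_0^{\tau,b}=W_\tau\one_\BL$ into the $\mathfrak{q}^\Sigma$-operation and vanish by unitality), and since $\m_1^{\tau,b}$ on a function is the de~Rham differential at leading order this gives constancy---so your assertion that each boundary composition ``produces a multiple of $\one_\BL$'' slightly misplaces where the unit enters, but the substance is identical.
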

\begin{proof}
	The proof is similar to Proposition \ref{prop:weakunobsbulk}.
	Namely, the output given by immersed sectors vanishes by the reflection argument. Hence, the output is in $\Omega^0(\BL)$, i.e. a function on $\BL$.
	One can consider the boundary configuration of $\mathfrak{forget}^{-1}(\Sigma)$ to conclude that the output is $\m_1^{\tau,b}$-closed.
	Then by the same argument as Lemma \ref{lem:multunit} we conclude it is a constant function.
\end{proof}

Now, pick $\Sigma_{3,i}$ (resp. $\Sigma_{4,i}$) very close to $\Sigma_{12}$ (resp. $\Sigma_0$) in  $\mathcal{M}^{main}_{1,2}$
which converges to $\Sigma_{12}$ (resp. $\Sigma_0$) as $i \to \infty$. 
We have the following analogue of Lemma 2.6.27 of \cite{FOOO_MS}.
\begin{prop}\label{prop:gluing}
$$\lim_{i\to \infty} (ev_0)_* ( \mathfrak{forget}^{-1}(\Sigma_{3,i})) = (ev_0)_* ( \mathfrak{forget}^{-1}(\Sigma_{12})) $$
$$\lim_{i\to \infty} (ev_0)_* ( \mathfrak{forget}^{-1}(\Sigma_{4,i})) = (ev_0)_* ( \mathfrak{forget}^{-1}(\Sigma_{0})) $$
\end{prop}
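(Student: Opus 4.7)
The plan is to follow the line of argument in \cite{FOOO_MS} Lemma 2.6.27, replacing the $T^n$-equivariance used there with the $\Z/2$-equivariance that we have already established is compatible with our Kuranishi data. The statement is really two independent gluing/continuity statements, one at a disc-bubble stratum $\Sigma_{12}$ and one at a sphere-bubble stratum $\Sigma_0$. In both cases the key mechanism is that the $\mathfrak{Glue}$ maps used to describe neighborhoods of $\mathfrak{forget}^{-1}(\Sigma_{12})$ and $\mathfrak{forget}^{-1}(\Sigma_0)$ extend, via a gluing parameter, to describe nearby fibers $\mathfrak{forget}^{-1}(\Sigma_{3,i})$ and $\mathfrak{forget}^{-1}(\Sigma_{4,i})$ as well. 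Thus the whole one-parameter family of fibers fits together into a Kuranishi space with corners, with the two boundary strata being the preimages of $\Sigma_{12}$ and $\Sigma_0$.

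My first step would be to make this one-parameter gluing picture precise. Near $\Sigma_{12}$, pick a path in $\cM^{main}_{1,2}$ from $\Sigma_{12}$ into the interior and parametrize it by the neck length $T_{\mathrm{glue}} \in (0,\infty]$. Standard disc gluing produces, from each point of the fiber product moduli space describing $\mathfrak{forget}^{-1}(\Sigma_{12})$ and each large enough $T_{\mathrm{glue}}$, a genuine bordered orbifold curve lying over the corresponding $\Sigma_{3,i}$. Because the Kuranishi structure and the CF perturbation were chosen component-wise and $\Z/2$-equivariantly in Lemma \ref{lem:cfp}, they extend to this parametrized family exactly as in \cite{FOOO_MS}, giving a single Kuranishi space with boundary whose boundary is $\mathfrak{forget}^{-1}(\Sigma_{12})$ and whose interior slices are the $\mathfrak{forget}^{-1}(\Sigma_{3,i})$. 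The analogous construction near $\Sigma_0$ uses sphere gluing instead; here the fact that we may (and do) put $\Z/2$-equivariant Kuranishi structures on sphere moduli spaces is exactly what makes the construction go through in the same way as the disc case, avoiding the more delicate arguments in the toric setting.

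Once this parametrized Kuranishi space with CF perturbation is in place, the second step is the convergence of the pushforwards. The evaluation map $ev_0$ on the parametrized family is smooth (stratified smooth, with the boundary stratum also smooth), and the CF perturbation varies continuously in $T_{\mathrm{glue}}$. By the standard continuity property of pushforward of differential forms along a continuous family of CF perturbations on a compact Kuranishi space (as formulated by Fukaya and used throughout \cite{FOOO_MS}, \cite{F10}), we get continuity of $(ev_0)_*$ as $T_{\mathrm{glue}} \to \infty$, i.e.\ as $\Sigma_{3,i}\to \Sigma_{12}$ and $\Sigma_{4,i}\to \Sigma_{0}$. Together with Gromov compactness this gives the desired equalities. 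As in the previous section, sums over the disc class $\beta$ (and over the choice of $\alpha,\beta_{(0)},\beta_{(1)},\beta_{(2)}$ etc.\ in the fiber-product description) converge thanks to the energy estimates in Section \ref{sec:bdpotchangevar}, so passing to the limit commutes with the sum.

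The main obstacle, as in \cite{FOOO_MS}, will be the bookkeeping around the overlaps of the $\mathfrak{Glue}$ map images in codimension two, together with checking that the extension of the CF perturbation into the one-parameter family can be made consistently on all components of the compactification. These points are where the $\Z/2$-equivariance genuinely helps: because the symmetry group is finite, we can inductively extend a $\Z/2$-equivariant CF perturbation from the boundary stratum into a neighborhood using the submersivity of $ev_0$ (Lemma 3.1 of \cite{F10}), without the obstructions that arise in the $T^n$-equivariant sphere case. Granting this, the two limits in Proposition \ref{prop:gluing} follow by the same argument as in the toric case, and the identification of the two boundary contributions with $\KS_\tau(A\bullet_\tau B)\cdot \one_\BL$ and $\KS_\tau(A)\KS_\tau(B)\cdot \one_\BL$ respectively then yields Theorem \ref{KSring}.
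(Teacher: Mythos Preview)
Your proposal is correct and takes essentially the same approach as the paper: both rely on Lemma \ref{lem:cfp} to get component-wise $\Z/2$-equivariant Kuranishi structures and CF perturbations compatible with the $\mathfrak{Glue}$ maps at both $\Sigma_0$ and $\Sigma_{12}$, and then invoke the gluing/$C^1$-convergence machinery of \cite{FOOO_MS}. The paper's version is simply terser, citing Lemma 4.6.5 of \cite{FOOO_MS} directly for the convergence step rather than unpacking the one-parameter gluing picture as you do; your more detailed description is a faithful expansion of that citation.
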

\begin{proof}
The proof in our case is easier than that of \cite{FOOO_MS}, because of Lemma \ref{lem:cfp}.
Namely, in our case, Glue map is compatible with $\Z/2$-equivariant Kuranishi structures for both $\Sigma_0, \Sigma_{12}$. Therefore, we can just  apply Lemma 4.6.5 \cite{FOOO_MS} which claims the $C^1$-convergence of perturbations as $i \to \infty$.  We remark that this type of convergence was extensively studied in \cite{FOOOexp}.
\end{proof}

We will now show that $Z_{b,\tau}^{\Sigma_{3,i}}$ and $Z_{b,\tau}^{\Sigma_{4,i}}$ are equal in the Jacobian ring. For this purpose we introduce an additional moduli space: choose a smooth curve $\psi$ on the open stratum of $\mathcal{M}_{1,2}$ connecting $\Sigma_{3,i}$ to $\Sigma_{4,i}$ and define $\mathcal{N}_{k+1, l+2}(\beta)= \mathfrak{forget}^{-1}(\psi) \subset \mathcal{M}^{main}_{k+1,l+2}(\beta, A \otimes B \otimes \tau_{tw}^{\otimes l},\alpha)$. Since $\mathfrak{forget}$ is a weakly smooth submersion when restricted to the open stratum, the Kuranishi structure defined in Lemma \ref{lem:cfp} induces a Kuranishi structure on $\mathcal{N}_{k+1, l+2}(\beta)$.  Using the evaluation map  $ev: \mathcal{N}_{k+1, l+2}(\beta) \to \prod_{i=1}^k L(\alpha(i))$, as before, we define 
$$\WT{Y}_{b,\tau}= \sum_{k,\beta, l} \frac{T^{\omega \cap \beta}}{l!} (ev_0)_* 
(ev_1^*b \wedge \cdots \wedge ev_k^*b).$$

By the homological perturbation lemma there is an $A_\infty$-quasi-isomorphism from $\CF(\BL)$ to its canonical model $H^*_{can}(\BL)$. Denote by $\Pi^{b,\tau}$ the first (or linear) component of this quasi-isomorphism, by definition we have $\Pi^{b,\tau}\circ\m_1^{\tau,b}= (\m_1^{\tau,b})_{can}\circ\Pi^{b,\tau}$ and $\Pi^{b,\tau}(\one_\BL)=\one_\BL$. We define
$$Y_{b,\tau}:=\Pi^{b,\tau} (\WT{Y}_{b,\tau}).$$
The following lemma can be proved in the same way as Proposition \ref{prop:fukalgconv} for the maps of $\m_k^{\tau,b}$.
\begin{lemma}
	$\WT{Y}_{b,\tau}$ (resp.  $Y_{b,\tau}$) is a convergent series, more precisely it is as an element of $\CF(L,\Lambda\langle\langle x,y,z\rangle\rangle)$
	(resp. $H^*_{can}(\BL,\Lambda\langle\langle x,y,z\rangle\rangle)$.
\end{lemma}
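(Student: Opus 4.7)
The plan is to adapt the energy/Maslov-index bookkeeping developed in the proof of Proposition \ref{prop:fukalgconv}, now applied to the moduli spaces $\mathcal{N}_{k+1,l+2}(\beta) = \mathfrak{forget}^{-1}(\psi)$ which parametrize stable orbi-polygons carrying two additional interior marked points constrained to the cycles $A$ and $B$. First I would expand $\widetilde{Y}_{b,\tau}$ as a formal sum indexed by tuples $(\beta,k,l,n_1,n_2,n_3)$, where $n_i$ record the numbers of $X,Y,Z$ boundary inputs; after the coordinate change \eqref{eqn:chvar}, the $T$-valuation of the associated monomial $x^{n_1}y^{n_2}z^{n_3}$ is $m - 3(n_1+n_2+n_3)$, with $m$ the symplectic area of $\beta$.

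The second step is to derive the analogue of the Maslov-index/dimension identity appearing in Proposition \ref{prop:fukalgconv}, now with two modifications: (i) two extra interior marked points carrying $A,B$ contribute fixed age shifts (and impose codimension conditions depending on whether $A,B$ are untwisted or twisted sectors), and (ii) the arc $\psi \subset \cM^{main}_{1,2}$ provides one extra real parameter, raising the virtual dimension by one but leaving the energy quantization unchanged. The resulting identity has the same shape
\[
\left(m - 3(n_1+n_2+n_3 + \textstyle\sum s_i)\right)\cdot\frac{-\chi}{8} = \frac{n_1}{a}+\frac{n_2}{b}+\frac{n_3}{c} + \sum_{i} \theta_i + \sum_j(1-\iota_j) + C_{A,B},
\]
where $C_{A,B}$ is a bounded constant depending only on the chosen representatives of $A,B$. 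Exactly as in the elliptic/spherical/hyperbolic case analysis of Proposition \ref{prop:fukalgconv}, once the $T$-valuation $m - 3(n_1+n_2+n_3)$ is bounded above, this identity forces $n_1,n_2,n_3$, the ages $\iota_j$, and then $m$ itself to be bounded.

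Combined with Gromov compactness (applied to the disc--sphere bubble configurations with the given bounded area and bounded number of orbi-insertions), only finitely many classes $\beta$ of any fixed valuation contribute, so the coefficients of the power series expansion of $\widetilde{Y}_{b,\tau}$ in $x,y,z$ have valuations tending to $+\infty$ as $n_1+n_2+n_3\to\infty$. This places $\widetilde{Y}_{b,\tau}$ in $\CF(\BL,\Lambda\langle\langle x,y,z\rangle\rangle)$. For $Y_{b,\tau} = \Pi^{b,\tau}(\widetilde{Y}_{b,\tau})$, I would note that $\Pi^{b,\tau}$, being the arity-one component of a homological-perturbation quasi-isomorphism of filtered $A_\infty$-algebras, is $\Lambda$-linear, continuous with respect to the non-archimedean topology, and preserves the Tate algebra $\Lambda\langle\langle x,y,z\rangle\rangle$; it therefore sends $\widetilde{Y}_{b,\tau}$ to an element of $H^*_{can}(\BL,\Lambda\langle\langle x,y,z\rangle\rangle)$.

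The main subtlety I anticipate is controlling the effect of fiber integration along the one-parameter family $\psi$: one must check that integrating the differential forms over the fibers of the stratified submersion $\mathfrak{forget}$ does not introduce contributions that break the energy filtration. Since $\psi$ is chosen entirely in the open stratum of $\cM^{main}_{1,2}$, the space $\mathcal{N}_{k+1,l+2}(\beta)$ inherits a bona fide Kuranishi structure and CF perturbation from Lemma \ref{lem:cfp}, and fiber integration over a compact fiber of bounded dimension preserves the convergence class term by term. Granting this, the proof is a routine transcription of the one for Proposition \ref{prop:fukalgconv}.
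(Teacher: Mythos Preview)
Your proposal is correct and follows exactly the approach the paper intends: the paper's own proof is simply the one-line remark that the lemma ``can be proved in the same way as Proposition \ref{prop:fukalgconv},'' and you have spelled out precisely that adaptation, including the dimension/Maslov-index identity with the bounded shift $C_{A,B}$ from the two extra constrained interior points and the one extra real parameter from $\psi$. Your remarks on $\Pi^{b,\tau}$ preserving convergence and on fiber integration over the compact arc $\psi$ are accurate and in fact more careful than what the paper records.
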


\begin{prop}\label{prop:cobordism}%[c.f. Lemma 2.6.28 \cite{FOOO_MS}] 
	We have the following relation in $H^*_{can}(\BL)$:
	$$(\m_1^{\tau,b})_{can}\left(Y_{b,\tau}\right)=\WT{Z}_{b,\tau}^{\Sigma_{4,i}}-\WT{Z}_{b,\tau}^{\Sigma_{3,i}}= \left(Z_{b,\tau}^{\Sigma_{4,i}} - Z_{b,\tau}^{\Sigma_{3,i}} \right) \one_\BL.$$
	Therefore $Z_{b,\tau}^{\Sigma_{3,i}} = Z_{b,\tau}^{\Sigma_{4,i}}$ in the Jacobian ring $\Jac(W_\tau)$.
\end{prop}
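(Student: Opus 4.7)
The approach is to apply Stokes' theorem to the one-parameter family $\mathcal{N}_{k+1,l+2}(\beta) = \mathfrak{forget}^{-1}(\psi)$, push forward via $ev_0$, and reorganize the boundary contributions using the weak Maurer-Cartan equation of Proposition \ref{prop:weakunobsbulk}. Since $\psi$ stays in the open stratum of $\mathcal{M}_{1,2}$, $\mathfrak{forget}$ is a weakly smooth submersion over $\psi$, and the $\mathbb{Z}/2$-equivariant, component-wise CF perturbations constructed in Lemma \ref{lem:cfp} restrict cleanly to $\mathcal{N}_{k+1,l+2}(\beta)$. This is the same cobordism strategy used to prove the ring-homomorphism property of closed-open maps in the toric setting of \cite{FOOO_MS}, now carried out with $\mathbb{Z}/2$-symmetry in place of $T^n$-symmetry.

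First I would identify the codimension-one boundary of $\mathcal{N}_{k+1,l+2}(\beta)$. It has two kinds of strata: (i) the fibers over the two endpoints of $\psi$, namely $\mathfrak{forget}^{-1}(\Sigma_{3,i})$ and $\mathfrak{forget}^{-1}(\Sigma_{4,i})$; and (ii) boundary-node degenerations of the main disc, which split off a disc bubble carrying a subset of the boundary marked points. Because $\psi$ stays in the open stratum, the two distinguished interior marked points carrying $A$ and $B$ remain on the main disc in any such degeneration; no sphere-bubble or disc-bubble absorbing them occurs in codimension one. Applying $(ev_0)_*$ to the Stokes identity, weighting by $T^{\omega \cap \beta}/l!$ and summing over $k$, $\beta$, $l$, the endpoint strata contribute exactly $\WT{Z}_{b,\tau}^{\Sigma_{4,i}} - \WT{Z}_{b,\tau}^{\Sigma_{3,i}}$.

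Next I would reorganize the disc-bubbling contributions. Using the fiber-product compatibility built into Lemma \ref{lem:cfp}, each such stratum contributes an expression of the form
\[
\m_{k-k'+1}^{\tau,b}\!\left(b,\ldots,b,\, \m_{k'}^{\tau,b}(b,\ldots,b),\, b,\ldots,b,\, \WT{Y}_{b,\tau},\, b,\ldots,b\right),
\]
in which $\WT{Y}_{b,\tau}$ sits at the main-disc output position and the bubble attaches at some slot. When the bubble attaches at the global output position, the sum collects to $\m_1^{\tau,b}(\WT{Y}_{b,\tau})$; when it attaches at any input slot, Proposition \ref{prop:weakunobsbulk} gives $\m_{k'}^{\tau,b}(b,\ldots,b) = \mathcal{P}(\tau,b)\cdot \one_\BL$, and the unit property of the $A_\infty$-algebra cancels these contributions in pairs, as in the standard derivation of the differential property of $\m_1^{\tau,b}$ after weakly unobstructing. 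The resulting chain-level identity is
\[
\m_1^{\tau,b}(\WT{Y}_{b,\tau}) \;=\; \WT{Z}_{b,\tau}^{\Sigma_{4,i}} - \WT{Z}_{b,\tau}^{\Sigma_{3,i}}
\]
in $\CF(\BL, \Lambda\langle\langle x,y,z\rangle\rangle)$. Applying $\Pi^{b,\tau}$ and using $\Pi^{b,\tau}\circ \m_1^{\tau,b} = (\m_1^{\tau,b})_{can}\circ \Pi^{b,\tau}$ together with $\Pi^{b,\tau}(\one_\BL) = \one_\BL$ yields the first assertion. For the second, write $\WT{Z}_{b,\tau}^{\Sigma_{j,i}} = z^{\Sigma_{j,i}}\,\one_\BL$ for $j=3,4$; Proposition \ref{prop:Imdiff} places $(\m_1^{\tau,b})_{can}(Y_{b,\tau})$ in the Jacobian ideal, so $z^{\Sigma_{4,i}} - z^{\Sigma_{3,i}}$ vanishes in $\Jac(W_\tau)$.

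The main obstacle I anticipate is rigorously matching CF perturbations across fiber-product boundary strata so that the $A_\infty$-reorganization of disc-bubbling contributions is legitimate at chain level and not merely up to homology. This is the analogue of the bookkeeping in Section 2.6 of \cite{FOOO_MS}, simplified here by the finite-group equivariance of Lemma \ref{lem:cfp}; one also needs to verify that the perturbation induced on $\mathcal{N}_{k+1,l+2}(\beta)$ agrees along its endpoint boundary with the perturbation on $\mathfrak{forget}^{-1}(\Sigma_{j,i})$ used to define $\WT{Z}_{b,\tau}^{\Sigma_{j,i}}$, which follows directly from the component-wise construction.
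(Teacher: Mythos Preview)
Your approach is the same as the paper's: apply Stokes to $\mathcal{N}_{k+1,l+2}(\beta)=\mathfrak{forget}^{-1}(\psi)$, identify the endpoint fibers with $\WT{Z}^{\Sigma_{3,i}}$ and $\WT{Z}^{\Sigma_{4,i}}$, and sort the remaining disc-bubbling boundary into two types. Your verbal two-case analysis is correct, but the displayed formula is muddled: it places both a bubble $\m_{k'}^{\tau,b}(b,\ldots,b)$ and $\WT{Y}_{b,\tau}$ as simultaneous inputs to an outer $\m^{\tau,b}$, which would be a codimension-two configuration, not codimension one. The correct codimension-one picture (as in the paper's \eqref{eq:boundaryN}) is: either the $\mathcal{N}$-component is the bubble and an ordinary $\mathcal{M}$-disc carries the output, summing to $\m_1^{\tau,b}(\WT{Y}_{b,\tau})$; or an ordinary $\mathcal{M}$-bubble (contributing $\m_0^{\tau,b}=W_\tau\cdot\one_\BL$) is fed into an input of the $\mathcal{N}$-operation. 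For the second type, the vanishing is not a ``cancellation in pairs'' but the unitality of the $\mathcal{N}$-operations themselves, which the paper secures by requiring the CF perturbation on $\mathcal{N}_{k,l}(\beta)$ to be compatible with forgetting boundary marked points---you should state this explicitly. Also, your claim that $A$ and $B$ ``remain on the main disc'' is not literally true: in the first boundary type they sit on the bubble, and it is the forgetful image that still lands in the open stratum of $\mathcal{M}_{1,2}^{\mathrm{main}}$ after the $A,B$-free component collapses.
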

\begin{proof}
	First note that the second statement follows from the first together with the fact, proved in Proposition \ref{prop:Imdiff} that $\textrm{Im}\left(\m^{\tau,b}_{1,can}\right)$ is contained in the Jacobian ideal. Second note that the first statement is equivalent to $\m_1^{\tau,b}\left(\WT{Y}_{b,\tau}\right)=\WT{Z}_{b,\tau}^{\Sigma_{4,i}}-\WT{Z}_{b,\tau}^{\Sigma_{3,i}}$, by definition of $\Pi^{b,\tau}$. In order to prove this relation we describe the boundary of $\mathcal{N}_{k+1, l+2}(\beta)$. As a space with Kuranishi structure the boundary of $\mathcal{N}_{k+1, l+2}(\beta)$ is the union of $\mathfrak{forget}^{-1}(\Sigma_{3,i})$, $\mathfrak{forget}^{-1}(\Sigma_{4,i})$ and the fiber products
	\begin{equation}\label{eq:boundaryN}
         \mathcal{N}_{k_1+1, l_1}(\beta_1)\;_{ev_0}\times_{ev_i} \mathcal{M}_{k_2+2, l_2}(\beta_2, \tau_{tw}^{l_2}) \ \ \textrm{and} \ \ \mathcal{M}_{k_1+2, l_1}(\beta_1, \tau_{tw}^{l_1})\;_{ev_0}\times_{ev_i} \mathcal{N}_{k_1+1, l_1}(\beta_1),  
	\end{equation}
where $\beta_1+\beta_2=\beta$, $k_1+k_2=k, l_1+l_2=l$ and $i \in \{1,\ldots , k_2+1\}$.

Now, using Stokes theorem \cite[Lemma 12.13]{FOOO2} and summing over all $\beta, k,l$ (like in Lemma 2.6.36 \cite{FOOO_MS}), we see that the first product in (\ref{eq:boundaryN}) gives $\m_1^{\tau,b}\left(\WT{Y}_{b,\tau}\right)$. The second product in (\ref{eq:boundaryN}) contributes as zero since $\m_0^{b,\tau}$ is a multiple of the unit and the perturbation in $\mathcal{N}_{k, l}(\beta)$ is compatible with forgetting boundary marked points. Finally, by definition, $\mathfrak{forget}^{-1}(\Sigma_{3,i})$ and $\mathfrak{forget}^{-1}(\Sigma_{4,i})$ give $\WT{Z}_{b,\tau}^{\Sigma_{4,i}}$ and $\WT{Z}_{b,\tau}^{\Sigma_{3,i}}$ respectively. Now the desired relation follows from the Stokes theorem.
\end{proof}

Now we need to relate cohomological intersection product and geometric intersection.
Let $\{f_i\}_{i=1}^m$ be basis of $H_{orb}^*(X)$, $g_{ij}= \langle f_i, f_j \rangle_{PD}$ and $(g^{ij})$ be its inverse matrix.
On this basis, we write $A\bullet_\tau B = \sum_i c_i f_i$.
%Let 
%\begin{equation}\label{del}
%	\Delta' = \{ (x,g), (x, g')\} \subset IX \times IX
%\end{equation}
%for the inertia orbifold $IX$.
Let $R$ be a chain in $IX \times IX$ such that 
$$\partial R = \Delta' - \sum_{ij} g^{ij } f_i \times f_j,$$
 and consider the moduli space $\mathcal{M}_{k+1, l_1,l_2}(\alpha, \beta; A,B,R)$ defined above. Using the boundary evaluation maps on these moduli spaces we define
 $$\WT{\Xi}(A,B,K,b) = \sum_{\alpha,\beta,l_1,l_2,k} \frac{T^{\omega \cap (\alpha \sharp \beta)} }{(l_1+l_2)!}(ev_0)_*(ev_1^*b \wedge \cdots \wedge ev_k^*b).$$
 The following lemma can be proved exactly as Lemma 2.6.36 in \cite{FOOO_MS}.
 
\begin{lemma}
	$$\sum_i c_i\KS_\tau(f_i) \one_\BL - \WT{Z}_{b,\tau}^{\Sigma_0} = \m_{1}^{\tau,b}( \WT{\Xi}(A,B,R,b))$$
	%$KS(A\;  \bullet_\tau B)$ equals $Z_{b,\tau}^{\Sigma_0}$ modulo Jacobian ideal relation.
\end{lemma}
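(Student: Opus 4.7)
The plan is a Stokes/cobordism argument applied to the chain $\WT{\Xi}(A,B,R,b)$, viewed as the $ev_0$-pushforward of the fundamental chain of the parametrized moduli spaces $\mathcal{M}_{k+1,l_1,l_2}(\alpha,\beta;A,B,R)$. The structure of the argument mirrors \cite[Lemma 2.6.36]{FOOO_MS}, with the $\Z/2$-equivariant CF perturbations of Lemma \ref{lem:cfp} playing the role that $T^n$-equivariance plays in the toric case. The identity is extracted from the codimension-one boundary strata of $\mathcal{M}_{k+1,l_1,l_2}(\alpha,\beta;A,B,R)$, which split into a contribution coming from $\partial R$ inside the fiber product and a contribution coming from the standard disc-bubbling degenerations.

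First I would exploit $\partial R = \Delta' - \sum_{ij} g^{ij} f_i \times f_j$. The $\Delta'$-piece is, outside codimension two, identified via the Glue map of \eqref{eqglue} with $\mathfrak{forget}^{-1}(\Sigma_0)$, so its $ev_0$-pushforward equals $Z_{b,\tau}^{\Sigma_0}\,\one_\BL$ (with appropriate sign). On the other piece the fiber product decouples across $f_i\times f_j$: the sphere factor, summed over $\alpha$ and $l_1$ with weight $T^{\omega\cap\alpha}/l_1!$ and paired against $f_j$, yields $\langle A\bullet_\tau B, f_j\rangle_{PD}$ by the definition of the quantum product; the disc factor, with the remaining $f_i$ constraint inserted at an interior marked point and summed over $\beta, l_2$, is exactly the expression for $\KS_\tau(f_i)\,\one_\BL$ provided by Proposition \ref{prop:divisor}. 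Combining through $g^{ij}$ and using $A\bullet_\tau B = \sum_i c_i f_i$, this piece contributes $-\sum_i c_i\KS_\tau(f_i)\,\one_\BL$.

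Next I would address the disc-bubbling boundaries. A disc bubble carrying the outgoing root marked point, with all sphere/fiber-product data absorbed into the complementary component, produces exactly $\m_1^{\tau,b}(\WT{\Xi}(A,B,R,b))$ after summing over energies and the number of $b$-insertions. A bubble that does not carry the root marked point reconnects to the main disc through an $\m_0^{\tau,b}$-configuration; by Proposition \ref{prop:weakunobsbulk} this is a multiple of $\one_\BL$, and the forgetful-map compatibility of the Kuranishi structures and CF perturbations from Lemma \ref{lem:cfp} forces these contributions to cancel, exactly as in the toric case of \cite{FOOO_MS}. Assembling all contributions by Stokes' theorem yields the identity
$$\sum_i c_i\KS_\tau(f_i)\,\one_\BL - Z_{b,\tau}^{\Sigma_0} = \m_1^{\tau,b}(\WT{\Xi}(A,B,R,b)).$$

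The main technical obstacle is the compatibility of perturbations across strata: one must ensure that the perturbation on $\mathfrak{forget}^{-1}(\Sigma_0)$ induced from the parametrized moduli space agrees, up to the Glue map, with the one used to define $Z_{b,\tau}^{\Sigma_0}$, and that the $\m_0$-bubble contributions genuinely cancel. Both points are secured by the component-wise $\Z/2$-equivariant construction of Lemma \ref{lem:cfp}, together with the fact that $\mathfrak{Glue}$ is an isomorphism outside codimension two; the remaining issue is a standard Kuranishi-theoretic verification parallel to the treatment of the $\Sigma_0$ and $\Sigma_{12}$ strata carried out in Proposition \ref{prop:gluing}.
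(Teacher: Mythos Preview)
Your proposal is correct and follows essentially the same Stokes/cobordism argument the paper defers to, namely Lemma 2.6.36 of \cite{FOOO_MS}; the paper itself gives no further details beyond that reference. One small clarification worth making explicit: in this lemma $\KS_\tau(f_i)$ denotes the chain-level $\mathfrak{q}$-map expression (the right-hand side of Proposition \ref{prop:divisor}) rather than the abstract $\partial W_\tau/\partial w_i$, so the identity is genuinely chain-level and your invocation of Proposition \ref{prop:divisor} should be read in that sense, as the paper notes immediately after the lemma.
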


Please note that here we are using the description for $\KS_{\tau}$ provided by Proposition \ref{prop:divisor}.

\begin{prop}\label{prop:sigma0}
	$\KS(A  \bullet_\tau B)$ equals $Z_{b,\tau}^{\Sigma_0}$ modulo the Jacobian ideal.
\end{prop}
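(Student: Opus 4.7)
The plan is to bundle together the three ingredients the paper has just set up: the expansion of the quantum product in a basis, the chain-level identity in the lemma preceding this proposition, and the fact (Proposition \ref{prop:Imdiff}) that the image of $\m_{1,can}^{\tau,b}$ lies in the Jacobian ideal. The proof will be short because the geometric content of $Z_{b,\tau}^{\Sigma_0}$ (that it ``factors'' as a closed-string product followed by a Kodaira--Spencer insertion) has already been extracted via the auxiliary moduli space $\mathcal{M}_{k+1,l_1,l_2}(\alpha,\beta;A,B,R)$ used in the preceding lemma.

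First I would rewrite $\KS_\tau(A \bullet_\tau B)$ using linearity. Writing $A \bullet_\tau B = \sum_i c_i f_i$ in the basis $\{f_i\}$ of $H^*_{orb}(X)$, linearity of $\KS_\tau$ gives
\begin{equation*}
\KS_\tau(A \bullet_\tau B) = \sum_i c_i \KS_\tau(f_i).
\end{equation*}
Hence to prove the proposition it suffices to show $\sum_i c_i \KS_\tau(f_i) \one_\BL - Z^{\Sigma_0}_{b,\tau}$ vanishes modulo the Jacobian ideal.

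Next I would invoke the lemma immediately preceding the proposition, which supplies exactly this comparison at chain level:
\begin{equation*}
\sum_i c_i \KS_\tau(f_i) \one_\BL - Z^{\Sigma_0}_{b,\tau} = \m_1^{\tau,b}\bigl(\WT{\Xi}(A,B,R,b)\bigr).
\end{equation*}
The right-hand side lives in $\CF(\BL)$, not yet in the canonical model. To land in $\Jac(W_\tau)$, I would apply the projection $\Pi^{b,\tau}$ to the canonical model, using $\Pi^{b,\tau}(\one_\BL)=\one_\BL$ and the intertwining $\Pi^{b,\tau}\circ \m_1^{\tau,b} = \m_{1,can}^{\tau,b}\circ \Pi^{b,\tau}$ to obtain
\begin{equation*}
\sum_i c_i \KS_\tau(f_i) \one_\BL - \Pi^{b,\tau}(Z^{\Sigma_0}_{b,\tau}) = \m_{1,can}^{\tau,b}\bigl(\Pi^{b,\tau}(\WT{\Xi}(A,B,R,b))\bigr).
\end{equation*}
Since $Z^{\Sigma_0}_{b,\tau}$ is (by its earlier identification) a scalar multiple of $\one_\BL$, the projection fixes it. Finally, Proposition \ref{prop:Imdiff} asserts that $\mathrm{Im}(\m_{1,can}^{\tau,b})$ is contained in the Jacobian ideal of $W_\tau$, so the right-hand side vanishes in $\Jac(W_\tau)$, which yields the desired equality.

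The only point that requires any vigilance is a bookkeeping check that $\WT{\Xi}(A,B,R,b)$ is actually a convergent element of $\CF(\BL, \Lambda\langle\langle x,y,z\rangle\rangle)$ so that $\Pi^{b,\tau}$ can be applied, and that the chain $R$ with $\partial R = \Delta' - \sum g^{ij} f_i \times f_j$ can be chosen $\Z/2$-equivariantly (so the preceding reflection-cancellation arguments that produced the identification $\WT{Z}^\Sigma_{b,\tau} = z^\Sigma_{b,\tau}\,\one_\BL$ continue to apply). Both are analogues of what has already been established: convergence follows by the same Gauss--Bonnet/area-bound argument as in Proposition \ref{prop:fukalgconv}, and equivariance of $R$ is possible because the anti-symplectic involution preserves $\Delta'$ and the Poincar\'e pairing. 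With these two sanity checks the proof is complete.
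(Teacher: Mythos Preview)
Your proof is correct and follows essentially the same route as the paper's: check convergence of $\WT{\Xi}(A,B,R,b)$, apply $\Pi^{b,\tau}$ to the chain-level identity from the preceding lemma, and invoke Proposition~\ref{prop:Imdiff}. Your additional remarks on the $\Z/2$-equivariance of $R$ and the projection fixing $Z_{b,\tau}^{\Sigma_0}$ are sound sanity checks that the paper leaves implicit.
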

\begin{proof}
	As before we can show that  $\WT{\Xi}(A,B,R,b)$ is convergent. Then we apply $\Pi^{b,\tau}$ to the equation in the previous lemma to conclude that $\KS_\tau(A  \bullet_\tau B)\one_\BL$ and $Z_{b,\tau}^{\Sigma_0}\one_\BL$ differ by an element in the image of $(\m_1^{\tau,b})_{can}$. The result now follows from Proposition \ref{prop:Imdiff}.
\end{proof}

\begin{prop}[c.f. Lemma 2.6.29 \cite{FOOO_MS}]\label{prop:sigma12}
We have $$\KS_\tau(A) \cdot \KS_\tau(B) = Z_{b,\tau}^{\Sigma_{12}}.$$
\end{prop}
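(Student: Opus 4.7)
The plan is to identify $Z_{b,\tau}^{\Sigma_{12}}$ at chain level with $\m_2^{\tau,b}$ applied to two closed-open chains and then pass to the canonical model, where strict unitality collapses the product to $\KS_\tau(A)\KS_\tau(B)$. The surjective Glue map displayed just before the statement exhibits $\mathfrak{forget}^{-1}(\Sigma_{12})$, outside codimension-$2$ strata, as a fiber product of three disc moduli: two bubble discs carrying the interior insertions $A$ and $B$, and a central disc carrying the boundary output together with the remaining $b$-insertions, attached at two boundary nodes. Introducing the chain-level closed-open element
$$\widetilde{\KS}(A):=\sum_{\beta,k}\exp(\tau^2\mathbf{p}\cap\beta)\sum_{l=0}^{\infty}\frac{T^{\beta\cap\omega}}{l!}\mathfrak{q}_{l+1,k,\beta}(A,\tau_{tw}^l; b,\ldots,b)\in \CF(\BL),$$
and analogously $\widetilde{\KS}(B)$, this fiber-product structure together with the sum over distributions of $b$-insertions among the three disc components reproduces $\m_2^{\tau,b}(\widetilde{\KS}(A),\widetilde{\KS}(B))$.

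To promote this set-theoretic picture to an equality of pushforwards, I would construct component-wise $\Z/2$-equivariant Kuranishi structures and CF-perturbations on $\mathcal{M}^{main}_{k+1,l+2}(\beta,A\otimes B\otimes \tau_{tw}^{\otimes l},\gamma)$ near $\Sigma_{12}$ that restrict, component by component, to the perturbations already used in the definitions of $\widetilde{\KS}(A)$, $\widetilde{\KS}(B)$ and $\m_2^{\tau,b}$. This is the direct analog of Lemma \ref{lem:cfp} near $\Sigma_{12}$; the construction proceeds by induction on energy using submersivity of the boundary evaluation on each disc factor, and $\Z/2$-equivariance is preserved because the anti-holomorphic involution acts componentwise on stable nodal configurations. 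With such perturbations in place, Stokes-type bookkeeping yields the chain-level identity $\WT{Z}_{b,\tau}^{\Sigma_{12}}=\m_2^{\tau,b}(\widetilde{\KS}(A),\widetilde{\KS}(B))$.

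Applying the homological-perturbation quasi-isomorphism $\Pi^{b,\tau}$ and using $\WT{Z}_{b,\tau}^{\Sigma_{12}}=Z_{b,\tau}^{\Sigma_{12}}\one_\BL$ from the reflection-cancellation argument of Lemma \ref{lem:multunit}, Proposition \ref{prop:divisor} allows us to write $\Pi^{b,\tau}(\widetilde{\KS}(A))=\KS_\tau(A)\one_\BL$ and similarly for $B$, both modulo $\mathrm{Im}(\m_{1,can}^{\tau,b})$. Strict unitality of $\one_\BL$ in the canonical model then gives $Z_{b,\tau}^{\Sigma_{12}}\one_\BL\equiv\m_{2,can}^{\tau,b}(\KS_\tau(A)\one_\BL,\KS_\tau(B)\one_\BL)=\KS_\tau(A)\KS_\tau(B)\,\one_\BL$ modulo $\mathrm{Im}(\m_{1,can}^{\tau,b})$, and since this image is contained in the Jacobian ideal by Proposition \ref{prop:Imdiff}, the identity descends to $\Jac(W_\tau)$. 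The hardest step is the inductive construction of compatible $\Z/2$-equivariant CF-perturbations near $\Sigma_{12}$, together with the combinatorial verification that summing over the positions at which the closed-open chains are inserted among the $b$-slots on the central disc reassembles exactly to $\m_2^{\tau,b}$ rather than to a related but distinct operation; this mirrors \cite[Lemma 2.6.29]{FOOO_MS}, with $T^n$-equivariance there replaced by the simpler $\Z/2$-equivariance in our setting.
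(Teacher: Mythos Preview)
Your proposal is correct and follows the same strategy as the paper (which simply cites \cite[Lemma 2.6.29]{FOOO_MS}): use the Glue decomposition near $\Sigma_{12}$ with component-wise $\Z/2$-equivariant Kuranishi structures to factor the pushforward into the three disc pieces, identifying the result with $\m_2^{\tau,b}$ applied to the two closed--open chains.

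One simplification you are missing: the detour through the canonical model is unnecessary. By the very same reflection argument that proves $\WT{Z}_{b,\tau}^{\Sigma}=z_{b,\tau}^{\Sigma}\one_\BL$ (the lemma preceding Proposition~\ref{prop:gluing}), the chain-level closed--open element $\widetilde{\KS}(A)$ is \emph{already} a scalar multiple $c_A\one_\BL$ in $\CF(\BL)$, not merely after applying $\Pi^{b,\tau}$. Then unitality of $\m_2^{\tau,b}$ at chain level gives directly
\[
\WT{Z}_{b,\tau}^{\Sigma_{12}}=\m_2^{\tau,b}(c_A\one_\BL,c_B\one_\BL)=c_A c_B\,\one_\BL,
\]
so $Z_{b,\tau}^{\Sigma_{12}}=c_A c_B$. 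Since $c_A$ represents $\KS_\tau(A)$ in $\Jac(W_\tau)$ (by definition for twisted sectors, by Proposition~\ref{prop:divisor} for the point and fundamental classes), the identity follows. Your route via $\Pi^{b,\tau}$ and $\m_{2,can}^{\tau,b}$ reaches the same conclusion but adds a layer of $A_\infty$-morphism bookkeeping that is not needed here.
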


This proposition is completely analogous to Lemma 2.6.29 \cite{FOOO_MS}. Now combining Propositions \ref{prop:sigma12}, \ref{prop:sigma0}, \ref{prop:cobordism} and \ref{prop:gluing} we obtain the proof of Theorem \ref{KSring}. \qed

\section{\texorpdfstring{$\KS_\tau$ is an isomorphism}{KS is an isomorphism}}\label{sec:KSisomsurjinj}

\subsection{ Surjectivity}

In this subsection we show that $\KS_\tau$ is surjective. We start by computing the lower energy contributions to $\KS_\tau$.

\begin{lemma}\label{lem:imageKS}
	There is $\lambda>0$ (depending on $\tau$) such that:
	\begin{align}\label{eqn:ksabc1}
	\KS_\tau\left(\floor*{\frac{1}{a}}^{\bullet_\tau i}\right)&= x^i \mod T^\lambda, \nonumber\\
	\KS_\tau\left(\floor*{\frac{1}{b}}^{\bullet_\tau j}\right)&=y^j  \mod T^\lambda,  \\
	\KS_\tau\left(\floor*{\frac{1}{c}}^{\bullet_\tau k}\right)&=z^k  \mod T^\lambda, \nonumber\\
    \KS_\tau\left(8 [\pt]\right) &=  -T^{-8}xyz + 3ax^a +3bx^b + 3cz^c \mod T^\lambda,\nonumber
	\end{align}
	where $1\leq i <a$, $1\leq j <b$ and $1\leq k <c$.
\end{lemma}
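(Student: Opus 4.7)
The plan is to identify the lowest $T$-order disc contributions to each value of $\KS_\tau$ using the geometric description of $\KS_\tau$ from Proposition \ref{prop:divisor} (and its twisted-sector analog in the paragraph after Definition \ref{def:KS}), combined with the classification of low-energy orbi-discs in Lemma \ref{lem:geq0} and the area formula of Proposition \ref{prop:GB}.

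First I would treat the twisted sector $\floor*{1/a}$. Expanding
\begin{equation*}
\KS_\tau\!\left(\floor*{\tfrac{1}{a}}\right) = \sum_{\beta,k,l} \exp(\tau^2\mathbf{p}\cap\beta)\,\tfrac{T^{\beta\cap\omega}}{l!}\,\mathfrak{q}_{l+1,k,\beta}\!\left(\floor*{\tfrac{1}{a}}, \tau_{tw}^l; b, \ldots, b\right)
\end{equation*}
and substituting $b = T^{-3}(xX + yY + zZ)$, any disc $\beta$ with $n_i$ corners of type $X,Y,Z$ contributes with $T$-exponent at least $\beta\cap\omega - 3(n_1+n_2+n_3)$, shifted upward by $\mathrm{val}(\tau)>0$ whenever $\tau_{tw}$ or $\tau^2\mathbf{p}$ enters. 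By Lemma \ref{lem:geq0}, excluding the minimal triangle (which cannot carry a $\floor*{1/a}$ insertion), the only discs realizing a zero $T$-exponent are the basic orbi-discs; the unique such disc with exactly one $\floor*{1/a}$ insertion has a single $X$-corner and area $3$ (by \eqref{eqn:orbareaformula}), contributing $T^{3}\cdot T^{-3}x = x$. All other contributions have $T$-valuation at least some $\lambda > 0$, yielding $\KS_\tau(\floor*{1/a}) \equiv x \pmod{T^\lambda}$. The arguments for $\floor*{1/b}$ and $\floor*{1/c}$ are identical. The higher powers then follow from the ring homomorphism property (Theorem \ref{KSring}):
\begin{equation*}
\KS_\tau\!\left(\floor*{\tfrac{1}{a}}^{\bullet_\tau i}\right) = \KS_\tau\!\left(\floor*{\tfrac{1}{a}}\right)^i = (x + O(T^\lambda))^i = x^i + O(T^\lambda),
\end{equation*}
after possibly shrinking $\lambda$ to absorb the cross-terms.

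For the point class I would apply Proposition \ref{prop:divisor} with the symmetric eight-point representative $\mathbf{Q}$, one point in the interior of each of the eight equal-area regions cut out by $\BL$ and the equator. Since each region has area $1$, one has $\mathbf{Q}\cap\beta = m(\beta)$, so
\begin{equation*}
\KS_\tau(8\pt) = \sum_\beta m(\beta)\cdot(\text{contribution of }\beta\text{ to }W_\tau) \quad \text{in } \Jac(W_\tau).
\end{equation*}
By Corollary \ref{coro:w-8} and the analysis in the proof of Lemma \ref{lem:geq0}, the only contributions to $W_\tau$ of $T$-valuation less than $\lambda$ are the minimal triangle ($m = 1$, contribution $-T^{-8}xyz$) and the three basic orbi-disc families associated with $x^a,y^b,z^c$ (areas $m = 3a, 3b, 3c$, contributions $x^a, y^b, z^c$). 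Weighting each by $m(\beta)$ yields precisely $-T^{-8}xyz + 3ax^a + 3by^b + 3cz^c$.

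The main obstacle I anticipate is the compatibility of chain-level representatives: elsewhere the paper adopts the convention from Corollary \ref{coro:w-8} that $\mathbf{p}$ is disjoint from the minimal triangles, whereas the symmetric eight-point representative used above meets them. To reconcile, I would invoke Proposition \ref{prop:tau2tau2'cc}, which shows that two $\Z/2$-symmetric representatives of $[\pt]$ are related by an explicit coordinate change $x\mapsto e^{k_X}x$ (and analogously for $y,z$). Combined with the well-definedness of $\KS_\tau$ on cohomology established in Section \ref{sec:KSringhom}, this transfers the identity to whichever $\mathbf{p}$ is being used, at the cost of a coordinate rescaling that does not affect the displayed leading terms. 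A secondary concern is the identification of the fibre-product moduli space defining $\mathfrak{q}_{l+1,k,\beta}(\mathbf{Q},\ldots)$ with $m(\beta)$ times the unconstrained $\mathfrak{q}_{l,k,\beta}$ — this is precisely the divisor-type equality that Proposition \ref{prop:divisor} establishes in $\Jac(W_\tau)$ via a homotopy of Kuranishi structures, and since my identities are stated in $\Jac(W_\tau)$ this version is exactly what is needed.
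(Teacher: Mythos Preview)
Your proof is correct and follows the paper's approach: identify the leading contributions via the basic orbi-discs and bound everything else using Lemma~\ref{lem:geq0}. Your route through Theorem~\ref{KSring} for the powers and Proposition~\ref{prop:divisor} with the eight-point cycle for $[\pt]$ simply makes explicit what the paper's three-sentence proof leaves implicit; the final concern about $\mathbf{Q}$ versus the bulk representative $\mathbf{p}$ is a non-issue, since these are independent choices and well-definedness of $\KS_\tau$ on cohomology covers it.
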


\begin{proof}
The first order term follows from direct computation. For example, $\frac{1}{a}$-slice of the disc contributing to $x^a$ in $W_\tau$ produces $x$ in the first equation (see Corollary \ref{coro:conelifts} and the preceding discussion for the precise description of these orbi-discs).
Thus it suffices to show that all the higher order terms in the above equations have strictly positive powers in $T$. This directly follows from Lemma \ref{lem:geq0}.
\end{proof}

Lemma \ref{lem:imageKS} together with the fact that $\KS_\tau$ is a ring map, is enough to establish surjectivity.

\begin{prop}\label{prop:surj}
	The map $\KS_\tau$ is surjective.
\end{prop}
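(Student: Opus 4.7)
The plan is to leverage the ring homomorphism property of $\KS_\tau$ (Theorem \ref{KSring}) together with the low-energy computation of Lemma \ref{lem:imageKS} to show that $I := \mathrm{Im}(\KS_\tau)$ is dense in $\Jac(W_\tau)$ in the $T$-adic topology, and then to bootstrap density to exact surjectivity via iterative correction, using $T$-adic completeness of $\QH^*_{orb}(X,\tau)$ as a free $\Lambda$-module of finite rank.

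First, I would combine Lemma \ref{lem:imageKS} with the ring-map property to see that $I + T^\lambda \Jac(W_\tau)$ contains all of $\Lambda[x,y,z]$ for some $\lambda > 0$. Indeed, Lemma \ref{lem:imageKS} puts $x, y, z$ into $I + T^\lambda \Jac(W_\tau)$ via the twisted sectors $\floor*{1/a}, \floor*{1/b}, \floor*{1/c}$; since $\KS_\tau$ is a $\Lambda$-algebra map, $I$ is a $\Lambda$-subalgebra of $\Jac(W_\tau)$, so $I + T^\lambda \Jac(W_\tau)$ is also a $\Lambda$-subalgebra of $\Jac(W_\tau)$ and thus contains $\Lambda[x,y,z]$. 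Next, I would observe that $\Lambda[x,y,z]$ is dense in $\Lambda\langle\langle x,y,z\rangle\rangle$ in the $T$-adic topology (truncate any convergent series by discarding all monomials with coefficient valuation $\geq \lambda$), so its image is dense in the quotient $\Jac(W_\tau)$, giving $\Lambda[x,y,z] + T^\lambda\Jac(W_\tau) = \Jac(W_\tau)$. Combining the two yields $I + T^\lambda\Jac(W_\tau) = \Jac(W_\tau)$, and iterating (using $T^\lambda I \subset I$) extends this to $I + T^{k\lambda}\Jac(W_\tau) = \Jac(W_\tau)$ for every $k \geq 1$.

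Finally, I would bootstrap as follows: given any $f \in \Jac(W_\tau)$, set $f_0 = f$ and recursively find $g_k \in \QH^*_{orb}(X,\tau)$ and $f_k \in \Jac(W_\tau)$ with $f_{k-1} = \KS_\tau(g_k) + T^\lambda f_k$, so that
$$f = \KS_\tau\bigl(g_1 + T^\lambda g_2 + \cdots + T^{(k-1)\lambda} g_k\bigr) + T^{k\lambda} f_k.$$
Since $\QH^*_{orb}(X,\tau)$ is a free $\Lambda$-module of finite rank and hence $T$-adically complete, the partial sums $G_k = \sum_{j=1}^k T^{(j-1)\lambda} g_j$ converge to some $g$, and $\KS_\tau(g) = f$ by $\Lambda$-linear continuity of $\KS_\tau$. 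The main technical point will be arranging that $\mathrm{val}(g_k)$ stays bounded below uniformly in $k$ so that $T^{(k-1)\lambda} g_k \to 0$ in $\QH^*_{orb}(X,\tau)$; this can be ensured by choosing each $g_k$ as a finite $\Lambda$-linear combination of a fixed collection of lifts of the monomial generators $x^i y^j z^k$, using that only finitely many monomials contribute at each truncation level and that their chosen lifts have valuations bounded below independently of $k$.
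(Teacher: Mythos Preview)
Your proposal is correct and follows essentially the same approach as the paper: both use Lemma~\ref{lem:imageKS} together with the ring homomorphism property (Theorem~\ref{KSring}) to hit every element modulo $T^\lambda$, then iterate this $T$-adic correction and sum the resulting series in $\QH^*_{orb}(X,\tau)$. The paper makes the convergence step concrete by first reducing to targets $R$ with $\Lambda_0$-coefficients and taking each $\rho_i$ to be a finite $\Lambda_0$-combination of iterated quantum products $\floor*{1/a}^{\bullet_\tau i}\bullet_\tau\floor*{1/b}^{\bullet_\tau j}\bullet_\tau\floor*{1/c}^{\bullet_\tau k}$; since the quantum product has $\Lambda_0$-valued structure constants on the standard basis, this guarantees $\mathrm{val}(\rho_i)\geq 0$ uniformly, which is exactly the technical point you flag at the end.
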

\begin{proof}

%Since $KS$ is $\Gamma$-linear it is enough to prove that for any $P\in\Jac(W)$ with coefficients in $\Lambda_0$, there exists a $\rho$ with $KS(\rho)=P$.
%
%Induction on degree.
%
%=======================

Since any element in $\Jac(W_\tau)$ can be written as $T^{-\epsilon} R$ such that $\epsilon >0$ and $R$ only has positive powers in $T$, it is enough to prove that any $R \in \Jac(W_\tau)$ with $\Lambda_0$-coefficients is in the image of $\KS_\tau$. Let $\lambda$ be the minimum of powers of $T$ appearing in the higher order terms in \eqref{eqn:ksabc1}. We claim that for any such $R$ there exists $\rho$ with 
$$R- \KS_\tau(\rho) = T^{\lambda} U$$
where $U$ is also an element in $\Jac (W_\tau)$ with $\Lambda_0$-coefficients only. To see this, write $R$ as
$$R = \sum_{l=1}^{N} a_l T^{\lambda_l} x^{i_l} y^{j_l} z^{k_l} + T^\lambda \tilde{U}$$
where $\tilde{U}$ has positive powers in $T$ (either of summands could be zero even for nonzero $R$) and $\lambda_l <\lambda$. We take $\rho$ to be as follows
$$\rho = \sum_{l=1}^N a_l T^{\lambda_l}  \floor*{\frac{1}{a}}^{\bullet_\tau i_l}\bullet_\tau \floor*{\frac{1}{b}}^{\bullet_\tau j_l} \bullet_\tau \floor*{\frac{1}{c}}^{ \bullet_\tau k_l}.$$
Using the fact that $\KS_\tau$ is a ring homomorphism, Lemma \ref{lem:imageKS} implies that the valuation of $R-\KS_\tau(\rho)$ is no less than $\lambda$. 

We next use this inductively to prove the surjectivity. For $R \in \Jac(W_\tau)$ only with $\Lambda_0$-coefficients, there exists $\rho_1$ such that
$$R - \KS_\tau( \rho_1) = T^{\lambda} R_1.$$
Applying the same to $R_1$, we get $\rho_2$ such that
$$R- \KS_\tau(\rho_1 + T^\lambda \rho_2) = T^\lambda \left( R_1 - \KS_\tau (\rho_2) \right) = T^{2 \lambda} R_3$$
Inductively, one sees that $\sum_{i} T^{(i-1) \lambda} \rho_i$ maps to $R$ under $\KS_\tau$.
\end{proof}

\subsection{Jacobian ring of the leading order potential}
From Proposition \ref{coro:w-8}, we can write
$$T^8 W_\tau=  \mathcal{W}_{lead} + W_+,$$
where 
\begin{equation}\label{eqn:leadingterms-8xyz}
\mathcal{W}_{lead} = -xyz + T^8(x^a+y^b+z^c)
\end{equation}
and $W_+ = T^8 W_{high}$. In particular, we have $val(W_+)=\lambda_0 >8$ for some $\lambda_0$. The coefficient of $xyz$ in $\mathcal{W}_{lead}$ depends on the choice of a representative of $[\pt]$, but we will only consider the case of \eqref{eqn:leadingterms-8xyz} in this section to make our exposition simpler. In general, one can have $\mathcal{W}_{lead} = - \tilde{\xi} xyz + T^8(x^a+y^b+z^c)$ for some $\tilde{\xi}$ with $val \left(\tilde{\xi}\right) =0$ (see \eqref{eqn:leadingterms-xixyz} where $\tilde{\xi}= T^8 \xi$), but the argument below will still apply for any $\tilde{\xi}$ without much change, since what essentially matters is its valuation. 
 We set the following notation
$$g_1=\partial_{x}\mathcal{W}_{lead}, \ g_2=\partial_{y}\mathcal{W}_{lead}, \ g_2=\partial_{z}\mathcal{W}_{lead}. $$
Moreover let $\gamma_1,\cdots,\gamma_N$ denote the following set of elements in $\Lambda\langle\langle x,y,z\rangle\rangle$:
\begin{equation}\label{eq:basis}
	1,x,x^2,\cdots,x^{a-1},y,\cdots,y^{b-1},z,\cdots,z^{c-1},xyz.
\end{equation}

\begin{theorem}
Let $\mathcal{A}_0$ be the Jacobian ring of $\mathcal{W}_{lead}$, that is $\mathcal{A}_0:=\Lambda\langle\langle x,y,z\rangle\rangle/\langle g_1,g_2,g_3 \rangle$. We have the following:

\begin{enumerate}\label{as:def}
\item $\{ \gamma_1,\ldots,\gamma_N \}$ forms a linear basis of $\mathcal{A}_0$ over $\Lambda$.\footnote{An analogous statement over $\mathbb{C}$ is well-known, but here, we additionally need a careful estimate on the valuation to prove this over $\Lambda$.}
\item Any $\rho \in \Lambda\langle\langle x,y,z\rangle\rangle$ with $val(\rho) \geq 0$ can be written as 
\begin{equation}\label{eqn:tjtjtj}
\rho = \sum_{i=1}^N c_i \gamma_i + \sum_{j=1}^3 t_j g_j
\end{equation}
where $c_i \in \Lambda$ with $val(c_i) \geq -8$ and $t_j \in \Lambda\langle\langle x,y,z\rangle\rangle$ with $val(t_j) \geq -8$.
\end{enumerate}
\end{theorem}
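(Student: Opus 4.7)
The plan has three stages. First, I construct a reduction algorithm that, modulo the Jacobian ideal, rewrites any element of $\Lambda\langle\langle x,y,z\rangle\rangle$ as a $\Lambda$-combination of $\gamma_1,\ldots,\gamma_N$. Second, I track $T$-valuations carefully through this reduction to establish the bound $\geq -8$. Third, I verify linear independence of $\{\gamma_i\}$ over $\Lambda$. Combining these gives both assertions of the theorem.

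For the reduction, I use the three rules obtained directly from $g_1, g_2, g_3$:
\begin{equation*}
yz \equiv T^8 a\,x^{a-1},\qquad xz \equiv T^8 b\,y^{b-1},\qquad xy \equiv T^8 c\,z^{c-1},
\end{equation*}
together with the derived identities $x^a = T^{-8}(1/a)(xyz + x g_1)$, $y^b = T^{-8}(1/b)(xyz + y g_2)$, $z^c = T^{-8}(1/c)(xyz + z g_3)$, which follow from $xg_1 = -xyz + aT^8 x^a$ and its $y,z$-analogs. Any monomial $x^p y^q z^r$ with at least two positive exponents is rewritten by one of the first three rules, each application reducing the multi-variable support while multiplying by $T^{+8}$; a single-variable monomial of degree $\geq a$ (or $\geq b,c$) is re-expressed in multi-variable form via the derived identities at the cost of a single $T^{-8}$, feeding back into the iteration. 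Since each cycle through the three-variable rules contributes $T^{+8}$, the series converges $T$-adically in $\Lambda\langle\langle x,y,z\rangle\rangle$ to an element of the $\Lambda$-span of $\{\gamma_i\}$.

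For the valuation estimate, the bound $val(c_i) \geq -8$ (and similarly for $t_j$) comes from the observation that the only step of negative valuation is $x^a \to T^{-8}(1/a) xyz$ or its $y,z$-analog, each contributing exactly valuation $-8$; every other reduction contributes $T^{+8}$. I would formalize this by inducting on a suitable monomial weight (separating reducible monomials from the basis $\{\gamma_i\}$) and showing that any reduction path incurs the $T^{-8}$ cost at most once before landing in the $xyz$-direction, after which only positive contributions accumulate. For a general $\rho$ with $val(\rho) \geq 0$, the algorithm is applied term-by-term in the monomial expansion; $T$-adic convergence is preserved by a uniform $T^{-8}$ shift in the Tate algebra, so the resulting $c_i \in \Lambda$ and $t_j \in \Lambda\langle\langle x,y,z\rangle\rangle$ are well-defined and satisfy the required estimates.

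For linear independence, when $\chi = 1/a + 1/b + 1/c - 1 \neq 0$ (spherical or hyperbolic), I would use the substitution $x = T^{-\alpha}u$, $y = T^{-\beta}v$, $z = T^{-\gamma}w$ where $(\alpha,\beta,\gamma)$ solves the linear system $(a-1)\alpha - \beta - \gamma = -\alpha + (b-1)\beta - \gamma = -\alpha - \beta + (c-1)\gamma = 8$, whose determinant $-abc\chi$ is nonzero. This transforms $\mathcal{W}_{lead}$ (up to an overall $T$-power) into the classical polynomial $-uvw + u^a + v^b + w^c \in \mathbb{C}[u,v,w]$, whose Milnor ring is well-known to have dimension $a+b+c-1$ with the analogous monomial basis; independence of $\{\gamma_i\}$ over $\Lambda$ then follows. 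The three elliptic exceptions $\chi=0$, namely $(a,b,c)\in\{(3,3,3),(2,4,4),(2,3,6)\}$ up to permutation, are verified by direct computation. The hard part is the valuation bookkeeping in stage two: because the reduction is generically infinite, one must prove systematically that no reduction path ever accumulates more than a single $T^{-8}$ factor, for otherwise iterated applications could push coefficients below valuation $-8$. Designing a monomial order (or a combinatorial invariant measuring ``maximal $T$-debt'') that enforces this one-debt-only property is the crux of the technical argument.
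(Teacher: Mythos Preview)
Your plan for part (2) is the paper's plan: the paper uses exactly your type I and type II replacements and the identity $x^a \equiv a^{-1}T^{-8}xyz$. But you correctly flag the hard step and then do not do it. The paper's execution is a case split on $(a,b,c)$: for $a,b,c\ge 3$ one composes $\I_z,\I_y,\I_x$ to get
\[
x^{2+i}y^{1+j}z^{k}\equiv (abc)\,T^{24}\,x^{i+a-1}y^{j+b-2}z^{k+c-3},
\]
and since $a-3,b-3,c-3\ge 0$ the monomial does not shrink, so a geometric-series argument kills it in the ideal with $val(t_j)\ge 0$; the single $T^{-8}$ then appears only when one feeds $x^a,y^b,z^c$ back to $xyz$. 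When one of $a,b,c$ equals $2$ this fails (e.g.\ $a-3<0$), and the paper handles $(2,2,c)$, $(2,3,c)$, $(2,b,c)$ with $b,c\ge 4$ by separate combinatorial reductions in which every use of type II is immediately paired with a type I so the net drop is zero. Your proposed ``monomial order enforcing one-debt-only'' would have to reproduce this; at present it is a placeholder, not a proof.

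Your approach to linear independence is genuinely different from the paper's. The paper gives a short direct argument, uniform in $(a,b,c)$: from $\sum c_i\gamma_i+\sum f_jg_j=0$ one observes that no $f_jg_j$ contains $1,x,\ldots,x^{a-2},y,\ldots,y^{b-2},z,\ldots,z^{c-2}$, killing those $c_i$; then forcing cancellation of $x^{a-1}$ would require a constant term in $f_1$, which produces an uncancellable $yz$, so $c_{x^{a-1}}=0$ (and symmetrically); finally cancelling $xyz$ would force a linear term in some $f_j$, producing an uncancellable $x^a$ (or $y^b$, $z^c$). Your substitution $x=T^{-\alpha}u$ etc.\ to reach the classical polynomial $-uvw+u^a+v^b+w^c$ can be made to work when $\chi\neq 0$, but you must note that the substitution does not carry $\Lambda\langle\langle x,y,z\rangle\rangle$ to $\Lambda\langle\langle u,v,w\rangle\rangle$; what saves you is that any relation still lands in the formal power series ring $\Lambda[[u,v,w]]$, where the Milnor algebra of an isolated singularity over the field $\Lambda$ has the standard basis. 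This is correct but heavier than needed, and it forces you to treat the three elliptic cases separately, whereas the paper's argument is a half-page and uniform.
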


Note that the ideal $ \langle g_1,g_2,g_3\rangle $ is closed since $\Lambda\langle\langle x,y,z\rangle\rangle$ is a Tate algebra \cite[Section 5.2.7]{BGR}, as are other ideals appearing in earlier sections.

The proof of condition (2) in this theorem requires a clever usage of relations in the Jacobian ring and the argument varies for different types of $(a,b,c)$. We will provide a detailed proof in Appendix \ref{sec:thmasdefpfpf}.
%This completes the verification of the condition (2) in Theorem \ref{as:def}. 
%Having the condition (2) in Theorem \ref{as:def}, we see 
It implies that the monomials in (\ref{eq:basis}) form a generating set for the $\Lambda$-vector space $\mathcal{A}_0$. Thus, in order to complete the proof of Theorem \ref{as:def}, it only remains to show that they are linearly independent.

\begin{prop}\label{prop:A0}
	The rank of $\mathcal{A}_0 $ is $a+b+c-1$.
\end{prop}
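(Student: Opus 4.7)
\textbf{Proof plan for Proposition \ref{prop:A0}.}

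The upper bound $\mathrm{rank}_\Lambda \mathcal{A}_0 \leq a+b+c-1$ is immediate from Theorem \ref{as:def}(2): the listed elements $\gamma_1,\ldots,\gamma_N$ with $N=a+b+c-1$ span $\mathcal{A}_0$ as a $\Lambda$-vector space (any element of $\Lambda\langle\langle x,y,z\rangle\rangle$, after scaling by a power of $T$, falls into the class covered by that theorem, and the scaling can be absorbed into the coefficients $c_i$). So the whole task is to prove linear independence of $\{\gamma_i\}$.

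My plan is to view $\mathcal{A}_0$ as the ring of functions on the critical locus of $\mathcal{W}_{lead}$ inside the affinoid polydisc $\mathrm{Sp}\left(\Lambda\langle\langle x,y,z\rangle\rangle\right)$. First I would verify that $(g_1,g_2,g_3)$ is a regular sequence in $\Lambda\langle\langle x,y,z\rangle\rangle$ (localized near the critical points), so that $\mathcal{A}_0$ is a zero-dimensional complete intersection and standard Koszul / intersection-multiplicity arguments identify $\mathrm{rank}_\Lambda \mathcal{A}_0$ with the sum of local Milnor numbers at critical points lying in the unit polydisc. Next I would enumerate these critical points. Besides the origin, $g_1=g_2=g_3=0$ forces $aT^8x^a=bT^8y^b=cT^8z^c=xyz$, giving an explicit family parametrised by compatible choices of $a$-th, $b$-th, $c$-th roots of unity; the valuation of such a solution is governed by $\chi=\tfrac{1}{a}+\tfrac{1}{b}+\tfrac{1}{c}-1$. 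In the spherical case $\chi>0$, all such solutions sit in the unit polydisc and are Morse (contributing $1$ each), while in the hyperbolic case they escape (e.g.\ for $(a,b,c)=(2,3,7)$ one finds $\mathrm{val}(z)=-48$), so only the origin remains in the spectrum and it absorbs the entire multiplicity; the elliptic case is the boundary situation.

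The main obstacle is then the local computation at the origin in the elliptic/hyperbolic regimes, where one must show that the local Milnor number accounts for all of $a+b+c-1$. I would do this by eliminating $x=yz/(aT^8)$ from $g_1=0$, reducing to an ideal in $\Lambda[[y,z]]$ generated by the images of $g_2,g_3$, and running a Gr\"obner-basis computation: the leading-term analysis produces a standard-monomial basis whose cardinality is forced to be exactly $a+b+c-1$ after combining the relations coming from $y(by^{b-1}-(z^{\cdots}/T^{\cdots}))$ and $z(cz^{c-1}-\cdots)$. The bookkeeping in this step is where the $T$-valuations have to be tracked carefully (this is precisely the ``convergence issue over $\Lambda$'' flagged in the introduction), since naively dividing by coefficients like $aT^8$ introduces negative $T$-powers which one must check do not spoil membership in $\Lambda\langle\langle y,z\rangle\rangle$.

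A cleaner alternative, which I would set up in parallel, is a deformation argument: perturb $\mathcal{W}_{lead}$ to a Morse potential $\mathcal{W}_\epsilon\in\Lambda\langle\langle x,y,z\rangle\rangle$ with $a+b+c-1$ nondegenerate critical points inside the unit polydisc (choosing the perturbation in a regime-dependent way, so that the escaping critical points of the hyperbolic case are pulled back into the polydisc by a small explicit change), then use flatness of the Koszul complex of a regular sequence in a Tate algebra together with lower semi-continuity of the $\Lambda$-rank of the Jacobian quotient to conclude $\mathrm{rank}_\Lambda \mathcal{A}_0\geq a+b+c-1$. Combined with the upper bound from Theorem \ref{as:def}(2), this gives equality and finishes the proof.
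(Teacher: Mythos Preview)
Your upper-bound observation matches the paper, but for linear independence the paper does something far simpler than anything you propose: a direct monomial inspection. Assume a relation
\[
\sum_i c_i\gamma_i + f_1 g_1 + f_2 g_2 + f_3 g_3 = 0
\]
in $\Lambda\langle\langle x,y,z\rangle\rangle$. Since $g_1 = -yz + aT^8 x^{a-1}$, $g_2 = -xz + bT^8 y^{b-1}$, $g_3 = -xy + cT^8 z^{c-1}$, no monomial in any $f_jg_j$ can equal $1$, $x,\ldots,x^{a-2}$, $y,\ldots,y^{b-2}$, or $z,\ldots,z^{c-2}$; hence those $c_i$ vanish immediately. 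To kill a surviving $c_x x^{a-1}$ one would need a nonzero constant term in $f_1$, but that simultaneously produces a $yz$ term with nothing to cancel it; so $c_x=0$, and likewise for $y^{b-1}$, $z^{c-1}$. Finally, to kill $c_N\,xyz$ one of $f_1,f_2,f_3$ would need a linear term in $x,y,z$ respectively, which in turn produces an uncancellable $x^a$, $y^b$, or $z^c$. Hence all $c_i=0$. No critical-point geometry, no Milnor numbers, no flatness.

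By contrast, your plan has genuine gaps precisely at the points you flag. In the hyperbolic/elliptic regimes you reduce everything to computing the local length at the origin, but the elimination $x = yz/(aT^8)$ forces you out of $\Lambda\langle\langle y,z\rangle\rangle$ and you never say how the Gr\"obner calculation actually closes; ``the leading-term analysis produces a standard-monomial basis whose cardinality is forced to be exactly $a+b+c-1$'' is the assertion to be proved, not an argument. Your deformation alternative is circular as stated: to know that a small perturbation $\mathcal{W}_\epsilon$ has \emph{exactly} $a+b+c-1$ Morse critical points in the polydisc you already need the total intersection multiplicity of $(g_1,g_2,g_3)$ on that polydisc, which is $\dim_\Lambda \mathcal{A}_0$ --- the quantity you are trying to bound from below. (Also, fibre dimension in a flat family is locally constant, not lower semicontinuous; you need the flatness first, not as a consequence.) The paper's three-line monomial argument sidesteps all of this.
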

\proof

We need to show that 
	$$1,x,x^2,\ldots,x^{a-1},y,\ldots,y^{b-1},z,\ldots,z^{c-1},xyz.$$ 
are linearly independent in $\mathcal{A}_0 = \Lambda\langle\langle x,y,z\rangle\rangle / \langle g_1,g_2,g_3 \rangle $. As in \eqref{eq:basis}, we write them as $\gamma_1,\cdots,\gamma_N$.
Suppose we have the following equation in $\Lambda \langle\langle x,y,z \rangle\rangle$:
\begin{equation}\label{eq:ind1} \sum_{i=1}^N c_i \gamma_i + f_1 g_1 + f_2g_2 + f_3g_3 =0
\end{equation}
where  $c_i \in \Lambda$,  $f_j \in \Lambda\langle\langle x,y,z\rangle\rangle$.
It is enough to show that $c_1= \cdots=c_N =0$.
From the expression of $g_1,g_2,g_3$ it is easy to see that $f_ig_i$ cannot have terms like
$$1,x, \ldots, x^{a-2}, y,\ldots,y^{b-2},z,\ldots,z^{c-2}.$$
Thus we find that the coefficients on these monomials should vanish, and the equation \eqref{eq:ind1} can be written as
\begin{equation}\label{eq:ind2}
c_x x^{a-1} + c_y y^{b-1} + c_z z^{c-1} + c_N xyz +   f_1 g_1 + f_2g_2 + f_3g_3 =0.
\end{equation}
If $c_x \in \Lambda$ is non-zero,  then $f_1$ must have a nontrivial constant term $f_1^0 \in \Lambda$ in order to cancel $c_x x^{a-1}$ making use of $f_1^0 g_1 =f_1^0 (yz + T^8x^{a-1}) $. However, the monomial $f_1^0 yz$  cannot appear in other expressions
of \eqref{eq:ind2}. Thus $f_1^0=0$, and hence $c_x=0$. In the same way $c_y=c_z=0$, and the equation \eqref{eq:ind2} can be written as
\begin{equation}\label{eq:ind3}
c_N xyz +   f_1 g_1 + f_2g_2 + f_3g_3 =0.
\end{equation}
If $c_N \neq 0$, then one of $f_1,f_2,f_3$ should have a term of monomial $x,y,z$ respectively.
Suppose $f_1$ has a monomial $f_1^1x$. Then, $f_1^1 x^a$ cannot appear in other expressions of \eqref{eq:ind3} and
thus $f_1^1=0$. Similarly $f_2$ and $f_3$ cannot have monomials in $y$ and $z$ respectively, which implies $c_N=0$.
Therefore all the coefficients $c_1,\ldots, c_N$ must vanish, as desired.
%Thus the equation \eqref{eq:ind3} can be written as
%\begin{equation}\label{eq:ind4}
% f_1 g_1 + f_2g_2 + f_3g_3 =0.
%\end{equation}
%A few moments thought tells us that all of terms of $f_1$ should contain $x$, and the same way $f_2$ contains $y$
%and $f_3$ contains $z$.
%So \eqref{eq:ind4} can be rewritten using $f_1 = xf_1'$ and so on:
%
%\begin{equation}\label{eq:ind5}
% f_1' (xyz + T^8 x^a)  + f_2' (xyz + T^8 y^b) + f_3' (xyz +T^8 z^c) =0.
%\end{equation}
%Set $f_1'' = f_1' +f_2' + f_3'$. Then we have
%\begin{equation}\label{eq:ind5}
% f_1'' (xyz + T^8 x^a)  + f_2' T^8 (y^b -x^a) + f_3' T^8 (z^c-x^a) =0.
%\end{equation}
%By simultaneously multiplying $T^k$, we may assume that valuation of $f_i$ is non-negative
%and one of them has valuation exactly zero.
%From the above identity, $val(f_1'') \geq 8$. Thus we write $f = T^8 f_1''$, $g = f_2', h=f_3'$ to have
%
%\begin{equation}\label{eq:ind5}
% f(xyz + T^8 x^a)  + g (y^b -x^a) + h (z^c-x^a) =0.
%\end{equation}
%By taking minimal valuation, we get
%\begin{equation}\label{eq:ind5}
% f^{min}xyz   + g^{min} (y^b -x^a) + h^{min}(z^c-x^a) =0.
%\end{equation}
%for $f^{min}, g^{min},h^{min} \in \C[x,y,z]$.
%
%Now, we want to show that $f^{min}= g^{min}=h^{min} =0$, which implies that $f_1=f_2=f_3=0$. ???
\qed

This completes the proof of Theorem \ref{as:def}.

\subsection{\texorpdfstring{Deforming $\Jac(W_\tau)$}{Deforming Jac(W)}}\label{subsec:flatness}

In this subsection we will construct a flat family of rings interpolating between $\Jac(\mathcal{W}_{lead})$ and $\Jac(W_\tau)$.

Recall $T^8 W_\tau=  \mathcal{W}_{lead} + W_+$, for some $W_+$ with $val(W_+)=\lambda_0 > 8$. We define
$$W(s)= \mathcal{W}_{lead} + s W_+ \ \in \ \Lambda\langle\langle s,x,y,z\rangle\rangle, $$
and denote
$$ f_1=\partial_{x}W(s), \ f_2=\partial_{y}W(s), \ f_3=\partial_{z}W(s).$$

Note that $f_i=g_i + s h_i$, for some $h_i \in \Lambda\langle\langle x,y,z\rangle\rangle$ with $val(h_i)\geq\lambda_0>8$.

\begin{prop}\label{prop:genAs}
	Let $\mathcal{A}= \Lambda\langle\langle s,x,y,z\rangle\rangle/\langle f_1,f_2,f_3 \rangle$. Then $\mathcal{A}$ is a finitely generated $\Lambda\langle\langle s\rangle\rangle$-module.
\end{prop}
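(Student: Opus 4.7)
The plan is to prove that the elements $\gamma_1, \ldots, \gamma_N$ from Theorem \ref{as:def} already generate $\mathcal{A}$ as a module over $\Lambda\langle\langle s\rangle\rangle$. Given any $\rho \in \Lambda\langle\langle s, x, y, z\rangle\rangle$, I would expand it as $\rho = \sum_{l \geq 0} \rho_l s^l$ with $\rho_l \in \Lambda\langle\langle x,y,z\rangle\rangle$ satisfying $val(\rho_l) \to \infty$, and then construct by induction on $k$ scalars $c_i^{(k)} \in \Lambda$ and elements $t_j^{(k)} \in \Lambda\langle\langle x,y,z\rangle\rangle$ so that, after telescoping, one obtains an identity
\[ \rho = \sum_i \Bigl(\sum_{k \geq 0} c_i^{(k)} s^k\Bigr) \gamma_i + \sum_j \Bigl(\sum_{k \geq 0} t_j^{(k)} s^k\Bigr) f_j, \]
with the two outer series converging in $\Lambda\langle\langle s\rangle\rangle$ and $\Lambda\langle\langle s, x, y, z\rangle\rangle$ respectively.

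The iteration sets $\sigma_0 = \rho$; given $\sigma_k$ with $s^0$-coefficient $\sigma_{k,0} \in \Lambda\langle\langle x,y,z\rangle\rangle$, one applies Theorem \ref{as:def}(2) to write $\sigma_{k,0} = \sum_i c_i^{(k)} \gamma_i + \sum_j t_j^{(k)} g_j$ with $val(c_i^{(k)}), val(t_j^{(k)}) \geq val(\sigma_{k,0}) - 8$. Using the defining relation $g_j = f_j - s h_j$, one then sets
\[ \sigma_{k+1} := \frac{\sigma_k - \sigma_{k,0}}{s} - \sum_j t_j^{(k)} h_j, \]
yielding the step identity $\sigma_k = \sum_i c_i^{(k)} \gamma_i + \sum_j t_j^{(k)} f_j + s\,\sigma_{k+1}$. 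A routine induction on $k$ shows that the $s^l$-coefficient of $\sigma_k$ equals $\rho_{k+l}$ for $l \geq 1$, while $\sigma_{k,0} = \rho_k - \sum_j t_j^{(k-1)} h_j$ for $k \geq 1$, which gives completely explicit control over the tail of the iteration.

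The main obstacle, and the heart of the argument, is verifying that the outer series $\sum_k c_i^{(k)} s^k$ really lies in the Tate algebra $\Lambda\langle\langle s\rangle\rangle$, i.e.\ that $val(c_i^{(k)}) \to \infty$ rather than being merely formally bounded below. Here the hypothesis $val(h_j) \geq \lambda_0 > 8$ becomes essential: combining $val(t_j^{(k)}) \geq val(\sigma_{k,0}) - 8$ with $val(h_j) \geq \lambda_0$ in the tail formula yields the recursion
\[ val(\sigma_{k+1, 0}) \geq \min\bigl(val(\rho_{k+1}),\; val(\sigma_{k,0}) + (\lambda_0 - 8)\bigr), \]
in which $\lambda_0 - 8 > 0$ acts as a genuine additive gain at each step. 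Unrolling gives $val(\sigma_{k,0}) \geq \min_{0 \leq j \leq k}\bigl(val(\rho_j) + (k-j)(\lambda_0 - 8)\bigr)$, and since $val(\rho_j) \to \infty$ this minimum tends to $\infty$ as $k \to \infty$. Consequently $val(c_i^{(k)}), val(t_j^{(k)}) \to \infty$, so $C_i := \sum_k c_i^{(k)} s^k$ lies in $\Lambda\langle\langle s\rangle\rangle$ and $T_j := \sum_k t_j^{(k)} s^k$ lies in $\Lambda\langle\langle s, x, y, z\rangle\rangle$. Telescoping the step identity then produces the desired presentation of $\rho$ modulo $(f_1, f_2, f_3)$, showing that $\gamma_1, \ldots, \gamma_N$ generate $\mathcal{A}$ over $\Lambda\langle\langle s\rangle\rangle$.
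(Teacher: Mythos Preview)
Your proof is correct and follows essentially the same approach as the paper: both iterate Theorem~\ref{as:def}(2) together with the substitution $g_j = f_j - s h_j$, using the gain $\lambda_0 - 8 > 0$ per step to force convergence of the resulting series in $s$. The only organizational difference is that the paper first reduces to $\rho \in \Lambda\langle\langle x,y,z\rangle\rangle$ and iterates by peeling off factors of $T^{\lambda_0-8}s$, whereas you handle general $\rho \in \Lambda\langle\langle s,x,y,z\rangle\rangle$ directly by working with the $s^0$-coefficient at each step; your version is slightly more self-contained since it avoids the reduction.
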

\proof
We will show that (\ref{eq:basis}) forms a generating set of $\mathcal{A}$. It is enough to show that any convergent series in $x,y,z$ with non-negative valuation is in the $\Lambda\langle\langle s\rangle\rangle$-span of (\ref{eq:basis}). Let $\rho$ be such series with $val(\rho) \geq 0$, by Theorem  \ref{as:def} we have 
$$\rho = \sum_{i=1}^N c_i \gamma_i + \sum_{j=1}^n t_j g_j,$$
with $c_i\in \Lambda$, $t_j\in\Lambda\langle\langle x,y,z\rangle\rangle$  and $val(c_i), val(t_j) \geq -8$. Rearranging we get

$$ \rho=  \sum_{i=1}^N c_i \gamma_i + \sum_{j=1}^n t_j (f_j -s h_j )$$
$$ = \sum_{i=1}^N c_i \gamma_i + \sum_{j=1}^n t_j f_j - T^{\lambda_0-8} s\sum_{j=1}^n t_j' h_j' $$
with $val(t_j' h_j' ) \geq 0$. By setting $\rho_1 =  \sum_{j=1}^n t_j' h_j' $, we can repeat the argument 
to prove that 
$$\rho_1 = \sum_{i=1}^N c_i^1 \gamma_i + \sum_{j=1}^n t_j^1 f_j - T^{\lambda_0-8} s \rho_{2}, $$
for some $\rho_2\in \Lambda\langle\langle x,y,z\rangle\rangle$ with non-negative valuation. Combining the two we obtain
$$\rho = \sum_{i=1}^N (c_i-T^{\lambda_0-8}s c_i^1) \gamma_i + \sum_{j=1}^n (t_j - T^{\lambda_0-8}t_j^1 s )f_j + T^{2(\lambda_0-8)} s\rho_2.$$
Note that this process increases the valuation of the error term (each time by $\lambda_0 -8 >0$), and hence using induction and taking the limit we obtain
$$\rho = \sum_{i=1}^N \tilde{c}_i \gamma_i + \sum_{j=1}^n \tilde{t}_j f_j$$
with $\tilde{c}_i\in \Lambda\langle\langle s\rangle\rangle$ and $ \tilde{t}_j \in \Lambda\langle\langle s,x,y,z\rangle\rangle$, which implies the result.
\qed

\begin{prop}
	$\mathcal{A}$ is a flat $\Lambda\langle\langle s\rangle\rangle$-module.
\end{prop}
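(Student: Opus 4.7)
The plan is to prove the stronger claim that $\{\gamma_1,\ldots,\gamma_N\}$ from Proposition~\ref{prop:genAs} is in fact a free $\Lambda\langle\langle s\rangle\rangle$-basis of $\mathcal{A}$; freeness immediately implies flatness. Since generation is already established, what remains is linear independence of the $\gamma_i$ over $\Lambda\langle\langle s\rangle\rangle$.

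The key commutative-algebra input I would isolate as a preliminary lemma is that $(g_1,g_2,g_3)$ forms a regular sequence in $\Lambda\langle\langle x,y,z\rangle\rangle$. This holds because the Tate algebra $\Lambda\langle\langle x,y,z\rangle\rangle$ is a Noetherian regular ring of Krull dimension $3$, hence Cohen--Macaulay, and by Theorem~\ref{as:def} its quotient $\mathcal{A}_0$ by $(g_1,g_2,g_3)$ is a finite-dimensional $\Lambda$-vector space; together these force $(g_1,g_2,g_3)$ to cut out a zero-dimensional subscheme of maximal codimension, which in a Cohen--Macaulay ring is equivalent to regularity. Regularity in turn makes the Koszul complex on $(g_1,g_2,g_3)$ exact, so every syzygy of $(g_1,g_2,g_3)$ is a $\Lambda\langle\langle x,y,z\rangle\rangle$-linear combination of the Koszul syzygies $(g_2,-g_1,0)$, $(g_3,0,-g_1)$, $(0,g_3,-g_2)$.

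Granted this, I would argue as follows. Suppose $\sum_i c_i(s)\gamma_i = \sum_j t_j(s,x,y,z)f_j$ in $\Lambda\langle\langle s,x,y,z\rangle\rangle$. Setting $s=0$ reduces the relation to $\sum c_i(0)\gamma_i = \sum t_j(0)g_j$ in $\Lambda\langle\langle x,y,z\rangle\rangle$, whence Theorem~\ref{as:def} gives $c_i(0)=0$ for all $i$ and shows $(t_j(0))$ is a syzygy of $(g_1,g_2,g_3)$. Expressing $(t_j(0))$ via the lemma as a Koszul combination and invoking the identity $g_jf_k-g_kf_j = s(h_kf_j-h_jf_k)$, which follows at once from $g_j=f_j-sh_j$, one finds $\sum_j t_j(0)f_j \in s\langle f_1,f_2,f_3\rangle$. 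Writing $t_j=t_j(0)+s\tilde t_j$ and $c_i=s\bar c_i$, the relation becomes $s\sum_i \bar c_i\gamma_i = s\sum_j r_j f_j$ for some $r_j\in \Lambda\langle\langle s,x,y,z\rangle\rangle$; cancelling $s$ in the integral domain $\Lambda\langle\langle s,x,y,z\rangle\rangle$ produces a relation of the same form with $c_i$ replaced by $\bar c_i$.

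Iterating this reduction, the $s$-adic order of each $c_i$ grows without bound; since $\Lambda\langle\langle s\rangle\rangle$ is $(s)$-adically separated, this forces $c_i=0$ for every $i$, proving linear independence. The main obstacle is the preliminary regular-sequence lemma, which combines the Cohen--Macaulayness of the Tate algebra with the finite-dimensionality of $\mathcal{A}_0$ from the previous section; once that is in place, everything else is bookkeeping around the Koszul identity and the integral-domain property of $\Lambda\langle\langle s,x,y,z\rangle\rangle$.
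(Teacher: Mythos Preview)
Your argument is correct and takes a genuinely different route from the paper. The paper localizes: for each maximal ideal $\mathfrak{m}\subset\mathcal{A}$ lying over $\mathfrak{n}=(s-s_0)$ it observes that $\Lambda\langle\langle s\rangle\rangle_{\mathfrak{n}}$ is a DVR, so flatness of $\mathcal{A}_{\mathfrak{m}}$ reduces to showing $s-s_0$ is a non-zero-divisor; this in turn follows because $(f_1,f_2,f_3,s-s_0)$ is a system of parameters in the regular local ring $\Lambda\langle\langle s,x,y,z\rangle\rangle_{\mathfrak{m}'}$, hence a regular sequence. Freeness is then deduced as a corollary from flatness plus finite generation over the PID $\Lambda\langle\langle s\rangle\rangle$.

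You instead prove freeness directly by an $s$-adic descent: the Koszul acyclicity of $(g_1,g_2,g_3)$ in the three-variable Tate algebra (which you correctly extract from Cohen--Macaulayness plus $\dim\mathcal{A}_0=0$) together with the identity $g_jf_k-g_kf_j=s(h_kf_j-h_jf_k)$ lets you strip off one power of $s$ at each step, and $(s)$-adic separatedness of $\Lambda\langle\langle s\rangle\rangle$ finishes the job. Your approach has the advantage of yielding the explicit basis (the paper's Corollary~\ref{cor:free} and Remark~\ref{rem:basisAs}) in one stroke without invoking the PID structure, and it works entirely in the three-variable Tate algebra rather than the four-variable one. The paper's approach is slightly more conceptual and would generalize more readily to base rings other than $\Lambda\langle\langle s\rangle\rangle$. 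One small point worth making explicit in your write-up: the claim ``every syzygy of $(g_1,g_2,g_3)$ is a combination of Koszul syzygies'' is exactly $H_1(K_\bullet(g_1,g_2,g_3))=0$, and since this can be checked after localization (and at maximal ideals not containing $(g_1,g_2,g_3)$ the Koszul complex is automatically exact), your Cohen--Macaulay argument does deliver it globally even though the Tate algebra is not local.
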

\proof
Flatness of $\mathcal{A}$ is equivalent to flatness of the localizations $\Lambda\langle\langle s\rangle\rangle_{\mathfrak{n}}\rightarrow \mathcal{A}_{\mathfrak{m}}$ for all maximal ideals $\mathfrak{m}$ of $\mathcal{A}$, with $\mathfrak{n}=\mathfrak{m}\cap\mathcal{A}$, (see \cite[Section 3.J]{matsumura}). Note that, since $\Lambda\langle\langle s\rangle\rangle$ is a PID, $\mathfrak{n}=\langle s-s_0 \rangle$ for some $s_0 \in \Lambda$.

Now, since $\Lambda\langle\langle s\rangle\rangle_{\mathfrak{n}}$ is a regular local ring of dimension $1$, $\mathcal{A}_\mathfrak{m}$ is flat over it if and only if $s-s_0$ is not a zero-divisor in $\mathcal{A}_\mathfrak{m}$, by Lemma 10.127.2 in \cite{stacks}.

By the previous proposition $rk_\Lambda(\mathcal{A}/\mathfrak{n}) < \infty$, which implies that $\dim (\mathcal{A}/\mathfrak{n})=0$. Therefore
$$\dim \frac{\Lambda\langle\langle s,x,y,z\rangle\rangle_\mathfrak{m'}}{\langle f_1, f_2, f_3, s-s_0 \rangle}=0,$$
where $\mathfrak{m'}$ is the ideal of $\Lambda\langle\langle s,x,y,z\rangle\rangle$ corresponding to $\mathfrak{m}$. Which implies that $f_1,f_2,f_3,s-s_0$ is a system of parameters of $\Lambda\langle\langle s,x,y,z\rangle\rangle_\mathfrak{m'}$. Since this is a regular local ring, Theorem 31 in \cite{matsumura}, shows that $f_1,f_2,f_3,s-s_0$ is a regular sequence in $\Lambda\langle\langle s,x,y,z\rangle\rangle_\mathfrak{m'}$. This immediately implies that $s-s_0$ is not a zero-divisor in $\mathcal{A}_\mathfrak{m}$, which gives the desired result.
\qed

\begin{corollary}\label{cor:free}
	$\mathcal{A}$ is a free, finite dimensional $\Lambda\langle\langle s\rangle\rangle$-module.
\end{corollary}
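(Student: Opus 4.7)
The plan is to combine the two immediately preceding propositions with structural properties of the one-variable Tate algebra $R := \Lambda\langle\langle s \rangle\rangle$. We already know that $\mathcal{A}$ is finitely generated over $R$ (by Proposition \ref{prop:genAs}) and flat over $R$ (by the previous proposition), so the remaining task is purely algebraic: pass from finitely generated and flat to free.

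The key structural input is that $R$ is a Noetherian principal ideal domain. Noetherianity is Tate's classical theorem for Tate algebras over complete non-Archimedean fields. The PID property for the one-variable Tate algebra over such a field follows from a Weierstrass preparation-style argument (cf.\ \cite{BGR}, Chapter 5): every nonzero element can be written, after a unit multiplication, as a polynomial in $s$, and the ring of such polynomials is a PID. With $R$ Noetherian, the finite generation of $\mathcal{A}$ upgrades automatically to finite presentation, and finitely presented plus flat over any ring implies projective. Hence $\mathcal{A}$ is a finitely generated projective $R$-module, and since $R$ is a PID, $\mathcal{A}$ is free of finite rank.

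To pin down the rank I would specialize at $s=0$: the quotient $\mathcal{A}/s\mathcal{A}$ coincides with the Jacobian ring $\mathcal{A}_0 = \Lambda\langle\langle x,y,z\rangle\rangle/\langle g_1,g_2,g_3\rangle$ of $\mathcal{W}_{lead}$, which by Proposition \ref{prop:A0} has dimension $a+b+c-1$ over $\Lambda$. Therefore $\mathcal{A}$ is a free $R$-module of rank $a+b+c-1$, which is the precise finite dimension asserted.

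The main obstacle is conceptual rather than computational: one must have the right facts about Tate algebras over the Novikov field $\Lambda$ on hand, particularly (i) Noetherianity of $R$ and (ii) the PID property in one variable. Both are standard for complete non-Archimedean base fields, and $\Lambda$ does sit in that setting, so once these are cited the argument is a clean chain ``finitely generated $+$ Noetherian $\Rightarrow$ finitely presented; finitely presented $+$ flat $\Rightarrow$ projective; projective over a PID $\Rightarrow$ free,'' and the rank is read off by specialization.
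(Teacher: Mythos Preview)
Your argument is correct and follows the same route as the paper: combine finite generation and flatness over $\Lambda\langle\langle s\rangle\rangle$, then invoke that this ring is a PID to conclude freeness. The paper states this in one line, while you spell out the intermediate steps (finitely presented, projective, free) and add the rank computation via specialization at $s=0$, which the paper records separately in the subsequent remark.
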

\proof
This is an immediate consequence of the two previous propositions, since $\Lambda\langle\langle s\rangle\rangle$ is a PID.
\qed

\begin{remark}\label{rem:basisAs}
	It follows from our argument that (\ref{eq:basis}) forms a basis of $\mathcal{A}$, since this is the case for $s=0$. In addition to this, it follows from the proof of Proposition \ref{prop:genAs}, that any $\rho\in \mathcal{A}$ with non-negative valuation, can be written as	
	$$\rho = \sum_{i=1}^N c_i \gamma_i + \sum_{j=1}^3 t_j f_j$$
	with $c_i \in \Lambda$, $val(c_i) \geq -8$ and $t_j \in \Lambda\langle\langle s,x,y,z\rangle\rangle$, $val(t_j) \geq -8$.
\end{remark}

\subsection{Injectivity}

Now we are ready to prove injectivity of the Kodaira-Spencer map. 

\begin{prop}
	The Kodaira-Spencer map $\KS_\tau:\QH^*_{orb}(X,\tau) \to\Jac(W_\tau)$ is injective, and hence a ring isomorphism.
\end{prop}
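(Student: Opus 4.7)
The plan is to prove injectivity by a dimension count, exploiting the surjectivity of $\KS_\tau$ already established in Proposition \ref{prop:surj}. Since $\KS_\tau$ is a surjective $\Lambda$-linear map between $\Lambda$-modules, if both $\QH^*_{orb}(X,\tau)$ and $\Jac(W_\tau)$ have the same finite rank over $\Lambda$, then $\KS_\tau$ must be an isomorphism.

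First, I would recall that as a $\Lambda$-module, $\QH^*_{orb}(X,\tau)$ has rank equal to $\dim_\mathbb{C} H^*_{orb}(\Pabc) = 2 + (a-1) + (b-1) + (c-1) = a+b+c-1$, spanned by $\one_X, [\pt]$ and the twisted sectors listed in \eqref{eq:twisted}. So the goal reduces to showing $\rank_\Lambda \Jac(W_\tau) = a+b+c-1$.

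Next, I would use the flat deformation of Section \ref{subsec:flatness}. By Corollary \ref{cor:free} and Remark \ref{rem:basisAs}, the ring $\mathcal{A} = \Lambda\langle\langle s,x,y,z\rangle\rangle/\langle f_1, f_2, f_3\rangle$ is a free $\Lambda\langle\langle s\rangle\rangle$-module with basis $\gamma_1,\ldots,\gamma_N$ from \eqref{eq:basis}, hence of rank $N = a+b+c-1$ (matching the $s=0$ fiber $\mathcal{A}_0$ analyzed in Proposition \ref{prop:A0}). Specializing at $s=1$ gives a $\Lambda$-vector space of the same rank, namely
\begin{equation*}
\mathcal{A}/(s-1)\mathcal{A} \;\cong\; \Lambda\langle\langle x,y,z\rangle\rangle/\langle \partial_x(\mathcal{W}_{lead} + W_+),\,\partial_y(\mathcal{W}_{lead} + W_+),\,\partial_z(\mathcal{W}_{lead} + W_+)\rangle.
\end{equation*}
Since $\mathcal{W}_{lead} + W_+ = T^8 W_\tau$ and $T^8 \in \Lambda^\times$, the ideal generated by the partial derivatives is identical to $\langle \partial_x W_\tau, \partial_y W_\tau, \partial_z W_\tau\rangle$. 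Therefore the specialization at $s=1$ is exactly $\Jac(W_\tau)$, and $\rank_\Lambda \Jac(W_\tau) = a+b+c-1$.

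Finally, combining the two computations with Proposition \ref{prop:surj}: the surjective $\Lambda$-linear map $\KS_\tau$ between two free $\Lambda$-modules of the same finite rank $a+b+c-1$ must be an isomorphism, in particular injective. Together with Theorem \ref{KSring}, this shows that $\KS_\tau$ is a ring isomorphism, completing the proof of Theorem \ref{thm:main}. The delicate part of this whole plan is already packaged in the preceding subsection: proving the flatness of $\mathcal{A}$ over $\Lambda\langle\langle s\rangle\rangle$ and controlling the valuations in the representation \eqref{eqn:tjtjtj} so that the $T$-adic convergence is preserved under the interpolation; the dimension-count argument itself is then essentially formal.
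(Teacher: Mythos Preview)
Your proposal is correct and follows essentially the same argument as the paper: both reduce injectivity to a dimension count, compute $\rank_\Lambda \QH^*_{orb}(X,\tau)=a+b+c-1$ directly, and then use the freeness of $\mathcal{A}$ over $\Lambda\langle\langle s\rangle\rangle$ (Corollary~\ref{cor:free}) together with Proposition~\ref{prop:A0} to conclude $\rank_\Lambda \Jac(W_\tau)=\rank_\Lambda \Jac(\mathcal{W}_{lead})=a+b+c-1$. Your observation that $T^8\in\Lambda^\times$ identifies the Jacobian ideals of $T^8W_\tau$ and $W_\tau$ is exactly what underlies the paper's tacit identification $\mathcal{A}/\langle s-1\rangle\cong\Jac(W_\tau)$.
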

\proof
We have already established that $\KS_\tau$ is a surjective ring homomorphism, so we need only to compare the ranks of the quantum cohomology and the Jacobian ring, to prove injectivity. It follows from the definition that $H^*_{orb}\left(\bP^1_{a,b,c}\right)$ has rank $a+b+c-1$.

As a consequence of Corollary \ref{cor:free}, we have
$$\dim_\Lambda \Jac(\mathcal{W}_{lead}) = \dim_\Lambda \mathcal{A}/\langle s \rangle =  \dim_\Lambda \mathcal{A}/ \langle s-1 \rangle= \dim_\Lambda \Jac(W_\tau). $$
We know, by Proposition \ref{prop:A0}, that $\Jac(\mathcal{W}_{lead})$ has rank $a+b+c-1$, therefore $\Jac(W_\tau)$ also has rank $a+b+c-1$, which implies the result.
\qed

\begin{remark}\label{rem:basis}
	In fact we have shown that (\ref{eq:basis}) forms a basis of $\Jac(W_\tau)$. Moreover, as explained in Remark \ref{rem:basisAs}, it follows that any $\rho\in \Jac(W_\tau)$ with non-negative valuation, can be written as	
	$$\rho = \sum_{i=1}^N c_i \gamma_i + t_1 \partial_x W_\tau+ t_2 \partial_y W_\tau+ t_3 \partial_z W_\tau.$$
	with $c_i \in \Lambda$, $val(c_i) \geq -8$ and $t_j \in \Lambda\langle\langle x,y,z\rangle\rangle$, $val(t_j) \geq 0$.
\end{remark}

\section{Calculations}\label{sec:applicationcal}

\subsection{Euler vector field}

Let $\chi$ be the Euler characteristic of $\mathbb{P}^1_{a,b,c}$.  We have $c_1(\mathbb{P}^1_{a,b,c}) = \chi [\pt]$.

\begin{theorem}\label{thm:KSofdiv}
	Under the Kodaira-Spencer map $\KS_\tau:\QH^*_{orb}(X,\tau) \to\Jac(W_\tau)$, 
	$$\KS_\tau\left(\chi [\pt]+\sum_k (1-\iota_k)\tau_k\bf{T}_k\right) = [W_\tau]$$
	where $\bf{T}_k$ form a basis of twisted sectors and $\tau_k$ are the corresponding coordinates.
\end{theorem}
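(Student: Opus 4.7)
The plan is to establish the identity by a monomial-by-monomial computation in $W_\tau$, using the Gauss--Bonnet relation \eqref{eqn:chiabc} as the crucial input and reducing modulo the Jacobian ideal at the last step.

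First I would fix a $\Z/2$-invariant cycle representative $\mathbf{p}$ of $[\pt]$ normalised so that $\mathbf{p}\cap\beta = (\omega\cap\beta)/8$ for every orbi-disc class $\beta$; by Proposition \ref{prop:tau2tau2'cc} the Jacobian-ring identity is insensitive to this choice. Combining Proposition \ref{prop:divisor} with the orbifold divisor axiom (applicable here because $[\pt]$ is the divisor-degree class in complex dimension one), one obtains $\KS_\tau([\pt]) = \partial_{\tau^2} W_\tau$ in $\Jac(W_\tau)$, which acts on each monomial of $W_\tau$ arising from a disc $\beta$ of symplectic area $m$ by multiplication by $m/8$. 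Similarly, $\KS_\tau\left(\sum_k (1-\iota_k)\tau_k \mathbf{T}_k\right) = \sum_k (1-\iota_k)\tau_k\,\partial_{\tau_k} W_\tau$ acts on that monomial by $\sum_j (1-\iota_{k_j})$, where the sum ranges over the interior orbi-insertions on $\beta$ of ages $\iota_{k_j}$.

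Substituting the Gauss--Bonnet identity \eqref{eqn:chiabc} then rewrites the total weight as
\[
\frac{\chi m}{8} + \sum_{j}(1-\iota_{k_j}) = 1 + n_1\left(\frac{3\chi}{8}-\frac{1}{a}\right) + n_2\left(\frac{3\chi}{8}-\frac{1}{b}\right) + n_3\left(\frac{3\chi}{8}-\frac{1}{c}\right),
\]
where $n_1,n_2,n_3$ are the $X$-, $Y$-, $Z$-corner counts of $\beta$, i.e.\ the $x$-, $y$-, $z$-degrees of the corresponding monomial in $W_\tau$. Summing this identity over all discs/monomials and recognising $n_i$ times a monomial as $u_i\,\partial_{u_i}$ with $(u_1,u_2,u_3)=(x,y,z)$, I obtain
\begin{align*}
\KS_\tau\left(\chi[\pt] + \sum_k(1-\iota_k)\tau_k\mathbf{T}_k\right) = \ & W_\tau + \left(\frac{3\chi}{8}-\frac{1}{a}\right)x\,\partial_x W_\tau \\
& + \left(\frac{3\chi}{8}-\frac{1}{b}\right)y\,\partial_y W_\tau + \left(\frac{3\chi}{8}-\frac{1}{c}\right)z\,\partial_z W_\tau
\end{align*}
as an identity in $\Lambda\langle\langle x,y,z\rangle\rangle$. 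The three correction terms lie in the Jacobian ideal by definition, hence vanish in $\Jac(W_\tau)$, and the theorem follows.

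The only non-elementary input is Proposition \ref{prop:divisor}, which turns the closed-open expression for $\KS_\tau([\pt])$ into $\partial_{\tau^2} W_\tau$ only after descent to the Jacobian ring; its Kuranishi-theoretic proof in Appendix \ref{sec:divisor} is the main technical obstacle, and it is precisely the failure of this identification at chain level that makes the reduction modulo the Jacobian ideal (absorbing the $u_i\,\partial_{u_i} W_\tau$ terms) essential. Once Proposition \ref{prop:divisor} is granted, everything else reduces to a direct algebraic manipulation of Proposition \ref{prop:GB} together with the definition of the Jacobian ideal.
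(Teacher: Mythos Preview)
Your proposal is correct and follows essentially the same route as the paper: apply the Euler-type operator $\chi\,\partial_{\tau^2} + \sum_k (1-\iota_k)\tau_k\,\partial_{\tau_k}$ to $W_\tau$ monomial by monomial, convert the resulting weight via the Gauss--Bonnet identity \eqref{eqn:chiabc} into $1$ plus a combination of $n_i$'s, recognise the latter as $u_i\,\partial_{u_i}W_\tau$, and reduce modulo the Jacobian ideal. One small simplification: you do not need Proposition \ref{prop:divisor} or any divisor axiom here, since by Definition \ref{def:KS} one already has $\KS_\tau([\pt])=\partial_{\tau^2}W_\tau$ at the level of $\Lambda\langle\langle x,y,z\rangle\rangle$, and with your normalisation $\mathbf{p}\cap\beta=m/8$ the derivative $\partial_{\tau^2}$ acts on the area-$m$ monomial by multiplication by $m/8$ directly from the formula \eqref{eq:mktau}.
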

\begin{proof}
	Recall that $\KS_\tau([\pt])$ is defined by counting discs with one interior point passing through $\pt$ and one boundary output point to the unit.  
	Since the total area of $\bP^1_{a,b,c}$ equals to $8A$, the image of $[\pt]$ equals to
	$$\frac{1}{8A}T \cdot \frac{\partial  \, W_\tau(\tilde{x},\tilde{y},\tilde{z})}{\partial T} $$
	written in terms of the geometric variables  $(\tilde{x},\tilde{y},\tilde{z})$ corresponding to the immersed generators.

%{\color{blue} This formula is correct before we change variables, but doesn't  it need to be changed to account for the change of variables?}

	Let $T^{mA}\left(\prod_j \tau_j^{l_j} \right) \tilde{x}^{n_1}\tilde{y}^{n_2}\tilde{z}^{n_3}$ be a term in $W_\tau(\tilde{x},\tilde{y},\tilde{z})$.  Using Proposition \ref{prop:GB},
	\begin{align*}
	&\left(\chi \cdot \frac{1}{8A}\cdot T\frac{\partial}{\partial T} + \sum_j (1-\iota_j)\tau_j\frac{\partial}{\partial \tau_j}\right) \cdot T^{mA}\left(\prod_j \tau_j^{l_j}\right)\tilde{x}^{n_1}\tilde{y}^{n_2}\tilde{z}^{n_3} \\
	=&\left(\frac{m\chi}{8} + \sum_j l_j(1-\iota_j)\right) T^{mA}\left(\prod_j \tau_j^{l_j}\right)\tilde{x}^{n_1}\tilde{y}^{n_2}\tilde{z}^{n_3} \\
	=&\left(1-\frac{n_1}{a}-\frac{n_2}{b}-\frac{n_3}{c} + \frac{3\chi}{8}\cdot(n_1+n_2+n_3)   \right) T^{mA}\left(\prod_j \tau_j^{l_j}\right)\tilde{x}^{n_1}\tilde{y}^{n_2}\tilde{z}^{n_3}\\
	=&\left(\left(\frac{\chi}{8}-\frac{1}{a}\right)\frac{\partial}{\partial x}+\left(\frac{\chi}{8}-\frac{1}{b}\right)\frac{\partial}{\partial y}+\left(\frac{\chi}{8}-\frac{1}{c}\right)\frac{\partial}{\partial z}\right)\cdot T^{mA}\left(\prod_j \tau_j^{l_j}\right)\tilde{x}^{n_1}\tilde{y}^{n_2}\tilde{z}^{n_3}\\
	& +T^{mA}\left(\prod_j \tau_j^{l_j}\right)\tilde{x}^{n_1}\tilde{y}^{n_2}\tilde{z}^{n_3}
	\end{align*}
	Hence 
	\begin{align*}
	&\left(\chi \cdot \frac{1}{8A}\cdot T\frac{\partial}{\partial T} + \sum_j (1-\iota_j)\tau_j\frac{\partial}{\partial \tau_j}\right) \cdot W_\tau(\tilde{x},\tilde{y},\tilde{z}) \\
	&= W_\tau(\tilde{x},\tilde{y},\tilde{z}) + \left(\left(\frac{\chi}{8}-\frac{1}{a}\right)\frac{\partial}{\partial \tilde{x}}+\left(\frac{\chi}{8}-\frac{1}{b}\right)\frac{\partial}{\partial \tilde{y}}+\left(\frac{\chi}{8}-\frac{1}{c}\right)\frac{\partial}{\partial \tilde{z}}\right)\cdot W_\tau(\tilde{x},\tilde{y},\tilde{z}).
	\end{align*}
	Changing back to the variables $x=T^3 \tilde{x}, y=T^3 \tilde{y}, z=T^3 \tilde{z}$, the left hand side is 
	$$\KS_\tau\left(\chi [\pt]+\sum_k (1-\iota_k)\tau_k\bf{T}_k\right).$$ 
	The right hand side equals to $W_\tau(x,y,z) + \left(\left(\frac{\chi}{8}-\frac{1}{a}\right)\frac{\partial}{\partial x}+\left(\frac{\chi}{8}-\frac{1}{b}\right)\frac{\partial}{\partial y}+\left(\frac{\chi}{8}-\frac{1}{c}\right)\frac{\partial}{\partial z}\right) W_\tau(x,y,z)$ which is in the same class of $W_\tau(x,y,z)$ in the Jacobian ideal.
\end{proof}

\subsection{Versality of the potential}

%\begin{defn}\label{def:coordchange1}
%	A coordinate change is a map $\varphi: \Lambda\langle\langle x',y',z'\rangle\rangle\rightarrow\Lambda\langle\langle x,y,z\rangle\rangle$ of the form
%	$$x'\to x + u_1, \ y'\to y + u_2, \ z'\to z + u_3,$$
%	where $u_i \in \Lambda\langle\langle x,y,z\rangle\rangle$ satisfy $val(u_i)>0$, for $i=1,2,3$.
%\end{defn}

The goal of this section is to prove the following statement, which describes the power series (up to a coordinate change) that can appear as the bulk deformed potential $W_\tau$.

\begin{theorem}\label{thm:versality}
	Let us denote $W_{lead}=-T^{-8}xyz+ x^a +y^b+z^c$ and consider $P \in \Lambda\langle\langle x,y,z\rangle\rangle$ with $val(P-W_{lead})>0$.
	Then there exist $\tau'\in H^*_{orb}(\mathbb{P}^1_{a,b,c}, \Lambda_0)$ and a coordinate change $(x',y',z')$ such that 
	$$P(x',y',z')=W_{\tau'}.$$
\end{theorem}

In order to prove this proposition we first need three lemmas.

\begin{lemma}[Refined surjectivity]\label{lem:refsurj}
	For any $P \in\Lambda\langle\langle x,y,z\rangle\rangle$ with $val(P)\geq0$, there is $\rho\in H^*_{orb}(\mathbb{P}^1_{a,b,c}, \Lambda_0)$  such that
	$$\KS_{\tau}(\rho)= P + t_1 \partial_{x}W_\tau + t_2 \partial_{y}W_\tau + t_3 \partial_{z}W_\tau,$$
for $t_1,t_2,t_3 \in \Lambda\langle\langle x,y,z\rangle\rangle$ with $val(t_1,t_2,t_3)\geq0$.
\end{lemma}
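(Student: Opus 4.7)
The plan is to prove the lemma by iterating a base step that reduces the valuation of a remainder by a fixed positive amount $\lambda > 0$ at each stage, and then summing the resulting series to build $\rho$ and the $t_j$.

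The base step I will establish is: for any $P \in \Lambda\langle\langle x,y,z\rangle\rangle$ with $val(P) \geq v$, there exist $\rho \in \QH^*_{orb}(X,\tau)$, $s_j \in \Lambda\langle\langle x,y,z\rangle\rangle$ and a remainder $R$ with $val(\rho), val(s_j) \geq v$ and $val(R) \geq v + \lambda$, such that
\begin{equation*}
P = \KS_\tau(\rho) + s_1\partial_x W_\tau + s_2\partial_y W_\tau + s_3\partial_z W_\tau + R.
\end{equation*}
To build $\rho$, I expand $P = \sum_{i,j,k} a_{ijk}\, x^i y^j z^k$ as a convergent power series (so $val(a_{ijk}) \geq v$ and $val(a_{ijk}) \to \infty$) and set
\begin{equation*}
\rho := \sum_{i,j,k} a_{ijk}\, \floor*{\frac{1}{a}}^{\bullet_\tau i} \bullet_\tau \floor*{\frac{1}{b}}^{\bullet_\tau j} \bullet_\tau \floor*{\frac{1}{c}}^{\bullet_\tau k}.
\end{equation*}
Since $\QH^*_{orb}(X,\tau)$ has finite $\Lambda$-rank and each quantum product of twisted generators has non-negative valuation, this converges to a well-defined $\rho$ with $val(\rho) \geq v$.

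The essential estimate is on $\KS_\tau(\rho) - P$, viewed in two ways. First, in $\Lambda\langle\langle x,y,z\rangle\rangle$ its valuation is $\geq v$: each basis element of $H^*_{orb}(X,\Lambda)$ (the fundamental class, a twisted sector, or the chosen representative $\mathbf{p}$ of $[\pt]$ disjoint from the minimal triangle, cf.~Corollary \ref{coro:w-8}) is sent by $\KS_\tau$, via the formula $\KS_\tau(\partial/\partial w_i) = \partial W_\tau/\partial w_i$, to an element of non-negative valuation, because by Lemma \ref{lem:geq0} the only term of $W_\tau$ of negative valuation is $-T^{-8}xyz$, which carries no bulk input. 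By $\Lambda$-linearity, $val(\KS_\tau(\rho)) \geq v$. Second, in $\Jac(W_\tau)$ the valuation of $\KS_\tau(\rho) - P$ is $\geq v + \lambda$ for a uniform $\lambda > 0$, by Lemma \ref{lem:imageKS} combined with the ring homomorphism property of $\KS_\tau$ (Theorem \ref{KSring}). Applying Remark \ref{rem:basis} to $T^{-v}(\KS_\tau(\rho) - P)$ then produces
\begin{equation*}
\KS_\tau(\rho) - P = \sum_l c_l\, \gamma_l + s_1'\partial_x W_\tau + s_2'\partial_y W_\tau + s_3'\partial_z W_\tau,
\end{equation*}
with $val(c_l) \geq v - 8$ and $val(s_j') \geq v$. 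The key refinement is that $\{\gamma_l\}$ is a $\Lambda$-basis of $\Jac(W_\tau)$ by Theorem \ref{as:def}(1), so the $c_l$ are uniquely determined by the $\Jac(W_\tau)$-image of $\KS_\tau(\rho)-P$; hence in fact $val(c_l) \geq v + \lambda$. Setting $R := -\sum_l c_l\gamma_l$ and $s_j := -s_j'$ completes the base step.

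Iterating the base step starting with $P_0 = P$ and $v = 0$ yields sequences $\rho_k$, $s_j^{(k)}$, $P_{k+1}$ with valuations at least $k\lambda$, $k\lambda$, $(k+1)\lambda$ respectively. The sums $\rho := \sum_k \rho_k$ and $t_j := -\sum_k s_j^{(k)}$ converge to elements of non-negative valuation, and $val(P_k) \to \infty$ collapses the telescoping identity to $\KS_\tau(\rho) = P + t_1\partial_x W_\tau + t_2\partial_y W_\tau + t_3\partial_z W_\tau$, as required. The main obstacle I anticipate is securing the upgrade $val(c_l) \geq v + \lambda$, which is what drives the inductive improvement by $\lambda$ per step; it rests on the uniqueness of $\gamma_l$-coordinates in $\Jac(W_\tau)$, whereas the decomposition supplied by Remark \ref{rem:basis} in $\Lambda\langle\langle x,y,z\rangle\rangle$ is itself not unique.
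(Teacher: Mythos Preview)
Your iterative scheme and both valuation estimates are correct; in particular, with $\mathbf{p}$ chosen away from the minimal triangle as in Corollary~\ref{coro:w-8}, the lift $\KS_\tau([\pt])=\partial W_\tau/\partial\tau^2$ genuinely has non-negative valuation in $\Lambda\langle\langle x,y,z\rangle\rangle$, so $val(\KS_\tau(\rho))\geq v$ holds. The gap is exactly where you flag it. Uniqueness of the $\gamma_l$-coordinates only says that the $c_l$ can be read off from \emph{any} representative $Q$ of $[\KS_\tau(\rho)-P]$; but Remark~\ref{rem:basis} applied to a $Q$ with $val(Q)\geq v+\lambda$ yields only $val(c_l)\geq v+\lambda-8$, not $\geq v+\lambda$. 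This loss of $8$ is genuine: for instance $[x^a]=\tfrac{1}{a}T^{-8}[xyz]+(\text{higher order})$ in $\Jac(W_\tau)$ (multiply the relation $\partial_xW_\tau=0$ by $x$), so the $\gamma_N=xyz$ coefficient of the valuation-$0$ element $x^a$ has valuation $-8$. Since $\lambda$ can be $\leq 8$ (it shrinks as $val(\tau)\to 0^+$), your iteration need not improve the remainder.

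The repair is to bypass the $\gamma_l$-decomposition and instead use the explicit intermediate
\[
P':=\sum_{i,j,k}a_{ijk}\,\KS_\tau\!\left(\floor*{\tfrac{1}{a}}\right)^{\!i}\KS_\tau\!\left(\floor*{\tfrac{1}{b}}\right)^{\!j}\KS_\tau\!\left(\floor*{\tfrac{1}{c}}\right)^{\!k},
\]
with the products taken in $\Lambda\langle\langle x,y,z\rangle\rangle$. Then $\KS_\tau(\rho)-P'$ lies in the Jacobian ideal (ring homomorphism) and has valuation $\geq v$ (both terms do, by your argument and Lemma~\ref{lem:geq0}); hence by Remark~\ref{rem:basis} it equals $\sum_j t_j\,\partial_jW_\tau$ with $val(t_j)\geq v$ and all $c_l=0$. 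Since $P'-P$ is a sum of terms $a_{ijk}\bigl((x+T^\lambda h_a)^i(y+T^\lambda h_b)^j(z+T^\lambda h_c)^k-x^iy^jz^k\bigr)$, one has $val(P'-P)\geq v+\lambda$ directly, and taking $R:=-(P'-P)$, $s_j:=-t_j$ completes the base step. This is the paper's argument in substance: it carries out the same idea monomial by monomial with a case distinction, and (working with a different lift of $\KS_\tau([\pt])$ of valuation $-8$) treats pure powers such as $x^a$ separately with explicitly constructed $t_j$.
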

\begin{proof}
	Take $\lambda>0$ satisfying both Lemma \ref{lem:geq0} and $W_\tau= W_{lead} \mod T^\lambda$. We will show there is $\rho_0\in H^*_{orb}(\mathbb{P}^1_{a,b,c}, \Lambda_0)$ and $t_1,t_2,t_3 \in \Lambda\langle\langle x,y,z\rangle\rangle$ of non-negative valuation such that
	\begin{equation}\label{eq:refsur}
	\KS_{\tau}(\rho_0)- P - t_1 \partial_{x}W_\tau - t_2 \partial_{y}W_\tau - t_3 \partial_{z}W_\tau= T^\lambda Q,
	\end{equation}
	for some $Q\in \Lambda\langle\langle x,y,z\rangle\rangle$ of non-negative valuation. It is enough to consider the case $P=x^i y^j z^k$.
	
	If at least two of $(i,j,k)$ are non zero we take $\rho_0= \floor*{\frac{1}{a}}^{\bullet_\tau i}\bullet_\tau \floor*{\frac{1}{b}}^{\bullet_\tau j} \bullet_\tau \floor*{\frac{1}{c}}^{ \bullet_\tau k}$. Then $val( \KS_\tau(\rho_0))\geq 0$. In fact, it follows from Lemma \ref{lem:geq0}, that $\KS_\tau$ has non-negative valuation on the standard basis of $\QH^*_{orb}(X,\tau)$ except on the point class $\pt$, in which case it is $-8$. But only non-constant spheres contribute to $\rho_0$, hence $val(\rho_0)\geq8$ and $val( \KS_\tau(\rho_0))\geq 0$. Therefore $\KS_\tau(\rho_0)- \KS_\tau\left( \floor*{\frac{1}{a}}\right)^i \KS_\tau\left( \floor*{\frac{1}{b}}\right)^j \KS_\tau\left( \floor*{\frac{1}{a}}\right)^k $ has non-negative valuation and as $\KS_\tau$ is a ring map, it is in the Jacobian ideal. It then follows from Remark \ref{rem:basis} that there are $t_1,t_2,t_3 \in \Lambda\langle\langle x,y,z\rangle\rangle$ of non-negative valuation such that
	$$	\KS_{\tau}(\rho_0)-  \KS_\tau\left( \floor*{\frac{1}{a}}\right)^i \KS_\tau\left( \floor*{\frac{1}{b}}\right)^j \KS_\tau\left( \floor*{\frac{1}{a}}\right)^k= t_1 \partial_{x}W_\tau + t_2 \partial_{y}W_\tau +t_3 \partial_{z}W_\tau.$$
	It follows from  Lemma \ref{lem:geq0}, that $\KS_\tau\left( \floor*{\frac{1}{a}}\right)^i \KS_\tau\left( \floor*{\frac{1}{b}}\right)^j \KS_\tau\left( \floor*{\frac{1}{a}}\right)^k= x^iy^jz^k + T^\lambda Q$, which proves (\ref{eq:refsur}), in this case.
	
	If $i<a$, $j=k=0$ we take $\rho_0=\floor*{\frac{1}{a}}^{\bullet_\tau i}$. Using the fact that $\floor*{\frac{1}{a}}^{\bullet_\tau i}= \floor*{\frac{i}{a}} \mod T^8$ and the same argument as above we find $t_1,t_2,t_3 \in \Lambda\langle\langle x,y,z\rangle\rangle$ of non-negative valuation such that
	$$	\KS_{\tau}(\rho_0)-  \KS_\tau\left( \floor*{\frac{1}{a}}\right)^i = t_1 \partial_{x}W_\tau + t_2 \partial_{y}W_\tau +t_3 \partial_{z}W_\tau.$$
	Again it follows from  Lemma \ref{lem:geq0}, that $\KS_\tau\left( \floor*{\frac{1}{a}}\right)^i = x^i + T^\lambda Q$, which proves (\ref{eq:refsur}), in this case.
	
	If $i=a$, we take $\rho_0= [\pt] / a$. Recall that $y \partial_{y}W_\tau=-T^{-8}xyz + b y^b \mod T^\lambda$ and $z \partial_{z}W_\tau=-T^{-8}xyz + c z^c \mod T^\lambda$. Using Lemma \ref{lem:geq0}, we easily compute
	$$\KS_{\tau}(\rho_0) + \frac{5}{8a}x \partial_xW_\tau - \frac{3}{8a}y \partial_yW_\tau -\frac{3}{8a}z \partial_zW_\tau= x^a \mod T^\lambda,$$
	which is equivalent to (\ref{eq:refsur}). Here, we used the computation in the proof of Theorem \ref{thm:KSofdiv} which tells us that
	$$ \KS_{\tau}(\rho_0) = \frac{1}{a} \KS_{\tau} ([\pt]) \equiv \frac{3}{8} x^a + \frac{3b}{8a} y^b + \frac{3c}{8a} z^c - \frac{T^{-8}}{8a} xyz \mod T^\lambda.$$
	
	Finally, if $i=na +i_0$, $j=k=0$, with $n>0,$ $i_0<a$, we take $\rho_0= ([\pt]/a)^{\bullet_\tau n} \bullet_\tau \floor*{\frac{1}{a}}^{\bullet_\tau i_0} $. Only non-constant spheres contribute to the product defining $\rho_0$ so we can argue as in the first case. The remaining cases follow by the exact same arguments.
	
	Now that we have established Equation (\ref{eq:refsur}), we can proceed by induction, as in the proof of Proposition \ref{prop:surj}, to complete the proof.	
\end{proof}

\begin{lemma}\label{lem:G}
	Let $P \in\Lambda\langle\langle x,y,z\rangle\rangle$ with $val(P- W_{lead})\geq0$ and define
	$$G(s,x,y,z,\tau):= W_\tau(x,y,z)+ s(P(x,y,z)-W_{lead})\in \Lambda\langle\langle s, x,y,z, \tau\rangle\rangle.$$
	There exist $c_i \in \Lambda\langle\langle s,\tau\rangle\rangle$, and $t_1, t_2, t_3 \in \Lambda\langle\langle s, x,y,z, \tau\rangle\rangle$ with $val(c_i)$, $val(t_j)>0$ such that 
	$$\frac{\partial G}{\partial s}= \sum_i c_i \frac{\partial G}{\partial \tau_i} + t_1 \frac{\partial G}{\partial x}+ t_2 \frac{\partial G}{\partial y}+ t_3 \frac{\partial G}{\partial z}.$$
\end{lemma}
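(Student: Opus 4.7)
The plan is to construct $c_i$ and $t_j$ as formal power series in $s$, solve the target identity order-by-order using the refined surjectivity result (Lemma \ref{lem:refsurj}), and then verify convergence in the Tate algebras by tracking valuations carefully. Setting $H := \partial_s G = P - W_{lead}$ and expanding $c_i = \sum_{k \geq 0} s^k c_i^{(k)}$, $t_j = \sum_{k \geq 0} s^k t_j^{(k)}$, and using $\partial_{\tau_i} G = \partial_{\tau_i} W_\tau$ together with $\partial_{x_j} G = \partial_{x_j} W_\tau + s\, \partial_{x_j} H$ (writing $(x_1,x_2,x_3) = (x,y,z)$), the target equation decomposes into the hierarchy
\begin{align*}
k=0: \quad & H = \sum_i c_i^{(0)} \partial_{\tau_i} W_\tau + \sum_j t_j^{(0)} \partial_{x_j} W_\tau, \\
k\geq 1: \quad & -\sum_j t_j^{(k-1)} \partial_{x_j} H = \sum_i c_i^{(k)} \partial_{\tau_i} W_\tau + \sum_j t_j^{(k)} \partial_{x_j} W_\tau.
\end{align*}

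Since $\partial_{\tau_i} W_\tau = \KS_\tau(f_i)$ for a basis $\{f_i\}$ of $H^*_{orb}(X)$, each of these equations is exactly a request to express a given element of $\Lambda\langle\langle x,y,z,\tau\rangle\rangle$ as $\sum_i c_i^{(k)} \KS_\tau(f_i) + \sum_j t_j^{(k)} \partial_{x_j} W_\tau$, which is solvable by (a $\tau$-parametric version of) Lemma \ref{lem:refsurj}. To track valuations, I would use the rescaled form of Lemma \ref{lem:refsurj}: if the target has valuation $\geq \mu$, then $c_i^{(k)}, t_j^{(k)}$ can be chosen with valuation $\geq \mu$, simply by multiplying the target by $T^{-\mu}$, applying the lemma, and multiplying back by $T^{\mu}$. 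Letting $m := val(H) > 0$, an induction on $k$ then yields $val(c_i^{(k)}), val(t_j^{(k)}) \geq (k+1)m$: granted the bound at step $k-1$, the left-hand side at step $k$ has valuation at least $km + m$, since $\partial_{x_j}$ preserves valuation on convergent series in $(x,y,z)$.

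This unbounded valuation growth $(k+1)m \to \infty$ guarantees that the formal series $c_i = \sum_k s^k c_i^{(k)}$ and $t_j = \sum_k s^k t_j^{(k)}$ converge in $\Lambda\langle\langle s,\tau\rangle\rangle$ and $\Lambda\langle\langle s,x,y,z,\tau\rangle\rangle$ respectively, with total valuations $\geq m > 0$, giving exactly what the lemma claims.

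The main obstacle I anticipate is the parametric refinement of Lemma \ref{lem:refsurj}: the iterates $-\sum_j t_j^{(k-1)} \partial_{x_j} H$ inevitably acquire $\tau$-dependence through the $\partial_{\tau_i} W_\tau$ factors absorbed in previous steps, so the surjectivity must be available with $\tau$ as a parameter. Fortunately, the argument for Lemma \ref{lem:refsurj} uses only (i) the ring-homomorphism property of $\KS_\tau$ and (ii) the normal form given by Remark \ref{rem:basis}, and both extend over the base $\Lambda\langle\langle \tau\rangle\rangle$ because the underlying low-energy computation in Lemma \ref{lem:imageKS} is $\tau$-independent. A minor caveat worth recording: for the conclusion $val(c_i), val(t_j) > 0$ to hold, one genuinely needs the strict inequality $val(P - W_{lead}) > 0$, which is indeed satisfied in the hypothesis of Theorem \ref{thm:versality}, the intended application of this lemma.
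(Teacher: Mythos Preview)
Your approach is correct and essentially the same as the paper's: both solve the equation order-by-order in $s$ by repeatedly invoking Lemma~\ref{lem:refsurj}, gaining a fixed positive valuation at each step (your $m$, the paper's $\alpha$) to ensure convergence. The paper packages this as a general statement about arbitrary $Q$ with $val(Q)\geq 0$ before specializing to $Q=\partial_s G$, but the iteration is identical; your observations about needing a $\tau$-parametric form of Lemma~\ref{lem:refsurj} and the strict inequality $val(P-W_{lead})>0$ are both accurate and are handled only implicitly in the paper.
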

\begin{proof}
    We will prove the following more general statement: given $Q \in \Lambda\langle\langle x,y,z, \tau,s\rangle\rangle$ with $val(Q)\geq 0$ there are $c_i$ and $t_j$ as in the statement with valuation greater or equal than zero such that 
    \begin{equation}\label{eq:Q}
    Q= \sum_i c_i \frac{\partial G}{\partial \tau_i} + t_1 \frac{\partial G}{\partial x}+ t_2 \frac{\partial G}{\partial y}+ t_3 \frac{\partial G}{\partial z}.	
    \end{equation}
    Which easily implies the lemma. It is enough to consider the case of $Q\in \Lambda\langle\langle x,y,z, \tau\rangle\rangle$. We proceed in two steps:
    
    \noindent Step 1: At $s=0$, $\frac{\partial G}{\partial \tau_i}= \KS_\tau(e_i)$ and $\partial_x G=\partial_x W_\tau$, $\partial_y G=\partial_y W_\tau$, $\partial_z G=\partial_z W_\tau$. Applying Lemma \ref{lem:refsurj} to $Q$ we obtain $t_j$ and $\rho= \sum_i c_i e_i$ satisfying Equation (\ref{eq:Q}).
    
    \noindent Step 2: (Similar to Proposition \ref{prop:genAs}) By Step 1 ,there are $c_i^{(0)}$ and $t_j^{(0)}$ satisfying Equation (\ref{eq:Q}). By definition, $\partial_x G=\partial_x W_\tau + s T^{\alpha}f_1$ for some $\alpha>0$ and $f_1$ with valuation greater or equal than zero. Similarly for $y$ and $z$. Hence 
    $$Q= \sum_i c_i^{(0)} \frac{\partial G}{\partial \tau_i} + t_1^{(0)} \frac{\partial G}{\partial x}+ t_2^{(0)} \frac{\partial G}{\partial y}+ t_3^{(0)} \frac{\partial G}{\partial z} - s T^{\alpha} \sum_j t_j^{(0)} f_j.$$
    Next, we apply Step 1 to $Q^{(1)}:=\sum_j t_j^{(0)} f_j$ gives $c_i^{(1)}$ and $t_j^{(1)}$ which allows us to rewrite $Q$ as
    $$ \sum_i (c_i^{(0)} -sT^{\alpha} c_i^{(1)} )\frac{\partial G}{\partial \tau_i} + (t_1^{(0)} -sT^{\alpha} t_1^{(1)} ) \frac{\partial G}{\partial x}+ (t_2^{(0)} -sT^{\alpha} t_2^{(1)} ) \frac{\partial G}{\partial y}+ (t_3^{(0)} -sT^{\alpha} t_3^{(1)} ) \frac{\partial G}{\partial z} + s^2 T^{2\alpha} Q^{(2)}$$
    By induction, we construct $c_i^{(n)}$ and $t_j^{(n)}$ and define $c_i :=\sum_n c_i^{(n)} s^n T^{n \alpha}$ and $t_j:= \sum_n t_j^{(n)}s^n T^{n \alpha}$. From the construction, it is clear these satisfy Equation (\ref{eq:Q}).
    \end{proof}

The next lemma is a general result about the existence of coordinate changes by integrating a vector field in our non-archimedean setting. It should be well known to experts.  We include a proof for completeness. We will use the short-hand notation $\Lambda\langle\langle x, \tau \rangle\rangle:= \Lambda\langle\langle x_1,\ldots ,x_n, \tau_1, \ldots \tau_m\rangle\rangle$.

\begin{lemma}\label{lem:flow}
	Consider $A_j \in\Lambda\langle\langle s, x, \tau\rangle\rangle$ and $B_i \in\Lambda\langle\langle s, \tau\rangle\rangle$ with valuations $\geq\epsilon>0$ and let $X$ be the vector field
	$$X:=\sum_j A_j \frac{\partial}{\partial x_j} + \sum_i B_i \frac{\partial}{\partial \tau_i}.$$
	Then there exists a coordinate change $\Phi(s,x,\tau)=(s, \psi(s,x,\tau), \varphi(s,\tau))$, with $\Phi(0,x,\tau)=(0,x,\tau)$ and 
	$$\frac{d \Phi}{d s}(s,x,\tau)= X(\Phi(s,x,\tau)).$$ 
	\end{lemma}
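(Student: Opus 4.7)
The plan is to construct $\Phi$ by Picard iteration, exploiting the non-archimedean setting: the hypothesis that $A_j, B_i$ all have valuation $\geq \epsilon > 0$ turns the obvious integral operator into a contraction in the $T$-adic metric on the Tate algebra $\Lambda\langle\langle s,x,\tau\rangle\rangle$. Note that $B$ depends only on $(s,\tau)$, so the $\tau$-component of the flow decouples, which is consistent with the ansatz $\varphi = \varphi(s,\tau)$ in the statement.

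The flow equation $\frac{d\Phi}{ds} = X \circ \Phi$ with $\Phi(0,x,\tau) = (0,x,\tau)$ is equivalent to the pair of fixed-point equations
$$\varphi(s,\tau) = \tau + \int_0^s B(s',\varphi(s',\tau))\, ds', \qquad \psi(s,x,\tau) = x + \int_0^s A(s',\psi(s',x,\tau),\varphi(s',\tau))\, ds',$$
where the integral is the coefficient-wise operation $s^i \mapsto s^{i+1}/(i+1)$ on power series in $s$, which preserves valuation. I would solve the $\tau$-equation first by setting $\varphi_0 = \tau$ and $\varphi_{n+1}(s,\tau) = \tau + \int_0^s B(s',\varphi_n(s',\tau))\, ds'$. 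Writing $B_i = T^{\epsilon} \tilde{B}_i$ with $\tilde{B}_i$ of non-negative valuation and Taylor-expanding $\tilde{B}_i(\varphi_n) - \tilde{B}_i(\varphi_{n-1})$ in the $\tau$-slot about $\varphi_{n-1}$ yields
$$\mathrm{val}\bigl(B(s',\varphi_n) - B(s',\varphi_{n-1})\bigr) \;\geq\; \epsilon + \mathrm{val}(\varphi_n - \varphi_{n-1}),$$
so inductively $\mathrm{val}(\varphi_{n+1} - \varphi_n) \geq (n+1)\epsilon \to \infty$. The sequence converges in $\Lambda\langle\langle s,\tau\rangle\rangle$ to a fixed point $\varphi$ with $\mathrm{val}(\varphi - \tau) \geq \epsilon$. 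The identical contraction argument applied to $\psi_0 = x$ and $\psi_{n+1} = x + \int_0^s A(s',\psi_n,\varphi)\, ds'$, using Taylor expansion of $A$ in its $x$-slot, produces $\psi \in \Lambda\langle\langle s,x,\tau\rangle\rangle$ with $\mathrm{val}(\psi - x) \geq \epsilon$. Differentiating the integral equations in $s$ recovers $\frac{d\Phi}{ds} = X \circ \Phi$, and $\Phi(0,x,\tau) = (0,x,\tau)$ is automatic. Since $\psi - x$ and $\varphi - \tau$ have strictly positive valuation, $\Phi$ has the identity as its linear part plus a positive-valuation perturbation, hence is a coordinate change in the sense of Definition \ref{defn:coord} (suitably extended to the present number of variables); its inverse is obtained either by running the same construction with $-X$ or by the standard formal-inverse argument for Tate-algebra endomorphisms of this form.

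The main obstacle, though essentially technical, is verifying at each step that the Picard iterates remain in the \emph{convergent} power series ring rather than merely the formal one: substitution of a convergent power series of positive valuation into a convergent power series must be well-defined, and a Cauchy sequence in the $T$-adic topology must land back in the Tate algebra. Both facts follow from standard properties of $\Lambda\langle\langle s,x,\tau\rangle\rangle$ together with the estimate above, but the verification is simultaneously a $T$-adic and an $(s,x,\tau)$-adic convergence statement, so some care is needed in combining the two filtrations.
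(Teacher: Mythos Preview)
Your proposal is correct and follows essentially the same Picard-iteration argument as the paper: reformulate the flow ODE as an integral fixed-point equation, iterate, and use the hypothesis $\mathrm{val}(A),\mathrm{val}(B)\geq\epsilon$ to get the contraction estimate $\mathrm{val}(\Phi^{k}-\Phi^{k-1})\geq k\epsilon$. The only cosmetic difference is that you decouple the $\tau$-equation from the $x$-equation and solve them sequentially, whereas the paper iterates on the pair $(\psi,\varphi)$ simultaneously; both are equally valid and yield the same estimates.
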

\begin{proof}
    	Simplifying the notation, we have to show that there is $(\psi_s(x,\tau),\varphi_s(\tau))$ such that $\frac{d}{d s}((\psi_s(x,\tau),\varphi_s(\tau)))=(A(s, \psi_s(x,\tau),\varphi_s(\tau)), B(s, \varphi_s(\tau)))$ and $(\psi_0(x,\tau),\varphi_0(\tau))=(x,\tau)$. This is equivalent to
    	\begin{equation}\label{eq:integral}
    	(\psi_s(x,\tau),\varphi_s(\tau)) - (x,\tau) = \left(\int_0^{s} A(u,\psi_u(x,\tau),\varphi_u(\tau))\,du , \int_0^{s} B(u,\varphi_u(\tau))\,du )\right). 
    	\end{equation}
    	We define a sequence $\Phi^k:=(\psi^k, \varphi^k)$ inductively as $(\psi^0, \varphi^0)=(x,\tau)$ and $$(\psi^{k+1}_s(x,\tau), \varphi^{k+1}_s(\tau))=\left(\int_0^{s} A(u,\psi^k_u(x,\tau),\varphi^k_u(\tau))\,du , \int_0^{s} B(u,\varphi^k_u(\tau))\,du )\right).$$
    	
    	By assumption there is $\epsilon>0$ such that $val(A),val(B)\geq \epsilon$. We claim that $val(\Phi^k-\Phi^{k-1})\geq k \epsilon$. We prove it by induction on $k$. First note that 
    	$$val(\Phi^k-\Phi^{k-1})=val( \int_0^{s} F(u,\Phi^{k-1})- F(u,\Phi^{k-2})\,du),$$
      where $F=(A,B)$. Then by induction $\Phi^{k-1}=\Phi^{k-2}+T^{(k-1)\epsilon}\rho$ for some $\rho$ of non-negative valuation. Hence $F(u,\Phi^{k-1})-F(u,\Phi^{k-2})=T^\epsilon T^{(k-1)\epsilon}\tilde{\rho}$, for some $\tilde{\rho}$ of non-negative valuation, which conclude the induction step.
      
      This argument shows that $(\psi, \varphi):= (\psi^0, \varphi^0)+ \sum_{k\geq 1} (\psi^k, \varphi^k)-(\psi^{k-1}, \varphi^{k-1})$ converges. By construction, it is a coordinate change. Obviously $(\psi, \varphi)=\lim_k (\psi^k, \varphi^k)$ and therefore it solves (\ref{eq:integral}).
\end{proof}

\begin{proof}[Proof of Theorem \ref{thm:versality}]
Given $P$, define $G$ as in Lemma \ref{lem:G} and let $t_j$, $c_i$ be the series provided by that lemma. Let $X$ be the vector field
	$$X:=\sum_j -t_j \frac{\partial}{\partial x_j} + \sum_i -c_i \frac{\partial}{\partial \tau_i},$$
and let $\Phi(s,x,\tau)$ be the coordinate change provided by Lemma \ref{lem:flow}. 
By construction $\displaystyle X\cdot G = - \frac{\partial G}{\partial s}$ which implies $\displaystyle \frac{d}{ds}\left(G(\Phi(s,x,y,z,\tau))\right)=0$. Hence $G(0,x,y,z,\tau)=G(\Phi(1,x,y,z,\tau))$. Using the notation $\Phi(s,x,y,z,\tau)=(s, \psi_s(x,y,z,\tau), \varphi_s(\tau))$, this is equivalent to 
$$G(0, \psi_1^{-1}(x, y, z, \tau), \varphi_1^{-1}(\tau))=G(1,x,\tau).$$

Evaluating at $\tau=0$ we obtain $W_{\varphi^{-1}(0)}(\psi^{-1}(x, y, z, 0)) = W_{lead}(x,y,z) + P(x,y,z) - W_{lead}(x,y,z)$. Denoting $\varphi^{-1}(0)=\tau'$ and $(x',y',z')=\psi(x,y,z,0)$ we get the desired equality
$W_{\tau'}(x,y,z)=P(x',y',z').$
\end{proof}

\subsection{Valuation of critical points}\label{subsec:valcritpt1}
Recall that instead of working with geometric variables for the immersed sectors $(\tilde{x},\tilde{y},\tilde{z})$,
we  switched to new variables  $(x,y,z)$ which were defined by $x=T^{3} \tilde{x},y=T^{3} \tilde{y},z=T^{3} \tilde{z}$.
The Jacobian ring for $W_\tau(x,y,z)$ and $W_\tau(\tilde{x},\tilde{y},\tilde{z})$ are a priori different.  We show that indeed, there is an example that the Kodaira-Spencer map is {\em not} an isomorphism
if we consider the map to the Jacobian ring of $W_\tau(\tilde{x},\tilde{y},\tilde{z})$.

This can happen because of the following. Before bulk deformation for the critical points of $W_\tau(\tilde{x},\tilde{y},\tilde{z})$, we have
$val(\tilde{x}), val(\tilde{y}), val(\tilde{z}) \geq 0$.  But after bulk deformations by twisted sectors $\tau=\tau_{tw}$ with small valuations, critical points of $W_\tau (\tilde{x},\tilde{y},\tilde{z})$ may begin to have negative valuations, hence moving away from the disc of Novikov convergence $\Lambda^3_{+,(\tilde{x},\tilde{y},\tilde{z})}$. We illustrate such a phenomenon in the example $(a,b,c)=(2,2,r)$. In fact,  in this example all the critical points will escape.

\begin{prop}\label{prop:critescape22r}
Consider the case $X=\mathbb{P}^1_{2,2,r}$. There exists $\tau \in H^*_{orb}(X,\Lambda_+)$ such that any critical point of the bulk-deformed potential $W_\tau (x,y,z)$ has at least one coordinate whose valuation is smaller than $3$.
\end{prop}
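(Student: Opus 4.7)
The plan is to exhibit an explicit bulk deformation $\tau$ and compute the critical points of $W_\tau$ via a Newton polygon argument, showing that every critical point escapes the polydisc $\{val \ge 3\}^3$ through at least one coordinate. I take
$$\tau = s\,\floor*{\tfrac{1}{a}} + t\,\floor*{\tfrac{1}{c}} \in H^*_{orb}(X, \Lambda_+)$$
with $a=2$ and $c=r$, where $s,t\in\Lambda_+$ are chosen so that $val(s)=val(t)=\epsilon$ for a fixed small positive rational $\epsilon<\tfrac{1}{2}$. By the classification of basic orbi-discs (Lemma \ref{lem:geq0} and the surrounding discussion), these twisted-sector insertions produce linear contributions $sx$ and $tz$ with zero $T$-exponent after the coordinate change \eqref{eqn:chvar1}, so
$$W_\tau = -T^{-8}xyz + x^2 + y^2 + z^r + sx + tz + R,$$
where $R\in\Lambda\langle\langle x,y,z\rangle\rangle$ gathers all further bulk and area contributions; each monomial of $R$ has valuation strictly larger than those of the listed terms for $\epsilon$ small.

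The critical system $\partial_x W_\tau = \partial_y W_\tau = \partial_z W_\tau = 0$ reduces to a single polynomial equation in $z$. From $\partial_y W_\tau = 0$ I get $y = T^{-8}xz/2$; substituting into $\partial_x W_\tau = 0$ yields
$$x = \frac{-2s}{4 - T^{-16}z^2},\qquad y = \frac{-T^{-8}sz}{4 - T^{-16}z^2}.$$
Plugging these into $\partial_z W_\tau = 0$ and clearing $(4 - T^{-16}z^2)^2$ produces the degree-$(r+3)$ equation
$$rT^{-32}z^{r+3} + tT^{-32}z^4 - 8rT^{-16}z^{r+1} - 8tT^{-16}z^2 + 16rz^{r-1} - 2T^{-16}s^2 z + 16t = 0,$$
whose degree matches the rank $a+b+c-1 = r+3$ of $\QH^*_{orb}(X,\tau)$, so this accounts for every critical point.

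For $\epsilon$ small, the Newton polygon of the left-hand side (plotting $(i, val(a_i))$ for the coefficient $a_i$ of $z^i$) has exactly three edges, giving three families of roots:
\begin{enumerate}
\item[(I)] from $(4, -32+\epsilon)$ to $(r+3, -32)$, of slope $-\epsilon/(r-1)$ and length $r-1$: this gives $r-1$ roots with $val(z) = \epsilon/(r-1) < 3$;
\item[(II)] from $(1, -16+2\epsilon)$ to $(4, -32+\epsilon)$, of slope $-(16+\epsilon)/3$ and length $3$: this gives $3$ roots with $val(z) = (16+\epsilon)/3$, and substituting into the formula for $y$ yields $val(y) = (8+2\epsilon)/3 < 3$ since $\epsilon<1/2$;
\item[(III)] from $(0, \epsilon)$ to $(1, -16+2\epsilon)$, of slope $-(16-\epsilon)$ and length $1$: this gives $1$ root with $val(z) = 16-\epsilon$, and substituting into the formula for $x$ yields $val(x) = \epsilon < 3$.
\end{enumerate}
In each family at least one coordinate has valuation strictly less than $3$, so all $r+3$ critical points escape.

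The main obstacle is verifying that the higher-order remainder $R$ does not disturb the Newton-polygon picture. Since every additional factor of $s$ or $t$ costs $\epsilon$ in valuation, and each $T$-area correction costs at least some fixed $\lambda>0$, for $\epsilon$ sufficiently small the contribution of $R$ to each coefficient of the polynomial in $z$ lies strictly above the corresponding Newton-polygon vertex. A standard successive-approximation argument in the non-archimedean setting (or Weierstrass preparation applied to each edge) then shows that the roots of the full equation coincide with the leading-order roots up to a shift in valuation much smaller than the safety margin to val $=3$; in particular every critical point of the full $W_\tau$ still escapes through the coordinate predicted by the leading analysis.
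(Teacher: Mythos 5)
Your proposal takes a genuinely different route from the paper, and the difference is worth spelling out. The paper's proof chooses $\tau = T^\lambda\left(\floor*{\tfrac12}_a + \floor*{\tfrac12}_b\right)$, which is symmetric under the geometric exchange of the two $\Z/2$ orbi-points; this symmetry gives $\partial_x W_\tau - \partial_y W_\tau = (x-y)(T^{-8}z+2)$, and the dichotomy $z=-2T^8$ or $x=y$ then leads to a short hand computation of valuations, with no Newton polygon and no explicit resultant. You instead deform by $s\floor*{\tfrac12}_a + t\floor*{\tfrac1r}_c$, destroying that symmetry, eliminate $x,y$ to get a degree-$(r+3)$ polynomial in $z$, and run a Newton-polygon analysis. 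Your leading-order computation is correct: I verified the polynomial, the three Newton-polygon edges (with slopes $-16+\epsilon$, $-(16+\epsilon)/3$, $-\epsilon/(r-1)$), and the resulting root valuations, and the back-substitutions giving $val(y)=(8+2\epsilon)/3<3$ and $val(x)=\epsilon<3$ are right for $\epsilon<\tfrac12$.

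However, there is a genuine gap in the treatment of the remainder $R$, and I don't think the final paragraph closes it. First, the identification of the low-energy bulk contributions as exactly $sx + tz$ requires the same lifting/classification argument the paper carries out for its $\tau$ (Corollary \ref{coro:conelifts} and Lemma \ref{lem:geq0} together with the age inequality): you should verify, for your $\tau$, that any polygon with $\ge 2$ orbi-insertions or with one insertion but nonzero $T$-exponent gets absorbed into $R$ with a definite positive $T$-gap, and also account for the intrinsic remainder $f(z)$ from the $\tau=0$ potential of $\bP^1_{2,2,r}$, which has fixed valuation independent of $\epsilon$. Second, and more seriously, your elimination of $x,y$ — solving $\partial_y W_\tau=0$ linearly for $y$, then $\partial_x W_\tau=0$ for $x$, then clearing $(4-T^{-16}z^2)^2$ — is performed on the truncated potential and is simply not available once $R$ is allowed to depend on $x$ and $y$. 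Bulk corrections with one $\floor*{\tfrac12}_a$ insertion of positive $T$-exponent can contribute monomials in $x,y$ to $W_\tau$; then $\partial_y W_\tau=0$ is no longer linear in $y$, and "the polynomial in $z$" is not canonically defined without a resultant or iterated substitution. The claim that "the contribution of $R$ to each coefficient of the polynomial in $z$ lies strictly above the corresponding Newton-polygon vertex" presupposes the very object whose construction is in question. To repair this you would need either to restrict to a $\tau$ (and a range of $\epsilon$) for which you can prove $R$ depends only on $z$ up to a harmless valuation gap, or to set up an honest non-archimedean implicit-function/Weierstrass argument that eliminates $x,y$ from the full system and tracks how the Newton polygon of the resulting eliminant is perturbed. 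The paper sidesteps all of this by the $x\leftrightarrow y$ symmetry, which is exactly why it chooses the $\Z/2$-symmetric $\tau$.
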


Proposition \ref{prop:critescape22r} implies that every critical point of $W_\tau (\tilde{x},\tilde{y},\tilde{z})$ have at least one coordinate with a negative valuation, and hence the conventional boundary deformation of $\mathbb{L}$ would not capture these points.

\begin{proof}
Let us take $\tau= T^\lambda \floor*{\frac{1}{2}}_a + T^{\lambda} \floor*{\frac{1}{2}}_b$ supported on the two orbi-points with $\Z/2$-singularity. We further assume that $\lambda < \min \left\{3, \frac{3r-5}{2} \right\}$.
The associated bulk-deformation only adds two terms $T^{\lambda} x + T^{\lambda} y $ to the non-bulk-deformed potential. To see this, recall that if the term $x^{n_1} y^{n_2} z^{n_3}$ appears in the potential, the contributing polygon satisfies the inequality
$$ \frac{n_1}{2}+\frac{n_2}{2}+\frac{n_3}{r}+ \sum_j (1-\iota_j) \leq 1.$$
by the area formula together with Lemma \ref{lem:geq0}. In our case $\iota_j=\frac{1}{2}$, and hence the polygon can have at most one orbi-insertion. Any such polygon lifts to a holomorphic disc in the universal cover of $\mathbb{P}^1_{2,2,r}$ by Corollary \ref{coro:conelifts}, and the lift must be invariant under the group action corresponding to either $\floor*{\frac{1}{2}}_a$ or $\floor*{\frac{1}{2}}_b$. It is easy to see that the 2-gons responsible for $x^2$ and $y^2$ are only such (see diagrams in \cite[Section 12]{CHKL14}), and their halves are orbi-discs producing $T^{\lambda} x + T^{\lambda} y $ in $W_\tau$.

More concretely,{%\color{blue}
	and arguing as in the proof of Proposition \ref{coro:w-8}, we conclude} the resulting bulk-deformed potential is given as
 $$W_\tau (x,y,z)=-T^{-8} xyz + x^2 + y^2 + z^r+ f(z) + T^{\lambda} x + T^{\lambda} y $$
where $f(z)$ is a polynomial in $z$ of the form
$$ f(z) = c_1 T^{16} z^{r-2} + c_2 T^{32} z^{r-4} + \cdots  = \sum_{k=1}^{\lfloor \frac{r}{2} \rfloor} c_k T^{16k} z^{r-2k}$$
for some combinatorially defined integers $c_k$. The precise value of $c_k$, which can be found in \cite[Theorem 12.2]{CHKL14}, is not important to us, but we will use the fact that $val(c_k)=0$ in the argument below.

Given the formula, critical points of $W_\tau$ satisfy
\begin{equation}\label{eqn:crit22rval}
\begin{array}{l}
 -T^{-8} yz + 2x + T^\lambda =0 \\
 -T^{-8} xz + 2y + T^\lambda = 0 \\
 -T^{-8} xy + rz^{r-1} + f'(z)=0.
 \end{array}
\end{equation}
Subtracting the second equation from the first gives
$$ (x-y) (T^{-8} z +2 ) =0,$$
and hence, either $z = -2 T^{8}$ or $x=y$.
\begin{enumerate}
\item[(i)] If $z= -2T^8$, then $2(x+y) = - T^{\lambda}$ and $T^{-8} xy= C T^{r-1}$ for some constant $C$ with $val(C)=0$. Thus $x$ and $y$ are solutions of the quadratic equation (in $s$)
$$ s^2 + \frac{1}{2} T^{\lambda} s + C T^{8 + (r-1)}=0,$$
which are
$$ -\frac{1}{4}T^{\lambda} \pm \frac{1}{4} T^{\lambda} \sqrt{1- 2 C T^{8+ (r-1) - 2\lambda}}. $$
In particular, one of $x$ and $y$ always has valuation $\lambda$, which is smaller than $3$.
\item[(ii)] Consider the case of $x=y$. The second equation in \eqref{eqn:crit22rval} reads
$$x (-T^{-8} z + 2) = -T^{\lambda}.$$
If $val(z) \geq 8$, then $val(x) = \lambda <3$, we are done. 

Now suppose $val(z) < 8$, which implies $ val(x) + val(z) = \lambda +8$. Therefore
\begin{equation}\label{eqn:valx+zlambda8}
val(x) = \lambda+8 - val(z)
\end{equation}
On the other hand, the third equation of \eqref{eqn:crit22rval} gives
\begin{equation}\label{eqn:xyrzf'z}
T^{-8} xy = r z^{r-1} + f'(z) = rz^{r-1} + \sum_{k=1}^{\lfloor \frac{r-1}{2} \rfloor} (r-2k) c_k T^{16k} z^{r-2k-1},
\end{equation}
and the valuation of monomials in $f'(z)$ can be estimated as
\begin{equation*}
\begin{array}{lcl}
val(T^{16k} z^{r-2k-1}) &=& 16k + (r-2k-1) val(z)  \\
&=& (r-1) val(z) + 2k (8 - val(z)) > (r-1) val(z).
\end{array}
\end{equation*}
Therefore the right hand side of \eqref{eqn:xyrzf'z} has valuation $(r-1) val(z)$, and we obtain
\begin{equation}\label{eqn:valxvalzr-1}
 -8 + 2 val(x) = (r-1) val(z)
\end{equation}
Combining \eqref{eqn:valx+zlambda8} and \eqref{eqn:valxvalzr-1}, we conclude that $ val(z) = \frac{ 2 \lambda +8}{r+1}$, which is again smaller than $3$.
\end{enumerate}
Therefore, at least one coordinate of any critical point $(x,y,z)$ of $W_\tau$ has valuation smaller than $3$, and this proves the claim. 
\end{proof}

In $(x,y,z)$ coordinates, these critical points still have $val(x),val(y),val(z)\geq 0$ for $val(\tau)\geq 0$, so it does not violate the isomorphism $\KS_\tau : \QH^*_{orb}(X,\tau) \cong \Jac(W_\tau (x,y,z))$. 

\begin{remark}
Although geometric variables  $\tilde{x},\tilde{y},\tilde{z}$ with negative valuations are not usually considered in Floer theory, we speculate that such a deformation can be replaced by another Lagrangian using the gluing procedure explained in \cite{CHL-glue}. 
%In this spirit, we speculate that the critical points $(x,y,z)$ with $val < 3$ (i.e. having negative valuations in terms of $\tilde{x},\tilde{y},\tilde{z}$) correspond to non-trivial pair-of-circles that are not isomorphic to the boundary deformed Seidel Lagrangian. .
\end{remark}

\subsection{\texorpdfstring{Explicit computation of $\KS_\tau$ for $\mathbb{P}^1_{3,3,3}$ without bulk-parameters}{Explicit computation of KS for P1,3,3,3 without bulk-parameters}}

In this section, we give an explicit computation of the Kodaira-Spencer map for $ \bP^1_{3,3,3}$ without bulk-insertions. Namely, we set $\tau=0$ throughout the section. For notational simplicity, let us write $X$ for $ \bP^1_{3,3,3}$ from now on.

%Let $X,Y,Z$ be the immersed sectors of degree one of the Seidel Lagrangian $\BL$.  
We use the following notations for generators of $\QH^*_{orb}(X):=\QH^*_{orb}(X,0)$.
We set $\one_X$ to be the unit class, and denote twisted sectors
%the cocycle dual to $\mathcal{X}$ representing the fundamental class.  
by $\Delta^{1/3}_i$ and $\Delta^{2/3}_i$ for $i=1,2,3$ where $i$ indicates orbifold points. We introduce these new notations to avoid the potential confusion due to the coincidence $a=b=c=3$ in this case. In the previous terminology, they all happen to be $\left\lfloor \frac{i}{3} \right\rfloor$. $\Delta^{1/3}_i$ has degree shifting number $\iota(\Delta^{1/3}_i) = 1/3$, and $\Delta^{2/3}_i$ has degree shifting number $\iota(\Delta^{2/3}_i) = 2/3$.

For the point class, we take 8-generic points $\pt_1, \cdots, \pt_8$ on $\mathbb{P}^1_{3,3,3}$ which are not orbifold singular points, as shown in Figure \ref{fig:QHJAC_pt}. Then we define the point class $\pt$ to be the average of these 8 points. i.e. $\pt := \frac{1}{8} \sum_i \pt_i$.
This is to make the calculation of $\KS ([\pt])$ easier. Notice that such a choice makes the number of $[\pt]$-insertions proportional to the symplectic area of the contributing polygons.
Then $\one_X, [\pt], \Delta^{1/3}_i,\Delta^{2/3}_i$ for $i=1,2,3$ form a basis of  $QH_{orb}^*(X)$.  

\begin{figure}
\begin{center}
\includegraphics[height=2.3in]{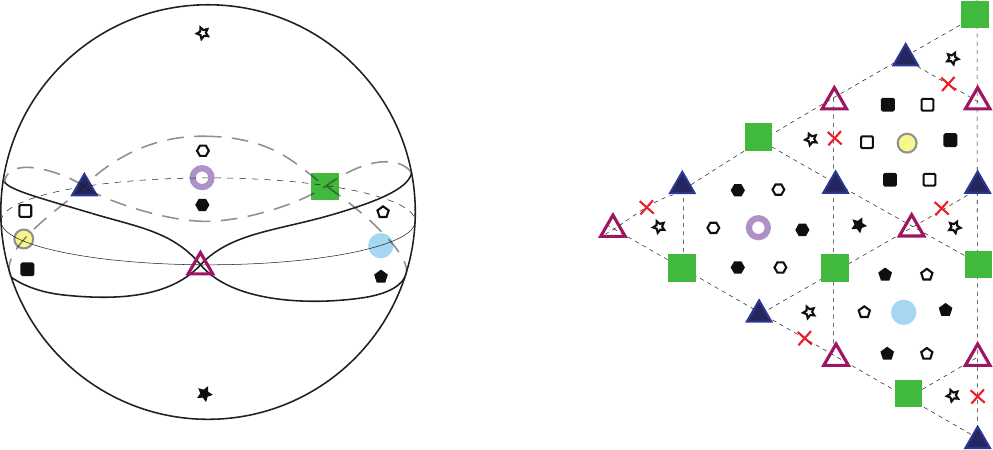}
\caption{$\mathbb{P}^1_{3,3,3}$ and its cover}\label{fig:QHJAC_pt}
\end{center}
\end{figure}

On the other hand, in \cite{CHL17}, a Morse model was adopted for $CF(\mathbb{L},\mathbb{L})$ together with the combinatorial sign rule in \cite{Se}, which produces the explicit formula for the potential $W $ given as
\begin{equation}\label{eqn:w333explicit}
W=  \tilde{\phi}(T) (\tilde{x}^3 + \tilde{y}^3 + \tilde{z}^3) - \tilde{\psi}(T) \tilde{x}\tilde{y} \tilde{z}
\end{equation}
where $T$ is the (exponentiated) area of the minimal triangle as before and 
\begin{equation}\label{eqn:phipsi333}
\begin{array}{l}
\displaystyle
 \tilde{\phi} (T) = \sum_{k=0}^{\infty} (-1)^{k} (2k+1) T^{(6k+3)^2} \\
%= \sum_{k=-\infty}^{\infty} (-1)^k (k+1) T^{(6k+3)^2}, \\
\displaystyle
 \tilde{\psi} (T) = T + \sum_{k=1}^{\infty} (-1)^{k} \left((6k+1) T^{(6k+1)^2} - (6k-1) T^{(6k-1)^2} \right)
%= \sum_{k=-\infty}^{\infty} (-1)^{k} (6k+1) T^{(6k+1)^2}.
\end{array}
\end{equation}
See \cite{CHL17} for detailed computation\footnote{To be more precise, \eqref{eqn:w333explicit} is obtained by changing $\tilde{x}$ and $\tilde{z}$ to $-\tilde{x}$ and $-\tilde{z}$ in the formula therein. $\tilde{x}$, $\tilde{y}$, $\tilde{z}$ did not appear to be symmetric  in \cite{CHL17}, due to some asymmetric choice of the input data for the combinatorial sign rule.}.
We then make the change of coordinate \eqref{eqn:chvar} to obtain
$$W (x,y,z) =   \phi (T) (x^3+ y^3 + z^3) - \psi (T) xyz$$
with
\begin{equation*}
\begin{array}{lcl}
\phi(T) &=&\sum_{k=0}^{\infty} (-1)^k (2k+1) T^{36k^2 + 36k} \\ 
\psi(T) 
%= T^{-8}   \sum_{k=-\infty}^{\infty} (-1)^k (6k+1) T^{36k^2 + 12k} 
&=&T^{-8} \left(1 + \sum_{k=1}^{\infty} (-1)^{k} \left((6k+1) T^{36k^2 + 12k} - (6k-1) T^{36k^2 -12k} \right) \right).
\end{array}
\end{equation*}

Recall that the map $\KS: QH^*_{orb} (X) \to \Jac (W)$ is defined by sending a cycle $C$ to the polynomial class in $\Jac (W)$ represented by
\begin{equation}\label{QH->Jac}
\sum_{\beta} \sum_{k\geq 0} q^\beta n_{1;k}(\beta;C;b,\ldots,b)
\end{equation}
where $b = xX + yY + zZ (=T^3 \tilde{x} X + T^3 \tilde{y} Y + T^3 \tilde{z} Z)$.  \eqref{QH->Jac} is a series in $x,y,z$ in general, but we will see from explicit calculations that it is just a polynomial (over the Novikov field) for $C = \one_X, \pt, \Delta^{1/3}_i,\Delta^{2/3}_i$, $i=1,2,3$.
By dimension counting \eqref{subsec:generalorbmod} restricted to this case, $n_{1;k}(\beta; \Delta^{1/3}_i;b,\ldots,b)$ (resp. $n_{1;k}(\beta;\Delta^{2/3}_i;b,\ldots,b)$) is non-zero only when
$\mu_{\rm CW} = 2/3$ (resp. $\mu_{\rm CW} = 4/3$).
%
%$\mu(\beta,\alpha) = 2/3$ (or $\mu(\beta,\alpha) = 4/3$, resp.).

On the other hand, $X$ has a manifold cover $\tilde{X}$, (which is the unique elliptic curve $E$ that admits $\Z/3$-action), and the preimage $\WT{\BL}$ of $\BL$ in $\tilde{X}$ is a $\Z /3$-orbit of a circle in $\WT{\BL}$, which are represented as dotted lines along three different directions in Figure \ref{fig:QHJAC_pt}. Recall that any orbi-disc in our interest should lift to $\tilde{X}$ by Corollary \ref{coro:conelifts}.
We will need the following classification of orbi-discs for explicit calculations:
 
\begin{prop}\label{prop:orbi-polyclass}
Let $u$ be a holomorphic orbi-disc in $X$ bounded by $\BL$ of Maslov index $2/3$ with one interior orbi-marked point passing through the twisted sector $\Delta^{1/3}_1$ (or $\Delta^{1/3}_2$, $\Delta^{1/3}_3$), and suppose that $u$ only passes through the immersed sectors $X,Y,Z$ (but not $\bar{X},\bar{Y},\bar{Z}$).  Then, it can be lifted to a holomorphic disc in $\tilde{X}$ bounded by $L$ of Maslov index two whose boundary passes through the (preimage of) immersed sector $X$ (or $Y$, $Z$ respectively) three times.

If $u$ is a holomorphic orbi-disc in $X$ bounded by $\BL$ of Maslov index $4/3$ with one interior orbi-marked point passing through the twisted sector $\Delta^{2/3}_1$ (or $\Delta^{2/3}_2$, $\Delta^{2/3}_3$), it can be lifted to a holomorphic disc in $\tilde{X}$ bounded by $\WT{\BL}$ of Maslov index four whose boundary either passes through $X$ (or $Y$, $Z$ respectively) six times or passes through both $(Y,Z)$ for three times (or $(X,Z)$, $(X,Y)$ respectively).
\end{prop}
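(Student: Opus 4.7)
The plan is to combine the lifting provided by Corollary \ref{coro:conelifts}, the Maslov--index formula of Proposition \ref{prop:GB}, and the $\Z/3$-equivariance arising from the deck transformations of the cover $\tilde X \to \bP^1_{3,3,3}$. First I would invoke Corollary \ref{coro:conelifts}: since each orbi-disc in the statement has a single interior orbi-marked point, it lifts to a genuine holomorphic disc $\tilde u \colon U \to \tilde X$, where $U \to \mathcal{D}$ is a $3{:}1$ cover branched precisely at the orbi-point with local degree $3$. The boundary map $\partial U \to \partial \mathcal{D}$ is therefore an unbranched $3{:}1$ cover. Because $\bP^1_{3,3,3}$ satisfies $\chi = 0$ and $a=b=c=3$, Proposition \ref{prop:GB} specialises to $\mu_{\mathrm{CW}}(u) = \tfrac{2}{3}(n_1 + n_2 + n_3)$, with $n_j$ the number of corners of $u$ at $X, Y, Z$. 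Hence $\mu_{\mathrm{CW}}(u) = 2/3$ forces a single corner on $u$ and $\mu_{\mathrm{CW}}(u) = 4/3$ forces two corners, producing $3$ and $6$ boundary corners of $\tilde u$ respectively. Rerunning the same Gauss--Bonnet argument inside the flat cover $\tilde X$, with unchanged exterior angles at preimages of $X,Y,Z$, yields the asserted Maslov indices $2$ and $4$ for the lifted discs.

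Next I would exploit the $\Z/3$-equivariance of $\tilde u$: the Galois group of $U \to \mathcal{D}$ is $\Z/3$, and $\tilde u$ intertwines this action with the $\Z/3$-action on $\tilde X$ fixing the chosen orbi-point $p_i$ (the lift of the orbi-point carrying the interior insertion). Consequently the cyclic sequence of corners along $\partial U$ is invariant under rotation by $2\pi/3$, so the multiset of corners decomposes into $\Z/3$-orbits in $\tilde X$. Since the action permutes the three preimages of each immersed generator of $\BL$ transitively (and freely, as the orbi-points lie off $\BL$), every such orbit consists of exactly the three preimages of one of $X$, $Y$, $Z$. In the Maslov $2/3$ case a single orbit accounts for all three corners, so they are the three preimages of a single generator. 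In the Maslov $4/3$ case the six corners split as two orbits, giving either two copies of the same generator (six corners, each of the three preimages counted twice) or two distinct generators each contributing three corners.

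The identification of each $\Delta^{1/3}_i$ with one of $X$, $Y$, $Z$ is then by definition: $\Delta^{1/3}_i$ is indexed by the immersed generator at which the basic orbi-disc through $p_i$---namely the unique Maslov $2/3$ slice of the Maslov $2$ disc contributing to $x^3$, $y^3$ or $z^3$ in $W$---has its corner. For the Maslov $4/3$ case the remaining task is to show that, among the three a priori possible pairings of distinct generators, only the complementary pair $\{Y,Z\}$ is realisable for $\Delta^{2/3}_1$ (and analogously $\{X,Z\}$ for $\Delta^{2/3}_2$, $\{X,Y\}$ for $\Delta^{2/3}_3$). I expect this to follow from a local analysis of the preimages of $\BL$ around $p_i \in \tilde X$: the rotation-by-$2\pi/3$ symmetry about $p_i$, together with the orientation of $\BL$ and the requirement that consecutive boundary edges of $\tilde u$ lie on components of $\WT{\BL}$ sharing a common corner at each transition, forces the boundary to alternate between the two components of $\WT{\BL}$ transverse to the component through the ``home'' generator at $p_i$.

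The main obstacle will be exactly this last local realisability step. The first two paragraphs are essentially mechanical applications of Corollary \ref{coro:conelifts}, Proposition \ref{prop:GB} and representation theory of the cyclic deck group. The geometric exclusion of mixed pairings involving the home generator of $p_i$ requires careful book-keeping of the combinatorics of $\WT{\BL}$ in a fundamental domain for the $\Z/3$-action on $\tilde X$, which I would carry out by explicit enumeration in the universal cover $\R^2 \to \tilde X$, using that $\BL$ is symmetric under reflection across the equator and that the three lifts of $\BL$ meet at the three $\Z/3$-orbits of preimages of $X$, $Y$, $Z$.
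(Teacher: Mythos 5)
Your proposal follows essentially the same path as the paper's proof: lift via the branched $3{:}1$ cover to a $\Z/3$-equivariant disc in $\tilde X$, use the Gauss--Bonnet/Maslov formula to cap the number of corners at three (resp.\ six), and then invoke $\Z/3$-equivariance to force the corners to lie in at most one (resp.\ two) $\Z/3$-orbits, i.e.\ at most one (resp.\ two) immersed sectors. The last geometric step you flag as the main obstacle --- ruling out pairings involving the ``home'' generator of $p_i$ --- is handled in the paper by exactly the local enumeration you describe (rotating a chosen $\WT{Y}$ about $p_a$ and checking which arcs of $\WT{\BL}$ can close up the boundary), and the paper likewise defers the case check as ``not hard''; so the two arguments coincide in content and in what they leave to the reader.
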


\begin{proof}
Let $u$ be a holomorphic orbi-disc in $X$ bounded by $\BL$ of Maslov index $2/3$ with one interior orbi-marked point passing through the twisted sector $\Delta^{1/3}_1$.  The orbi-marked point in the domain of $u$ must have isotropy group $\Z / 3$ because of the injectivity between local groups.  Thus $u$ can be lifted to a $\Z/3$-equivariant holomorphic map $\tilde{u}:(\Delta,\partial \Delta) \to (\tilde{X},\WT{\BL})$ with the domain of $\tilde{u}$ covers the domain of $u$ by the map $\zeta = \tilde{\zeta}^3$.  Since $u$ has Maslov index $2/3$, $\tilde{u}$ has Maslov index two.  Moreover $\tilde{u}$ only passes through the immersed sectors $X,Y,Z$.  Each of these immersed sectors contribute $2/3$ to the Maslov index.  Thus $\tilde{u}$ can only pass through three of them.  By the $\Z/3$-equivariance these three immersed sectors are the same.  This forces the immersed sectors that $\tilde{u}$ passes through to be all $X$.

The proof for the second statement is similar.  Consider the uniformizing disc $\tilde{u}$ of the orbi-disc of Maslov index $4/3$. $\tilde{u}$ can only pass through six immersed sectors by the constraint on Maslov index.  Unless $\tilde{u}$ is constant, $\Z/3$ acts non-trivially on $\tilde{u}$, and hence the six corners of $\tilde{u}$ can only pass through at most two distinct immersed sectors.  Thus the immersed sectors $\tilde{u}$ passes through are either all $X$ (or all $Y$ or all $Z$) or three copies of $Y$ and $Z$ (or three copies of $X$ and $Y$, or three copies of $Z$ and $X$).
(See Figure \ref{fig:twothird} for the shape of these orbi-discs.)

To see this, we first fix the twisted sector   $\Delta^{2/3}_1$, and choose one of its pre-images in $\C$, say $p_a$.
Pick any point $\WT{Y} \in \C$ corresponding to the immersed sector $Y$. Then, if $\tilde{u}$
pass through $p_a$ (at the orbifold point) and has an immersed boundary at  $\WT{Y}$, then we know that  $\tilde{u}$ also passes through $\Z/3$-rotation images
of $\WT{Y}$ with respect to $p_a$, say  $\WT{P}, \WT{Q}$. These two points still corresponds to immersed sector $Y$. 
Then,  remaining immersed sectors should connect two points out of $\WT{Y}, \WT{P}, \WT{Q}$ along the lifts of the Lagrangian.
It is not hard to check that such immersed sector correspond to $Z$.  The remaining cases are similar and we omit the details.
\end{proof}

By calculating contribution from each orbi-disc in the above classification, we can explicitly calculate the map $\KS : QH^*_{orb} (X) \to \Jac(W)$ as follows.

\begin{prop} \label{QH->Jac explicit}
The map $\KS$ from the orbifold quantum cohomology of $X$ to the Jacobian ring of $W$ defined in \eqref{QH->Jac} is given by
$$\one_X \mapsto  1 , \qquad [\pt] \mapsto  \frac{1}{8} T \dd{T} W(\tilde{x},\tilde{y},\tilde{z}),$$
%\begin{align*}
%1_X \mapsto & 1; \\
%[\pt] \mapsto & \frac{1}{8} T \dd{T} W;
%\end{align*}
\begin{equation*}
\left\{
\begin{array}{l}
\Delta^{1/3}_1 \mapsto P(T) x \\
\Delta^{1/3}_2 \mapsto P(T) y \\
\Delta^{1/3}_1 \mapsto P(T) z
\end{array}\right. , \qquad \qquad
\left\{
\begin{array}{l}
\Delta^{2/3}_1 \mapsto Q(T) x^2 + R(T) yz \\
\Delta^{2/3}_2 \mapsto  Q(T) y^2 + R(T) zx \\ 
\Delta^{1/3}_3 \mapsto  Q(T) z^2 + R(T) xy
\end{array}\right. .
\end{equation*}
where 
\begin{align*}
 P(T) &\!=\! \sum_{k=0}^\infty (-1)^{k} (2k+1) T^{12k^2 + 12k}, \\ 
 Q(T) &\!=\! \sum_{k=0}^\infty (2k+1) T^{24k^2+24k}+ \sum_{k=1}^\infty \sum_{i=0}^{k-1} (-1)^{3k-i} (6k-2i+2) T^{36k^2 + 36k - 12i^2 - 12i}, \\
 R(T) &\!=\! \sum_{k=1}^\infty \sum_{i=0}^{k-1} (-1)^{3k-i} T^{-8} \left(  (6k-2i) T^{36k^2 + 12k - 12i^2 - 12i  } -(6k-2i-2) T^{36k^2 - 12k - 12i^2 - 12i  }  \right) . 
\end{align*}
\end{prop}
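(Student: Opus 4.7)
The strategy is to compute each image by direct enumeration of the contributing orbifold holomorphic discs, handling $\one_X$ and $[\pt]$ by general axioms and the twisted sectors through the lifting classification of Proposition \ref{prop:orbi-polyclass}. For $\one_X$ the unit property of $\KS_\tau$ forces only the constant disc to contribute, giving $1$. For $[\pt]$ I would exploit the chosen representative as the average $\frac{1}{8}\sum_j \pt_j$ of point classes placed one in each of the eight area-$A$ regions cut out by $\BL$ and the equator. The divisor-type statement of Proposition \ref{prop:divisor} then gives $\KS_\tau([\pt]) = \frac{1}{8}\sum_j \KS_\tau(\pt_j)$, and since a disc $\beta$ of area $mA$ meets region $j$ with multiplicity equal to its covering degree there, summing over $j$ yields $\sum_j [\pt_j] \cap \beta = m$; this reassembles, modulo the Jacobian ideal, as $\frac{1}{8}\, T \partial_T W$.

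For $\Delta^{1/3}_i$, only orbi-discs of Chern--Weil Maslov index $2/3$ with one age-$1/3$ interior insertion contribute, and the constraint $\mu_{\mathrm{CW}} = 2(n_1+n_2+n_3)/3 = 2/3$ from Proposition \ref{prop:GB} forces exactly one immersed corner. Proposition \ref{prop:orbi-polyclass} lifts each such disc to a Maslov-index $2$ disc in the elliptic curve cover $\tilde X$ with three corners at one preimage of $X$, $Y$, or $Z$; the $\Z/3$-equivariance with respect to the orbi-sector pins down which one, producing the linear dependence on $x$, $y$, or $z$ respectively. Extracting $P(T)$ then amounts to enumerating the triangles in $\tilde X$ with three corners at preimages of a fixed immersed sector and interior passing through a fixed preimage of the orbi-point, computing their symplectic areas as $12k(k+1)$ after the coordinate change $x = T^3 \tilde x$, counting the $2k+1$ configurations of each size, and assigning the sign $(-1)^k$ from the non-trivial spin structure based on how the boundary covers the designated base point.

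For $\Delta^{2/3}_i$, the constraint $\mu_{\mathrm{CW}} = 4/3$ allows $n_1+n_2+n_3 = 2$, and the two lift types in Proposition \ref{prop:orbi-polyclass} separate contributions: six corners at a single preimage in $\tilde X$ descend to two equal corners in the orbi-disc, yielding $Q(T)$ times $x^2$, $y^2$, or $z^2$; three-plus-three corners at two distinct preimages descend to one corner at each of two distinct immersed sectors, yielding $R(T)$ times $yz$, $zx$, or $xy$. Enumerating these hexagonal discs in $\tilde X$ by area, $\Z/3$-orbit type, and spin-structure sign reproduces the claimed series; the $T^{-8}$ prefactor in $R(T)$ tracks back to the minimal triangle itself, which generates the smallest mixed-type orbi-disc and contributes with $T^{-8}$ after the coordinate change, matching the $-T^{-8} xyz$ term of $W$. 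The principal obstacle is the combinatorial enumeration in $\tilde X$, which is parallel to but more delicate than the $W$-computation in \cite{CHL17}; as a consistency check I would verify the lowest few coefficients of $P(T)$, $Q(T)$, and $R(T)$ against the basic and next-to-basic orbi-discs by direct inspection in the universal cover.
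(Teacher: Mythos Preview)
Your overall strategy matches the paper's: it also handles $\one_X$ by unitality, $[\pt]$ by the divisor-type argument (this is exactly how Theorem~\ref{thm:KSofdiv} is proved), and the twisted sectors by the lifting classification of Proposition~\ref{prop:orbi-polyclass} followed by direct enumeration in the elliptic-curve cover. So at the level of approach you are on the right track.

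There are, however, a few inaccuracies in your sketch that would need correction. First, your explanation of the $T^{-8}$ in $R(T)$ is wrong: the minimal triangle carries no orbi-insertion and therefore contributes nothing to $\KS(\Delta^{2/3}_i)$. The $T^{-8}$ arises purely from the change of variables $yz = T^6\tilde y\tilde z$ applied to the $\tilde y\tilde z$-coefficients $\psi_k^\pm(T)/\phi_i(T)$, and the lowest-order term of $R(T)$ in fact has positive valuation. Second, your description of the $x^2$-contribution conflates two distinct families: in the paper the coefficient $Q(T)$ splits into a sum over \emph{triangular} lifts $\Delta_{x^2,k}$ (giving the first series $\sum (2k+1)T^{24k^2+24k}$) and a sum over \emph{immersed hexagonal} lifts $\Delta_{x^3,k}\setminus\Delta_{x,i}$ (giving the double series); you should separate these. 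Third, the multiplicities $(2k+1)$, $(6k-2i+2)$, etc.\ do not count distinct polygon classes of a given area: for each size there are only the polygon and its reflection, and the integer weight is the number of boundary points mapping to the Morse minimum $e$ (equivalently, the number of admissible positions for the output marked point). Your phrase ``counting the $2k+1$ configurations'' is at best ambiguous on this point, and getting it right is where the actual bookkeeping lives.
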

The proof will be given in Appendix \ref{sec:pf333orb}.

%
%We remark that  the explicit map given by Proposition \ref{QH->Jac explicit} can be used to
%determine structure constants of the orbifold quantum cohomology of $\mathbb{P}^1_{3,3,3}$.
%For example, 
\begin{remark}
 Satake--Takahashi \cite{ST} provided an explicit description of the genus zero 
 Gromov--Witten potential of $\mathbb{P}^1_{3,3,3}$, which, in particular, determines  the structure constants for the product structure on $QH^\ast_{orb} (\mathbb{P}^1_{3,3,3})$. 
 %On the other hand, the product structure on the Jacobian ring is trivial
% , and hence it is easier to compute 
%multiplications there, using the map from quantum cohomology to Jacobian ring.
For instance, 
one of the interesting quantum products is given by
% $$\Delta_1^{1/3} *_Q \Delta_1^{2/3} = 1/3 [\pt],$$
% can be explicitly checked using our map given in Proposition \ref{QH->Jac explicit}.
%More interesting is the calculation 
 $$\Delta_1^{1/3} \bullet_0 \Delta_1^{1/3} = f_1(q)\Delta_1^{1/3},$$
 where $f_1(q)$ given in \cite{ST} is an expression involving Dedekind eta function. This gives a complicated looking identity on $\Jac(W)$-side through our explicit map, which is a priori highly nontrivial. 
 %One may numerically check the identity for a first few terms...
\end{remark}

\appendix

\section{Proof of Proposition \ref{prop:divisor}}\label{sec:divisor}

Proposition \ref{prop:divisor} is equivalent to the following equalities in the Jacobian ring
from the Definition \ref{def:KS} of Kodaira-Spencer map and the potential. 
\begin{equation}\label{eq:unital}
 \sum_{\beta, k, l} \exp( \tau^2 \mathbf{p} \cap \beta)\frac{T^{\beta\cap \omega}}{l!}\mathfrak{q}_{l+1,k,\beta}(\one_{X}, \tau_{tw}^l; b^k)= \one_\BL 
 \end{equation}
 	\begin{equation}\label{eq:divisor}
 \sum_{\beta, k, l} \exp( \tau^2 \mathbf{p} \cap \beta) \frac{T^{\beta\cap \omega}}{l!}\mathfrak{q}_{l+1,k,\beta}(\mathbf{Q}, \tau_{tw}^l; b^k)= \sum_{\beta, k, l} \exp( \tau^2 \mathbf{p} \cap \beta) \frac{T^{\beta\cap \omega}}{l!}( \mathbf{Q} \cap \beta)\mathfrak{q}_{l,k,\beta}(\tau_{tw}^l; b^k)
\end{equation}

The proof of both statement is similar, we will start with the second one.
The main technical issue is that the Kuranishi structure which is used to define bulk deformation as well as Kodaira-Spencer map may not be compatible with the operation of forgetting interior marked points - see Remark 3.1 in \cite{F10} for a discussion of this point.  To overcome this problem,  we will construct Kuranishi structures and CF-perturbations on the spaces  $\mathcal{M}^{para}_{k+1,l+1}(\beta, \mathbf{Q}, \tau_{tw},\gamma):=\mathcal{M}_{k+1,l+1}(\beta, \mathbf{Q}, \tau_{tw},\gamma)\times [0,1]$. First note these have the following boundary decomposition
\begin{align}\label{eq:boundpara}
\partial \mathcal{M}^{para}_{k+1,l+1}(\beta, \mathbf{Q}, \tau_{tw}^l,\gamma)\times [0,1] & =  \mathcal{M}_{k+1,l+1}(\beta, \mathbf{Q}, \tau_{tw}^l,\gamma)\times \{0\} \bigcup \mathcal{M}_{k+1,l+1}(\beta, \mathbf{Q}, \tau_{tw},\gamma)\times \{1\}\nonumber \\
      \bigcup_{\substack{ \beta_1+\beta_2=\beta \\ k_1+k_2=k, 1\leq i\leq k_2+1}}  &  \mathcal{M}^{para}_{k_1+1, l_1+1}(\beta_1;\mathbf{Q}, \tau_{tw}^{l_1},\gamma)\;_{ev_0}\times_{ev_i} \mathcal{M}_{k_2+2, l_2}(\beta_2, \tau_{tw}^{l_2}, \gamma) \\ 
      \bigcup_{\substack{ \beta_1+\beta_2=\beta \\ k_1+k_2=k, 1\leq i\leq k_2+1}} &  \mathcal{M}_{k_1+1, l_1}(\beta_1, \tau_{tw}^{l_1},\gamma) \;_{ev_0}\times_{ev_i} \mathcal{M}^{para}_{k_2+2, l_2+1}(\beta_1;\mathbf{Q}, \tau_{tw}^{l_2},\gamma)\nonumber  
\end{align}  

On the factors with no $\mathbf{Q}$ insertion in the second and third line in Equation \ref{eq:boundpara} the Kuranishi structure and CF-perturbations coincide with the ones used to define the $\m^\tau_k$ operations. On the component $\mathcal{M}_{k+1,l+1}(\beta, \mathbf{Q}, \tau_{tw},\gamma)\times \{0\}$ the Kuranishi structure and CF-perturbations coincide with ones used to define the maps $\mathfrak{q}_{l+1,k,\beta}$, which are used in Section \ref{subsec:ringhomkura} and crucially respect the decomposition in (\ref{eqglue}). Finally, on the component $\mathcal{M}_{k+1,l+1}(\beta, \mathbf{Q}, \tau_{tw}^l,\gamma)\times \{1\}$ we require \emph{compatibility} with the map that forgets the interior marked point where we insert the divisor $\mathbf{Q}$
$$\pi: \mathcal{M}_{k+1,l+1}(\beta, \mathbf{Q}, \tau_{tw},\gamma) \rightarrow \mathcal{M}_{k+1,l}(\beta, \tau_{tw}^l,\gamma).$$
The notion of compatibility we use here is a variation of the notions considered in \cite[Def. 3.1]{Amo} and \cite[Sec 7.3.2]{FOOO}.

\begin{defn}
We say the Kuranishi structures are \emph{compatible} with respect to $\pi$ if for every $u\in \mathcal{M}_{k+1,l+1}(\beta, \mathbf{Q}, \tau_{tw}^l,\gamma)$ and $v=\pi(u)$, there is a map between the Kuranishi neighborhoods $(V_u,E_u,\Gamma_u,s_u,\psi_u)$ and $(V_v,E_v,\Gamma_v,s_v,\psi_v)$ satisfying the following
\begin{itemize}
	\setlength{\parsep}{0pt}
	\setlength{\itemsep}{0pt}
	\item $h_{uv}:\Gamma_u \rightarrow\Gamma_v$ is an injective homomorphism;
	\item $V_u\cong V_v \times B$ where $B$ is a ball in $\mathbb{C}$ and $\varphi_{uv}:V_u \rightarrow V_v$ is $h_{uv}$-equivariant continuous map, strata-wise smooth;
	\item an isomorphism $E_u\simeq \varphi_{uv}^*E_v \oplus N$ where $N$ is a rank two bundle;
	\item the $\varphi_{uv}^*E_v$ component of $s_u$ equals $\varphi_{uv}^*s_v$;
	\item $\varphi\circ\psi_u=\psi_v\circ\varphi_{uv}$ on $s^{-1}_u(0)/\Gamma_u$.
\end{itemize}
\end{defn} 
\begin{lemma}
	There are Kuranishi structures on the moduli spaces $\mathcal{M}^{para}_{k+1,l+1}(\beta, \mathbf{Q}, \tau_{tw}^l,\gamma)$ which respect the boundary decomposition (\ref{eq:boundpara}) and have the compatibilities described above.
\end{lemma}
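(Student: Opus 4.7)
The plan is to construct the Kuranishi structures by induction on the pair $(\omega \cap \beta, k + l)$ with lexicographic order. At each stage I would simultaneously construct the Kuranishi structure on the parameterized moduli space $\mathcal{M}^{para}_{k+1,l+1}(\beta, \mathbf{Q}, \tau_{tw}^l, \gamma)$, respecting the four types of boundary strata in \eqref{eq:boundpara}. The fiber-product strata in the second and third lines of \eqref{eq:boundpara} are equipped with Kuranishi structures by the inductive hypothesis together with the usual fiber-product construction, leaving only two endpoint structures and an interior to specify.

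At $t = 0$ the Kuranishi structure is prescribed to be the one used to define $\mathfrak{q}_{l+1, k, \beta}$ in Section \ref{subsec:bdeffukalg}, which by construction respects the decomposition \eqref{eqglue}. At $t = 1$ we construct a Kuranishi structure compatible with the forgetful map $\pi$ by pulling back the inductively constructed Kuranishi structure on $\mathcal{M}_{k+1,l}(\beta, \tau_{tw}^l, \gamma)$. Explicitly, for $u$ with $v = \pi(u)$, take $V_u = V_v \times B$ with $B \subset \mathbb{C}$ a small disc parameterizing the position of the interior marked point carrying $\mathbf{Q}$; let $\Gamma_u = \Gamma_v$ act trivially on $B$; and set $E_u = \pi^\ast E_v \oplus N$, where $N$ is a trivial rank-two bundle whose section component cuts out the locus on which the moving marked point maps into $\mathbf{Q}$. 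This construction automatically verifies the five compatibility bullets in the statement.

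With the two endpoint data and the boundary-stratum data in place, extending to the interior of $\mathcal{M} \times [0,1]$ follows the standard homotopy construction: use a collar neighborhood of the boundary to extend the boundary Kuranishi structures inward, then glue in the interior using a partition of unity on the obstruction spaces. Verifications of compatibility with the two families of fiber-product strata in \eqref{eq:boundpara} reduce to the corresponding statement on the lower-order factors, which holds by the inductive hypothesis.

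The hard part will be arranging the product structure $V_u = V_v \times B$ consistently across the partial gluings. On strata where the $\mathbf{Q}$-insertion lies on the first factor, the map $\pi$ acts only there, and the required compatibility is inherited directly from the inductive hypothesis on that factor. On the remaining strata, one must check that the germ of the pullback Kuranishi structure along $\pi$-fibers is preserved under the fiber-product gluing; this is the same kind of forgetful-compatibility issue handled in \cite[Sec 7.3.2]{FOOO}, and it can be resolved by requiring the Kuranishi structures on all lower-order spaces produced by the induction to factor through the relevant forgetful maps, which is a consistent constraint to impose since forgetting commutes with the fiber-product decompositions of the boundary.
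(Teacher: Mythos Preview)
Your proposal is essentially correct and follows the same approach as the paper: the key idea---pulling back the Kuranishi neighborhood along $\pi$ by setting $V_u = V_v \times B$ (with $B$ parameterizing the extra marked point) and enlarging the obstruction bundle to $\pi^*E_v \oplus N$ with the rank-two summand $N$ cutting out the incidence with $\mathbf{Q}$---is exactly what the paper does at the $t=1$ boundary, and both treat the rest of the construction as standard (the paper citing \cite{F10} and \cite{Amo}). The paper is terser, giving only the explicit form of the $N$-component of $s_u$ as $ev_{z_1^+}(x) - ev_{z_1^+}(u)$ after identifying a neighborhood of the image point with $\mathbb{R}^2$, while you spell out the inductive framework and the collar-extension argument more fully; one small inaccuracy in your write-up is that the structure on $\mathcal{M}_{k+1,l}(\beta,\tau_{tw}^l,\gamma)$ is not produced by your induction but is the pre-existing one used to define $\m_k^\tau$, and $\Gamma_u$ need only inject into $\Gamma_v$ rather than equal it.
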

\begin{proof}
	With the exception of the compatibility with the forgetful map $\pi$, the construction of such Kuranishi structure is standard by now, see \cite{F10} for example. To ensure compatibility with $\pi$ we proceed as follows. Given the Kuranishi neighborhood $(V_v,E_v,\Gamma_v,s_v,\psi_v)$ we take $V_u\cong V_v \times B$ where $B$ parameterizes the position of the additional marked point $z_1^+$. Then the map $\varphi_{uv}$ is locally modeled on a forgetful map $\Pi: \mathcal{M}_{k+1, l'+1} \to \mathcal{M}_{k+1, l'}$, see the proof of Proposition 4.2 in \cite{Amo} for an analogous argument. Then taking the obstruction bundle $\varphi_{uv}^*E_v $ would give a Kuranishi neighborhood in $\mathcal{M}_{k+1,l+1}(\beta, \tau_{tw}^l,\gamma)$, that is, without incidence relation with $\mathbf{Q}$. We include this restriction by identifying a neighborhood of $ev_{z_1^+}(u)$ (which includes no other component of $\mathbf{Q}$) in $\Pabc$ with a ball in $\mathbb{R}^2\cong N$. Then the $N$ component of  the obstruction map $s_u(x)$ is $ev_{z_1^+}(x) - ev_{z_1^+}(u)$. It is not hard to see this satisfies all the properties.
\end{proof}

\begin{remark}
	As explained in Appendix A.1.4 in \cite{FOOO}, when constructing Kuranishi structures on moduli spaces of discs one has to take a special smooth structure near nodal discs. Due to this choice, forgetful maps are continuous but smooth only when we restrict to a stratum of the stratification according to combinatorial type of the underlying disc. This is the reason $\varphi_{uv}$ is only strata-wise smooth.
\end{remark}

Now consider  CF-perturbations $(\mathcal{ U}_\alpha, W_\alpha, S_\alpha)_{\alpha\in I}$ on $\mathcal{M}^{para}_{k+1,l+1}(\beta, \mathbf{Q}, \tau_{tw}^l,\gamma)$. We say it is compatible with $\pi$ if its restriction to $\mathcal{M}_{k+1,l+1}(\beta, \mathbf{Q}, \tau_{tw}^l,\gamma)\times \{1\}$ is the {\it pull-back} of CF-perturbations $(\mathcal{ V}_\beta, W_\beta, S_\beta)_{\beta\in J}$ on $\mathcal{M}_{k+1,l}(\beta, \tau_{tw}^l,\gamma)$. By pull-back we mean there are maps of Kuranishi neighborhoods from $\mathcal{ U}_\alpha$ to $\mathcal{ V}_{\beta(\alpha)}$, $W_\alpha=W_{\beta(\alpha)}$, $\theta_\alpha=\theta_{\beta(\alpha)}$ and %$S_\alpha=S_\beta\circ(\textrm{id}\times\varphi_{\alpha\beta})$.
$\varphi_{\alpha\beta}$ induces a covering map $S^{-1}_{\alpha,i,j}(0)\rightarrow S^{-1}_{\beta,i,j}(0)$.

\begin{lemma}\label{lem:multicomp}
	There are systems of CF-perturbations $(\mathcal{ U}_\alpha, W_\alpha, S_\alpha)_{\alpha\in I}$ on the moduli spaces $\mathcal{M}^{para}_{k+1,l+1}(\beta, \mathbf{Q}, \tau_{tw}^l,\gamma)$, whose 
	%given the decomposition of the boundary \refeq{eq:boundpara}, 
	restriction to the boundary agrees with the fiber product of CF-perturbations on the right-hand side of (\refeq{eq:boundpara}). Moreover they are compatible with $\pi$ and the evaluation at the 0-th boundary marked point maps $(ev_0)_\alpha\vert_{S^{-1}_\alpha(0)}$ are submersions.
\end{lemma}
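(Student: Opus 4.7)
The construction proceeds by induction on the pair $(\omega\cap \beta, k+l)$ ordered lexicographically, following the standard inductive scheme of \cite{F17, FOOO_MS} but with additional bookkeeping to accommodate compatibility with $\pi$. The induction hypothesis asserts that, for all $(\beta',k',l')$ strictly smaller, we have already chosen continuous families of multisections on both $\mathcal{M}_{k'+1,l'+1}(\beta',\mathbf{Q},\tau_{tw}^{l'},\gamma')$ (with $ev_0$ submersive) and $\mathcal{M}_{k'+1,l'}(\beta',\tau_{tw}^{l'},\gamma')$ (with $ev_0$ submersive), such that the second family is the push-forward of the first under the forgetful map $\pi$; and on the associated parametric moduli spaces interpolating between them.

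The plan for the inductive step on $\mathcal{M}^{para}_{k+1,l+1}(\beta,\mathbf{Q},\tau_{tw}^{l},\gamma)$ is as follows. First, on the face at $\{0\}$, the multisection is the one used to define $\mathfrak{q}_{l+1,k,\beta}$, already compatible with the disc--sphere stratification by Lemma \ref{lem:cfp}. Second, on the faces of type \eqref{eq:boundpara} lines two and three, the multisection is defined as the fiber product of the multisection on the $\mathbf{Q}$-insertion factor (given by the induction hypothesis, or by the $\{0\}/\{1\}$ face just constructed) with the multisection on the factor without $\mathbf{Q}$-insertion used to define the $\m_k^{\tau,b}$; these agree on triple intersections by associativity of fiber products. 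Third, on the face at $\{1\}$, we define the multisection as the pull-back under $\pi$ of a continuous family already chosen on $\mathcal{M}_{k+1,l}(\beta,\tau_{tw}^{l},\gamma)$ by the induction hypothesis. The local model $V_u \cong V_v\times B$ together with the splitting $E_u \simeq \varphi_{uv}^*E_v \oplus N$ constructed in the previous lemma ensures that this pull-back is well-defined as a continuous family and that its zero locus $S_\alpha^{-1}(0)$ fibers over the zero locus of the original multisection with fiber the additional marked-point parameter $B$; in particular, since $ev_0$ on the base is submersive by induction and $ev_0$ factors through $\pi$, submersivity on the $\{1\}$-face is automatic. Finally, these multisections on $\partial \mathcal{M}^{para}$ agree on all overlaps between faces, again by the associativity of fiber products together with the inductive compatibility.

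The last task is to extend the multisection from $\partial\mathcal{M}^{para}$ to the interior while preserving the submersivity of $ev_0$. This is accomplished by the standard argument: cover the interior by finitely many Kuranishi neighborhoods, on each take an extension such that the additional perturbation section surjects onto the cokernel of $d\,ev_0$ at every zero, and glue by a partition of unity in the continuous-family framework of \cite{F10, FOOO2}. Submersivity is an open condition, and the pull-back structure on the $\{1\}$-face is preserved under extension into the interior because the parametric coordinate lies in $[0,1]$ and the extension only needs to match the face values.

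The main obstacle is the tension at the $\{1\}$-face between two requirements: the multisection must be a $\pi$-pull-back (which forces it to be constant in the $B$-direction introduced by the interior marked point), yet it must cut out a zero locus on which $ev_0$ is still submersive. This is precisely why we introduced the direct summand $N$ in the obstruction bundle and the $N$-component $ev_{z_1^+}(x) - ev_{z_1^+}(u)$ of the Kuranishi map: the incidence with $\mathbf{Q}$ is absorbed into $N$ rather than into the perturbation, so that pulling back a multisection on the $\varphi_{uv}^*E_v$-factor is compatible with the divisor constraint. Once this compatibility is in place, the remaining steps are routine applications of the Kuranishi-theoretic technology developed in \cite{F10, FOOO2, Amo}.
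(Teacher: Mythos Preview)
Your inductive scheme and the treatment of the boundary faces match the paper's approach: both build the multisection on the $\{1\}$-face by pull-back along $\pi$ from the multisection on $\mathcal{M}_{k+1,l}(\beta,\tau_{tw}^l,\gamma)$, and then extend over $[0,1]$ by the standard induction on energy from \cite{F10}.

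There is, however, a genuine imprecision at the $\{1\}$-face. You write that the zero locus $S_\alpha^{-1}(0)$ ``fibers over the zero locus of the original multisection with fiber the additional marked-point parameter $B$''. This contradicts the very notion of compatibility with $\pi$ that the paper sets up: it requires $\varphi_{\alpha\beta}$ to induce a \emph{covering} map $S_{\alpha,i,j}^{-1}(0)\to S_{\beta,i,j}^{-1}(0)$, so the fiber must be finite, not the two-ball $B$. The paper resolves this by prescribing the $N$-component of the multisection explicitly: on $N$ one takes a generic perturbation of $\mathbf{Q}$ so that it becomes transversal to the image of $ev_{z_1^+}$. The zero locus in the $B$-direction then consists of the finitely many positions where the extra interior marked point hits the perturbed divisor, and the resulting covering has exactly $\mathbf{Q}\cap\beta$ sheets. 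This sheet count is precisely what produces the factor $(\mathbf{Q}\cap\beta)$ on the right-hand side of \eqref{eq:divisor} in the ensuing Stokes argument, so it is not a cosmetic detail. Your last paragraph correctly identifies that the incidence constraint lives in $N$, but you never specify the $N$-component of the multisection itself; once you insert the generic perturbation of $\mathbf{Q}$ there and replace the ``fiber $B$'' claim by the finite-covering statement, your argument aligns with the paper's.
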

\begin{proof}
	Once again the proof follows the strategy in \cite{F10} and \cite[Proposition 4.4]{Amo}. We take the CF-perturbations on $\mathcal{M}_{k+1,l}(\beta, \tau_{tw}^l,\gamma)$  and define $W_\alpha=W_{\beta(\alpha)}$, $\theta_\alpha=\theta_{\beta(\alpha)}$. On the $\varphi_{\alpha \beta}^*E_\beta $ component we take the map $S_\alpha= S_\beta \circ \varphi_{\alpha \beta}$. On the $N$ component we take a generic perturbation of $\mathbf{Q}$ so that it becomes transversal to the image of $ev_{z_1^+}$. With this choice, $\varphi_{\alpha\beta}$ induces a natural covering map $S^{-1}_{\alpha,i,j}(0)\rightarrow S^{-1}_{\beta,i,j}(0)$. Please note that if consider the union of all $\alpha$ over a fixed $\beta$ the resulting covering map has exactly $\mathbf{Q}\cap \beta$ sheets.
	The remainder of the proof follows the usual induction argument on energy, see \cite{F10}.
\end{proof}

Equipped with these CF-perturbations on $\mathcal{M}^{para}_{k+1,l+1}(\beta, \mathbf{Q}, \tau_{tw}^l,\gamma)$ we can use the evaluation maps at the boundary marked points to define new operations on the Fukaya algebra and set 
$$F_{b,\tau} = \sum_{k,\beta, l} \frac{T^{\omega \cap \beta}}{l!} (ev_0)_* 
(ev_1^*b \wedge \cdots \wedge ev_k^*b).$$

Now we apply the Stokes theorem \cite[Lemma 12.13]{FOOO2} to $\mathcal{M}^{para}_{k+1,l+1}(\beta, \mathbf{Q}, \tau_{tw}^l,\gamma)$. Please note that the terms coming from the second line in (\ref{eq:boundpara}) contribute with zero since $\m_{1}^{\tau, b}$ is unital. Also the terms in the first line of (\ref{eq:boundpara}) give respectively the left and right-hand side of Equation (\ref{eq:divisor}), by the previous proof. Therefore we conclude that $\m_1^{\tau,b}(F_{b,\tau} )$ is exactly the difference between the two side of (\ref{eq:divisor}). Combining this with Proposition \ref{prop:Imdiff} proves the desired statement

The proof of (\ref{eq:unital}) is entirely analogous. The main difference is that in this case when describing compatibility with $\pi$ there is no extra component $N$ in the obstruction bundle. Therefore, with the exception of the case $\beta=0$ and $k=l=0$, the induced map $S^{-1}_{\alpha,i,j}(0)\rightarrow S^{-1}_{\beta,i,j}(0)$ is a submersion of positive codimension and the corresponding operation gives zero (see \cite[Proposition 3.7]{Amo}). When $\beta=0$ and $k=l=0$, it is easy to see that we obtain $\one_\BL$. This proves that the two sides of (\ref{eq:unital}) differ by a $\m_1^{\tau,b}$ coboundary which proves the result.

\section{Proof of Theorem \ref{as:def} (2)}\label{sec:thmasdefpfpf}

We give a proof of Theorem \ref{as:def} (2), here. 
%We start by verifying condition (2) in Theorem \ref{as:def}.  
First observe that we have the following basic relations in $\mathcal{A}_0$ from the ideal $\langle g_1, g_2, g_3 \rangle$:
$$a T^8x^{a-1} = yz, \quad b T^8y^{b-1} = zx,  \quad c T^8z^{c-1} = xy.$$
We will refer to these as the Jacobi relations.
\begin{defn}
Given a monomial in $\Lambda \langle\langle x,y,z\rangle\rangle$, the operation replacing $yz, zx, xy$ in the monomial by $T^8x^{a-1}, T^8y^{b-1}, T^8z^{c-1}$ will be referred to as {\em type I replacement}, 
and  replacing  $x^{a-1}, y^{b-1}, z^{c-1}$ by $T^{-8}yz, T^{-8}zx, T^{-8}xy$  will be referred to as {\em type II replacement}.
Each of individual replacements (as well as their corresponding relations in $\langle g_1, g_2, g_2 \rangle$) will be called by $\I_x, \I_y, \I_z, \II_x, \II_y, \II_z$, respectively. 
%We may also denote the corresponding relations in $\langle g_1, g_2, g_2 \rangle$ by the same notations.
\end{defn}
Hence, if we perform type $\I$ replacement $a$-times and type $\II$ replacement $b$ times, then
the the original exponent of $T$ is increased by $8a - 8b$.
We will use the following properties of $\langle g_1, g_2,g_3 \rangle$ frequently.
\begin{lemma}\label{jacl1}
If an expression $$x^{p}y^qz^r  - T^m x^{p+i}y^{q+j}z^{r+k}$$ lies in the ideal $\langle g_1, g_2,g_3 \rangle$
with $p,q,r,i,j,k \geq 0$ and $m >0$, then
so does $x^{p}y^qz^r$ itself.
\end{lemma}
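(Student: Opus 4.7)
The plan is very short. Rewriting the hypothesis in the quotient ring $\mathcal{A}_0 = \Lambda\langle\langle x,y,z\rangle\rangle / \langle g_1,g_2,g_3\rangle$, it says
\[
[x^{p}y^{q}z^{r}] \;=\; T^{m}\,[x^{p+i}y^{q+j}z^{r+k}] \;=\; T^{m}\,[x^{i}y^{j}z^{k}]\cdot[x^{p}y^{q}z^{r}]
\]
in $\mathcal{A}_0$. So I would factor this as
\[
\bigl(1 - T^{m} x^{i} y^{j} z^{k}\bigr)\cdot[x^{p}y^{q}z^{r}] \;=\; 0 \quad\text{in } \mathcal{A}_0.
\]

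The key step is to observe that $1 - T^{m} x^{i} y^{j} z^{k}$ is a unit in $\Lambda\langle\langle x,y,z\rangle\rangle$ (hence in $\mathcal{A}_0$), with explicit inverse given by the geometric series $\sum_{n\geq 0} T^{mn} x^{ni}y^{nj}z^{nk}$. Since $m>0$, the valuations of the terms tend to $+\infty$, so this series lies in the Tate algebra $\Lambda\langle\langle x,y,z\rangle\rangle$ by Definition \ref{def:cpr}. Multiplying both sides of the displayed equation by this inverse yields $[x^{p}y^{q}z^{r}]=0$ in $\mathcal{A}_0$, which is exactly the statement that $x^{p}y^{q}z^{r} \in \langle g_1,g_2,g_3\rangle$.

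There is no real obstacle: the only thing to check is the convergence of the geometric series, which is immediate from $m>0$ together with the fact that $x,y,z$ are Tate-algebra variables (so $x^{i}y^{j}z^{k}$ has non-negative valuation). I expect the argument to occupy just a few lines.
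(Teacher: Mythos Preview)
Your proposal is correct and is essentially identical to the paper's proof: the paper also factors $x^{p}y^{q}z^{r} - T^{m}x^{p+i}y^{q+j}z^{r+k} = x^{p}y^{q}z^{r}(1 - T^{m}x^{i}y^{j}z^{k})$ and observes that $1 - T^{m}x^{i}y^{j}z^{k}$ is invertible in $\Lambda\langle\langle x,y,z\rangle\rangle$. The only cosmetic difference is that the paper factors directly in the Tate algebra rather than passing to the quotient $\mathcal{A}_0$ first, but the content is the same.
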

\begin{proof}
We have $$x^{p}y^qz^r  - T^m x^{p+i}y^{q+j}z^{r+k} = x^{p}y^qz^r (1 - T^m x^i y^j z^k)$$
and $(1 - T^m x^i y^j z^k)$ is invertible in $\Lambda \langle\langle x,y,z \rangle\rangle$. Hence the lemma follows.
\end{proof}

\begin{lemma}\label{jacl2}
For $p,q,r, i,j,k \geq 0$, if an expression $x^{p}y^qz^r$ transforms to another expression $x^iy^jz^k$ by
 performing type $\I$ or $\II$ replacements
 then their difference lies in the ideal $\langle g_1, g_2, g_3 \rangle$. i.e. $$x^{p}y^qz^r - x^iy^jz^k = \sum_{j=1}^3 t_jg_j$$
for some $t_j$ with $val(t_j) \geq s$, where $s$ is the minimum valuation of the intermediate expressions (including both ends of the operation sequence).
\end{lemma}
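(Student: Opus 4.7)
\textbf{Proof plan for Lemma \ref{jacl2}.} The idea is to telescope the sequence of replacements: each single replacement contributes one summand of the form $t \cdot g_j$ with controlled valuation, and the full identity follows by summing these contributions.

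First I would handle a single replacement. Say we apply $\I_x$ to a monomial $M = T^c x^{\alpha} y^{\beta} z^{\gamma}$ with $\beta,\gamma \geq 1$, producing the new monomial $M' = a T^{c+8} x^{\alpha+a-1} y^{\beta-1} z^{\gamma-1}$. A direct computation gives
\begin{equation*}
M - M' \;=\; T^c x^{\alpha} y^{\beta-1} z^{\gamma-1}\bigl(yz - aT^8 x^{a-1}\bigr) \;=\; -\,T^c x^{\alpha} y^{\beta-1} z^{\gamma-1}\, g_1,
\end{equation*}
whose coefficient on $g_1$ has $T$-valuation $c = \mathrm{val}(M)$. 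For $\II_x$ applied to $M = T^c x^{\alpha} y^{\beta} z^{\gamma}$ with $\alpha \geq a-1$, producing $M' = a^{-1} T^{c-8} x^{\alpha-a+1} y^{\beta+1} z^{\gamma+1}$, we analogously get
\begin{equation*}
M - M' \;=\; \tfrac{T^{c-8}}{a}\, x^{\alpha-a+1} y^{\beta} z^{\gamma}\, g_1,
\end{equation*}
whose coefficient has valuation $c - 8 = \mathrm{val}(M')$. The analogous formulas with $g_2, g_3$ cover the remaining four replacement types. In every case, the coefficient of the relevant $g_j$ has valuation equal to $\min(\mathrm{val}(M), \mathrm{val}(M'))$.

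Next, given a sequence of replacements $M_0 \to M_1 \to \cdots \to M_N$ with $M_0 = x^p y^q z^r$ and $M_N = x^i y^j z^k$, I would telescope:
\begin{equation*}
x^p y^q z^r - x^i y^j z^k \;=\; \sum_{n=1}^{N} (M_{n-1} - M_n) \;=\; \sum_{n=1}^{N} t_n\, g_{j_n},
\end{equation*}
where $j_n \in \{1,2,3\}$ and $\mathrm{val}(t_n) \geq \min(\mathrm{val}(M_{n-1}), \mathrm{val}(M_n)) \geq s$ by the single-step analysis. Grouping the summands according to $j_n$ gives the desired decomposition $\sum_{j=1}^{3} t_j g_j$ with $\mathrm{val}(t_j) \geq s$, since the $T$-adic valuation is non-archimedean and the minimum of a finite sum of terms of valuation $\geq s$ is itself $\geq s$.

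There is no real obstacle here beyond bookkeeping; the only thing to check carefully is that the coefficient accompanying $g_j$ in the $\II$-case picks up exactly the $T^{-8}$ factor needed so that its valuation matches $\mathrm{val}(M')$ rather than $\mathrm{val}(M)$, which is why the bound in the statement is formulated in terms of the \emph{minimum} of the intermediate valuations rather than just the starting valuation.
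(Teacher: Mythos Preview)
Your proposal is correct and follows exactly the approach the paper has in mind: the paper's own proof is a single sentence (``It directly follows from the fact that both of replacements are trivial modulo relations in the ideal $\langle g_1, g_2, g_3 \rangle$''), and your argument is simply the explicit telescoping that makes this precise, including the observation that the valuation of the $g_j$-coefficient in each single step equals $\min(\mathrm{val}(M),\mathrm{val}(M'))$. One small remark: in the paper's applications the intermediate and final expressions carry nonzero scalar factors (powers of $a,b,c$) in addition to the $T$-powers, so the endpoints need not literally be bare monomials $x^iy^jz^k$; your framework with general $M_n$ already accommodates this and nothing in your valuation estimates changes.
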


\begin{proof}
It directly follows from the fact that both of replacements are trivial modulo relations in the ideal $\langle g_1, g_2, g_3 \rangle$.
\end{proof}

Let us now begin the proof of Theorem \ref{as:def} (2). 
We divide the proof into a few different cases (Lemma \ref{lem:(2)abc3}, \ref{lem:(2)abc22}, \ref{lem:(2)abc23} and \ref{lem:(2)abc24g} below) depending on the type of $(a,b,c)$.

\begin{lemma}\label{lem:(2)abc3}
	If $a,b,c \geq 3$, then Theorem \ref{as:def} (2) holds.
\end{lemma}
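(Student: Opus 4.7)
The plan is to reduce every monomial $x^p y^q z^r$ in the expansion of $\rho$ to a $\Lambda$-linear combination of the basis monomials $\gamma_i$ modulo $\langle g_1,g_2,g_3\rangle$, while tracking $T$-valuations to guarantee that the resulting coefficients $c_i \in \Lambda$ and multipliers $t_j \in \Lambda\langle\langle x,y,z\rangle\rangle$ all satisfy $\mathrm{val}\geq -8$. Since $\rho$ is $T$-adically convergent with $\mathrm{val}(\rho)\geq 0$, a monomial-wise decomposition combined with Lemma \ref{jacl2} yields the required global one. The argument splits into three regimes for $(p,q,r)$.

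First, for a pure high power $x^p$ with $p\geq a$ (and symmetrically for $y^q$ and $z^r$), a single application of $\II_x$ to the factor $x^{a-1}$ yields $x^a \equiv T^{-8}\,xyz/a$ modulo $\langle g_1,g_2,g_3\rangle$, with multiplier $T^{-8}x/a$ of valuation exactly $-8$. Multiplying by $x^{p-a}$ gives $x^p \equiv T^{-8}\,x^{p-a+1}yz/a$, converting the pure power into a mixed monomial at the cost of one factor of $T^{-8}$. This is the only step in the whole reduction that introduces a negative power of $T$.

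Second, for a mixed monomial with all three exponents $\geq 1$, the ordered cycle $\I_z\,\I_y\,\I_x$ (or a cyclic permutation chosen so that all three operations are legitimate) sends $(p,q,r)\mapsto(p+a-3,\,q+b-3,\,r+c-3)$ and multiplies the monomial by $abc\,T^{24}$, with cycle multipliers of valuations $0, 8, 16$ respectively. Iterating $n$ times gives
\[
x^p y^q z^r \equiv (abc)^n T^{24n}\, x^{p+n(a-3)}\,y^{q+n(b-3)}\,z^{r+n(c-3)} \pmod{\langle g_1,g_2,g_3\rangle}.
\]
When $(a,b,c)=(3,3,3)$ the exponents are stationary after one cycle and we obtain $\bigl(1-27\,T^{24}\bigr)\,x^p y^q z^r\in\langle g_1,g_2,g_3\rangle$; since $1-27T^{24}$ is a unit in $\Lambda\langle\langle x,y,z\rangle\rangle$, the monomial itself lies in the ideal with multipliers of non-negative valuation. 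When $a+b+c>9$ the right-hand side above converges $T$-adically to $0$, so by closedness of the ideal (\cite[\S5.2.7]{BGR}) the monomial again lies in $\langle g_1,g_2,g_3\rangle$ with multipliers of non-negative valuation (as an infinite $T$-adically convergent sum of cycle contributions). The triple $(1,1,1)$ is the basis element $xyz$ itself.

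Combining the regimes, every monomial reduces either to a $\Lambda$-multiple of some $\gamma_i$ with coefficient of valuation $\geq -8$ (the worst case arising from a single application of regime (i) on a pure power $x^a, y^b$ or $z^c$), or to an element of $\langle g_1,g_2,g_3\rangle$ with multipliers of valuation $\geq -8$. Assembling these monomial-wise decompositions into a convergent series, using that the coefficients in $\rho$ have valuations growing with the total degree, yields the desired global decomposition. The main obstacle I foresee is handling the corner cases of small exponents such as $(2,1,1)$ and its cyclic permutations, where the canonical cycle $\I_z\I_y\I_x$ cannot be initiated directly but a permuted ordering works after an auxiliary reduction of one leftover pure-power factor via regime (i); I expect this edge-case bookkeeping, together with verifying that multipliers of arbitrary high-degree monomials accumulate into a genuine element of $\Lambda\langle\langle x,y,z\rangle\rangle$ (rather than only a formal sum), to be the most delicate technical point of the proof.
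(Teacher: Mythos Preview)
Your approach follows the same essential strategy as the paper's: the three-step cycle $\I_z\I_y\I_x$ multiplies a monomial by $abc\,T^{24}$ while shifting exponents by $(a-3,b-3,c-3)$, and pure high powers require exactly one application of $\II$, costing $T^{-8}$. However, there is a genuine gap: monomials with exactly two nonzero exponents, such as $x^py^q$, are not covered by either of your regimes (regime~(i) is for pure powers only, regime~(ii) as stated requires all three exponents positive). The fix is easy---the cycle in fact requires only that the largest exponent be $\geq 2$ and the second-largest be $\geq 1$, with no condition on the smallest; the paper handles this by ordering $i'\geq j'\geq k'$ and observing that the same cycle applies whether $k'>0$ or $k'=0$. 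The residual case $xy$ (and its permutations) reduces via a single $\I_z$ to $cT^8z^{c-1}$, a basis element with coefficient of non-negative valuation.

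Two smaller remarks. Your worry about $(2,1,1)$ is misplaced: the cycle $\I_z\I_y\I_x$ applies directly there (it needs only $p\geq 2,\,q\geq 1$), and your proposed workaround via regime~(i) makes no sense since $(2,1,1)$ is not a pure power. Also, rather than iterate and invoke closedness of the ideal, the paper obtains the conclusion in one stroke via Lemma~\ref{jacl1}: after a single cycle one has $x^py^qz^r\bigl(1-abc\,T^{24}x^{a-3}y^{b-3}z^{c-3}\bigr)\in\langle g_1,g_2,g_3\rangle$, and since the second factor is a unit in $\Lambda\langle\langle x,y,z\rangle\rangle$ the monomial itself lies in the ideal, with multipliers of non-negative valuation by Lemma~\ref{jacl2}. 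This makes the valuation bound on the $t_j$ immediate and sidesteps the limiting argument and the issue you flag of assembling the infinite sum.
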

\begin{proof}
	Consider a monomial $x^{i'}y^{j'}z^{k'}$ (with $i',j',k' \geq 0$ ) which does not appear in the basis.
	By symmetry, we may assume that $i' \geq j' \geq k'$.
	First we consider the case that $k' \neq 0$. Then $i' \geq 2$ since otherwise $x^{i'}y^{j'}z^{k'}= xyz$ is an element of the basis.
	Thus we can write $i' = i+2, j' = j+1, k'=k$ (with $i,j \geq 0, k \geq 1$).
	By using Jacobian relation $\I_z, \I_y, \I_x$ successively, we have
	\begin{equation*}
	\begin{array}{ccccc}
	
	x^{2+i}y^{1+j}z^k &\equiv& c \, x^i y^j z^k ( x (T^{8}z^{c-1})) &\equiv& c \, x^i y^j z^{k+c-2} T^8 (zx) 
	  \\
		&\equiv& (bc) \, x^i y^j z^{k+c-2} T^{16} y^{b-1}  &\equiv & (bc) x^i y^{j+b-2} z^{k+c-3} T^{16} yz \\
		&\equiv & (abc) x^{i+a-1} y^{j+b-2} z^{k+c-3} T^{24} &&  
	\end{array}
	\end{equation*}
	Hence the difference
	$$x^{2+i}y^{1+j}z^k - (abc) x^{i+a-1} y^{j+b-2} z^{k+c-3} T^{24}  = x^{2+i}y^{1+j}z^k( 1 - (abc) x^{a-3}y^{b-3}z^{c-3}T^{24})$$
	lies in the ideal. Lemma \ref{jacl2} tells us that the term in the right hand side  lies in $\langle g_1, g_2, g_3 \rangle$. Therefore $x^{i'} y^{j'} z^{k'}(=x^{2+i}y^{1+j}z^k)$ with $k' \neq 0$ belongs to $\langle g_1, g_2, g_3 \rangle$ by Lemma \ref{jacl1}.
	
	Now, let us consider  the case for $k'=0$.
	If $i' \geq 2, j' \geq 1$, then we can apply exactly the same argument as above to prove that $x^{i'} y^{j'}$ lies in the ideal.
	If $i'=1,j'=1$, then we have $x^{i'} y^{j'}= xy \equiv c T^{8}z^{c-1} \subset c T^{8} \{\gamma_1, \ldots, \gamma_N\}$ and thus the claim holds.
	
	We are left with the case when $j'=k'=0$ and $i' \geq a$.
	If $i'=a$, then $ x^a = \frac{1}{a} T^{-8} xyz$ and hence the claim still holds.
	If $i' = a+i$ with $i \geq 1$, then
	$$x^{i'} = x^{a+i} \equiv (1/a) T^{-8}xyz \cdot x^{i} = (1/a) T^{-8}x^{i+1}yz$$
	We have already shown that $x^{i+1}yz$ is an element in the ideal which can be written as $\sum_{j=1}^n t_j g_j$
	with $val(t_j) \geq 0$. 
	Therefore, 
	$$x^{i'} = (1/a) T^{-8} x^{i+1} ( \overbrace{a T^{8} x^{a-1} -  yz}^{g_1} ) + T^{-8} \sum_{j=1}^3 t_j g_j =  \sum_{j=1}^3 t_j' g_j$$
	with $val(t_j') \geq -8$.
	\end{proof}
\begin{lemma}\label{lem:(2)abc22}
	If $(a,b,c) = (2,2,c)$ $(c \geq 2)$, then Theorem \ref{as:def} (2) holds.
\end{lemma}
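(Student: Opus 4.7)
The proof will follow the structure of Lemma \ref{lem:(2)abc3} with modifications since $a-1=b-1=1$ collapses the role of $\II_x,\II_y$. The three Jacobi relations specialize to $yz \equiv 2T^8 x$, $zx \equiv 2T^8 y$, and $xy \equiv cT^8 z^{c-1}$, and the basis is $1,x,y,z,z^2,\ldots,z^{c-1},xyz$. Since every type I replacement contributes a factor $T^8$ while every type II replacement costs $T^{-8}$, the guiding constraint is that we may use at most one type II replacement in each reduction, in order to keep $val(c_i), val(t_j) \geq -8$.

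My plan is a case analysis on a non-basis monomial $x^\alpha y^\beta z^\gamma$, splitting into \emph{mixed} monomials (at least two of $\alpha,\beta,\gamma$ positive) and \emph{pure powers}. In the mixed case, I will apply $\I_x$ and $\I_y$ first to eliminate factors of $z$ (these do not raise total degree), invoking $\I_z$ only when the remaining monomial has the form $x^{\alpha'} y^{\beta'}$; this reintroduces $z^{c-1}$ and enables further $\I_x,\I_y$ steps. For the pure powers, a single type II replacement --- $x^\alpha = \tfrac{1}{2}T^{-8}x^{\alpha-1}yz$ for $\alpha\ge 2$, $y^\beta = \tfrac{1}{2}T^{-8}y^{\beta-1}zx$ for $\beta\ge 2$, or $z^\gamma = \tfrac{1}{c}T^{-8}xy z^{\gamma-c+1}$ for $\gamma\ge c$ --- introduces the missing variables, after which the mixed case takes over. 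Only this single $\II$ step contributes a $T^{-8}$ factor; all subsequent reductions contribute nonnegative powers of $T$.

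Lemmas \ref{jacl1} and \ref{jacl2} then convert the chain of replacements into the desired decomposition $x^\alpha y^\beta z^\gamma = \sum_i c_i\gamma_i + \sum_j t_j g_j$ with $val(c_i), val(t_j) \geq -8$.

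The main obstacle is termination of the type I reduction in the mixed case: because $\I_z$ raises the total degree by $c-3$, which is positive when $c\ge 4$, total degree is not a monovariant. The key observation to resolve this is that each cycle of $\I_z$ followed by $\I_x$ (or $\I_y$) strictly decreases $\alpha+\beta$ by $2$, so the reduction terminates after finitely many steps at a basis element. Subcases with $\alpha=0$ or $\beta=0$ (e.g.\ $yz^\gamma$ or $xz^\gamma$) involve only alternating applications of $\I_x$ and $\I_y$ and terminate by induction on $\gamma$, yielding either a multiple of $x$ or of $y$ depending on the parity of $\gamma$.
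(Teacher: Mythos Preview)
Your approach is essentially the same as the paper's: handle mixed monomials by type I replacements and pure powers by a single type II followed by type I. The paper organises the case analysis by exponent pattern --- $(i,0,k)$, $(0,j,k)$, $(0,j,0)$, $(0,0,k)$, $(i,j,0)$, $(i,j,k)$ --- and for each gives an explicit two-step move (e.g.\ $\I_y$ then $\I_x$ for $(i,0,k)\to(i,0,k-2)$, or $\I_y$ then $\I_z$ to lower $i$), but the underlying mechanism is identical to yours, and your use of $\alpha+\beta$ as the monovariant under $\I_z$ is exactly the right termination idea.

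There is one imprecision you should tighten. Your claim that the mixed reduction ``terminates after finitely many steps at a basis element'' using only $\I_x,\I_y,\I_z$ requires specifying \emph{which} of $\I_x,\I_y$ to apply at each step. For instance, from $(4,0,2)$ the alternation $\I_y,\I_x$ gives $(3,1,1)\to(4,0,0)=x^4$, a non-basis pure power; your ``subcases with $\alpha=0$ or $\beta=0$'' paragraph only treats $\alpha+\beta=1$. The fix is easy: when $\beta=0$ and $\alpha\ge 2$, use $\I_y$ followed by $\I_z$ (rather than $\I_x$), which takes $(\alpha,0,\gamma)\to(\alpha-1,1,\gamma-1)\to(\alpha-2,0,\gamma+c-2)$ and drops $\alpha+\beta$ by $2$; this is precisely the paper's move for the $(i,0,k)$ case. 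With this adjustment your argument goes through cleanly. Alternatively, you can observe that even if one lands at such an intermediate pure power, at least one type I step has already been applied, so the accumulated $T^8$ offsets the subsequent $\II$ and the valuation floor stays at $-8$ --- but then your sentence ``Only this single $\II$ step contributes a $T^{-8}$ factor'' needs rephrasing.
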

\begin{proof}
 For simplicity, we represent the monomial $x^iy^jz^k$ by its exponent vector $(i,j,k)$ in what follows.
We separate them into the following cases depending on which entries of the vector vanish. Below, $i,j,k$ are all assumed to be positive integers.
  \begin{itemize}
 \item[$(i,0,k)$:] By first applying $\I_y$ and later $\I_x$ (or $\I_z$), we can make it into $(i,0,k-2)$: (or $(i-2,0,k)$)
Repeating the procedure, we can reduce it to one of the basis element (by type $\I$ replacements only).
 \item[$(0,j,k)$:] This follows from the previous case by symmetry of $(2,2,n)$. Again, we only need type $\I$ replacements.
 \item[$(0,j,0)$:]  For the case of $j=2$, $(0,2,0) \sim^{\II_y} (1,1,1)$ which becomes a basis element.
For the case of $j \geq 3$, we first apply $\II_y$ to get $(1,j-1,1)$, followed by $\I_z$ to get $(0,j-2,c)$. Since we have applied
 each of type $\I$ and $\II$ exactly once, the exponent of $T$ remains zero, and we can now apply the previous case of $(0,j,k)$. The same argument can be used for $(i,0,0)$.
 \item[$(0,0,k)$:] For the case of $k=c$,  $(0,0,c) \sim^{\II_z} (1,1,1)$ which becomes a basis element.
 For the case of $k \geq c+1$, we may take $$(0,0,k) \sim^{\II_z} (1,1,k-c+1) \sim^{\I_y} (0,2,k-c)$$
to go back to one of the previous cases.
\item[$(i,j,0)$:] For the case that $i=1$, we have $(1,j,0) \sim^{\I_z} (0,j-1,c-1)$, and the latter has been already covered.
Let us consider the case that $i \geq 2$. We have $(i,j,0) \sim^{\I_z} (i-1,j-1,c-1) \sim^{\I_y} (i-2,j,c-2)$.
We can then apply $I_z$ as many times as needed to turn them into  $(*,0,*)$ or $(0,*,*)$, which is already covered.
\item[$(i,j,k)$:] We use induction on $i+j+k$ and 
$(i,j,k) \sim^{\I_x} (i+1,j-1,k-1)$. We can apply either induction hypothesis to $(i+1,j-1,k-1)$ if all entries are non-zero or one of the above steps otherwise.
 \end{itemize}
 
% 
% 
% 
%	We argue by induction on the sum $i+j+k$ for $x^iy^jz^k$ that it is already equivalent to one of basis element.
%	If non of $i,j,k$ is zero, note that $x^iy^jz^k \cong x^{i+1}y^{j-1}z^{k-1}$ and we are done.
%	If only one of $i,j,k$ is zero, we proceed as follows.
%	For the case of $(i,j,k) = (i,0,k)$ (or $(0,j,k)$ by symmetry)
%	we note that $(i,0,k) ~ (i-1,1,k-1)$ and we are done.
%	For the case of $(i,j,k) = (i,j,0)$, 
%	note that $$ (i,j,0) ~ (i-1,j-1,k-1) T^{-8} \cong (i,j-2,k-2) \cong (i-1,j-1,k-3)T^{8} \cong \cdots$$
%	Thus, at the end we obtain  with $(i,j-2,0)$ or $(i-1,j-1,0)$ with $T^{>0}$. Thus we are done.
%	
%	If two of $i,j,k$ are zero, we proceed as follows.
%	$$(i,0,0) \cong (i-1,1,1)T^{-8} \cong (i-2,0,k) \cong (i-3,1,k-1)T^{8} \cong (i-2,0,k-2)T^{16} \cong \cdots $$
%	at the end, we get $(i-3,1,0)$ or $(i-2,0,0)$ and we are done by induction.
%	
%	
\end{proof}
\begin{lemma}\label{lem:(2)abc23}
	If $(a,b,c) =(2,3,c)$, then Theorem \ref{as:def} (2) holds. 
	\end{lemma}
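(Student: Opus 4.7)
The argument parallels that of Lemma~\ref{lem:(2)abc22}: we encode monomials by their exponent vectors $(i,j,k)$ and reduce each one to a $\Lambda$-linear combination of the basis
\begin{equation*}
\{1,\,x,\,y,\,y^2,\,z,\,z^2,\ldots,z^{c-1},\,xyz\}
\end{equation*}
modulo $\langle g_1,g_2,g_3\rangle$, tracking the minimum valuation along the chain. For $(a,b,c)=(2,3,c)$ the Jacobi relations specialize to
\begin{equation*}
\I_x:\, yz \equiv 2T^8 x, \qquad \I_y:\, xz \equiv 3T^8 y^2, \qquad \I_z:\, xy \equiv cT^8 z^{c-1}.
\end{equation*}
By Lemmas~\ref{jacl1} and~\ref{jacl2}, it suffices to exhibit a finite reduction chain for each monomial whose intermediate valuations remain $\geq -8$.

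The routine cases concern monomials with at least two nonzero coordinates. If $j,k\geq 1$, iterating $\I_x$ sends $(i,j,k)\mapsto(i+\min(j,k),\,j-\min(j,k),\,k-\min(j,k))$ with an accumulated factor $(2T^8)^{\min(j,k)}$, leaving us with $j=0$ or $k=0$. The two-coordinate cases $(i,0,k)$ and $(i,j,0)$ with both nonzero entries $\geq 1$ are fed back into the preceding step by a single application of $\I_y$ or $\I_z$, respectively. All of these are type~$\I$ replacements, so the $T$-exponent only grows.

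The delicate step handles pure powers outside the basis, namely $x^i$ with $i\geq 2$, $y^j$ with $j\geq 3$, and $z^k$ with $k\geq c$. Each such monomial requires \emph{exactly one} type~$\II$ replacement to re-introduce the missing variables, after which only type~$\I$ replacements are used. The base identities
\begin{equation*}
x^2=\tfrac{1}{2}T^{-8}\,xyz, \qquad y^3=\tfrac{1}{3}T^{-8}\,xyz, \qquad z^c=\tfrac{1}{c}T^{-8}\,xyz
\end{equation*}
land directly on the basis element $xyz$ at valuation $-8$. For higher pure powers we factor:
\begin{equation*}
x^i=\tfrac{1}{2}T^{-8}\,x^{i-1}yz, \qquad y^j=\tfrac{1}{3}T^{-8}\,x y^{j-2} z, \qquad z^k=\tfrac{1}{c}T^{-8}\,x y z^{k-c+1},
\end{equation*}
each producing a monomial with all three coordinates positive, to which the routine procedure then applies. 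The single type~$\II$ step costs a factor $T^{-8}$ while every subsequent type~$\I$ step gains $T^{+8}$, so the minimum valuation along the whole chain equals $-8$, attained immediately after the type~$\II$ step.

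The main obstacle is to verify that the type~$\I$ chain initiated after the one type~$\II$ step terminates on a basis element without re-entering a pure-power regime that would require another type~$\II$. This is settled by a finite case analysis fully analogous to the $(2,2,c)$ case: one prioritizes $\I_x$ (maximally contractive since $a=2$ makes $x$ itself a basis element), and when $\I_x$ is not applicable one cycles through $\I_y$ and $\I_z$ to redistribute variables, systematically driving the exponent vector toward the basis. Once termination is established, Lemma~\ref{jacl2} gives $\mathrm{val}(t_j)\geq -8$ and Lemma~\ref{jacl1} concludes the proof.
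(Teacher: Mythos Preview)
There is a genuine gap in the termination argument. Your explicit algorithm loops already on $x^3$: applying $\II_x$ gives $(3,0,0)\to(2,1,1)$, and then prioritizing $\I_x$ sends $(2,1,1)\to(3,0,0)$ again. More importantly, the claim that the type~$\I$ chain ``terminates on a basis element'' is false for $(2,3,c)$ in general. For $c\geq 6$ many non-basis monomials are actually \emph{zero} in $\mathcal{A}_0$; for instance the paper shows $(0,2,k)\sim^{\I_x,\I_x,\I_y,\I_z,\I_x,\I_y}(0,2,k+c-6)$, and since this relates the monomial to a $T^{48}$-multiple of a higher monomial in the same family, Lemma~\ref{jacl1} is what forces $y^2z^k\in\langle g_1,g_2,g_3\rangle$. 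No finite type~$\I$ chain ever reaches a basis element in these cases --- it is precisely the self-referential loop, combined with the invertibility of $(1-T^m\cdot\text{monomial})$ in the Tate algebra, that does the work.

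The asserted analogy with $(2,2,c)$ also breaks down. There, both $\I_x$ and $\I_y$ reduce total degree by one (since $a=b=2$), so a na\"ive descent works. In $(2,3,c)$ only $\I_x$ reduces total degree; $\I_y$ preserves it and $\I_z$ increases it by $c-3$. This is why the paper's argument for $(2,3,c)$ is markedly more intricate: one must carefully select which type~$\I$ move to apply at each step (e.g.\ following $\II_x$ by $\I_z$ rather than $\I_x$, giving $(i,0,0)\to(i-2,0,c)$), organize a descending induction on the first coordinate for $(i,0,k)$, and repeatedly invoke Lemma~\ref{jacl1} to dispose of the monomials that vanish. A blanket appeal to ``a finite case analysis fully analogous to the $(2,2,c)$ case'' does not suffice.
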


\begin{proof}
This is the most elaborate case. We claim that a given type of monomial is either equivalent to a basis element or to zero element modulo $\langle g_1,g_2,g_3 \rangle$ by applying type $\I$ and $\II$ replacements. Since we also need to control the valuation of $t_j$ in \eqref{eqn:tjtjtj}, the type $\II$ replacement should be applied carefully. It will be always coupled with the type $\I$ to compensate the energy. 
Here, we only consider $c\geq 3$ since the case with $c=2$ has already been covered by Lemma \ref{lem:(2)abc22}. Again, $i,j,k$ are all assumed to be positive integers.
	
	\begin{enumerate}
		\item[$(0,j,k)$:] We further divide the case into the following.
		\begin{enumerate}
		\item[(i)] $j \leq 2$:  The lowest possibly non-basis element is $(0,1,2)$, and since $(0,1,2) \sim^{\I_x} (1,0,1)$, it is equivalent to a basis element. 
		Now for $k \geq 3$, observe that $(0,1,k) \sim^{\I_x} (1,0,k-1) \sim^{\I_y} (0,2,k-2)$. Thus it suffices to consider
	   $(0,2,k)$ for $k \geq 1$, for which we have
		 $$(0,2,k) \sim^{\I_x} (1,1,k-1) \sim^{\I_x} (2,0,k-2) \sim^{\I_y} (1,2,k-3) $$
		 $$  \sim^{\I_z} ( 0,1,k+c-4) \sim^{\I_x} (1,0,k+c-5) \sim^{\I_y} (0,2,k+c-6).$$
		(For $k=1,2$ case, we stop at 2nd and 3rd equality.)
		If $c \geq 6$, by applying Lemma \ref{jacl1} to the first and the last term, we obtain the claim.		
		If $c=3,4$ or $5$, $ (0,2,k) \sim (0,2,k-3)$, $(0,2,k) \sim (0,2, k-2)$ or $(0,2,k) \sim (0,2, k-1)$. In any case, we can reduce it to either of $(0,2,0), (0,2,1), (0,2,2)$ which were covered in the first step.
		Note that we only use type $\I$ in this case.
		
		\item[(ii)] $j \geq 3$:
		 Consider the case that $j=3$. First, $(0,3,0) \sim^{II_y} (1,1,1)$ which is one of the basis element.
		 For $(0,3,k)$ with $k >0$, we apply $\I_x$ and $\I_z$ to obtain
 $$ (0,3,k) \sim^{\I_x} (1,2,k-1) \sim^{\I_z} (0,1,k+c-2)$$		 
		The last one belongs to the previous case (of $j=1$), and we are done.
		
		 Consider $j \geq 4$. The same argument as above shows that 
		$(0,j,k) \sim (0,j,k+c-6)$ and for $c \geq 6$, this shows the vanishing of the monomial modulo the relations by Lemma \ref{jacl1}.
		Thus it is enough to consider the case that $3 \leq c \leq 5$.
		For $c=3$, we run an induction on $j$ to get $(0,j,0), (0,j,1), (0,j,2)$ as in (i). Finally,
		$$(0,j,0) \sim^{\II_y} (1,j-2,1) \sim^{\I_z} (0,j-3,c),$$
		$$(0,j,1) \sim^{\II_y} (1,j-2,2) \sim^{\I_z} (0,j-3,c+1),$$
		$$(0,j,2) \sim^{\II_y} (1,j-2,3) \sim^{\I_z} (0,j-3,c+2),$$
and inductively we can go back to the case  of $j \leq 3$. The other cases $c=4,5$ are similar.
		Note that we sometimes used type $\II$ exactly once, but immediately followed by type $\I$ in this case.
		\end{enumerate}

		\item[$(0,j,0)$:] It can be done as in the last paragraph.
		$ (0,j,0) \sim^{\II_y} (0,j-3,c)$ uses type $\II$, but the latter can be reduced without further energy loss. So this proves the claim.

		\item[$(0,0,k)$:] We can transform it to the first case since
		$$(0,0,k) \sim^{\II_z} (1,1,k-c+1) \sim^{\I_y} (0,3,k-c).$$
		\item[$(i,0,k)$:]
		If $i \geq 3$,
		$$(i,0,k) \sim^{\I_y} (i-1,2,k-1) \sim^{\I_z, \I_z} (i-3,0,a-1+2c-2),$$
		so we can run induction on $i$ to make $i=1$ or $i=2$.
		For $i=1,2$, we have
		$$ (1,0,k) \sim^{\I_y} (0,2, k-1), \quad  (2,0,k) \sim^{\I_y} (1,2, k-1) \sim^{\I_z} (0,1, c+k-2).$$
		\item[$(i,0,0)$:] The claim follows from
		$$(i,0,0) \sim^{\II_x} (i-1,1,1) \sim^{\I_z} (i-2,0,c)$$
		where the last term was covered in the previous step.
		\item[$(i,j,0)$:] Observe that for $c \geq 6$
		$$ (i,j,0) \sim^{\I_z} (i-1,j-1,c-1) \sim^{\I_x} (i,j-2,c-2) \sim^{\I_y} (i-1,j,c-3)$$
		$$ \sim^{\I_x} (i,j-1,c-4) \sim^{\I_x} (i+1,j-2,c-5) \sim^{\I_y} (i,j,c-6).$$
		Thus, for $c \geq 6$, we have the vanishing of the monomial modulo $\langle g_1,g_2,g_3 \rangle$ by comparing two ends.
		If $c=3$, then $(i,j,0) \sim^{\I_z,\I_x,\I_y} (i-1,j,c-3)= (i-1,j,0)$.
		Thus we run induction on $i$.
		For $c=4,5$, we similarly run induction on $j$.
%		For $c=5$, we run induction on $j$.
%		We are done.
		
		\item[$(i,j,k)$:]
		We run induction on $(i+j+k)$ for $(i,j,k)$. Since $(i,j,k) \sim^{\I_x} (i+1,j-1,k-1)$, we can make either $j$ or $k$ vanish by applying this operation repeatedly.
	\end{enumerate}
\end{proof}

\begin{lemma}\label{lem:(2)abc24g}
	If $(a,b,c) =(2,b,c)$ with $b,c \geq 4$, then Theorem \ref{as:def} (2) holds.
\end{lemma}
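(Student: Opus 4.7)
The plan is to follow the case-by-case reduction template of Lemma \ref{lem:(2)abc23}, showing that every monomial $x^i y^j z^k$ can be written as a $\Lambda$-linear combination of the basis elements $\{1, x, y, \dots, y^{b-1}, z, \dots, z^{c-1}, xyz\}$ modulo $\langle g_1, g_2, g_3\rangle$, with coefficients $c_i$ of valuation $\geq -8$ and error terms $\sum t_j g_j$ satisfying $val(t_j) \geq -8$. The Jacobi relations read
\begin{equation*}
yz \equiv 2T^8 x, \qquad xz \equiv bT^8 y^{b-1}, \qquad xy \equiv cT^8 z^{c-1},
\end{equation*}
from which the derived identities $x^2 \equiv \tfrac{1}{2}T^{-8} xyz$, $y^b \equiv \tfrac{1}{b} T^{-8} xyz$, and $z^c \equiv \tfrac{1}{c} T^{-8} xyz$ follow by multiplying by $x, y, z$ respectively, each expressible with $t_j$ of valuation exactly $-8$.

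The key new ingredient beyond the previous lemmas is the vanishing $x^3 \equiv 0$ in $\mathcal{A}_0$, which I would establish by computing $x^2 \cdot yz$ in two different ways. On one hand $x^2 yz \equiv 2T^8 x^3$ via $\I_x$; on the other hand $x^2 yz = (xy)(xz) \equiv bc T^{16} y^{b-1} z^{c-1}$, and applying $\I_x$ a further $\min(b-1, c-1)$ times to $y^{b-1} z^{c-1}$ (assume WLOG $b \leq c$) produces $bc\,2^{b-1} T^{8b+8} x^{b-1} z^{c-b}$. Dividing by $2T^8$ yields
\begin{equation*}
x^3 \equiv bc\,2^{b-2} T^{8b}\, x^{b-1} z^{c-b} \pmod{\langle g_1, g_2, g_3\rangle}
\end{equation*}
with associated $t_j$ of valuation $\geq -8$. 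Since $b \geq 4$ we may factor $x^{b-1} = x^{b-4} \cdot x^3$, so this becomes $x^3 \bigl(1 - bc\, 2^{b-2} T^{8b} x^{b-4} z^{c-b}\bigr) \equiv 0$; the bracketed factor is a unit in $\Lambda\langle\langle x,y,z\rangle\rangle$ because its constant term is $1$ and the correction has strictly positive $T$-valuation, hence $x^3 \equiv 0$. A symmetric computation gives $x^2 y \equiv 0 \equiv x^2 z$, and therefore $x^i y^j z^k \equiv 0$ whenever either $i \geq 3$, or $i = 2$ and $(j, k) \neq (0, 0)$.

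With these reductions in hand, the case analysis proceeds much as in Lemma \ref{lem:(2)abc23}. Any monomial $(i, j, k)$ with $j, k \geq 1$ is first reduced via $\I_x$ to a scalar multiple of $x^{i+\min(j,k)} y^{j-\min(j,k)} z^{k-\min(j,k)}$, eliminating one of $y$ or $z$. The pure-power cases $x^i$, $y^j$, $z^k$ beyond the basis range are handled by the derived identities above together with the vanishing of $x^{\geq 3}$. The remaining mixed cases $(i, j, 0)$ and $(i, 0, k)$ with $i \geq 1$ are resolved by applying $\I_z$ or $\I_y$ to replace $xy$ or $xz$, which produces $z^{c-1}$ or $y^{b-1}$ factors that can then be reduced further via $\I_x$, landing us back in earlier cases. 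The main obstacle throughout is the valuation bookkeeping: every $\II$-type replacement and every division by $2T^8$ contributes a $T^{-8}$ factor, and one must verify that these do not compound so as to violate the bounds $val(c_i), val(t_j) \geq -8$. This is ensured because each of the derived identities and the relation $x^3 \equiv 0$ is obtained using exactly one such division, after which only $\I$-type replacements, carrying strictly positive $T$-valuation, are invoked.
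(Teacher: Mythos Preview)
Your approach is workable but genuinely different from the paper's. The paper runs a direct case analysis on the exponent vector $(i,j,k)$: for each case it exhibits a chain of type~I (and at most one paired type~II) replacements that either reaches a basis element or returns to a monomial dominating the original, whereupon Lemma~\ref{jacl1} closes the argument. For instance, for $(0,j,k)$ with $j\geq1$, $k\geq3$ the paper finds the four-step type~I chain $(0,j,k)\sim(0,j+b-4,k+c-4)$, which weakly increases both nonzero exponents (since $b,c\geq4$) while raising the $T$-power by $32$. You instead isolate $x^3\equiv 0$ as a key lemma and argue that after type~I moves most monomials acquire an $x^3$, $x^2y$, or $x^2z$ factor. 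The $x^3\equiv0$ derivation, including the division by $2T^8$ and the unit trick giving $val(t_j)\geq -8$, is correct.

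Two points need real work, however. First, there is no computation for $x^2y$ that is ``symmetric'' to the one for $x^3$: no single product two-ways-expands to isolate $x^2y$ the way $x^2\cdot yz$ isolates $x^3$. What does work is the type~I chain $x^2y\equiv cT^8xz^{c-1}\equiv bcT^{16}y^{b-1}z^{c-2}$, after which repeated $\I_x$ lands either on a multiple of $x^{b-1}$ (hence of $x^3$, already known to vanish) when $c\geq b+1$, or on a multiple of $x^{b-2}y$ (whence the unit trick applied to $x^2y$ itself closes the loop) when $b=c$; and analogously for $x^2z$. Second, the mixed case $(1,j,0)$ with $j\geq2$ does not directly reach an $x^{\geq2}$ factor after your recipe: iterating $\I_z,\I_x,\I_y,\I_x,\ldots$ can cycle back---for $b=c=4$ one gets $(1,2,0)\sim 64\,T^{32}(1,2,0)$---and one must then invoke Lemma~\ref{jacl1} to finish, which your outline omits. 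Once these two gaps are filled in, the valuation bookkeeping goes through as you claim.
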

\begin{proof}
We separate them into the following cases.
	\begin{enumerate}
		\item[$(0,j,k)$:] 
%		We first consider the type of monomial $x^iy^jz^k$ where $(i,j,k) = (0,*,*)$, i.e. only the first entry is zero.
%		We only write the exponent of the monomial.
		Note that 
		$$(0,j,k) \sim^{\I_z} (1,j-1,k-1) \sim^{\I_z} (0,j+b-2,k-2)$$
		$$ \sim^{\I_x} (1,j+b-3,k-3) \sim^{\I_z} (0,j+ b-4 ,k+c-4),$$
		and since $b, c \geq 4$, this shows that the monomial $(0,j,k)$ is trivial modulo $\langle g_1,g_2,g_3 \rangle$ if $i \geq 1, j \geq 3$.
		The remaining case can be handled by
		$$(0,2,1) \sim^{\I_x} (1,1,0), \quad (0,j,1) \sim^{\I_x} (1,j-1,0) \sim^{\I_z} (0,j-2,c-1),$$ 
		$$ (0,j,2) \sim^{\I_x} (1,j-1,1) \sim^{\I_z} (0,j-2,c).$$
		%Thus it is done.
		\item[$(0,j,0)$:] If $j \leq b$, then it is a basis element, so we only consider $j\geq b+1$. In this case,  we have
		$(0,j,0) \sim^{\II_y} (1,j-b+1,1) \sim^{\I_z} (0,j-b,c)$.
		
		\item[$(0,0,k)$:] We only need to consider $k \geq c+1$ for which
		$(0,0,k) \sim^{\II_z} (1,1,k-c+1) \sim^{\I_y} (0,b,k-c)$.

		\item[$(i,0,k)$:] 
		We run induction on $i$. Observe that
		$(i,0,k)  \sim^{\I_y} (i-1,b-1,k-1)$.
		We repeatedly apply $\I_z$ (which adds $(-1,-1,c)$) until either the first or the second entry become $0$, depending on the relative sizes of $i$ and $b$.
		In the former, we have $(0,*,*)$ which was already covered. In the latter case, we obtain  $(i-b,0,k-1+(b-1)(c-1))$, and hence  we can apply the induction.
		
		\item[$(i,0,0)$:]
We only need to consider $i \geq 3$, in which case $(i,0,0) \sim^{\II_x} (i-1,1,1) \sim^{\I_z} (i-2,0,c)$.
		\item[$(i,j,0)$:] We proceed as 
		$$(i,j,0) \sim^{\I_z} (i-1,j-1,c-1) \sim^{\I_x} (i,j-2,c-2) $$
		$$\sim^{\I_y} (i-1,j+b-3,c-3) \sim^{\I_x} (i,j+b-4,c-4).$$
		Thus we can say that $(i,j,0)$ is equivalent to $0$ if $i\geq1, j \geq 2$.
		Also note that $(2,1,0) \sim^{\I_z} (1,0,c-1)$.
		\item[$(i,j,k)$:] It can be handled by using the induction on $(i+j+k)$ based on the relation $(i,j,k) \sim^{\I_x} (i+1,j-1,k-1)$.
		
	\end{enumerate}
	
\end{proof}

\section{Proof of Proposition \ref{QH->Jac explicit}}\label{sec:pf333orb}
 Proposition \ref{QH->Jac explicit} can be shown by directly counting the contributing orbi-discs, which is tedious, but elementary. Below is a reformulation of  Proposition \ref{QH->Jac explicit} in $\tilde{x},\tilde{y},\tilde{z}$-variables which are more accessible in actual disc counting. In addition, we choose 
 \begin{equation}\label{eqn:unusualbbb}
 b= -\tilde{x} X + \tilde{y} Y - \tilde{z} Z
 \end{equation}
 in order to make the signs in the formula more symmetric.
% 
%  In addition, some of infinite sums are split into 
%two summands below so as to see the direct relationship between each of terms and the one from geometric count. 
It is not difficult to check that Proposition \ref{QH->Jac explicitre} is equivalent to the original statement in Proposition \ref{QH->Jac explicit}.

\begin{prop} \label{QH->Jac explicitre}
The map $\KS$ from the orbifold quantum cohomology of $X$ to the Jacobian ring of $W(\tilde{x},\tilde{y},\tilde{z})$ defined in \eqref{QH->Jac} is given by
\begin{align*}
\one_X \mapsto & 1; \\
[\pt] \mapsto & \frac{1}{8A} T \dd{T} W (\tilde{x},\tilde{y},\tilde{z});
\end{align*}
\begin{align*}
\Delta^{1/3}_1 \mapsto & \tilde{x} \sum_{k=0}^\infty (-1)^{k} (2k+1) \phi_k(T); \\
\Delta^{1/3}_2 \mapsto & \tilde{y} \sum_{k=0}^\infty (-1)^{k} (2k+1) \phi_k(T); \\
\Delta^{1/3}_3 \mapsto & \tilde{z} \sum_{k=0}^\infty (-1)^{k} (2k+1) \phi_k(T); \\
\end{align*}

\begin{align*}
\Delta^{2/3}_1 \mapsto &
\tilde{x}^2 \sum_{k=0}^\infty (2k+1) \phi_k(T^2) 
+ \tilde{x}^2 \sum_{k=1}^\infty \sum_{i=0}^{k-1} (-1)^{3k-i} (6k-2i+2)  \frac{\phi_k(T^3) }{\phi_i (T)} \\
& + \tilde{y}\tilde{z} \sum_{k=1}^\infty \sum_{i=0}^{k-1} \left( (-1)^{3k-i} (6k-2i) \frac{\psi_k^{+} (T)}{\phi_i (T)} + (-1)^{3k-i-1} (6k-2i-2) \frac{\psi_k^{-} (T)}{\phi_i (T)} \right); \\
\Delta^{2/3}_2 \mapsto &
\tilde{y}^2 \sum_{k=0}^\infty (2k+1) \phi_k(T^2) 
+ \tilde{y}^2 \sum_{k=1}^\infty \sum_{i=0}^{k-1} (-1)^{3k-i} (6k-2i+2)  \frac{\phi_k(T^3) }{\phi_i (T)} \\
& + \tilde{z}\tilde{x} \sum_{k=1}^\infty \sum_{i=0}^{k-1} \left( (-1)^{3k-i} (6k-2i) \frac{\psi_k^{+} (T)}{\phi_i (T)} + (-1)^{3k-i-1} (6k-2i-2) \frac{\psi_k^{-} (T)}{\phi_i (T)} \right); \\
\Delta^{2/3}_3 \mapsto &
\tilde{z}^2 \sum_{k=0}^\infty (2k+1) \phi_k(T^2) 
+ \tilde{z}^2 \sum_{k=1}^\infty \sum_{i=0}^{k-1} (-1)^{3k-i} (6k-2i+2)  \frac{\phi_k(T^3) }{\phi_i (T)} \\
& + \tilde{x}\tilde{y} \sum_{k=1}^\infty \sum_{i=0}^{k-1} \left( (-1)^{3k-i} (6k-2i) \frac{\psi_k^{+} (T)}{\phi_i (T)} + (-1)^{3k-i-1} (6k-2i-2) \frac{\psi_k^{-} (T)}{\phi_i (T)} \right)
\end{align*}
where $\phi_k(T) = T^{12k^2+12k +3}$, $\psi_k^{+} (T) = T^{(6k+1)^2}$, $\psi_k^{-} (T) = T^{(6k-1)^2}$.
\end{prop}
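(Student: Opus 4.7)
The plan is to compute $\KS$ on each basis element of $\QH^*_{orb}(\mathbb{P}^1_{3,3,3})$ by direct enumeration of the contributing (orbi-)holomorphic discs, and to read off the coefficients by comparison with the known universal-cover enumeration that gives the ordinary potential $W(\tilde x,\tilde y,\tilde z)$.

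For the fundamental class, $\KS(\one_X)=1$ is immediate from the unital part of Proposition~\ref{prop:divisor}: only the constant disc with the interior $\one_X$-insertion contributes, and it produces $\one_\BL$. For $[\pt]$, the divisor-axiom part of Proposition~\ref{prop:divisor} yields
\[
\KS([\pt])\;=\;\sum_{\beta,k}\, T^{\beta\cap\omega}\bigl([\pt]\cap\beta\bigr)\,\mathfrak{q}_{0,k,\beta}(b^{\otimes k}).
\]
With the chosen representative $[\pt]=\tfrac{1}{8}\sum_{i=1}^{8}\pt_i$ placed one per region of the $\BL\cup(\text{equator})$ decomposition, one has $[\pt]\cap\beta=\omega(\beta)/(8A)$ for every disc $\beta$, so the sum becomes $\tfrac{1}{8A}\,T\partial_T W(\tilde x,\tilde y,\tilde z)$, as asserted.

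For $\KS(\Delta^{1/3}_i)$ I would use Proposition~\ref{prop:orbi-polyclass}: a contributing orbi-disc has Maslov index $2/3$ and exactly one boundary corner at $X$ (resp.~$Y,Z$), and its lift through the triple branched cover at the orbi-insertion is a Maslov-$2$ disc in the elliptic cover $\tilde X=E$ whose boundary passes three times through the preimage of the corresponding immersed sector. Such lifts are in bijection with the Maslov-$2$ discs enumerated in \cite{CHL17} to produce the coefficient $\tilde\phi(T)=\sum_k(-1)^k(2k+1)T^{(6k+3)^2}$ of $\tilde x^3$ in $W$; the triple branched cover divides areas by $3$, producing the exponent $(6k+3)^2/3$ of $\phi_k(T)$ together with the coefficient $(-1)^k(2k+1)$, and the single boundary insertion of $b=-\tilde x X+\tilde y Y-\tilde z Z$ contributes the overall factor of $\tilde x$ (with signs tracked via the combinatorial rule of \cite{Se}, shown consistent with our de~Rham model in the proof of Proposition~\ref{prop:weakunobsbulk}). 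The statements for $\Delta^{1/3}_2,\Delta^{1/3}_3$ follow by permuting the roles of $X,Y,Z$.

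For $\KS(\Delta^{2/3}_i)$, Proposition~\ref{prop:orbi-polyclass} splits the contributing orbi-discs into two families. The first has two corners of a single type and lifts to a Maslov-$4$ disc in $E$ with six equal corners, producing the $\tilde x^2,\tilde y^2,\tilde z^2$ terms; these lifts split further by the combinatorics of their embedded images in the universal cover $\mathbb{C}$ into a sub-family whose underlying image is the area-doubling of the Maslov-$2$ discs contributing to $\tilde\phi(T)$, giving the positive-sign contribution $(2k+1)\phi_k(T^2)$, and a sub-family of genuinely new Maslov-$4$ discs indexed by pairs $i<k$ with alternating signs $(-1)^{3k-i}$, giving the second inner sum in $Q(T)$. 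The second family has one corner each of the two remaining types, lifts to a Maslov-$4$ disc with three corners each of those two types, and is enumerated via a pairwise assembly of the Maslov-$2$ minimal $\tilde x\tilde y\tilde z$-triangles with the Maslov-$2$ $\tilde x^3$-type discs, yielding the $\psi_k^{\pm}(T)/\phi_i(T)$ expression for $R(T)$ with multiplicities $(6k-2i)$ and $(6k-2i-2)$ inherited from the combinatorial enumeration of $\tilde\psi(T)$ in \cite{CHL17}.

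The main technical obstacle will be the careful bookkeeping of signs and multiplicities: verifying that no orbi-disc passing through the even-degree immersed sectors $\bar X,\bar Y,\bar Z$ contributes (which follows from the reflection-symmetry argument of Proposition~\ref{prop:weakunobsbulk}, applicable here because each $\Delta^{k/3}_i$ is reflection invariant), checking that the combinatorial signs of \cite{Se} pull through the branched-cover correspondence, and establishing the claimed bijection between orbi-discs in $X$ and the relevant Maslov-$2$ or Maslov-$4$ discs in $E$ (which requires the $\Z/3$-equivariance of the lift established in Proposition~\ref{prop:orbi-polyclass}).
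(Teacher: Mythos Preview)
Your overall strategy matches the paper's: classify orbi-discs via the lifting argument of Proposition~\ref{prop:orbi-polyclass}, then read off areas and multiplicities. However, there is a genuine gap in how you propose to obtain the integer coefficients $(2k+1)$, $(6k-2i+2)$, $(6k-2i)$, and $(6k-2i-2)$.

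You write that the bijection between orbi-discs and Maslov-$2$ discs in $E$ ``produces the exponent $(6k+3)^2/3$ of $\phi_k(T)$ together with the coefficient $(-1)^k(2k+1)$''. But the coefficient does \emph{not} transfer automatically through the bijection. In the computation of $W$ in \cite{CHL17}, the multiplicity $(2k+1)$ for the $k$-th triangle records the number of positions of the output boundary marked point on the \emph{full} Maslov-$2$ triangle that land on the Morse minimum $e$ (namely $k+1$ on the positive triangle, $k$ on its reflection). For $\KS(\Delta^{1/3}_1)$ you must instead count the $e$'s on the boundary of the \emph{orbi-disc itself}, which is only one third of the lift's boundary. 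The paper carries this out directly and finds, again, $k+1$ and $k$; but this is an independent combinatorial check, not a consequence of the bijection. The same issue recurs for each multiplicity in the $\Delta^{2/3}$ case.

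A second point you miss: in the $\tilde{x}^2$ contribution whose lift double-covers a Maslov-$2$ triangle, the orbi-disc has a $\Z/2$ automorphism (rotation exchanging the two $X$-corners), so the naive $e$-count of $2k+2$ and $2k$ must be halved. Finally, your description of the $\tilde y\tilde z$ family as ``pairwise assembly of the Maslov-$2$ minimal $\tilde x\tilde y\tilde z$-triangles with the Maslov-$2$ $\tilde x^3$-type discs'' is slightly off: the relevant hexagons are $\Delta_{xyz,k,\pm}\setminus\Delta_{x,i}$, a general (not minimal) $XYZ$-triangle with a small $X$-triangle excised, indexed by $i<k$.
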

 
\begin{proof} 
%First,  we show that $\pt=\sum_i \pt_i$ goes to $T \frac{\partial}{\partial T W_{3,3,3}$. For example, one can find $(6k-1)^2$ generic points in the $XYZ$ triangle of the size $(6k-1)^2$ shown in the right side of Figure \ref{fig:QHJAC_pt}.  Recall that this type of triangles contributes to $W_{3,3,3}$ as $(6k-1) T^{(6k-1)^2}$ whereas it contributes to $\KS (\pt)$ as
%$$(6k-1)^3 T^{(6k-1)^2} = \overbrace{(6k-1)^2}^{\# \,\, \textnormal{of} \,\, \pt_i's} \,\, \overbrace{(6k-1)}^{\# \,\, \textnormal{of} \,\, e} T^{(6k-1)^2}.$$
%In this way, one can show that  $\KS$ sends $\pt$ to $\frac{1}{8} T \frac{\partial}{\partial q_{\alpha}} W_{3,3,3}$,
%and we leave the details  as an exercise.
%If we define $q := T^8$ to be the exponential area of $\mathbb{P}^1_{3,3,3}$, then $\frac{1}{8} T \frac{\partial}{\partial q_{\alpha}} W_{3,3,3} =  q \frac{\partial}{\partial q} W_{3,3,3}$.  We remark that in the toric case of  \cite{FOOO_MS}, the point class also was mapped to  $q \frac{\partial}{\partial q} W$.

$\KS(\one_X) = 1$ by the unital property of $\KS$, and $\KS( [\pt] )$ was already computed in the proof of Theorem \ref{thm:KSofdiv}. 
It only remains to compute the image of $\Delta_1^{i/3}$ for $i=1,2$ (other cases can be calculated in a similar way). 
In the computation below, we will use the Morse model with the combinatorial sign rule following \cite{Se}. For this reason we choose a perfect Morse function on $\mathbb{L}$ with the minimum $e$ which serves as the unit class in $CF(\mathbb{L},\mathbb{L})$. In addition, we choose a generic point which is close to $e$ that represents a nontrivial structure put on $\mathbb{L}$.
Readers may consult \cite[3.4]{CHL17} for more details on the disc counting in this setting.

\noindent(1)  $\KS(\Delta^{1/3}_1)$ : From our earlier lifting argument, the holomorphic triangles counted for the potential $W$ can be
regarded as uniformizing covers of [1/3] orbi-discs which contribute to $\KS(\Delta^{1/3}_1)$, as shown in Figure \ref{fig:onethird}. Comparing with \eqref{eqn:phipsi333}, there are sequences $\Delta_{x,k}$ and $\Delta_{x,k}^{op}$ of such orbi-discs with sizes $\phi_k (T)$, $k=0,1,2,\cdots$. Here, we set $\Delta_{x,k}$ to be a positive triangle, and $\Delta_{x,k}^{op}$ a negative one. 
%(See the discussion above Lemma \ref{lem:kxkykzbetaR} for these terminologies.)

We also need to count the number of times in which the discs meet the minimum $e$. By direct counting, we see that
for $\Delta_{x,k}$ and $\Delta_{x,k}^{op}$ of size $\phi_k (T)$, there are $k+1$ and $k$ many $e$'s on their boundaries, respectively. Taking signs into account ($s(\Delta_{x,k}) = (-1)^{k+1}$, $s(\Delta_{x,k}^{op}) = (-1)^{|X|} (-1)^k = (-1)^{k+1}$), the element $\Delta^{1/3}_1$ of $QH^\ast_{orb} (X)$ maps to
$$\tilde{x} \sum_{k=0}^\infty (-1)^{k} (2k+1) \phi_k(T) = \phi (T) \tilde{x}$$
as desired.\footnote{Notice that $(-1)^{k+1}$ in $s(\Delta_{x,k}) = s(\Delta_{x,k}^{op})=  (-1)^{k+1}$ turns into $(-1)^k$ due to \eqref{eqn:unusualbbb}.} \\
\begin{figure}
\begin{center}
\includegraphics[height=2.2in]{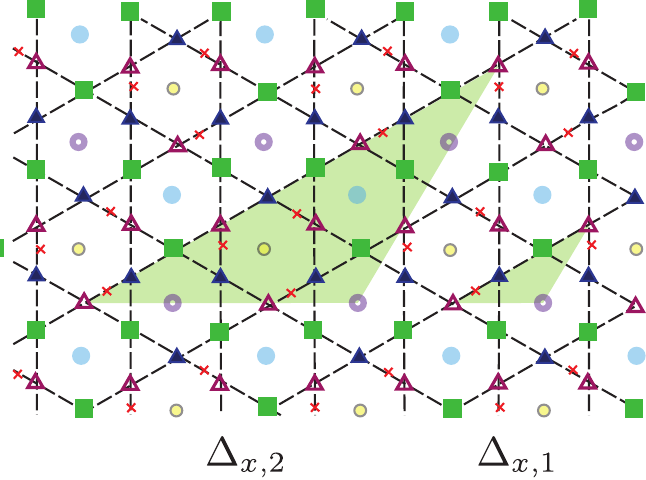}
\caption{$[1/3]$ orbi-discs}\label{fig:onethird}
\end{center}
\end{figure}
\begin{figure}
\begin{center}
\includegraphics[height=2.2in]{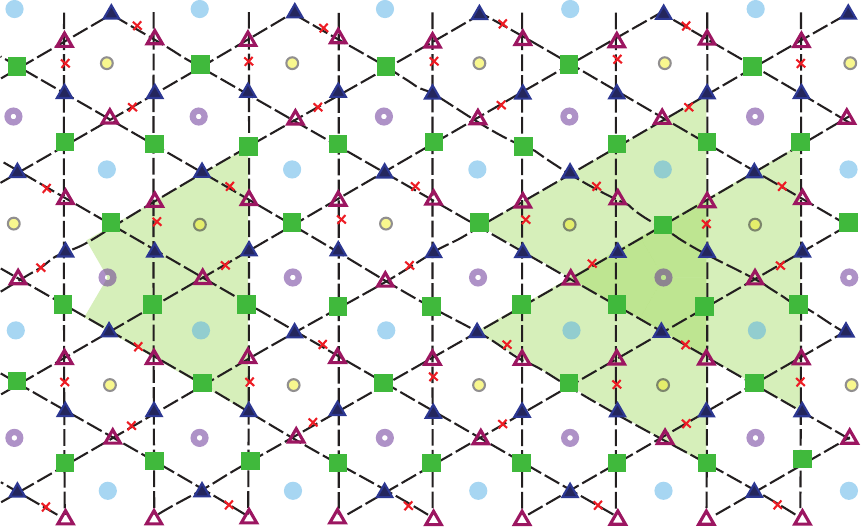}
\caption{$[2/3]$ orbi-discs $\Delta_{xyz,1,+} \setminus \Delta_{x, 1}$ and its tripled image (lifting)}\label{fig:twothird}
\end{center}
\end{figure}

\noindent(2) $\KS(\Delta^{2/3}_1)$ :
From Proposition \ref{prop:orbi-polyclass}, there are two types of such orbi-discs, corresponding to either $\tilde{x}^2$ or $\tilde{y}\tilde{z}$. 
The images of liftings of orbi-discs can be triangles or immersed hexagons as depicted in Figure \ref{fig:twothird}. 
We first consider  the case when the images are triangles, which occurs only for $\tilde{x}^2$-type orbi-discs. Similarly to (1), we have two sequences $\Delta_{x^2,k}$ and $\Delta_{x^2,k}^{op}$ for such discs.
%
%In this case, the holomorphic triangles which were used to calculate the potential are relevant. 
Namely, we can take two third of such triangles to get desired orbi-discs, and their count is given by
$$\tilde{x}^2 \sum_{k=0}^\infty (2k+1) \phi_k(T^2).$$
Here, the two triangles $\Delta_{x^2,k}$ and $\Delta_{x^2,k}^{op}$ have the common size $\phi_k (T^2)$. Also, they have $2k+2$ and $2k$ many $e$'s on their boundaries respectively, but because of the rotation symmetry (which gives an automorphism on the moduli) we should count them as $k+1$ and $k$. Signs of contribution are given by
$$s(\Delta_{x^2,k}) = (-1)^{2k+2}=1, \quad s(\Delta_{x^2,k}^{op}) = (-1)^{|X| + |X|} (-1)^{2k} = (-1)^{2k+2} =1.$$

We next consider orbi-discs whose liftings become immersed hexagons.
Again, there are two types of such orbi-discs corresponding to either $\tilde{x}^2$ or $\tilde{y}\tilde{z}$.

\begin{itemize}
\item[(i) $\tilde{x}^2$:]
In this case, we count the orbi-discs $\Delta_{x^3,k} \setminus \Delta_{x, i}$ ($i=0, \cdots, k-1$) of size $\frac{\phi_k (T^3)}{ \phi_i (T)}$, which has $3k+3 - (i+1) = 3k -i +2$  many $e$'s on its boundary,
%($3k+3$ on $\partial \Delta_{x^3,k} $ and $i+1$ on $\partial \Delta_{x, i}$)
and $s(\Delta_{x^3,k} \setminus \Delta_{x, i}) = (-1)^{3k - i+2}$. Its reflection image $\left( \Delta_{x^3,k} \setminus \Delta_{x, i} \right)^{op}$ has $3k - i$ many $e$'s on the boundary, and $s \left( \left( \Delta_{x^3,k} \setminus \Delta_{x, i} \right)^{op} \right) =(-1)^{|X| + |X|} (-1)^{3k-i} = (-1)^{3k-i}$. 
%Here, note that we only count $e$'s in the boundary of ``orbi-discs"
%(left hand side of Figure \ref{fig:twothird}), not on there lifts (right hand side of Figure \ref{fig:twothird}). One may think of this as getting rid of the redundancy from the $\Z/3$-symmetry.
%Therefore, the total count of such $2/3$-discs gives
In total, they produce 
\begin{equation*}
\begin{array}{l}
\tilde{x}^2\displaystyle\sum_{i=0}^{k-1} \left( (-1)^{3k-i+2} (3k-i+2) + (-1)^{3k-i} (3k-i) \right)  \frac{\phi_k(T^3) }{\phi_i (T)}  \\
=\tilde{x}^2\displaystyle\sum_{i=0}^{k-1} (-1)^{3k-i} (6k-2i+2)  \frac{\phi_k(T^3) }{\phi_i (T)}.
\end{array}
\end{equation*}
%
%Hence we get the term
%$$x^2 \sum_{k=1}^\infty \sum_{i=0}^{k-1} (-1)^{3k-i} (6k-2i+2)  \frac{\phi_k(T^3) }{\phi_i (T)} $$
%in the image of $\Delta_1^{2/3}$ under $\KS$.

\item[(ii) $\tilde{y}\tilde{z}$:]
Denote the two positive triangles contributing to the $k$-th terms in \ref{eqn:phipsi333} by $\Delta_{xyz,k,\pm}$. The only contribution to $\tilde{y}\tilde{z}$ is from the count of 
%
%If one takes out $[1/3]$-triangle of size $\phi_i (T)$ (for $1 \leq i <k$) from one of these, we obtain an orbi-disc which contribute to $\tilde{y}\tilde{z}$ in $\KS(\Delta^{2/3}_1)$. So, we need to count 
$\Delta_{xyz,k,\pm} \setminus \Delta_{x, i}$ ($i=0,1, \cdots, k-1$) and their reflection image, both of which have with size $\psi_k^{\pm} (T)$. 
%(See Figure \ref{fig:twothird}.)

 $\Delta_{xyz,k,+} \setminus \Delta_{x, i}$ has $3k+1 - (i+1) = 3k-i$ many $e$'s along the boundary, and
%The number of $e$'s on the boundary of $\Delta_{xyz,k,+} \setminus \Delta_{x, i}$ is $3k+1 - (i+1) = 3k-i$ and 
$s(\Delta_{xyz,k,+} \setminus \Delta_{x, i}) = (-1)^{3k-i}$. For $\left(\Delta_{xyz,k,+} \setminus \Delta_{x, i} \right)^{op}$, we have $3k-i$ many $e$'s, and $s\left( \left(\Delta_{xyz,k,+} \setminus \Delta_{x, i} \right)^{op} \right) = (-1)^{|Y| + |Z|} (-1)^{3k-i} = (-1)^{3k-i}$. So, these two discs give
$$\tilde{y}\tilde{z}\left( (-1)^{3k-i} (3k-i) + (-1)^{3k-i} (3k-i) \right) \frac{\psi_k^+ (T)}{\phi_i (T)} $$
 Similarly, $\Delta_{xyz,k,-} \setminus \Delta_{x, i}$ and its reflection image contribute
%The number of $e$'s on the boundary of $\Delta_{xyz,k,-} \setminus \Delta_{x, i}$ is $3k - (i+1) = 3k-i-1$ and $s(\Delta_{xyz,k,-} \setminus \Delta_{x, i}) = (-1)^{3k-i-1}$. For $\left(\Delta_{xyz,k,+} \setminus \Delta_{x, i} \right)^{op}$, we have $3k-1-i$ many $e$'s on the boundary and $s\left( \left(\Delta_{xyz,k,-} \setminus \Delta_{x, i} \right)^{op} \right) = (-1)^{|Y| + |Z|} (-1)^{3k-i-1} = (-1)^{3k-i-1}$. Hence, the contribution from these two discs are
$$\tilde{y}\tilde{z}\left( (-1)^{3k-i-1} (3k-i-1) + (-1)^{3k-i-1} (3k-1-i) \right) \frac{\psi_k^+ (T)}{\phi_i (T)}.$$
 
\end{itemize}

%Therefore, we get
%$$yz   \sum_{k=1}^\infty \sum_{i=0}^{k-1} \left( (-1)^{3k-i} (6k-2i) \frac{\psi_k^{+} (T)}{\phi_i (T)} + (-1)^{3k-i-1} (6k-2i-2) \frac{\psi_k^{-} (T)}{\phi_i (T)} \right)$$
%and hence, $KS (\Delta^{2/3}_1)$ is given by
%\begin{equation}\label{twothirds}
%\begin{array}{c}
%x^2 \displaystyle\sum_{k=0}^\infty (2k+1) \phi_k(T^2) 
%+ x^2 \sum_{k=1}^\infty \sum_{i=0}^{k-1} (-1)^{3k-i} (6k-2i+2)  \frac{\phi_k(T^3) }{\phi_i (T)} \\
%+ yz   \displaystyle\sum_{k=1}^\infty \sum_{i=0}^{k-1} \left( (-1)^{3k-i} (6k-2i) \frac{\psi_k^{+} (T)}{\phi_i (T)} + (-1)^{3k-i-1} (6k-2i-2) \frac{\psi_k^{-} (T)}{\phi_i (T)} \right).
%\end{array}
%\end{equation}
\end{proof}

\bibliographystyle{amsalpha}
\bibliography{geometry}

\end{document}